\newtheorem{thm}[subsubsection]{Theorem}
\newtheorem{prop}[subsubsection]{Proposition}
\newtheorem{lem}[subsubsection]{Lemma}
\newtheorem{lem-def}[subsubsection]{Lemma-Definition}
\newtheorem{cor}[subsubsection]{Corollary}
\newtheorem{theorem}[subsubsection]{Theorem}
\newtheorem{proposition}[subsubsection]{Proposition}
\newtheorem{lemma}[subsubsection]{Lemma}
\theoremstyle{definition}
\newtheorem{example}[subsubsection]{Example}
\newtheorem{remark}[subsubsection]{Remark}
\theoremstyle{definition}
\newtheorem{assumption}[subsubsection]{Assumption}
\newtheorem{ex}[subsubsection]{Example}
\newtheorem{rmk}[subsubsection]{Remark}
\theoremstyle{definition}
\newtheorem{dfn}[subsubsection]{Definition}
\numberwithin{equation}{subsection}
\newcommand{\quash}[1]{}  
\newcommand{\nc}{\newcommand}
\nc{\on}{\operatorname}
\DeclareMathOperator{\Ker}{Ker}
\def\bbb{\mathbf{b}}
\def\AAA{\mathbb{A}}
\def\CC{\mathbb{C}}
\def\GG{\mathbb{G}}
\def\NN{\mathbb{N}}
\def\XX{\mathbb{X}}
\def\ZZ{\mathbb{Z}}
\def\calF{\mathcal{F}}
\def\calO{\mathcal{O}}
\def\bfJ{{\boldsymbol{J}}}
 \nc{\SL}{{\rm SL}}
 \nc{\hatQ}{{\hat Q}}
 \nc{\sgn}{{\rm sgn}}
 \nc{\seee}{\mathbb C}
 \newcommand{\id}{{\rm id}}
 \nc{\hatlambda}{{\hat\lambda}}
 \nc{\daggerlambda}{{\lambda^\dagger}}
\newcommand{\frakb}{{\mathfrak b}}
\newcommand{\frakd}{{\mathfrak d}}
\newcommand{\frakg}{{\mathfrak g}}
\newcommand{\frakk}{{\mathfrak k}}
\newcommand{\frakp}{{\mathfrak p}}
\newcommand{\frakt}{{\mathfrak t}}
\newcommand{\fraku}{{\mathfrak u}}
\newcommand{\frakv}{{\mathfrak v}}
\newcommand{\frakz}{{\mathfrak z}}
\newcommand{\frakM}{{\mathfrak M}}
\newcommand{\frakS}{{\mathfrak S}}
\newcommand{\frakX}{{\mathfrak X}}
\newcommand{\Sat}{\mathrm{Sat}}
\newcommand{\der}{\mathrm{der}}
\newcommand{\bA}{{\mathbb A}}
\newcommand{\bB}{{\mathbb B}}
\newcommand{\bC}{{\mathbb C}}
\newcommand{\bG}{{\mathbb G}}
\newcommand{\bM}{{\mathbb M}}
\newcommand{\bN}{{\mathbb N}}
\newcommand{\bP}{{\mathbb P}}
\newcommand{\bQ}{{\mathbb Q}}
\newcommand{\bV}{{\mathbb V}}
\newcommand{\bZ}{{\mathbb Z}}
\newcommand{\mA}{{\mathcal A}}
\newcommand{\mB}{{\mathcal B}}
\newcommand{\mE}{{\mathcal E}}
\newcommand{\mO}{{\mathcal O}}
\def\bfA{\mathbf{A}}
\def\sfA{\mathsf{A}}
\def\sfB{\mathsf{B}}
\def\sfC{\mathsf{C}}
\def\ttS{\mathtt{S}}
\def\ttW{\mathtt{W}}
\def\ttM{\mathtt{M}}
\newcommand{\red}{\mathrm{red}}
\newcommand{\fil}{\mathrm{fil}}
\newcommand{\Rep}{\mathrm{Rep}}
\nc{\Tate}{\mathrm{Tate}}
\nc{\al}{{\alpha}} \nc{\be}{{\beta}} \nc{\ga}{{\gamma}}
\nc{\ve}{{\varepsilon}} \nc{\Ga}{{\Gamma}} \nc{\la}{{\lambda}}
\nc{\La}{{\Lambda}}
\nc{\ad}{{\on{ad}}}
\newcommand{\Ad}{{\on{Ad}}}
\nc{\aff}{{\on{aff}}}
\nc{\Aff}{{\mathbf{Aff}}}
\newcommand{\Aut}{{\on{Aut}}}
\nc{\Bun}{{\on{Bun}}}
\newcommand{\cha}{\on{char}}
\nc{\Coh}{{\on{Coh}}}
\newcommand{\even}{\mathrm{even}}
\newcommand{\odd}{\mathrm{odd}}
\nc{\diag}{{\on{diag}}}
\nc{\dR}{{\on{dR}}}
\nc{\dist}{{\on{Dist}}}
\newcommand{\End}{{\on{End}}}
\nc{\Fl}{{\calF\ell}}
\newcommand{\Gr}{{\on{Gr}}}
\newcommand{\Hom}{{\on{Hom}}}
\nc{\Id}{{\on{Id}}}
\newcommand{\ind}{{\on{ind}}}
\nc{\Ind}{{\on{Ind}}}
\nc{\inv}{{\on{Inv}}}
\nc{\Iso}{{\on{Isom}}}
\newcommand{\Lie}{{\on{Lie}}}
\nc{\Nm}{{\on{Nm}}}
\nc{\pf}{{\on{pf}}}
\newcommand{\pr}{{\on{pr}}}
\nc{\rec}{{\on{rec}}}
\nc{\Iw}{\on{Iw}}
\newcommand{\Res}{{\on{Res}}}
\newcommand{\s}{{\on{sc}}}
\newcommand{\Spec}{\on{Spec}}
\nc{\tr}{{\on{tr}}}
\newcommand{\Mod}{{\mathrm{-Mod}}}
\newcommand{\ord}{\mathrm{ord}}
\newcommand{\gr}{\mathrm{gr}}
\newcommand{\Min}{\mathsf{Min}}
\newcommand{\GL}{{\on{GL}}}
\nc{\Spin}{\on{Spin}} \nc{\Pin}{\on{Pin}}
\nc{\SU}{{\on{SU}}} \nc{\SO}{{\on{SO}}}
\nc{\Ql}{{\overline{\bQ}_\ell}}
\newcommand{\reg}{\mathrm{reg}}
\nc{\fg}{\frakg}
\nc{\fp}{\frakp}
\nc{\rat}{\overline{\bQ}}
\nc{\triv}{{\bf{1}}}
\newcommand{\longto}{\longrightarrow}
\newcommand{\std}{\mathrm{std}}
\nc{\dm}{/\!\!/}
\def\xcoch{\mathbb{X}_\bullet}
\def\xch{\mathbb{X}^\bullet}
\nc{\wt}{\mathrm{wt}}
\nc{\Sh}{\on{Sh}}
\nc{\Sht}{\on{Sht}}
\nc{\wSh}{\widetilde{\Sht}}
\nc{\Sph}{\on{Sph}}
\nc{\Fr}{\on{Frob}}
\nc{\Fp}{{^\sigma\mE}}
\nc{\und}{\underline}
\nc{\mmu}{{\mu_\bullet}}
\nc{\nnu}{{\nu_\bullet}}
\nc{\Hk}{{\on{Hk}}}
\nc{\lhk}{\Hk^{\on{loc}}}
\nc{\bp}{{\mathbf{p}}}
\nc{\MV}{{\bM\bV}}
\newcommand{\xdashrightarrow}[2][]{\ext@arrow 3359 \rightarrowfill@@{#1}{#2}}
\def\rightarrowfill@@{\arrowfill@@\relax\relbar\shortrightarrow}
\def\arrowfill@@#1#2#3#4{%
  $\m@th\thickmuskip0mu\medmuskip\thickmuskip\thinmuskip\thickmuskip
   \relax#4#1
   \xleaders\hbox{$#4#2$}\hfill
   #3$%
}
\begin{document}
\title{On vector-valued twisted conjugation invariant functions on a group}
\author{Liang Xiao} 
\address{Liang Xiao, Department of Mathematics, University of Connecticut, Storrs, 341 Mansfield Road, Unit 1009, Storrs, CT 06269.}
\email{liang.xiao@uconn.edu}\date{}
\author{Xinwen Zhu}
\address{Xinwen Zhu, Department of Mathematics, Caltech, 1200 East California Boulevard, Pasadena, CA 91125.}
\email{xzhu@caltech.edu}\date{}
\dedicatory{To Joseph Bernstein with deep admiration}
\begin{abstract}
We study the space of vector-valued  (twisted) conjugation invariant functions on a connected reductive group.
\end{abstract}
\thanks{
L.X. was  partially supported by Simons Collaboration grant \#278433 and NSF grant DMS--1502147
. X.Z. was partially supported by NSF grant
DMS--1602092 and the Sloan Fellowship.}
\subjclass[2010]{}
\keywords{}
\maketitle
\setcounter{tocdepth}{1}
\tableofcontents

\section{Introduction}

We first describe the problems we are studying in this article. Let us fix an algebraically closed field $k$ throughout this article. Let $G$ be a connected simply-connected semisimple group over $k$, and let $k[G]$ denote its ring of regular functions. Consider the conjugation action of $G$ on itself, and 
let 
$\bfJ=k[G]^{c(G)}$ denote the space of conjugation invariant regular functions on $G$. More generally, for an algebraic representation $V$ of $G$, let
\[
\bfJ(V):=(k[G]\otimes V)^{c(G)}=\{f: G\to V\mid f(gxg^{-1}) = g\cdot f(x),\quad x,g\in G\}
\]
denote the $\bfJ$-module of $V$-valued conjugation invariant (algebraic) functions on $G$, where $G$ acts on $k[G]\otimes V$ diagonally. There are also similarly defined
rings of invariants $\bfJ_0$ and $\bfJ_+$ and the corresponding modules $\bfJ_0(V)$ and $\bfJ_+(V)$, when $G$ is replaced by its asymptotic cone $\on{As}_G$ and the corresponding Vinberg monoid $V_G$. Our first result is as follows.

\begin{thm}
\label{T:main theorem}
Assume that $V$ admits a good filtration. Then the module $\bfJ(V)$ (resp. $\bfJ_+(V)$, resp. $\bfJ_0(V)$) is finite free over $\bfJ$ (resp. $\bfJ_+$, resp. $\bfJ_0$) of rank $\dim V(0)$, where $V(0)$ denotes the zero weight space of $V$ (with respect to a maximal torus of $G$). In addition, if $\cha k=0$, there are natural bases of $\bfJ(V)$, $\bfJ_+(V)$, and $\bfJ_0(V)$ determined by certain bases of $V$.
\end{thm}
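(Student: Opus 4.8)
The plan is to prove all three assertions at once by a uniform argument, reducing the freeness to a vanishing of $\mathrm{Tor}$ that is controlled by good filtrations, and to extract the rank (and the explicit bases in characteristic $0$) from the geometry of the regular locus. Let $X$ range over $G$, $V_G$, $\on{As}_G$, and let $\pi\colon X\to\Spec\bfJ_\bullet$ be the quotient by $c(G)$, where $\bfJ_\bullet\in\{\bfJ,\bfJ_+,\bfJ_0\}$. The inputs I would take from the Steinberg-type results (proved or cited earlier) are: $\bfJ_\bullet$ is a polynomial ring in $r=\mathrm{rank}\,G$ variables $\theta_1,\dots,\theta_r$ (graded in the last two cases); $X$ is normal and Cohen--Macaulay and $\pi$ is flat with equidimensional fibres; and there is a Steinberg section $\Sigma\subset X$ meeting every regular class, with $\pi|_\Sigma$ an isomorphism, together with a dense open ``regular semisimple'' locus of $\Spec\bfJ_\bullet$ over which the fibres of $\pi$ are single orbits isomorphic to $G/T$.

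The easy part: restriction to $\Sigma$ embeds $\bfJ_\bullet(V)$ into the free module $k[\Sigma]\otimes V\cong\bfJ_\bullet\otimes V$, so $\bfJ_\bullet(V)$ is finitely generated and torsion-free, and computing on the generic fibre $\cong G/T$ shows it has generic rank $\dim(k[G/T]\otimes V)^G=\dim V^T=\dim V(0)$. The engine is that taking $c(G)$-invariants commutes with Koszul homology: for a closed point $c$, Cohen--Macaulayness and flatness make $\mathrm{Kos}(\theta_\bullet-c_\bullet;\,k[X]\otimes V)$ a resolution of $k[\pi^{-1}(c)]\otimes V$; since $k[X]\otimes V$ has a good filtration — here the hypothesis on $V$ enters, together with the fact that $k[X]$ has a good filtration for conjugation — every term of this complex is $H^{>0}(G,-)$-acyclic, so a standard hypercohomology spectral sequence shows that applying $(-)^{c(G)}$ termwise, which turns the complex into $\mathrm{Kos}(\theta_\bullet-c_\bullet;\,\bfJ_\bullet(V))$, computes $H^i\big(G,\,k[\pi^{-1}(c)]\otimes V\big)$ in degree $i$. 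Hence $\mathrm{Tor}^{\bfJ_\bullet}_{>0}(\bfJ_\bullet(V),k_c)\cong H^{>0}\big(G,\,k[\pi^{-1}(c)]\otimes V\big)$, which vanishes once $k[\pi^{-1}(c)]$ has a good filtration. For $X=\on{As}_G$ this is needed only at the origin, and since $\bfJ_0$ is graded the vanishing of $\mathrm{Tor}_1$ there already forces $\bfJ_0(V)$ to be free, necessarily of rank $\dim V(0)$. Freeness of $\bfJ_+(V)$ over $\bfJ_+$ and of $\bfJ(V)$ over $\bfJ$ then follows by transporting along the Vinberg degeneration: $V_G$ exhibits $G$ (up to a torus factor) as the general fibre of a flat family with special fibre $\on{As}_G$, so $\bfJ_+$ is filtered with associated graded $\bfJ_0$ and $\bfJ_+(V)$ with associated graded $\bfJ_0(V)$; then ``free over $\gr$'' lifts to ``free over the filtered ring'', and specializing the Vinberg parameter recovers $\bfJ(V)$ over $\bfJ$. (Alternatively one repeats the argument directly, using that all Steinberg fibres have good-filtration coordinate rings.)

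The main obstacle is exactly this last geometric input: one must know that $\on{As}_G$, $V_G$, and the relevant fibres of $\pi$ — in particular the closed fibre of $\on{As}_G$ (closely related to, and for $\mathrm{SL}_2$ literally equal to, the nilpotent cone of $\mathfrak g$) and the unipotent variety of $G$ — are normal and Cohen--Macaulay with good filtrations on their coordinate rings for the conjugation action. This is the group analogue of the deepest part of Kostant's theory of the nilpotent cone, and it is here that positive characteristic genuinely uses (and may further restrict) the hypotheses; I would assemble it from the literature on nilpotent cones, asymptotic cones, and Vinberg monoids, e.g.\ via Frobenius splitting.

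For the final assertion, assume $\cha k=0$, so the good-filtration hypotheses are automatic and the surjection $k[\on{As}_G]\to k[\pi^{-1}(0)]$ splits $c(G)$-equivariantly; hence $\bfJ_0(V)\cong\bfJ_0\otimes\big(k[\pi^{-1}(0)]\otimes V\big)^G$, which reduces us to producing a canonical basis of $\big(k[\pi^{-1}(0)]\otimes V\big)^G$. Here I would invoke Kostant's description of the $G$-module structure of the regular functions on the nilpotent cone together with the Brylinski--Kostant (principal $\mathfrak{sl}_2$) filtration, which canonically identifies the $V_\mu$-multiplicity space there with the zero weight space $V_\mu(0)$; a choice of basis of each zero weight space (and of the multiplicity spaces of $V$) then yields the asserted distinguished basis of $\bfJ_0(V)$, which is transported through the Vinberg degeneration to give the corresponding bases of $\bfJ_+(V)$ and $\bfJ(V)$.
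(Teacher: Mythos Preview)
Your proposal is correct in outline but takes a genuinely different route from the paper, and in fact is much closer to the Richardson--Donkin argument that the paper explicitly sets aside (see the footnote in the introduction and Donkin's appendix). Your engine is: assume flatness of $X\to\Spec\bfJ_\bullet$, then use the Koszul resolution plus good-filtration acyclicity to identify $\mathrm{Tor}^{\bfJ_\bullet}_{>0}(\bfJ_\bullet(V),k_c)$ with $H^{>0}(G,k[\pi^{-1}(c)]\otimes V)$, and kill the latter by showing the fibre coordinate ring has a good filtration. The paper does the opposite: it never assumes flatness of the Chevalley maps, but instead builds an $\xch(T_\s)^+$-valued multi-filtration on each weight space $V(\xi)$ (\S3.3--3.5), proves via Borel--Weil that $(\ttS_{\sigma(\nu^*)}\otimes\ttS_\nu\otimes V)^G\cong\fil_\nu V(\xi)$, and then recognises $\bfJ_0(V)$ as the Rees module of this filtration. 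Freeness of $\bfJ_0(V)$ then reduces to the combinatorial equality $\dim\gr V(\xi)=\dim V(\xi)$ (Theorem~3.3.1), established by exhibiting a basis compatible with all the single-root filtrations (MV basis or canonical basis). Freeness for $\bfJ_+$ and $\bfJ$ is then lifted by graded Nakayama plus a generic-rank count, and flatness of $\chi$, $\chi_0$, $\chi_+$ falls out as a \emph{corollary} (Corollary~4.3.3).

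What each approach buys: yours is conceptually clean and puts the whole weight on one geometric input---that the special fibres (unipotent variety, its $\on{As}_G$ analogue) are Cohen--Macaulay with good-filtration coordinate rings---which, as you honestly flag, is the hard part and is not obviously available for $\on{As}_G$ and $V_G$ in arbitrary characteristic. The paper's approach avoids that input entirely, is self-contained, and produces extra structure along the way: the multi-filtration on weight spaces, the multivariable analogue $P_{\mu,\nu}(q_1,\dots,q_l)$ of the Lusztig--Kato polynomial, and a concrete recipe (\S4.4) for the bases in characteristic $0$ directly from a basis of each weight space satisfying Proposition~3.3.3, rather than via Kostant's harmonic theory on the nilpotent cone. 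So your proposal would reprove the freeness statement (given the inputs), but would not recover the filtration-theoretic content that the paper is really after.
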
 
We refer to \S~\ref{SS: basis} for detailed discussions regarding to the last statement. This theorem resembles the classical result of Kostant (\cite{Ko}): Assume $\cha k=0$, and let $\frakg$ be the Lie algebra of $G$. Then $(k[\frakg]\otimes V)^{G}$ is a graded free $k[\frakg]^{G}$-module with a basis given by harmonic polynomials. Recall that the study of $(k[\frakg]\otimes V)^{G}$ leads to a natural filtration on the weight spaces (the Kostant--Brylinski filtration), which can be studied either via the geometry of the flag variety (as in \cite{Bry}) or via the geometric Satake correspondence (as in \cite{Gi}). Likewise, our proof of the above theorem leads us to define a multi-filtration on the weight spaces (see \S~\ref{S:the filtration}), which can also be studied either via the geometry of the flag variety (\S~\ref{SS:fil via BW}) or via the geometric Satake correspondence (\S~\ref{SS: fil via geomSat}). In particular, we obtain a multivariable analogue of the weight multiplicity $P_{\mu,\nu}(q)$ of a representation (see Remark~\ref{R:mult weight}). Note that Kostant's theorem has a very simple proof (\cite{BL96}). It would be interesting to see whether there is a similar argument in the group case (at least when $\cha k=0$).

Note that finite freeness of $\bfJ(V)$ over $\bfJ$ was previously known to Richardson (see \cite{Richardson}) when $\on{char} k =0$ and Donkin (see \cite{Do3} and the appendix) in positive characteristic, under the same assumption on $V$, but by a different method\footnote{For example, the method in \emph{loc. cit.} requires the flatness of the Chevalley map $G\to G/\!\!/c(G)$ as an input, whereas our approach could deduce the flatness as a corollary (see Corollary~\ref{L:cl flat}).}.

\medskip
Now let $V^*$ be the dual representation of $V$, which is also assumed to admit a good filtration. Then there is a natural $\bfJ$-bilinear pairing
\begin{equation}
\label{E:pairing of J}
\bfJ(V)\otimes\bfJ(V^*)\to \bfJ
\end{equation} 
induced by pairing $V$-valued functions with $V^*$-valued functions. One can show that this pairing is a certain deformation of the natural pairing between $V(0)$ and $V^*(0)$ (see Remark~\ref{R: pairing} (2)). Our main result (Theorem~\ref{T:determinant of intersection}) calculates the determinant of this pairing as an element in $\bfJ$ up to a unit (or more precisely as a divisor on $\Spec \bfJ$.) We choose a maximal torus $T$ of $G$. Let $\Phi(G,T)$ denote the root system of $G$.
For a root $\al$ of $G$, let $e^\al$ denote the corresponding character function on $T$. For a weight $\la$ of $T$, let $V(\la)$ denote the corresponding weight space.
Using the classical Chevalley isomorphism, we identify $\bfJ=k[G]^{c(G)}$ with the Weyl group invariant functions on $T$. 
\begin{thm}
Assume that $\cha k>2$.
Then the determinant of the pairing \eqref{E:pairing of J} is 
\[c\prod_{\al\in\Phi(G,T)}(e^{\al}-1)^{\zeta_\al},\]
where $c$ is a non-zero constant and $\zeta_\al=\sum_{n\geq 1} \dim V(n\al)$.
\end{thm}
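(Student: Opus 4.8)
The plan is to reduce the computation to the maximal torus and then to a rank-one computation, one root at a time. By Theorem~\ref{T:main theorem}, $\bfJ(V)$ is finite free over $\bfJ$ of rank $\dim V(0)$, so the pairing \eqref{E:pairing of J} has a well-defined determinant in $\bfJ$, a priori up to a unit; since $\bfJ$ is a polynomial ring (Chevalley), units are nonzero constants, so it suffices to identify the associated divisor on $\Spec\bfJ = T/\!\!/W$. First I would localize: over the locus of regular semisimple elements the conjugation action is as simple as possible, and one expects the pairing to be a perfect pairing of vector bundles there, so the determinant is supported on the complement, which is the image of the root hyperplanes $e^\alpha = 1$. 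Thus the determinant is (up to a constant) a product $\prod_\alpha (e^\alpha - 1)^{m_\alpha}$ over a set of representatives of $W$-orbits of roots — or, keeping track of the $W$-action, over all roots with a uniform exponent along each orbit — and the entire problem is to compute the multiplicity $m_\alpha = \zeta_\alpha$ along the generic point of the divisor $\{e^\alpha = 1\}$.

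The key reduction is that the order of vanishing along $\{e^\alpha = 1\}$ can be computed after passing to the completed local ring at the generic point of that divisor, and there the situation is governed by the rank-one subgroup $G_\alpha \subset G$ generated by the root subgroups $U_{\pm\alpha}$ (an $\SL_2$ or $\PGL_2$). Concretely, a generic point of $\{e^\alpha=1\}$ is a semisimple element $t$ whose centralizer $Z_G(t)$ has derived group $G_\alpha$; one computes the pairing locally near the conjugacy class of $t$ by restricting functions, and the relevant contribution comes from the Jordan decomposition / slice through $t$, whose reductive part is $Z_G(t)$. The weight-space bookkeeping then becomes: decompose $V$ as a $Z_G(t)$-module, and the exponent picks up $\dim V(n\alpha)$ for each $n \geq 1$ from the $G_\alpha$-strings through the zero weight, which is exactly $\zeta_\alpha = \sum_{n\geq 1}\dim V(n\alpha)$. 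Here the hypothesis $\cha k > 2$ enters to ensure that $G_\alpha$ is $\SL_2$ or $\PGL_2$ in good characteristic, that $V$ restricted to $G_\alpha$ still has a good filtration, and that the relevant $\SL_2$-representation theory (decomposition into Weyl modules, behavior of the zero weight space) works as in characteristic zero.

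The heart of the argument is therefore the $\SL_2$ (or $\PGL_2$) case: compute the determinant of the pairing $\bfJ(V)\otimes\bfJ(V^*)\to\bfJ$ directly for $G$ of semisimple rank one. I would do this by choosing the natural bases of $\bfJ(V)$ and $\bfJ(V^*)$ coming from a basis of $V(0)$ — matching the construction in \S\ref{SS: basis} — writing the pairing matrix explicitly in terms of characters on $T \cong \GG_m$, and observing that for a Weyl module of highest weight $n\alpha$ (more precisely, one whose zero weight space is nontrivial, i.e.\ $n \geq 0$), the corresponding matrix entry vanishes to order exactly $1$ along $e^\alpha = 1$ when $n \geq 1$ and is a unit when $n = 0$. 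Summing over a good filtration of $V$ gives total order $\sum_{n\geq 1}\dim V(n\alpha)$. The constant $c$ is then whatever it is; we do not track it. I expect the main obstacle to be making the localization-at-the-divisor step rigorous: one must show that the determinant's order of vanishing really is computed by the rank-one slice, i.e.\ that the higher-rank part of $G$ contributes nothing and that the finite-free bases behave compatibly under this restriction. This is essentially an étale-local structure statement about the Chevalley map near a subregular semisimple point, combined with the flatness already available from Corollary~\ref{L:cl flat}; once that compatibility is in place, the rank-one computation and the good-filtration additivity finish the proof.
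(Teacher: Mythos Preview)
Your overall strategy is essentially the paper's: localize at the generic point of each root hyperplane, reduce to the rank-one subgroup $G_\alpha$, and finish by an explicit $\SL_2$ computation. Two points of comparison are worth noting.

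First, the paper does not compute the pairing directly on the rank-one slice. Instead it factors through the Chevalley restriction map
\[
\Res_V\otimes 1:\ \bfJ_G(V)\otimes_{\bfJ_G}\bfJ_T\ \longrightarrow\ \bfJ_T(V),
\]
observes that the pairing on $\bfJ_T(V)\otimes\bfJ_T(V^*)$ is \emph{perfect}, and hence that the determinant of the pairing equals (up to unit) the product of the determinants of $\Res_V\otimes 1$ and $\Res_{V^*}\otimes 1$. This halves the work and is what makes the $\SL_2$ computation clean. For the reduction to rank one, the paper does not use the centralizer $Z_G(t)$ and a Luna-type slice as you propose; it instead identifies $\bfJ_G(V)\otimes_{\bfJ_G}\bfJ_T\cong\bfJ_B(V)$ via the Grothendieck--Springer resolution, and then shows that over the open set $\bfA^{[\alpha]}$ the map $\bfJ_B(V)\to\bfJ_{B_\alpha}(V)$ is an isomorphism (a concrete conjugation argument on $B$). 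Your centralizer approach should work too, but you would need to supply the compatibility you flag as the ``main obstacle''; the Borel route sidesteps it.

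Second, your $\SL_2$ claim is wrong as stated. For the Weyl module of highest weight $n\alpha$ (i.e.\ $\ttS_{2n}$ in the paper's notation), $\bfJ(V)$ has rank one and the single matrix entry of the pairing vanishes to order $n$, not $1$, along $\{e^\alpha=1\}$: one computes $\bfJ_B(\ttS_{2n})=k[x^{\pm1}]\cdot(z(x-x^{-1})+y)^n$, so restricting to $T$ and pairing with the dual gives $(x-x^{-1})^{2n}$, i.e.\ order $n$ at each of $e^{\pm\alpha}=1$. Your final tally $\sum_{n\geq 1}\dim V(n\alpha)$ is nonetheless correct, because summing $n$ over the pieces of a good filtration of $V|_{G_\alpha}$ yields $\sum_i m_i=\sum_{n\geq 1}\#\{i:m_i\geq n\}=\zeta_\alpha$; but the per-block order is $n$, not $1$.
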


As mentioned above, when $\cha k=0$, it is possible to construct bases of $\bfJ(V)$ and $\bfJ(V^*)$ so the pairing \eqref{E:pairing of J} can be represented by a matrix. We refer to \S~\ref{SS:example of matrix} for some examples where the matrices are  calculated explicitly. 
Our main interest in these matrices (and their determinants) lies in the fact that under the Satake isomorphism they exactly correspond to the intersection matrices for certain cycles on the mod $p$ fibers of some Shimura varieties; we refer to
\cite{XZ} for more details.

\medskip
In fact, in this article, we will consider a more general situation.
Assume that $\tau$ is an automorphism of the algebraic group $G$. We consider the $\tau$-twisted conjugation action of $G$ on itself
\begin{equation}
\label{E:Ginvfun}
c_\tau(h) ( g) =  h g\tau( h)^{-1} , \quad \textrm{for }  g,h \in G.
\end{equation}
This is equivalent to considering the usual conjugation action of $ G$ on the coset $ G\tau$ inside the semidirect product $ G \rtimes \langle\tau \rangle$. We can similarly define
$\bfJ(V):=(k[G]\otimes V)^{c_\tau(G)}$ as the space of $V$-valued functions $f$ on $G$ satisfying
$f(gx\tau(g)^{-1}) = g\cdot f(x),\quad x,g\in G$, 
and in particular $\bfJ:=k[G]^{c_\tau(G)}$.
We prove the above mentioned results for general $\tau$ (see Theorem~\ref{T:Kostant} and Theorem~\ref{T:determinant of intersection} for the general statements). 
This generality is needed in \cite{XZ} for applications, but will cause some additional complications for this article. Readers who are happy with $\tau=\id$ can skip \S~\ref{S:automorphism} and parts of \S~\ref{S:vector-valued twisted invariant functions} and  \S~\ref{S:determinant}.
  
We expect that results in this article can be generalized to the case of symmetric pairs. In addition, by reformulating the argument purely algebraically, it is likely that they also extend to quantum groups. 

\medskip
We briefly describe the remaining parts of the note. In \S~\ref{S: fil vect and Rees}, we discuss multi-filtered vector spaces and the corresponding Rees construction  in some generality. Unlike the classical situation, the dimension of the associated graded vector space might be larger than the dimension of the original vector space. Whether the two dimensions are equal is a subtle question and is closely related to the flatness of certain Rees modules. We give some sufficient conditions for the equality, which might be of independent interest. 

In \S~\ref{SS:connon}, we define the canonical filtration on a $G$-representations and study the associated Rees modules. We apply these discussions to the Vinberg monoid in \S~\ref{SS: Vin}.

In \S~\ref{S:automorphism}, we discuss twisted conjugacy classes of $G$ for the action \eqref{E:Ginvfun}. In particular, we discuss the notion of $\tau$-regular conjugacy class, and study the (twisted) Chevalley map and the Grothendieck--Springer resolution, generalizing some well-known results for $\tau=\id$. 

In \S~\ref{SS:endoring}, we study $\bfJ(V\otimes V^*)$. It is naturally the space of endomorphisms of a vector bundle $\widetilde V$ on the quotient stack $[G/c_\tau(G)]$ for the $\tau$-twisted conjugation action of $G$ on itself, and therefore is a $\bfJ$-algebra. This is non-commutative in general. But we will study a commutative subalgebra generated by a ``tautological" element in $\ga_{\on{taut}}\in \bfJ(V\otimes V^*)$.

\subsection*{Acknowledgments}
Some of the work was finished when both authors visited Beijing International Center of Mathematical Research, we thank the staff members for the hospitality. We thank Xuhua He, George Lusztig, and Jun Yu for useful discussions, Stephen Donkin for useful comments and the referee for carefully reading the early version of this article.

\subsection*{Notations and conventions}
\label{SS: NC}
Throughout the note, let $k$ denote an algebraically closed field. By a variety over $k$, we mean a separated, integral scheme of finite type over $k$.
If $X$ is an affine algebraic variety over $k$, let $k[X]$ denote the ring of regular functions on $X$. If $\frakX$ is an algebraic stack of finite presentation over $k$, let $\Coh(\frakX)$ denote the category of coherent sheaves on $\frakX$. 

For an algebraic group $H$, let $\bB H=[\Spec k/H]$ denote the classifying stack. We identify $\Coh(\bB H)$ with the category $\Rep^f(H)$ of finite dimensional representations of $H$, and use them interchangeably. In particular, the trivial representation, sometimes denoted by $\mathbf{1}$ corresponds to the structure sheaf of $\bB H$. If $X$ is an $H$-space and $V$ is an $H$-representation, let 
$V_{[X/H]}:=X\times^HV$ 
be the locally free sheaf on $[X/H]$ via the usual associated construction. Alternatively, it is the pullback of $V$ (as coherent sheaf on $\bB H$) along the natural projection $[X/H]\to \bB H$. 

For a reductive group $H$ acting on an affine variety $X$ over $k$, we write $X/\!\!/H : = \Spec k[X]^H$ for the GIT quotient. There is a natural morphism of stacks $[X / H] \to X/\!\!/ H$. 

The ind-completion $\Rep(H)$ of $\Rep^f(H)$ is the category of algebraic representations of $H$. If $H_1\subset H$ is a closed subgroup, and $V$ an $H_1$-representation, let
\[\on{ind}_{H_1}^HV:= (k[H]\otimes V)^{H_1}=\Gamma(H/H_1,V_{H/H_1})\]
be the induced $H$-representation. It is the \emph{right} adjoint of the restriction functor $\Res_{H_1}^H$ from $H$-representations to $H_1$-representations. If no confusion will likely arise, for an $H$-representation $V$, we write $V$ for $\Res_{H_1}^HV$ for simplicity.

For an algebraic group $H$, let $\on{Dist}(H)$ denote the Hopf algebra of invariant distributions on $H$. Sometimes, $\on{Dist}(H)$ is also called the hyperalgebra of $H$.  

Let $X$ be a set (or scheme) with an automorphism $\tau$. We denote by $X^\tau$ the subset (subscheme) of fixed points. If $X$ is an abelian group, let $X_\tau$ denote its group of coinvariants.

Throughout this note, $\NN$ stands for the set of \emph{nonnegative} integers.

\medskip

\section{Filtered vector spaces and Rees modules}
\label{S: fil vect and Rees}
\subsection{Filtered vector spaces}
\label{S:filtered vector space}
We need to first discuss (multi)-filtered vector spaces and (multi)-graded modules. 
It seems best to start with the following setup. Let $(S,\leq)$ be a partially ordered set (or sometimes called a poset for simplicity). 
As usual, we write $s_1<s_2$ if $s_1\leq s_2$ but $s_1\neq s_2$.  
Recall that $S$ is called \emph{directed} if for $s,s'\in S$, there exists $s''\in S$ such that $s\leq s''$ and $s'\leq s''$.
We say a partially ordered set \emph{poly-directed} if $S=\sqcup_i S_i$ is a disjoint union of directed poset $S_i$, and if $s\in S_i$ and $s'\in S_j$ are incomparable if $i\neq j$.

\begin{dfn}
Assume that $S$ is poly-directed.
We define an \emph{$S$-filtered vector space} (or a vector space with an \emph{$S$-filtration}) to be a vector space $M$, equipped with a collection of subspace $\{\on{fil}_s M, s\in S\}$ such that $\on{fil}_{s} M\subset \on{fil}_{s'}M$ if $s\leq s'$, and that
\begin{equation}
\label{E:weak filtration}
M = \bigoplus_{S_i}\bigcup_{s \in S_i} \fil_s M.
\end{equation}
Let $\on{Vect}^{S-\on{fil}}$ denote the category of $S$-filtered vector spaces,  with morphisms given by $k$-linear maps preserving the filtrations. Similarly,  we define an \emph{$S$-graded vector space} to be a vector space $M$ equipped with a direct sum decomposition $M=\oplus_{s\in S}M_s$ indexed by $S$. Let $\on{Vect}^{S-\on{gr}}$ denote the corresponding category. There is a natural functor
\begin{equation*}
\label{E:graded quotient}
\gr: \on{Vect}^{S-\on{fil}}\to \on{Vect}^{S-\on{gr}},\quad M\mapsto \gr_S M=\bigoplus_{s\in S}\gr_s M,\quad \gr_sM=\on{fil}_sM\big/\sum_{s'< s}\on{fil}_{s'}M.
\end{equation*}
In the above definition, if $s$ is \emph{minimal} in $S$, i.e. the set $\{s'\in S\mid s'<s\}$ is empty, then we set $\sum_{s'< s}\on{fil}_{s'}M=0$ so $\gr_sM=\fil_sM$. 
If $S$ is clear from the context, we write $\gr M$ for $\gr_SM$ for simplicity. 

We say a nonzero element $m\in \gr M$ is of \emph{homogeneous} of degree $s$ if $m\in \gr_s M$. For a homogeneous element $m$ of degree $s$, a \emph{lifting} of $m$ is an element $\widetilde m\in\on{fil}_sM$ whose projection to $\gr_s M$ is $m$.
\end{dfn}

\begin{ex}
\label{E: classical and mult fil}
(1) If $S=\bN$ equipped with the natural order, an $S$-filtered vector space is a vector space $M$ equipped with an increasing filtration $\on{fil}_0M\subset\on{fil}_1M\subset\cdots$ in the usual sense such that $M=\cup_s \on{fil}_sM$. In this case, for every non-zero element $m$, there is a unique $s$ such that $m\in \on{fil}_sM-\on{fil}_{s-1}M$ and its projection to $\gr_sM$ is called the \emph{symbol} of $m$, denoted by $\overline{m}$.

(2) Assume $S=\bN^r$ with the partial order $(s_1,\ldots,s_r)\leq (s'_1,\ldots,s'_r)$ if $s_j\leq s'_j$ for every $j$.
Let $M$ be a vector space equipped with $r$ independent increasing filtrations 
$\on{fil}^j_0M\subset \on{fil}^j_1M\subset \cdots$ such that $\bigcup_s \fil_s^jM =M$, for each $j=1,\ldots, r$. Then one can define an $S$-filtration of $M$  as follows: for $s=(s_1,\dots, s_r)$, let $\on{fil}_sM=\cap_j \on{fil}^j_{s_j}M$.
\end{ex}

The partially ordered sets we encounter in this article will satisfy the following descending chain condition
\begin{center}
(DCC)  \quad\quad\quad\quad\quad  Every descending chain $s_0>s_1>\cdots$ is finite. 
\end{center}
It is clear that the condition (DCC) passes to subsets of a partially order subset.
\medskip

The following lemma is usually referred to as the \emph{graded Nakayama lemma}.
\begin{lem}
\label{L: graded Nakayama}
Let $S$ be a poly-directed poset satisfying \emph{(DCC)}. Let $M$ be an $S$-filtered vector space. 
\begin{enumerate}
\item If $\gr M= 0$, then $M= 0$.
\item Let $\{m_i\}$ be a set of homogeneous elements that span $\gr M$, and let $\{\widetilde{m_i}\}$ be a set of liftings. Then $\{\widetilde{m_i}\}$ span $M$. In particular, $\dim M\leq \dim \gr M$.
\end{enumerate}
\end{lem}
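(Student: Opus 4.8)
The plan is to prove the two statements in turn, using the (DCC) hypothesis to set up a well-founded induction on the poset $S$.

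\textbf{Proof of (1).} Suppose $\gr M = 0$ but $M \neq 0$. Pick a nonzero element $m \in M$. By \eqref{E:weak filtration}, $m$ lies in some $\fil_s M$ with $s \in S_i$ (in fact in a finite sum of such, but since each $S_i$ is directed and the $S_i$ are mutually incomparable, $m$ decomposes as a sum of pieces each lying in a single $\fil_s M$, so we may assume $m \in \fil_s M$ for a single $s$). Since $\gr_s M = 0$, we have $\fil_s M = \sum_{s' < s} \fil_{s'} M$, so $m$ is a finite sum $m = \sum m_{s'}$ with $m_{s'} \in \fil_{s'} M$ and each $s' < s$. Iterating this, any element of $\fil_s M$ can be rewritten as a sum of elements lying in strictly lower filtration steps; but by (DCC) there is no infinite strictly descending chain, so after finitely many steps we reach minimal elements $t$, for which $\fil_t M = \gr_t M = 0$. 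Hence $m = 0$, a contradiction. To make this rigorous I would phrase it as: the set $T = \{ s \in S : \fil_s M \neq 0\}$ is empty. If not, by (DCC) it has a minimal element $s_0$; but minimality in $T$ combined with $\gr_{s_0} M = 0$ forces $\fil_{s_0} M = \sum_{s' < s_0} \fil_{s'} M$, and every $s' < s_0$ lies outside $T$ (else it would contradict minimality — wait, $s'$ need not be in $T$, but if $\fil_{s'}M \ne 0$ then $s' \in T$ and $s' < s_0$ contradicts minimality), so each such $\fil_{s'} M = 0$, giving $\fil_{s_0} M = 0$, contradiction. Hence $T = \emptyset$ and $M = 0$.

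\textbf{Proof of (2).} Let $N \subseteq M$ be the subspace spanned by the liftings $\{\widetilde{m_i}\}$, and endow $N$ with the induced $S$-filtration $\fil_s N := N \cap \fil_s M$. The natural map $\gr N \to \gr M$ is injective (clear from the definition of the induced filtration). I claim it is also surjective: each generator $m_i$ of $\gr M$, say homogeneous of degree $s$, is the image of $\widetilde{m_i} \in \fil_s N$ under the composite $\fil_s N \to \fil_s M \to \gr_s M$, so $m_i$ lies in the image of $\gr_s N \to \gr_s M$; since the $m_i$ span $\gr M$, the map is surjective. Therefore $\gr(M/N) = \gr M / \gr N = 0$ — here I need that $\gr$ is compatible with the quotient, i.e. that for the quotient filtration $\fil_s(M/N) = (\fil_s M + N)/N$ one has $\gr_s(M/N) = \gr_s M/\gr_s N = 0$; this is a routine diagram chase using right-exactness of the relevant cokernels. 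Note $M/N$ is again an $S$-filtered vector space with the quotient filtration (one checks \eqref{E:weak filtration} is inherited). By part (1) applied to $M/N$, we conclude $M/N = 0$, i.e. $N = M$, so the $\widetilde{m_i}$ span $M$. The dimension bound $\dim M \leq \dim \gr M$ is then immediate: a spanning set of size at most $\#\{i\}$, and $\#\{i\}$ may be taken to be $\dim_k \gr M$ by choosing the $m_i$ to be a basis.

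\textbf{Main obstacle.} The delicate point is the reduction in part (1): turning "every element can be pushed to strictly lower filtration" into an actual vanishing statement requires invoking (DCC) correctly, since a priori an element might be an infinite-looking sum — the resolution is that each element lies in a single $\fil_s M$ and the rewriting process, tracked via the minimal-element formulation, terminates by well-foundedness. One must also be slightly careful that the decomposition in \eqref{E:weak filtration} across the components $S_i$ interacts well with everything; since the $S_i$ are mutually incomparable this is harmless, but it should be stated. Everything else is formal homological bookkeeping with filtered vector spaces.
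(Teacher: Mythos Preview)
Your proof is correct and follows essentially the same approach as the paper: for (1) you use the minimal-element reformulation of (DCC) where the paper constructs an infinite descending chain, and for (2) both arguments pass to the quotient $M/N$ with its image filtration, show $\gr(M/N)=0$, and invoke (1). One small remark: your parenthetical claim that $\gr N \to \gr M$ is injective is not as immediate as you suggest (for the induced filtration on a general subspace it can fail), but you never actually use it --- the vanishing of $\gr(M/N)$ follows from surjectivity of $\gr N \to \gr M$ together with the right-exactness you cite, so the argument stands.
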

\begin{proof}
(1) Assume $M\neq 0$. Then there should exist some $s_1\in S$ such that $\on{fil}_{s_1}M\neq 0$. Since $\gr M=0$, there should be another $s_2<s_1$ such $\on{fil}_{s_2}M\neq 0$. In this way, one could produce a descending chain $s_1>s_2>\cdots$ of infinite length. Contradiction.

(2) We may assume that $S$ is directed. Let $\widetilde M\subset M$ be the subspace spanned by $\{\widetilde{m_i}\}$, and let $M'=M/\widetilde M$, equipped with an $S$-filtration defined as $\on{fil}_sM'=\on{Im}(\on{fil}_sM\to M')$. By construction, $\gr_SM'=0$. Therefore $M'=0$ by Part (1).
\end{proof}

\begin{rmk}
\label{E: Rees dimension jump}
In the classical situation, i.e. Example~\ref{E: classical and mult fil} (1), it is always true that $\dim \gr M=\dim M$. One can also show that this is the case if $S$ is as in Example~\ref{E: classical and mult fil} (2) with $r=2$. 
However, the inequality $\dim \gr M\geq \dim M$ could be strict in general. For example, take $S$ as in Example~\ref{E: classical and mult fil} (2) with $r=3$ and let $M$ be a $2$-dimensional vector space equipped with three filtrations $0=\on{fil}^j_0M\subset \on{fil}^j_1M\subset \on{fil}^j_2M=M$, $j=1,2, 3$ such that $\on{fil}^1_1M,\on{fil}^2_1M, \on{fil}^3_1M$ are three different lines in $M$.
Then if we define the $S$-filtration on $M$ as in Example~\ref{E: classical and mult fil} (2), the graded spaces $\gr_{(1, 2, 2)}M,\gr_{(2, 1, 2)}M, \gr_{(2,2, 1)}M$ are all nontrivial. It then follows that $\dim \gr M=3> 2 = \dim M$.
\end{rmk}

As we shall see soon, whether $\dim M=\dim \gr M$ is closely related to the flatness of certain Rees module. So it would be desirable to have some criteria for the equality. Here is one which is a direct consequence of the above lemma. 
\begin{cor}
\label{C: dim equal}
Let $(S, M)$ be as in Lemma~\ref{L: graded Nakayama}. If 
\begin{enumerate}
\item[(*)] there is a basis $\{m_i\}$ of $\gr M$ consisting of homogeneous elements (such a basis is called a \emph{homogeneous basis}), and for each $i$ a lifting $\widetilde{m_i}$ of $m_i$, such that $\{\widetilde{m_i}\}$ form a basis of $M$, 
\end{enumerate}
then $\dim M=\dim \gr M$. 

Conversely, if $\dim M=\dim \gr M<\infty$, then for every homogeneous basis of $\gr M$, any set of liftings of elements in this basis to elements in $M$ form a basis of $M$.
\end{cor}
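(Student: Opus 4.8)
The plan is to derive both assertions directly from the graded Nakayama lemma (Lemma~\ref{L: graded Nakayama}), whose hypotheses on $(S,M)$ are precisely those imposed in the corollary. For the forward implication, if condition (*) holds then the homogeneous basis $\{m_i\}_{i\in I}$ of $\gr M$ and the lifted basis $\{\widetilde{m_i}\}_{i\in I}$ of $M$ are indexed by the same set $I$, so $\dim M=\#I=\dim\gr M$ with no further input.

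For the converse, I would set $n=\dim M=\dim\gr M<\infty$ and fix an arbitrary homogeneous basis $m_1,\dots,m_n$ of $\gr M$ — necessarily of cardinality $n$ — together with arbitrary liftings $\widetilde{m_i}\in\fil_{s_i}M$ of $m_i$, where $s_i=\deg m_i$. Since $\{m_i\}$ spans $\gr M$, Lemma~\ref{L: graded Nakayama}(2) shows that $\{\widetilde{m_i}\}$ spans $M$; being $n$ vectors spanning the $n$-dimensional space $M$, they form a basis of $M$, which is the assertion.

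The only delicate point — and the reason the converse carries the hypothesis $\dim\gr M<\infty$ — is the cardinality step at the end: Lemma~\ref{L: graded Nakayama}(2) on its own yields only the inequality $\dim M\le\dim\gr M$, and it is finiteness that upgrades ``spanning'' to ``basis''. Granting that, there is essentially no obstacle: the substance of the corollary is already packaged in the graded Nakayama lemma, and the remaining argument is just the bookkeeping observation that a spanning set of the correct finite size is a basis.
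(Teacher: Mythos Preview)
Your argument is correct and matches the paper's own proof, which simply states that the corollary is a direct consequence of Lemma~\ref{L: graded Nakayama}. You have spelled out exactly the intended reasoning: the forward direction is a cardinality count, and the converse uses Lemma~\ref{L: graded Nakayama}(2) to get a spanning set of the right finite size.
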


A particular situation where the above criterion is applicable is as follows.

\begin{lem}
\label{L: bases criterion}
Consider an $S$-filtered module $M$ as in Example~\ref{E: classical and mult fil} (2).
Assume that there is a basis $\bB=\{m_i\}$ of $M$ such that for each $j$, the corresponding symbols $\{\overline{m_i}^j\}$ for the filtration $\{\on{fil}^j\}$ form a basis of $\gr^jM=\oplus_{d}\on{fil}^j_{d}M/\on{fil}^j_{d-1}M$. Then $\dim M=\dim \gr M$.
\end{lem}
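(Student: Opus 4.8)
The plan is to verify condition (*) of Corollary~\ref{C: dim equal} for the $S$-filtration defined in Example~\ref{E: classical and mult fil} (2), i.e. to show that some set of liftings of a homogeneous basis of $\gr_S M$ forms a basis of $M$. The natural candidate for both is the given basis $\bB = \{m_i\}$ itself. The key observation is that for each $i$, the hypothesis gives us, for every $j=1,\dots,r$, a well-defined integer $d_j(m_i)$ such that $m_i \in \fil^j_{d_j(m_i)}M \setminus \fil^j_{d_j(m_i)-1}M$, namely the degree of the symbol $\overline{m_i}^j$. Set $s(m_i) = (d_1(m_i),\dots,d_r(m_i)) \in \bN^r$. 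Then $m_i \in \fil_{s(m_i)}M = \bigcap_j \fil^j_{d_j(m_i)}M$, and I claim the image of $m_i$ in $\gr_{s(m_i)}M$ is nonzero and that these images (over all $i$) form a basis of $\gr_S M$; granting this, $m_i$ is a lifting of a homogeneous basis element, condition (*) holds, and we conclude $\dim M = \dim \gr M$.

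First I would record that the classical single-filtration fact (used implicitly) says: if $\{m_i\}$ is a basis of $M$ whose symbols form a basis of $\gr^j M$, then for any $s_j$, the elements $m_i$ with $d_j(m_i) \le s_j$ form a basis of $\fil^j_{s_j}M$. (Proof: they are linearly independent and lie in $\fil^j_{s_j}M$; conversely any $v \in \fil^j_{s_j}M$ is a combination $\sum c_i m_i$, and if some $m_i$ with $d_j(m_i) > s_j$ appeared, taking the top-degree such term would give a nonzero symbol in degree $>s_j$, contradicting $v \in \fil^j_{s_j}M$.) Applying this for all $j$ simultaneously: since $\fil_s M = \bigcap_j \fil^j_{s_j}M$ and each $\fil^j_{s_j}M$ is spanned by a subset of the basis $\bB$, the intersection is spanned by $\{m_i : d_j(m_i) \le s_j \text{ for all } j\} = \{m_i : s(m_i) \le s\}$, and these remain linearly independent, so they form a basis of $\fil_s M$.

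Next I would compute $\gr_s M = \fil_s M / \sum_{s' < s}\fil_{s'}M$. Both $\fil_s M$ and the subspace $\sum_{s'<s}\fil_{s'}M$ are spanned by subsets of $\bB$: the former by $\{m_i : s(m_i) \le s\}$, the latter by $\{m_i : s(m_i) \le s' \text{ for some } s' < s\} = \{m_i : s(m_i) \le s,\ s(m_i) \ne s\}$. Hence $\gr_s M$ has as basis the images of $\{m_i : s(m_i) = s\}$, and in particular each $m_i$ has nonzero image precisely in degree $s(m_i)$, with the collection of all these images forming a basis of $\gr_S M = \bigoplus_s \gr_s M$. This is exactly condition (*) with the homogeneous basis $\{\overline{m_i}\}$ (image of $m_i$ in $\gr_{s(m_i)}M$) and liftings $\{m_i\}$, so Corollary~\ref{C: dim equal} yields $\dim M = \dim \gr M$.

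The only mild subtlety — the step I would be most careful about — is the bookkeeping that $\sum_{s' < s}\fil_{s'}M$ is spanned by exactly the basis vectors $m_i$ with $s(m_i) \le s$ and $s(m_i) \neq s$: one uses that each $\fil_{s'}M$ for $s' < s$ is basis-spanned (from the previous paragraph), that a sum of basis-spanned subspaces is spanned by the union of the index sets, and that $s' < s$ together with $s(m_i) \le s'$ forces $s(m_i) < s$, while conversely any $m_i$ with $s(m_i) < s$ lies in $\fil_{s(m_i)}M$ with $s(m_i) < s$. No deep input beyond the definitions and the classical one-variable fact is needed; the content is purely the combinatorics of intersections and sums of coordinate subspaces spanned by a common basis. (Note this is precisely the mechanism that fails in Remark~\ref{E: Rees dimension jump} for $r=3$: there no single basis is simultaneously compatible with all three filtrations.)
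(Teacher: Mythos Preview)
Your proof is correct and follows essentially the same route as the paper: define the multidegree $s(m_i)$, show that $\{m_i : s(m_i)\le s\}$ is a basis of $\fil_s M$, deduce that the images of $\{m_i : s(m_i)=s\}$ form a basis of $\gr_s M$, and invoke Corollary~\ref{C: dim equal}. The only cosmetic difference is that you first prove the single-filtration spanning statement for each $\fil^j_{s_j}M$ and then intersect, whereas the paper argues directly for $\fil_{\underline d}M$ by taking the top $j$-degree term of a putative counterexample; these are the same argument.
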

\begin{proof}
For $m_i \in \bB$, define its multidegree to be $\underline d(m_i):=(d_1(m_i),\ldots,d_r(m_i))\in\bN^r$ such that with respect to the $j$th filtration $\on{fil}^j_\bullet M$, $m_i\in \on{fil}^j_{d_j(m_i)}M-\on{fil}^j_{d_j(m_i)-1}M$. Given $\underline{d}=(d_1,\ldots,d_r)$, let
\[\bB_{\underline d}=\{m_i\in\bB\mid d_j(m_i)=d_j\},\quad \bB_{\leq \underline d}=\sqcup_{\underline d'\leq \underline d}\bB_{\underline d'}.\]
Then it is easy to show that $\bB_{\leq \underline d}$ form a basis of $\on{fil}_{\underline d}M$. Indeed, let $m\in\on{fil}_{\underline d}M$, and write $m=\sum a_im_i$ in terms of the basis $\bB$. If not every element in $\{\underline d(m_i)\mid a_i\neq 0\}$ is $\leq \underline d$, we can find some $1\leq j\leq r$ such that $d'_j:=\max\{d_j(m_i)\mid a_i\neq 0\}> d_j$. Then projecting to $\gr^j_{d'_j}M$ gives $0=\sum a_i\overline{m_i}^j$ for the sum over those $m_i$ with $d_j(m_i)=d'_j$. This contradicts our assumption.

It follows that the projection of elements in $\bB_{\underline d}$ in $\gr_{\underline d}M$ form a basis of $\gr_{\underline d}M$. The lemma follows from Corollary~\ref{C: dim equal}.
\end{proof}

We will also  use of the following lemma.
\begin{lem}
\label{L: sub-fil}
Let $(S,M,\{\on{fil}_sM\}_{s\in S})$ be an $S$-filtered vector as above (in particular $S$ is poly-directed),  and assume that $S$ satisfies \emph{(DCC)}. Let $S'\subset S$ be a subset.
Assume for every $s\in S$, the set $\{s'\in S'\mid s\leq s'\}$ is nonempty and has a unique least element, denoted by $s^h$. Then $S'$ with the induced partial order is also poly-directed, and also satisfies (DCC).
In addition, $\dim \gr_{S'}M\leq \dim \gr_S M$, where $\gr_{S'}M$ denotes the associated graded of $M$ with respect to the sub-filtration $\{\on{fil}_sM\}_{s\in S'}$. In particular, if $\dim M=\dim \gr_SM$, then $\dim M=\dim \gr_{S'}M$.
\end{lem}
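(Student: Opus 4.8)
The plan is to verify the combinatorial claims about $S'$ first, then set up a surjection of graded vector spaces $\gr_S M \twoheadrightarrow \gr_{S'} M$ degree by degree. For the structural claims: since (DCC) passes to subsets (as noted in the excerpt right after the statement of (DCC)), $S'$ satisfies (DCC). Poly-directedness of $S'$ follows by writing $S = \sqcup_i S_i$ with each $S_i$ directed: I would set $S'_i = S' \cap S_i$, note that elements of $S'_i$ and $S'_j$ remain incomparable for $i\ne j$, and check each $S'_i$ is directed — given $s', t' \in S'_i$, pick $s'' \in S_i$ above both (directedness of $S_i$), then $(s'')^h \in S'_i$ lies above $s'$ and $t'$ since $s' , t' \le s''$ forces $s', t' \le (s'')^h$ by minimality of $(s'')^h$ among elements of $S'$ above $s''$ (using $s', t' \in S'$). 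One also must check that $M = \bigoplus_i \bigcup_{s \in S'_i} \fil_s M$, i.e. \eqref{E:weak filtration} for the sub-filtration: given $x \in \fil_s M$ with $s \in S_i$, we have $x \in \fil_{s^h} M$ with $s^h \in S'_i$, so the union over $S'_i$ already exhausts the union over $S_i$.

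For the dimension inequality, the key point is to produce, for each $s' \in S'$, a surjection
\[
\bigoplus_{\substack{s \in S \\ s^h = s'}} \gr_s M \;\twoheadrightarrow\; \gr_{s'}^{S'} M,
\]
where $\gr^{S'}_{s'}M = \fil_{s'}M / \sum_{t' \in S',\, t' < s'} \fil_{t'} M$. Summing over $s' \in S'$ and using that the fibers $\{s \in S : s^h = s'\}$ partition $S$ (every $s$ has exactly one $s^h$) then gives $\dim \gr_{S'} M \le \dim \gr_S M$. To build this map: note $\fil_{s'} M = \sum_{s^h = s'} \fil_s M$ — indeed every such $\fil_s M \subset \fil_{s'} M$ since $s \le s^h = s'$, and $\fil_{s'}M$ itself is the term with $s = s'$. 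So it suffices to show each $\fil_s M$ with $s^h = s'$ maps into $\gr^{S'}_{s'}M$ through $\sum_{t'<s'} \fil_{t'}M$ when $s \ne s'$, i.e. to show $\fil_s M \subset \sum_{t' \in S',\, t' < s'} \fil_{t'} M$ — actually the cleanest route is: if $s \ne s'=s^h$, is $s^h$ the least element of $S'$ above $s$, and $s \le s^h$ with $s \ne s^h$; but $s$ need not lie in $S'$. So instead I would argue that the composite $\fil_s M \hookrightarrow \fil_{s'}M \to \gr^{S'}_{s'}M$ factors appropriately by comparing with $\gr_s M$: the surjection $\bigoplus_{s : s^h = s'}\gr_s M \to \gr^{S'}_{s'}M$ is defined on a homogeneous element $\bar x \in \gr_s M$ (lift $x \in \fil_s M \subset \fil_{s'}M$) by sending it to the class of $x$ in $\gr^{S'}_{s'}M$; this is well-defined because changing the lift modifies $x$ by an element of $\sum_{s_1 < s}\fil_{s_1}M$, and each such $s_1$ satisfies $s_1^h \le s^h = s'$ — if $s_1^h < s'$ then $\fil_{s_1}M \subset \fil_{s_1^h}M$ is killed in $\gr^{S'}_{s'}M$, while if $s_1^h = s'$ we are in the image of another summand, so the induced map on the direct sum is still well-defined. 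Surjectivity is clear since $\fil_{s'}M = \sum_{s^h=s'}\fil_s M$ already surjects onto $\gr^{S'}_{s'}M$.

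The main obstacle I anticipate is precisely the well-definedness bookkeeping in the previous paragraph: one has to track carefully that all the ``error terms'' $\fil_{s_1}M$ arising from the ambiguity of lifts are either annihilated in $\gr^{S'}_{s'}M$ or absorbed into other summands $\gr_{s_1}M$ of the source, and this requires the monotonicity property $s_1 \le s \implies s_1^h \le s^h$ (immediate from the definition of $(-)^h$: $s_1 \le s \le s^h \in S'$ exhibits $s^h$ as an element of $S'$ above $s_1$, hence $s_1^h \le s^h$). Once that is pinned down, the inequality $\dim \gr_{S'}M \le \dim \gr_S M$ follows by summing dimensions over the partition of $S$ by the fibers of $(-)^h$, and the final sentence is immediate: if $\dim M = \dim \gr_S M$, then combining with $\dim M \le \dim \gr_{S'}M$ (Lemma~\ref{L: graded Nakayama}(2) applied to the $S'$-filtration, which is legitimate since $S'$ is poly-directed and satisfies (DCC)) and $\dim \gr_{S'}M \le \dim \gr_S M$ forces all three to be equal.
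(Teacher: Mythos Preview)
Your overall strategy matches the paper's: partition $S$ by the fibres $S_t=\{s:s^h=t\}$ of $(-)^h$ and establish $\sum_{s\in S_t}\dim\gr_sM\ge\dim\gr^{S'}_tM$ for each $t\in S'$. The structural claims about $S'$ are also fine (though for poly-directedness you over-argue: $s',t'\le s''\le (s'')^h$ directly, no appeal to minimality needed).

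The gap is in your construction of the surjection $\bigoplus_{s^h=s'}\gr_sM\to\gr^{S'}_{s'}M$. This map is \emph{not well-defined} on the individual summand $\gr_sM$ whenever there exists $s_1<s$ with $s_1^h=s'$, and the phrase ``absorbed into other summands'' does not repair it: a linear map out of a direct sum is determined by its restrictions to the summands, so if those restrictions are ill-defined, so is the map. Concretely, take $S=\{a,b\}$ with $a<b$, $S'=\{b\}$, $M=k^2$, $\fil_aM=ke_1$, $\fil_bM=k^2$. Then $\gr_bM=k^2/ke_1$, and the two lifts $e_2$ and $e_2+e_1$ of $\bar e_2\in\gr_bM$ map to different elements of $\gr^{S'}_bM=k^2$. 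That the discrepancy $e_1$ lies in the image of $\fil_aM$ is irrelevant to well-definedness on $\gr_bM$.

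The fix---and this is exactly what the paper does---is to interpose one more filtration. For $t\in S'$ set $\gr'_tM:=\gr^{S'}_tM$ and filter it by $S_t$ via $\fil_s\gr'_tM:=\mathrm{Im}(\fil_sM\to\gr'_tM)$. Now the map $\fil_sM\to\gr'_tM$ \emph{does} descend to a surjection $\gr_sM\twoheadrightarrow\gr_s\gr'_tM$, because the troublesome error terms with $s_1<s$ and $s_1^h=t$ land in $\fil_{s_1}\gr'_tM$, which is quotiented out in $\gr_s\gr'_tM$; and those with $s_1^h<t$ are killed already in $\gr'_tM$. Hence
\[
\sum_{s\in S_t}\dim\gr_sM\;\ge\;\sum_{s\in S_t}\dim\gr_s\gr'_tM\;\ge\;\dim\gr'_tM,
\]
the second inequality by Lemma~\ref{L: graded Nakayama} applied to the $S_t$-filtration on $\gr'_tM$. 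Your partition-and-sum step and the final sentence via Lemma~\ref{L: graded Nakayama}(2) then go through unchanged.
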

\begin{proof}
The statement for $S'$ is clear. We prove the inequality. Then last equality follows from it by Lemma~\ref{L: graded Nakayama}.

If $s\in S'$, the $s$-graded piece in $\gr_S M$ is denote by $\gr_sM$ while the $s$-graded piece in $\gr_{S'}M$ is denoted by $\gr'_s M$.
For $t\in S'$, let $S_t=\{s\in S\mid s^h=t\}\subset S$. Note that this is a partially ordered set with the greatest element $t$ and $S=\sqcup_{t\in S'} S_t$.
We can define a filtration on $\gr'_tM$ labeled by $S_t$ as $\on{fil}_s \gr'_tM=\on{Im}(\on{fil}_s M\to \gr'_tM)$. Note that $\on{fil}_s\gr'_tM\subset \on{fil}_{s'}\gr'_tM$ if $s\leq s'$. Let 
\[\gr_s\gr'_tM=\on{fil}_s\gr'_tM\Big/\!\!\sum_{s'<s,\, s'\in S_t}\on{fil}_{s'}\gr'_tM.\]
Note that $S_t$ also satisfies (DCC), so by Lemma~\ref{L: graded Nakayama}, $\sum_{s\in S_t} \dim\gr_s\gr'_tM\geq  \dim \gr'_tM$.
On the other hand, for a fixed $s \in S_t$,  if $s'<s$ and ${s'}^h\neq t$, then ${s'}^h<t$ and therefore the image of $\on{fil}_{s'}M\to \gr'_tM$ is zero. It follows that $\gr_s\gr'_tM$ is a quotient of $\gr_sM=\on{fil}_sM/\sum_{s'<s}\on{fil}_{s'}M$. Therefore, it follows that for every $t\in S'$,
\[\sum _{s\in S_t}\dim\gr_sM\geq \sum _{s\in S_t}\dim\gr_s\gr'_tM\geq  \dim \gr'_tM,\]
giving $\dim \gr_S M\geq \dim \gr_{S'} M$.
\end{proof}

\subsubsection{Monoid and partially ordered set}
\label{SS: monoid and partial order}
The partially ordered sets we encounter in this paper will mostly arise in the following way.
Let $\Gamma$ be a commutative monoid, and let $S$ be a non-empty set with a $\Gamma$-action (where multiplication and the action map will be written additively).
Consider the following condition
\begin{itemize}
\item[(Can)] The only invertible element in $\Gamma$ is the unit element (such a monoid is called \emph{sharp}), and the action of $\Ga$ on $S$ is free, i.e.  for $s\in S$ and $ \ga,\ga'\in\Ga$, the identity $s+\ga=s+\ga'$ implies $\ga=\ga'$.
\end{itemize}
This condition in particular implies that $\Ga$ is \emph{integral}, i.e. the map from $\Ga$ to its group completion $\Ga^{\on{gp}}$ is injective. In addition, there is a well-defined partial order on $S$ defined by $s\leq s'$ if $s'=s+\ga$ for some $\ga\in \Ga$. Note that since $s_1\leq s_2$ and $s_2\leq s_1$ will imply $s_1=s_2$, this is indeed a partial order. 

\begin{lem}
The set $S$ equipped with the above partial order is poly-directed.
\end{lem}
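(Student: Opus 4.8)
The plan is to produce the poly-directed decomposition of $S$ by hand. Introduce the relation $s\sim s'$ on $S$, declared to hold precisely when there exist $\gamma,\gamma'\in\Gamma$ with $s+\gamma=s'+\gamma'$. The first step is to verify that $\sim$ is an equivalence relation: reflexivity is trivial (take $\gamma=\gamma'=0$), symmetry is built into the definition, and transitivity follows by adding the two witnessing equations and using that $\Gamma$ is commutative --- if $s+\gamma_1=s'+\gamma_1'$ and $s'+\gamma_2=s''+\gamma_2'$, then $s+(\gamma_1+\gamma_2)=s''+(\gamma_1'+\gamma_2')$. Note that this step uses only the monoid structure of $\Gamma$; condition (Can) has already been invoked, prior to the lemma, to guarantee that the relation $s\le s'$ is a genuine partial order (in particular antisymmetric), and I will simply rely on that.

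Next, let $S=\bigsqcup_i S_i$ be the partition of $S$ into $\sim$-equivalence classes, each of which is nonempty and, with the order inherited from $S$, a poset. I would then check the two conditions in the definition of poly-directed. For directedness of $S_i$: given $s,s'\in S_i$, choose witnesses $\gamma,\gamma'$ and set $s''=s+\gamma=s'+\gamma'$; since $\Gamma$ acts on $S$ we have $s''\in S$, the identity $s''=s+\gamma$ shows $s''\sim s$ hence $s''\in S_i$, and $s\le s''$, $s'\le s''$ hold by the very definition of the order. For cross-incomparability: if $s\in S_i$, $s'\in S_j$ and $s\le s'$, say $s'=s+\gamma$, then $s+\gamma=s'+0$ witnesses $s\sim s'$, forcing $i=j$; the case $s'\le s$ is symmetric. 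Together these establish exactly that $S$ is poly-directed.

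The argument is entirely formal, so I do not anticipate a genuine obstacle; the only point requiring care is the transitivity of $\sim$, where one must add the two witnessing equations in the correct order and invoke commutativity of $\Gamma$, together with the implicit bookkeeping that each $S_i$ is an honest poset, which is inherited from the antisymmetry secured under condition (Can).
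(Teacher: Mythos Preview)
Your argument is correct. It is close in spirit to the paper's proof but organized differently, and the difference is worth noting. The paper first isolates an abstract poset-theoretic property --- if $s\le s_1$ and $s\le s_2$ then $s_1,s_2$ admit a common upper bound --- verifies it in one line using $s'=s+\gamma_1+\gamma_2$, and then proves (as a self-contained sublemma) that \emph{any} poset with this property is poly-directed, by taking the equivalence relation generated by $\le$ and straightening zigzag chains $s_i\ge s_{i+1}\le s_{i+2}$ into peaks. You instead write down the equivalence relation explicitly in terms of the $\Gamma$-action, $s\sim s'\iff s+\gamma=s'+\gamma'$, and check transitivity and directedness by hand; this bypasses the zigzag argument entirely. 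Your relation coincides with the one generated by $\le$ (each implies the other), so the resulting decomposition is the same. Your route is shorter and more concrete for this particular $S$; the paper's route has the minor payoff of isolating a reusable poset-level statement.
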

\begin{proof}The only property of this partial order we need is the following: if $s\leq s_1$ and $s\leq s_2$, then there is $s'$ such that $s_1\leq s'$ and $s_2\leq s'$. Indeed, by definition $s_1=s+\ga_1$ and $s=s+\ga_2$. Then we can choose $s'=s+\ga_1+\ga_2$. We show that any partially ordered set $S$ satisfying this property is poly-directed.

Consider the equivalence relation on $S$ generated by the partial order. It is enough to show that every equivalence class equipped with the induced partial order is directed. But if $s,s'$ are in one equivalence class, then there exists a chain of elements $s=s_0,s_1,\ldots,s_r=s'\in S$ such that either $s_i\leq s_{i+1}\geq s_{i+2}$ or $s_{i}\geq s_{i+1}\leq s_{i+2}$. By replacing the second relation by $s_i\leq s'_{i+1}\geq s_{i+2}$ repeatedly, one find $s''$ such that $s\leq s''\geq s'$. 
\end{proof}

\subsection{Rees construction}
\label{SS:filtered ring}
Let $\Ga$ be a commutative monoid acting on a set $S$. Let $R=k[\Gamma]$ be the monoid algebra. For $\ga\in\Gamma$, the corresponding element in $R$ is denoted by $e^\ga$. Let $I_1\subset R$ be the ideal generated by $e^\ga-1$ for all $\ga\in \Ga$, and let $I_0\subset R$ be the ideal spanned by $e^\ga$ for all $0\neq \ga\in\Ga$. 

We define an \emph{$S$-graded $R$-module} to be a $k$-vector space $N$ with a direct sum decomposition $N=\oplus_{s\in S}N_s$, such that $e^\ga N_s\subset N_{s+\ga}$. They naturally form a category, with morphisms given by $R$-module homomorphisms preserving the grading, denoted by $R\Mod^{S-\gr}$. There is a natural functor 
$$R\Mod^{S-\on{gr}}\to \on{Vect}^{S-\on{gr}}, \quad N\mapsto N/I_0N,$$
where $(N/I_0N)_s=N_s/\sum_{\ga'+s'=s} e^{\ga'}N_{s'}$. Given a homogeneous element $n\in N/I_0N$ of degree $s$, \emph{a homogeneous lifting} of $n$ to $N$ is an element $\widetilde{n}\in N_s$ which projects to $n$.

If $(\Ga, S)$ satisfies (Can), there is also a natural functor 
$$R\Mod^{S-\on{gr}}\to \on{Vect}^{S-\on{fil}}, \quad N\mapsto N/I_1N,$$
where we define the $S$-filtration by $\on{fil}_s(N/I_1N)=\on{Im}(N_s\to N\to N/I_1N)$.
The following lemma is another version of the graded Nakayama lemma. 
\begin{lem}
\label{L: graded Nakayama2}
Let $(\Ga,S)$ be as above and assume that \emph{(Can)} and \emph{(DCC)} hold for $(\Ga,S)$. Let $R=k[\Gamma]$. Let $N$ be an $S$-graded $R$-modules.
\begin{enumerate}
\item If $N/I_0N=0$, then $N=0$.
\item Let $\{n_i\}$ be a set of homogeneous elements that span $N/I_0N$, and let $\{\widetilde{n_i}\}$ be a set of homogeneous liftings. Then $\{\widetilde{n_i}\}$ generates $N$ as an $R$-module.
\item Assume that in addition, the images of $\{\widetilde{n_i}\}$ in $N/I_1N$ form a basis of $N/I_1N$. Then $N\cong \oplus_i R\widetilde{n_i}$. In particular, $N$ is a free $R$-module.
\end{enumerate}
\end{lem}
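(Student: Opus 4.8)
The plan is to prove the three parts in order, each reducing to the previous. For part (1), I would argue by contradiction exactly as in Lemma~\ref{L: graded Nakayama}(1): if $N\neq 0$, pick $s_1\in S$ with $N_{s_1}\neq 0$; since $(N/I_0N)_{s_1}=0$, the subspace $N_{s_1}$ is spanned by the images of $e^{\ga'}N_{s'}$ with $\ga'+s'=s_1$ and $\ga'\neq 0$, hence there is $s_2=s'<s_1$ (note $\ga'\neq 0$ forces $s'<s_1$ strictly, using (Can) to see $s'\neq s_1$) with $N_{s_2}\neq 0$; iterating produces an infinite descending chain $s_1>s_2>\cdots$, contradicting (DCC).

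For part (2), I would mimic Lemma~\ref{L: graded Nakayama}(2): let $N'\subset N$ be the $R$-submodule generated by $\{\widetilde{n_i}\}$, which is automatically $S$-graded since the generators are homogeneous, and set $\bar N=N/N'$ with the quotient grading. Then $\bar N/I_0\bar N$ is the cokernel of $N'/I_0N'\to N/I_0N$; since the $n_i$ span $N/I_0N$ and lie in the image of $N'$, we get $\bar N/I_0\bar N=0$, so $\bar N=0$ by part (1), i.e. $N'=N$.

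For part (3), by part (2) the surjection $\phi:\bigoplus_i R\,\bbe_i\twoheadrightarrow N$ sending $\bbe_i\mapsto \widetilde{n_i}$ (a map of $S$-graded $R$-modules, where $\bbe_i$ sits in the appropriate degree) is well-defined; I must show it is injective. Let $K=\Ker\phi$, an $S$-graded submodule of the free module $F:=\bigoplus_i R\,\bbe_i$. Applying $-\otimes_R R/I_1$ to $0\to K\to F\to N\to 0$ gives a right-exact sequence $K/I_1K\to F/I_1F\to N/I_1N\to 0$; but by hypothesis the composite $F/I_1F\to N/I_1N$ is an isomorphism (the images of $\{\widetilde{n_i}\}$ form a basis of $N/I_1N$, and $F/I_1F$ has the $\{\bbe_i\}$-images as a basis since $R/I_1\cong k$). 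Hence the image of $K/I_1K$ in $F/I_1F$ is zero, meaning $K\subset I_1 F$. Now I would feed this back into the graded Nakayama machinery: $K/I_1K\to F/I_1F$ being zero means $K = I_1 K + (\text{stuff already in }I_1F)$, and more precisely since $K\subset I_1F$ one shows $K$ is its own ``$I_1$-radical''; the cleanest route is to observe that $K$, as an $S$-graded module with $K/I_1K = 0$ would force $K=0$ by the version of Nakayama passing through $N/I_1N$. Concretely: $K/I_1K$ injects into $F/I_1F$ when we... actually the issue is that $K/I_1K\to F/I_1F$ need not be injective a priori, so instead I would argue directly that $K/I_1K=0$: from $K\subseteq I_1 F$ and exactness, the map $\Tor_1^R(N,R/I_1)\to K/I_1K\to F/I_1F$ shows $K/I_1K$ is covered by $\Tor_1^R(R/I_1,N)$, and one checks this Tor vanishes because $N$ is generated in the relevant degrees and $I_1$ is generated by a regular-sequence-like family — but to keep this elementary I would instead invoke the filtered Nakayama lemma on $K$ with the filtration induced from the $S$-grading of $F$, noting $K$ inherits (DCC) and that $K\subset I_1F$ together with part (1)-style descent forces $K=0$. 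The main obstacle is precisely this last injectivity step: making rigorous that $K\subseteq I_1F$ implies $K=0$ without circularity, which I expect requires carefully re-running the descending-chain argument of part (1) directly on homogeneous elements of $K$, using that any nonzero homogeneous $\kappa\in K_s\subseteq (I_1F)_s$ can be rewritten so as to produce a nonzero homogeneous element of $K$ in strictly smaller degree, contradicting (DCC).

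Once injectivity is established, $\phi$ is an isomorphism $\bigoplus_i R\,\widetilde{n_i}\xrightarrow{\ \sim\ } N$ of $S$-graded $R$-modules, so $N$ is free, completing the proof.
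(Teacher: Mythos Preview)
Your treatment of parts (1) and (2) is correct and matches the paper's own proof, which simply says these are proved identically to Lemma~\ref{L: graded Nakayama}.

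For part (3), however, you have taken a detour that leaves a genuine gap, and the paper's argument is dramatically simpler. The paper does not introduce the kernel $K$ or any homological machinery at all. Instead it argues directly: take any \emph{homogeneous} relation $\sum_i r_i\widetilde{n_i}=0$ in $N$, say of degree $s$. Since each $\widetilde{n_i}$ is homogeneous of some degree $s_i$, and since by (Can) there is at most one $\gamma_i\in\Gamma$ with $s_i+\gamma_i=s$, the coefficient $r_i$ contributing to degree $s$ must be a scalar multiple of a single monomial: $r_i=a_ie^{\gamma_i}$. Reducing modulo $I_1$ (where every $e^{\gamma_i}$ becomes $1$) gives the $k$-linear relation $\sum_i a_i\,\overline{\widetilde{n_i}}=0$ in $N/I_1N$; by hypothesis the images form a basis, so all $a_i=0$, hence all $r_i=0$. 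That is the entire proof of injectivity---no (DCC), no Tor, no descending chains.

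Your approach stalls precisely because you lose track of this homogeneity. Once you pass to the abstract inclusion $K\subseteq I_1F$, you have thrown away the crucial information that homogeneous elements of $K$ have monomial coefficients, and $I_1F$ is not a graded submodule of $F$, so ``$K\subseteq I_1F$'' by itself is too weak to conclude $K=0$ (and your attempted DCC descent is both unnecessary and not clearly well-posed). If you make your final sketch concrete---write a nonzero $\kappa\in K_s$ in terms of the $\bbe_i$ and observe the coefficients are monomials---you recover exactly the paper's one-line argument. The moral: for part (3), work with a single homogeneous relation rather than the whole kernel.
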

\begin{proof}The proof of Part (1) and (2) is the same as the proof of Lemma~\ref{L: graded Nakayama}. For Part (3), let $\sum r_i\widetilde{n_i}=0$ be a homogeneous linear relation in $N$, and write $r_i=a_ie^{\ga_i}$. Then in $N/I_1N$, there is a linear relation $\sum a_i\widetilde{n_i}=0$, which implies all $a_i=0$.
\end{proof}

We continue to assume that $(\Ga, S)$ satisfies (Can). Then the functor $N\mapsto N/I_1N$ as above admits a right adjoint
$$\on{Vect}^{S-\on{fil}}\to R\Mod^{S-\on{gr}},\quad M\mapsto R_SM=\bigoplus_{s\in S} \on{fil}_s M,$$
where we define the multiplication $e^{\ga}: \on{fil}_{s} M\to \on{fil}_{s+\ga} M$ (i.e. the $R$-module structure) to be the natural inclusion. The functor $M\to R_SM$ sometimes is called the \emph{Rees construction} and $R_SM$ is called the \emph{Rees module}. It is convenient to introduce the formal symbols $e^s, \ s\in S$ and write elements in the degree $s$ summand in $R_SM$ as $me^s$ for $m\in\on{fil}_sM$. Then the $R$-module structure on $R_SM$ is given by $e^\ga\cdot me^s=me^{s+\ga}$.

We need some criteria of flatness of $R_SM$ over $R$. Note that if it is the case, then by \eqref{E: Rees fiber dim} $\dim \gr M$ should be equal to $\dim M$ (if both are finite). It turns out that under some mild assumptions on $M$, this condition is also sufficient. 
First, note that
\begin{equation}
\label{E: Rees fiber dim}
R_SM/I_0R_SM\cong \gr M, \quad R_SM/I_1R_SM \cong M,
\end{equation}
where the second isomorphism makes use of \eqref{E:weak filtration}.

\begin{lem}
\label{L: criterion free Rees}
Let $(\Ga,S,M,\{\on{fil}_sM\}_{s\in S})$ be as above and assume that Conditions \emph{(Can)} and \emph{(DCC)} hold for $(\Ga,S)$. Let $R_SM$ be the corresponding Rees module. Assume that Condition \emph{(*)} in Corollary~\ref{C: dim equal} holds.
Then $R_SM$ is free over $R$ with basis $\{\widetilde{m_i}e^{s_i}\}$. In particular, if $\dim \gr M=\dim M<\infty$, then $R_SM$ is free over $R$.
\end{lem}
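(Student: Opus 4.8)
The statement to prove is Lemma~\ref{L: criterion free Rees}: given Condition (*) from Corollary~\ref{C: dim equal}, the Rees module $R_SM$ is free over $R=k[\Gamma]$ with basis $\{\widetilde{m_i}e^{s_i}\}$; and if $\dim\gr M = \dim M < \infty$, then $R_SM$ is free over $R$. The natural approach is to apply Lemma~\ref{L: graded Nakayama2}(3), which is exactly the statement that an $S$-graded $R$-module $N$ with homogeneous lifts $\{\widetilde{n_i}\}$ of a spanning set of $N/I_0N$ whose images in $N/I_1N$ form a basis is free with basis $\{\widetilde{n_i}\}$. So the whole proof is a matter of setting up $N = R_SM$ and checking the two hypotheses of that lemma are met.

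**Key steps.** First I would recall from \eqref{E: Rees fiber dim} that $R_SM/I_0R_SM \cong \gr M$ and $R_SM/I_1R_SM \cong M$ — these are the two quotients that appear in Lemma~\ref{L: graded Nakayama2}. Condition (*) gives a homogeneous basis $\{m_i\}$ of $\gr M$ (say $m_i$ has degree $s_i$) together with liftings $\widetilde{m_i}\in\fil_{s_i}M$. Under the first isomorphism of \eqref{E: Rees fiber dim}, the element $\widetilde{m_i}e^{s_i}\in (R_SM)_{s_i}$ is a homogeneous lifting of $m_i\in \gr_{s_i}M = (R_SM/I_0R_SM)_{s_i}$; since $\{m_i\}$ spans (indeed is a basis of) $\gr M = R_SM/I_0R_SM$, the hypothesis of Lemma~\ref{L: graded Nakayama2}(2)--(3) on the $I_0$-quotient is satisfied. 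Second, I must check the hypothesis on the $I_1$-quotient: under the isomorphism $R_SM/I_1R_SM\cong M$, the image of $\widetilde{m_i}e^{s_i}$ is just $\widetilde{m_i}\in M$ (the inclusion $\fil_{s_i}M\hookrightarrow M$ composed with the quotient is, by construction of this isomorphism via \eqref{E:weak filtration}, the inclusion of the filtration step). Condition (*) asserts precisely that $\{\widetilde{m_i}\}$ is a basis of $M$. Hence Lemma~\ref{L: graded Nakayama2}(3) applies and yields $R_SM\cong\bigoplus_i R\,\widetilde{m_i}e^{s_i}$, free over $R$. For the final sentence: if $\dim M = \dim\gr M < \infty$, the converse part of Corollary~\ref{C: dim equal} says Condition (*) holds (any homogeneous basis of $\gr M$ lifts to a basis of $M$), so the first part of the lemma gives freeness.

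**Main obstacle.** There is essentially no deep obstacle here — the lemma is a bookkeeping consequence of the graded Nakayama lemma already established as Lemma~\ref{L: graded Nakayama2}. The one point requiring a little care is verifying that the two natural identifications in \eqref{E: Rees fiber dim} carry the formal lifts $\widetilde{m_i}e^{s_i}$ to the intended elements ($m_i$ in $\gr M$ and $\widetilde{m_i}$ in $M$), so that Condition (*) translates literally into the hypotheses of Lemma~\ref{L: graded Nakayama2}(3); this is where the role of \eqref{E:weak filtration} (ensuring $R_SM/I_1R_SM$ really is $M$ and not a proper subspace) is used. I would state this identification explicitly and then invoke Lemma~\ref{L: graded Nakayama2}(3) to conclude.
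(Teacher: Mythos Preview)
Your proposal is correct and follows exactly the same approach as the paper, which simply states that the lemma follows from Lemma~\ref{L: graded Nakayama2} and \eqref{E: Rees fiber dim}. Your write-up merely makes explicit the verification that under the identifications \eqref{E: Rees fiber dim} the elements $\widetilde{m_i}e^{s_i}$ map to $m_i\in\gr M$ and $\widetilde{m_i}\in M$ respectively, which is precisely what is needed to invoke Lemma~\ref{L: graded Nakayama2}(3).
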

\begin{proof} This follows from Lemma~\ref{L: graded Nakayama2} and \eqref{E: Rees fiber dim}.\end{proof}

\subsubsection{Rees algebra} 
\label{SSS: rees alg}
Let $(\Ga, S, M, \{\on{fil}_sM\}_{s\in S})$ be as before, where $(\Ga,S)$ satisfy condition (Can).
If $S$ itself is a commutative monoid such that the action of $\Ga$ on $S$ is induced by a monoid homomorphism $f:\Ga\to S$ (the action of $S$ on itself is the natural translation), and if $M$ is a (not necessarily commutative) $k$-algebra, then it makes sense to assume that the filtration $\{\on{fil}_sM\}_{s\in S}$ satisfies the additional condition
\begin{itemize}
\item[(Alg)] $k\subset \on{fil}_0M$ and $\on{fil}_sM\cdot\on{fil}_{s'}M\subset \on{fil}_{s+s'}M$.
\end{itemize}
In this case,  $R_SM$ is naturally a (not necessarily commutative) algebra over $R$, with the multiplication given by $me^s\cdot m'e^{s'}=mm'e^{s+s'}$, and the map $R\to R_SM$ is given by $e^\ga\mapsto 1\cdot e^{f(\ga)}$. We call it the \emph{Rees algebra} of $M$.

If in addition, $M$ has a co-algebra structure such that each $\on{fil}_sM$ is a sub-coalgebra, then $R_SM$ is also a coalgebra.

\section{Filtration on representations}
\label{S: filtration on weight space}
In this section, let $G$ be a connected reductive group over $k$. We discuss the canonical filtration on $G$-representations and the associated Rees modules in \S~\ref{SS:connon},  and apply discussions to the Vinberg monoid in \S~\ref{SS: Vin}.
In \S~\ref{S:the filtration}--\ref{SS: fil via geomSat}, we define and study a multi-filtration on the weight spaces of a representation of $G$. Just as the Kostant--Brylinski filtration plays an important role in the study of vector-valued invariant functions on $\frakg$, this filtration is important for the study of vector-valued invariant functions on the group. We will make use of the following notations and conventions throughout this section.

We fix a maximal torus $T$ of $G$, contained in a Borel subgroup $B$. Let $U$ denote the unipotent radical of $B$.
Let $\Phi=\Phi(G, T)$ denote the root system, and $\Delta\subset \Phi$ the set of simple roots with respect to $B$. For every $\al\in \Phi$, we fix an isomorphism $x_\al: \bG_a \simeq U_\al $, where $U_\al$ is the root subgroup corresponding to $\al$. 
The tuple $(G,B,T,\{x_\al\}_{\al\in\Delta})$ forms a pinning of $G$.

The Lie algebra $\fraku_\al$ of $U_\al$ is spanned over $k$ by $E_\alpha: = \frac{d}{dy_\alpha}$ (here we regard $y_\al: = x_\al^{-1}$ as a coordinate function on $U_\al$). More generally, the algebra of invariant distributions $\on{Dist}(U_\al)$ of $U_\al$ is spanned over $k$ by $\{E_\al^{(n)},\ n\geq 0\}$, where 
$E_\al^{(n)}(y_\al^j)=\begin{pmatrix}j\\ n\end{pmatrix} y_\al^{j-n}$. In particular, $E_\al^{(n)}(y_\al^j)=0$ if $n>j$.
Informally, we may think $E_\al^{(n)}=E_\al^n/n!$.  

Let $B^-$ be the opposite Borel with respect to $(T,B)$ and $U^-$ its unipotent radical.  Let $N=N_G(T)$ be the normalizer of $T$ in $G$, and let $W=N/T$ denote the (absolute) Weyl group. Let $w_0\in W$ be the longest Weyl group element. Let $\mu\mapsto \mu^*:=-w_0(\mu)$ be the involution on the character lattice $\xch(T)$, which preserves the set of dominant weights $\xch(T)^+$ (with respect to $B$). 
When we regard a weight $\nu \in \XX^\bullet(T)$ as a regular function on $T$,  we write it as $e^\nu$.

Let $Z_G$ denote the scheme-theoretic center of $G$.
Let $T_\ad$ be the adjoint torus of $G$, i.e. the quotient of $T$ by $Z_G$. Its character lattice $\xch(T_\ad)$ is the subgroup of $\xch(T)$ generated by roots.
We write $\xch(T_{\on{ad}})_{\on{pos}}$ for the monoid of nonnegative integer linear combinations of simple roots in $\Delta$. We consider the partial order $\preceq$ on $\xch(T)$ induced by the action of $\xch(T_\ad)_{\on{pos}}$ in the sense of \S~\ref{SS: monoid and partial order}, i.e. $\la_1\preceq\la_2$ if and only if $\la_2-\la_1$ is a nonnegative integral linear combination of simple roots of $G$. 

For a root $\al$, let $G_\al$ be the rank one subgroup of $G$ generated by $T, U_\al, U_{-\al}$. Let $B_\al=TU_\al$ and $ B^-_\al=TU_{-\al}$ be the pair of opposite Borel subgroups of $G_\al$. We similarly have the partial order $\preceq_\al$ on $\xch(T)$ induced by the action of $\bZ_{\geq 0}\al$.   We say $\la$ is \emph{$\al$-dominant} if $\langle\la,\al^\vee\rangle\geq 0$, where $\al^\vee$ is the coroot corresponding to $\al$. Note that if $0\preceq_{\al}\la$, then $\la$ is $\al$-dominant.

For a weight $\nu \in \XX^\bullet(T)$, let $k_\nu$ denote the corresponding one-dimensional $T$-module. For a representation $V$ of $T$ and $\nu \in \xch(T)$, we write $V(\nu)$ for the $\nu$-weight space, so $V(\nu)\cong \Hom_T(k_\nu,V)\otimes k_\nu$.

\subsection{The canonical filtration on $G$-modules}\label{SS:connon}
We first review Weyl and Schur modules.
 Via inflation, the $T$-module $k_\nu$ can be regarded as a representation of $B$ or $B^-$. Let
$$\mathtt S_\nu:=\on{ind}_{B^-}^G k_\nu \cong  \on{ind}_{B}^Gk_{w_0(\nu)}$$ 
be the \emph{Schur module} of highest weight $\nu$, and let
$$\mathtt W_\nu:=\mathtt S_{\nu^*}^*$$ 
denote the \emph{Weyl module} of highest weight $\nu$. More geometrically, 
we write 
$$\mO_{G/B}(\nu)=G\times^{B}k_\nu,\quad \mO_{G/B^-}(\nu)=G\times^{B^-}k_\nu$$ to denote the line bundle on the flag variety. Then
 $$\mathtt S_\nu=\Gamma(G/B^-, \mO(\nu))=\Gamma(G/B,\mO(w_0(\nu))).$$
It is known that $\ttS_\nu = \ttW_\nu = 0$ unless $\nu$ is dominant.

We call a dominant weight $\omega \in \XX^\bullet(T)^+$ \emph{minuscule} if all weights in $\ttS_\omega$ form a single orbit under the action of the Weyl group $W$. 
Note that in this case, the multiplicity of each weight space is one-dimension and $\ttS_\omega \cong \ttW_\omega$.  The set of minuscule weights is denoted by $\mathtt{Min}\subset \xch(T)^+$. Note that in our convention, the zero weight is minuscule.
\begin{lemma}
Let $\xch(T)^+_{\on{pos}}\subset \xch(T)$ be the submonoid generated by $\xch(T_\ad)_{\on{pos}}$ and $\xch(T)^+$. Then under the natural action of the monoid $\xch(T_\ad)_{\on{pos}}$, 
\begin{equation*}
\label{E:setS}
\xch(T)^+_{\on{pos}}=\bigsqcup_{\omega\in\mathtt{Min}} \big(\omega+\xch(T_\ad)_{\on{pos}}\big).
\end{equation*}
In particular, the pair $(\Ga,S)=(\xch(T_\ad)_{\on{pos}},\xch(T)^+_{\on{pos}})$ satisfies Conditions \emph{(DCC)} and \emph{(Can)} in the previous section. 
\end{lemma}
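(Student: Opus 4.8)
The statement has two parts: first, the decomposition
\[
\xch(T)^+_{\on{pos}}=\bigsqcup_{\omega\in\mathtt{Min}} \big(\omega+\xch(T_\ad)_{\on{pos}}\big),
\]
and second, the deduction that $(\Ga,S)=(\xch(T_\ad)_{\on{pos}},\xch(T)^+_{\on{pos}})$ satisfies (DCC) and (Can).

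**For the decomposition.** I would argue that every $\lambda\in \xch(T)^+_{\on{pos}}$ can be written uniquely as $\omega+\mu$ with $\omega\in\mathtt{Min}$ and $\mu\in\xch(T_\ad)_{\on{pos}}$. For existence: given $\lambda\in\xch(T)^+_{\on{pos}}$, among all dominant weights $\omega'\le\lambda$ (in the $\preceq$ order) with $\lambda-\omega'\in\xch(T_\ad)_{\on{pos}}$ — the set is nonempty since $\lambda$ itself is of this form if $\lambda$ is dominant, and in general $\xch(T)^+_{\on{pos}}$ is by definition generated by $\xch(T)^+$ and $\xch(T_\ad)_{\on{pos}}$ — pick a minimal such $\omega$ (using (DCC) on $\preceq$, which holds since $\xch(T_\ad)_{\on{pos}}$ is a finitely generated submonoid of a lattice with trivial units). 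I claim such a minimal $\omega$ is minuscule. If not, some weight of $\ttS_\omega$ is not in the Weyl orbit of $\omega$; equivalently $\omega$ is not minuscule iff $\omega-\alpha$ is still dominant for some simple root $\alpha$ with $\langle\omega,\alpha^\vee\rangle>0$ — more carefully, a dominant $\omega$ fails to be minuscule precisely when there is a simple root $\alpha$ with $\omega-\alpha\succeq 0$ and $\omega-\alpha$ dominant (this is a standard characterization: $\omega$ minuscule iff $\omega$ is the unique dominant weight in its orbit iff no $\omega-\alpha$ is dominant for $\alpha$ simple). Then $\omega-\alpha$ would be a smaller dominant weight with $\lambda-(\omega-\alpha)\in\xch(T_\ad)_{\on{pos}}$, contradicting minimality. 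For uniqueness: if $\omega_1+\mu_1=\omega_2+\mu_2$ with both $\omega_i$ minuscule, then $\omega_1-\omega_2=\mu_2-\mu_1\in\xch(T_\ad)$; since the $W$-orbit of a minuscule weight consists of all its dominant-Weyl-conjugates and minuscule weights are characterized by being minimal dominant in their coset of $\xch(T)/\xch(T_\ad)$... actually the cleanest route: two minuscule weights in the same coset modulo $\xch(T_\ad)$ that are $\preceq$-comparable must be equal, because if $\omega_1\preceq\omega_2$ with $\omega_2-\omega_1\in\xch(T_\ad)_{\on{pos}}$ nonzero, then $\omega_2$ is not minuscule (again by the characterization). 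Since WLOG $\omega_1\preceq\omega_2$ here (as $\mu_2-\mu_1$ or its negative lies in $\xch(T_\ad)_{\on{pos}}$ — hmm, this needs care as $\mu_2-\mu_1$ need not be in $\xch(T_\ad)_{\on{pos}}$ or its negative), I may instead invoke that each coset of $\xch(T)/\xch(T_\ad)$ contains a unique minuscule dominant weight, a well-known fact, and that $\omega_i$ both lie in the coset of $\lambda$, forcing $\omega_1=\omega_2$ hence $\mu_1=\mu_2$.

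**For the second part.** Once the disjoint decomposition is established, (Can) for $(\xch(T_\ad)_{\on{pos}},\xch(T)^+_{\on{pos}})$ is immediate: $\xch(T_\ad)_{\on{pos}}$ is a submonoid of the free abelian group $\xch(T_\ad)$, so it is sharp (only unit is $0$) and cancellative, hence the translation action on any subset of $\xch(T)$ — in particular on each coset $\omega+\xch(T_\ad)_{\on{pos}}$ and on their disjoint union — is free. (DCC) follows because a strictly $\preceq$-descending chain in $\xch(T)^+_{\on{pos}}$ projects, within a single piece $\omega+\xch(T_\ad)_{\on{pos}}$ (a chain cannot jump between pieces since comparable elements lie in the same piece, being related by adding an element of $\xch(T_\ad)_{\on{pos}}$), to a strictly descending chain in $\xch(T_\ad)_{\on{pos}}$; but $\xch(T_\ad)_{\on{pos}}=\NN\Delta$ has no infinite strictly decreasing chains under the order "$\mu_1\le\mu_2$ iff $\mu_2-\mu_1\in\NN\Delta$", since $\mu_2-\mu_1$ would have strictly positive total coefficient sum, which is bounded below by $0$ on the chain. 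Finally, poly-directedness is already guaranteed by the Lemma in \S\ref{SS: monoid and partial order}, so there is nothing further to check there.

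**Main obstacle.** The crux is the characterization of minuscule weights used in both existence and uniqueness — namely that a dominant weight $\omega$ is minuscule if and only if $\omega-\alpha$ is non-dominant for every simple root $\alpha$, equivalently $\omega$ is the unique dominant weight in its $W$-orbit, equivalently $\omega$ is minimal in $(\omega+\xch(T_\ad))\cap\xch(T)^+$ under $\preceq$. I would want to state this carefully (it is classical, e.g. Bourbaki), and then the rest is bookkeeping. The uniqueness of the minuscule representative in each coset of $\xch(T)/\xch(T_\ad)$ is the one external fact I would cite rather than reprove.
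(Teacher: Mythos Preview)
Your proposal is correct and follows essentially the same approach as the paper: both rely on the standard fact that the minuscule weights form a system of coset representatives for $\xch(T)/\xch(T_\ad)$, and that a dominant weight $\la$ satisfies $\la-\omega_\la\in\xch(T_\ad)_{\on{pos}}$ (the paper simply cites this, while you essentially reprove it via your minimality argument). One small wobble: the condition ``$\omega-\alpha\succeq 0$'' in your existence step is ill-posed when $\omega\notin\xch(T_\ad)$ and should be dropped---the correct characterization you then state, that $\omega$ is minuscule iff no $\omega-\alpha$ is dominant for $\alpha$ simple, is what you actually use and is sufficient.
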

\begin{proof}
 It is known that the set $\mathtt{Min}$ gives a collection of coset representatives of the quotient $\XX^\bullet(T) / \XX^\bullet(T_{\on{ad}})$ so that every $\la\in\xch(T)$ can be uniquely written as $\la=\ga_\la+\omega_\la$ with $\ga_\la\in \xch(T_\ad)$ and $\omega_\la\in\mathtt{Min}$. In addition, if $\la\in \xch(T)^+$, then $\gamma_\la\in\xch(T_\ad)_{\on{pos}}$. The lemma then clearly follows.
\end{proof}

For a representation $V$ of $G$, we define an $\xch(T)^+_{\on{pos}}$-filtration on $V$, called \emph{the canonical filtration} of $V$,\footnote{This is closely related, but not the same as the notion of canonical filtration in \cite[\S 3]{Ma}. In particular, our definition is independent of any choice.} as follows. 
For $\la\in \xch(T)$, we denote by $V_{\preceq \la}$ the \emph{maximal} subrepresentation of $G$ such that $V_{\preceq \la}(\nu)\neq 0$ implies $\nu\preceq \la$. 
Clearly, $V_{\preceq\la}\neq 0$ only if $\la\in\xch(T)^+_{\on{pos}}$. Moreover, the functor $ V \mapsto V_{\preceq \lambda}$ is left exact. Therefore, we obtained the quadruple
\[(\Ga,S, M, \{\fil_sM\}_{s\in S})=\big(\xch(T_\ad)_{\on{pos}}, \xch(T)^+_{\on{pos}}, V, \{V_{\preceq\la}\}_{\la\in\xch(T)^+_{\on{pos}}}\big).\]
Let $R=k[\xch(T_\ad)_{\on{pos}}]$, and let $R_{\xch(T)^+_{\on{pos}}}V$ be the associated Rees module, which is an $\xch(T)^+_{\on{pos}}$-graded $R$-module. 
Note that the functor
\[G\Mod\to R\Mod^{\xch(T)^+_{\on{pos}}-\gr},\quad V\mapsto R_{\xch(T)^+_{\on{pos}}}V\]
is left exact. 

There is an important class of $G$-modules, whose Rees module associated to the canonical filtration is $R$-flat. 
Recall that a \emph{good filtration} of a representation $V$ of $G$ is a filtration of $V$ by $G$-submodules (in the classical sense as in Example~\ref{E: classical and mult fil} (1)) whose associated graded are Schur modules. We recall some important properties of this class of representations. 
 \begin{thm}
 \label{T:prop of good fil}
\begin{enumerate}
\item If $V$ admits a good filtration, then its restriction to every Levi subgroup $M\subset G$ also admits a good filtration (as an $M$-module).
\item The tensor product of two $G$-modules that admit a good filtration also admits a good filtration.
\item The following are equivalent.
\begin{itemize}
\item $V$ admits a good filtration.
\item  $\on{Ext}^i(\ttW_{\nu^*},V)=\on{H}^i(G,\ttS_{\nu}\otimes V)=0$ for every dominant weight $\nu$ and every $i>0$.  
\item $\on{Ext}^1(\ttW_{\nu^*},V)=\on{H}^1(G,\ttS_{\nu}\otimes V)=0$ for every dominant weight $\nu$. 
\end{itemize}
\item Regard $k[G]$ as a $G\times G$-bimodule via left and right translations. Then $k[G]$ admits a good filtration. For every dominant weight $\nu$, $\ttS_\nu\otimes\ttS_{\nu^*}$ appears in the composition factors of any good filtration of $k[G]$ exactly once.
\end{enumerate}
\end{thm}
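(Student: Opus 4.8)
The plan is to prove Theorem~\ref{T:prop of good fil} by reducing each part to the cohomological criterion in (3), which is the technical heart, and then deducing (1), (2), (4) from it together with standard facts about Schur modules. Most of these statements are classical (due to Donkin, Mathieu, and others), so the proof will largely consist of assembling the right references and indicating how the pieces fit; I would not reprove the deepest input (the good filtration of a tensor product).

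First I would establish the cohomological criterion (3). The equality $\on{Ext}^i_G(\ttW_{\nu^*},V) = \on{H}^i(G,\ttS_\nu\otimes V)$ follows from $\ttW_{\nu^*}^* \cong \ttS_\nu$ (by definition $\ttW_\mu = \ttS_{\mu^*}^*$, so $\ttW_{\nu^*}^* = \ttS_\nu$) together with the fact that $\ttS_\nu = \on{ind}_{B^-}^G k_\nu$ has a $G$-costandard structure, so $\on{Ext}^i_G(M, \ttS_\nu\otimes V)$ computes $\on{H}^i(G, \on{Hom}(M,\ttS_\nu)\otimes V)$; taking $M = \triv$ and using Frobenius reciprocity / Kempf vanishing ($\on{H}^{>0}(G/B^-,\mO(\nu)) = 0$ for $\nu$ dominant) gives the identification with $\on{H}^i(G,\ttS_\nu\otimes V)$. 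The implication ``good filtration $\Rightarrow$ higher Ext vanishing'' is then a d\'evissage: filter $V$ by Schur modules and use $\on{Ext}^{>0}_G(\ttW_{\nu^*},\ttS_\mu) = 0$, which is the defining $\on{Ext}$-orthogonality between Weyl and Schur modules (Cline--Parshall--Scott / Donkin). Conversely, assuming $\on{Ext}^1_G(\ttW_{\nu^*},V) = 0$ for all dominant $\nu$, one builds a good filtration of $V$ from the bottom: take a highest weight $\mu$ of $V$ that is maximal among weights appearing, show the resulting map to a Schur module and the inclusion of a Weyl module behave well, and induct on $\dim V$; the vanishing of $\on{Ext}^1$ is exactly what is needed to split off $\ttS_\mu$ as the bottom layer. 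This is the standard argument and I would cite Donkin or Jantzen's book rather than spell out every step.

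Given (3), parts (1), (2), (4) become formal. For (2): if $V,V'$ both have good filtrations, then using the cohomological criterion it suffices to show $\on{H}^{>0}(G,\ttS_\nu\otimes V\otimes V') = 0$; but $\ttS_\nu\otimes V$ has a good filtration (this is the hard classical input — Mathieu's theorem, or Donkin's under mild characteristic hypotheses that are harmless here since we may also invoke Andersen--Jantzen or the characteristic-free proof), hence $\ttS_\nu\otimes V\otimes V'$ does, and its higher cohomology vanishes; so $V\otimes V'$ satisfies the criterion. For (1): restriction to a Levi $M$ — here one uses that $\ttS_\nu^G$ restricted to $M$ has a good filtration as an $M$-module (Donkin), combined with the criterion applied over $M$; alternatively cite Donkin directly. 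For (4): that $k[G]$ has a good filtration as a $G\times G$-module is the algebraic Peter--Weyl / Borel--Weil statement — by Frobenius reciprocity $k[G] = \on{ind}_{\{e\}}^G k$ as a right $G$-module, and filtering the regular representation gives $k[G] \cong \bigoplus$ (as $G\times G$-modules in the graded sense) with layers $\ttS_\nu \boxtimes \ttS_{\nu^*}$; the ``exactly once'' multiplicity statement follows from $\on{Hom}_{G\times G}(\ttW_{\nu^*}\boxtimes\ttW_{\mu^*}, k[G])$ computing $(\ttS_\nu\otimes\ttS_{\mu^*}^*)^G$-type spaces which is one-dimensional precisely when $\mu = \nu$, using that $\ttS_\nu\otimes\ttW_{\nu^*}$ contains the trivial representation with multiplicity one.

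The main obstacle is genuinely the tensor product statement (2): there is no easy characteristic-free proof that $\ttS_\nu\otimes\ttS_\mu$ has a good filtration, and everything else leans on it. In the write-up I would not attempt to prove it — I would state it as the key classical theorem (Mathieu, \emph{Filtrations of $G$-modules}; or Donkin in the cases he covers) and take it as a black box, since the excerpt's Theorem~\ref{T:prop of good fil} is explicitly labeled as recalling known properties. The only mild care needed is to make sure the characteristic hypotheses under which the cited results hold are compatible with the ambient hypotheses of the paper (which place no restriction here beyond $k$ algebraically closed), so I would phrase part (2) as a citation valid in all characteristics via Mathieu's theorem.
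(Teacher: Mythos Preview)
Your proposal is essentially correct and matches the paper's treatment: the paper does not prove Theorem~\ref{T:prop of good fil} at all but simply attributes (1) and (2) to Mathieu \cite{Ma} (with Donkin \cite{Do1} in most cases), (3) to Donkin \cite{Do}, and (4) to Donkin \cite{Do2} and Koppinen \cite{Kop}. Your write-up goes somewhat further by sketching how the pieces connect, but you correctly identify that the theorem is a compilation of known results and that the only non-formal input is Mathieu's theorem, which you (like the paper) would cite as a black box.

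One small point of presentation: your reduction of (2) to (3) is phrased in a way that is circular as written --- you invoke ``$\ttS_\nu\otimes V$ has a good filtration by Mathieu'' to prove (2), but that \emph{is} (2) applied to $\ttS_\nu$ and $V$. What you presumably mean (and what the literature does) is that the hard content is the special case $\ttS_\nu\otimes\ttS_\mu$; once that is known, the general case follows by d\'evissage using (3). Since you ultimately say you would cite Mathieu outright, this does not affect correctness, but if you include the sketch you should straighten out the logical order. Also, your remark about Kempf vanishing is not needed for the identification $\on{Ext}^i_G(\ttW_{\nu^*},V)=\on{H}^i(G,\ttS_\nu\otimes V)$ itself (that is just $\ttW_{\nu^*}^*\cong\ttS_\nu$ plus exactness of tensoring with a finite-dimensional module); Kempf vanishing enters only in establishing the $\on{Ext}$-orthogonality $\on{Ext}^{>0}_G(\ttW_{\nu^*},\ttS_\mu)=0$.
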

Part (1) and (2) are due to Mathieu \cite{Ma} (and were already obtained earlier by Donkin \cite{Do1} in most cases) and (3) is due to Donkin \cite{Do}.  Part (4) is due to Donkin \cite{Do2} and independently Koppinen \cite{Kop}.
It follows easily from Part (3) that the number of  factors in the successive quotients of a good filtration of $V$  that are isomorphic to $\ttS_\nu$ is equal to $\dim \Hom_G(\ttW_{\nu},V)$, and therefore is independent of the choice of the good filtration. In particular, if $V$ is finite dimensional
\begin{equation}
\label{E: dim of V}
\dim V=\sum_{\nu} \dim \Hom_G(\ttW_{\nu},V) \cdot \dim \ttS_{\nu}.
\end{equation}

The main result of this subsection is as follows.
\begin{prop}
\label{P: good implies flat}
Let $V$ be a (finite dimensional) $G$-module that admit a good filtration. Then the Rees module $R_{\xch(T)^+_{\on{pos}}}V$ associated to the canonical filtration is a (finite) flat $R$-module. 
\end{prop}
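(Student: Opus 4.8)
The plan is to reduce the statement to the flatness criterion established in Lemma~\ref{L: criterion free Rees}, namely condition (*) of Corollary~\ref{C: dim equal}: exhibiting a homogeneous basis of $\gr V$ with respect to the canonical filtration whose liftings form a basis of $V$. By that lemma it suffices to show $\dim \gr V = \dim V < \infty$, or equivalently to produce enough homogeneous lifts; and by Lemma~\ref{L: graded Nakayama} we always have $\dim V \le \dim \gr V$, so the content is the reverse inequality. The natural strategy is to first treat the case $V = \ttS_\nu$ a single Schur module, and then bootstrap to an arbitrary $V$ with a good filtration using the left-exactness of $V \mapsto R_{\xch(T)^+_{\on{pos}}}V$ together with a dimension count.

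\textbf{Step 1: the Schur module case.} For $V = \ttS_\nu$, I would identify the canonical filtration explicitly. Writing $\nu = \gamma_\nu + \omega_\nu$ with $\omega_\nu \in \mathtt{Min}$, the poset component of $S = \xch(T)^+_{\on{pos}}$ containing all weights $\preceq$ the weights of $\ttS_\nu$ is the one indexed by $\omega_\nu$, and the relevant indices are $\la \in \omega_\nu + \xch(T_\ad)_{\on{pos}}$. The subrepresentation $(\ttS_\nu)_{\preceq\la}$ is the largest $G$-submodule supported on weights $\preceq \la$; one expects $(\ttS_\nu)_{\preceq\la} = \ttS_\nu$ once $\la \succeq \nu$ and a proper submodule otherwise. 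The key point is that $\dim \gr \ttS_\nu = \dim \ttS_\nu$: since $\ttS_\nu$ has a unique highest weight $\nu$ occurring with multiplicity one, one shows the associated graded is concentrated in a way that does not inflate the dimension. I would argue this by induction on the poset (using (DCC)): $\gr_\nu \ttS_\nu = (\ttS_\nu)_{\preceq\nu}$, and the successive quotients $(\ttS_\nu)_{\preceq\la}/\sum_{\la'\prec\la}(\ttS_\nu)_{\preceq\la'}$ exhaust $\ttS_\nu$ without overlap because the canonical filtration is, in this case, an honest increasing filtration along each chain (the $r=1$ phenomenon of Example~\ref{E: classical and mult fil}, where $\dim\gr = \dim$ always holds along a directed chain — contrast Remark~\ref{E: Rees dimension jump}). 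So the Schur case follows, giving $R_{\xch(T)^+_{\on{pos}}}\ttS_\nu$ flat, indeed free.

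\textbf{Step 2: passage to good filtrations.} Let $V$ have a good filtration $0 = V_0 \subset V_1 \subset \cdots \subset V_m = V$ with $V_j/V_{j-1} \cong \ttS_{\nu_j}$. Applying the left-exact functor $V \mapsto R_{\xch(T)^+_{\on{pos}}}V$ to each short exact sequence $0 \to V_{j-1} \to V_j \to \ttS_{\nu_j} \to 0$ yields left-exact sequences of $R$-modules $0 \to R V_{j-1} \to R V_j \to R\ttS_{\nu_j}$, hence $\dim \gr V \le \sum_j \dim \gr \ttS_{\nu_j} = \sum_j \dim \ttS_{\nu_j} = \dim V$ by Step 1 and by reducing mod $I_0$ (using $RV/I_0RV \cong \gr V$ from \eqref{E: Rees fiber dim}). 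Combined with $\dim V \le \dim \gr V$ this forces equality, and then Lemma~\ref{L: criterion free Rees} gives flatness (freeness) of $R_{\xch(T)^+_{\on{pos}}}V$. Finiteness is automatic since $V$ is finite-dimensional and $\gr V$ has the same dimension, so finitely many homogeneous generators suffice.

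\textbf{The main obstacle} I anticipate is Step 1 — pinning down that $\dim \gr \ttS_\nu = \dim \ttS_\nu$, i.e. that the canonical multi-filtration does not suffer the dimension jump of Remark~\ref{E: Rees dimension jump} on a Schur module. The subtlety is that $S$ is a multi-dimensional poset ($\xch(T_\ad)_{\on{pos}}$ has rank $\geq 2$ in general), so one cannot simply invoke the $r \le 2$ case; one must use special structure of $\ttS_\nu$ — most plausibly that the $G$-submodule lattice of $\ttS_\nu$ below a given weight is totally ordered enough, or a direct argument that $(\ttS_\nu)_{\preceq\la}$ is spanned by weight vectors and that choosing a weight basis adapted to the dominance order exhibits condition (*) of Lemma~\ref{L: bases criterion}. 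Alternatively, one could sidestep Step 1 by induction on $\dim V$ using Theorem~\ref{T:prop of good fil}(3) to split off a Schur submodule and invoking a base case $\ttS_\omega$ with $\omega$ minuscule, where the weights form a single $W$-orbit and the filtration is transparent. I would pursue whichever of these makes the "no dimension jump" claim cleanest.
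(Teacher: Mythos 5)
Your overall reduction (via Lemma~\ref{L: criterion free Rees} to the equality $\dim V = \dim\gr V$, then reducing to Schur modules and bootstrapping along a good filtration) has the right flavor, but Step~2 contains a genuine gap, and Step~1 is much easier than you fear — to the point that your anticipated ``main obstacle'' is on the wrong step.

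On Step~1: you worry about the Schur module case, but it is in fact immediate once you note that $\ttS_\nu$ has \emph{simple socle} $L(\nu)$. Any nonzero $G$-submodule of $\ttS_\nu$ therefore contains $L(\nu)$ and in particular has $\nu$ as a weight, so $(\ttS_\nu)_{\preceq\la}=0$ unless $\la\succeq\nu$ (and $=\ttS_\nu$ when $\la\succeq\nu$). Hence $\gr\ttS_\nu$ is $\ttS_\nu$ concentrated in degree $\nu$, and there is no dimension jump. You do not need any case analysis on the poset or an appeal to Lemma~\ref{L: bases criterion}.

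On Step~2: here is the real gap. From the left-exact sequence $0 \to R V_{j-1} \to R V_j \to R\ttS_{\nu_j}$ you only get an injection $Q_j := RV_j/RV_{j-1} \hookrightarrow R\ttS_{\nu_j}$, and applying $-\otimes_R R/I_0$ to the short exact sequence $0\to RV_{j-1}\to RV_j\to Q_j\to 0$ gives $\dim\gr V_j \le \dim\gr V_{j-1} + \dim Q_j/I_0 Q_j$. But $\dim Q_j/I_0 Q_j$ is \emph{not} bounded by $\dim(R\ttS_{\nu_j})/I_0(R\ttS_{\nu_j}) = \dim\ttS_{\nu_j}$: a torsion-free submodule of a free module over a polynomial ring in $\geq 2$ variables can have strictly larger reduction mod $I_0$ than the ambient free module (take $Q=(x,y)\subset k[x,y]$, $I_0=(x,y)$; then $Q/I_0Q$ is $2$-dimensional while $R/I_0$ is $1$-dimensional). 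So your claimed inequality $\dim\gr V \le \sum_j\dim\gr\ttS_{\nu_j}$ does not follow from left exactness alone. What you actually need is the \emph{surjectivity} of $RV_j\to R\ttS_{\nu_j}$, i.e. that the functor $V\mapsto V_{\preceq\la}$ is exact (not just left exact) on modules with good filtration. That surjectivity is exactly the nontrivial content of Lemma~\ref{L:good sub}(1) in the paper, proved via the Frobenius-reciprocity argument showing $\Hom_G(\ttW_\mu, V)\to\Hom_G(\ttW_\mu, V/V_{\preceq\la})$ is onto; without it the bootstrap collapses. The paper avoids the bootstrap altogether: Lemma~\ref{L:good sub}(2) directly identifies $\gr_\la V\cong \Hom_G(\ttW_\la,V)\otimes\ttS_\la$ for any $V$ with good filtration, and then the dimension identity \eqref{E: dim of V} finishes the count in one stroke.
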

\begin{proof}
As every $G$-module $V$ that admits a good filtration is the union of finite dimensional $G$-submodules $V_i$ that admit a good filtration, and as $R_{\xch(T)^+_{\on{pos}}}V$ is the union of $R_{\xch(T)^+_{\on{pos}}}V_i$, we may assume that $V$ is finite dimensional. Now by Lemma~\ref{L: criterion free Rees}, it suffices to show that $\dim V = \dim \gr V$. But this follows from \eqref{E: dim of V} and the following lemma.
\end{proof}
\begin{lem}
\label{L:good sub} 
Let $V$ be a representation of $G$ that admits a good filtration. Let $\la$ be a weight.
\begin{enumerate}
\item 
Both $V_{\preceq \la}\subset V$ and $V/V_{\preceq \la}$ admit good filtrations.
\item
Let $V_{\prec \la}=\sum_{\la'\preceq \la} V_{\preceq \la'}\subset V_{\preceq \la}$. Then $V_{\prec \la}$ admits a good filtration, and $V_{\preceq \la}/V_{\prec \la}$ is isomorphic to $\Hom_G(\ttW_{\la},V)\otimes \ttS_{\la}$. 
\end{enumerate}
\end{lem}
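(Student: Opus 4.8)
The plan is to deduce everything from Theorem~\ref{T:prop of good fil}, especially part (3), using the characterization of good filtrations via vanishing of higher $\Ext$ against Weyl modules.

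\medskip
First I would prove (2), and then derive (1) from it. For (2): by definition $V_{\preceq\la}$ is the maximal $G$-submodule all of whose weights are $\preceq\la$, and $V_{\prec\la}=\sum_{\la'\prec\la}V_{\preceq\la'}$ is the maximal $G$-submodule all of whose weights are $\prec\la$ (i.e.\ $\preceq\la$ but $\neq\la$); note $\la$ itself lies in $\xch(T)^+_{\on{pos}}$ whenever $V_{\preceq\la}\neq 0$, and one checks $\la$ is then dominant. I claim $Q:=V_{\preceq\la}/V_{\prec\la}$ is a direct sum of copies of $\ttS_\la$. Indeed, every weight of $Q$ is $\preceq\la$; moreover the highest weight(s) of any composition factor $L(\mu)$ of $Q$ must be maximal among weights of $V_{\preceq\la}$ not already killed in $V_{\prec\la}$, which forces $\mu=\la$ — if $\mu\prec\la$ then $L(\mu)$ would be a quotient of $V_{\preceq\la}$ contained in the weights $\prec\la$, hence would lift into $V_{\prec\la}$ by maximality (using left-exactness of $W\mapsto W_{\preceq\mu}$ noted before the proposition), a contradiction. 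So every composition factor of $Q$ is $L(\la)$. To upgrade this to "$Q$ is a sum of Schur modules $\ttS_\la$", I would use that $V_{\preceq\la}$ — and hence its quotient $Q$ — can be shown to have a good filtration: $V_{\preceq\la}$ is a submodule of $V$, but submodules of good-filtration modules need not have good filtrations in general, so instead I would argue on $V/V_{\preceq\la}$. The module $V/V_{\preceq\la}$ has the property that $(V/V_{\preceq\la})_{\preceq\la}=0$, i.e.\ it has no nonzero submodule with all weights $\preceq\la$; equivalently $\Hom_G(\ttW_\mu, V/V_{\preceq\la})=0$ for all $\mu\preceq\la$, in particular for $\mu=\la$ and all smaller dominant weights. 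Combined with $\on{Ext}^1(\ttW_{\nu^*}, V)=0$ for all $\nu$ (Theorem~\ref{T:prop of good fil}(3), applied to the good-filtration module $V$), a standard dévissage — filtering $V$ by a good filtration and using that $\ttW_\mu$ for $\mu\not\preceq\la$ has no maps to the $\preceq\la$-part — shows $V/V_{\preceq\la}$ has a good filtration whose Schur factors $\ttS_\nu$ all satisfy $\nu\not\preceq\la$, and dually $V_{\preceq\la}$ has a good filtration with all Schur factors $\ttS_\nu$, $\nu\preceq\la$.

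\medskip
Once $V_{\preceq\la}$ and $V/V_{\preceq\la}$ both have good filtrations (part (1)), I return to (2): $V_{\prec\la}=\sum_{\la'\prec\la}V_{\preceq\la'}$ is then a sum of submodules each having a good filtration with Schur factors indexed by weights $\prec\la$; since the category of $G$-modules admitting a good filtration is closed under extensions and (as $\on{Ext}^1$ vanishing against all $\ttW_{\nu^*}$ is inherited by submodules generated by good-filtration submodules via the cohomological criterion of Theorem~\ref{T:prop of good fil}(3)) under such sums, $V_{\prec\la}$ admits a good filtration whose Schur factors are $\ttS_\nu$ with $\nu\prec\la$. Now $Q=V_{\preceq\la}/V_{\prec\la}$ has a good filtration (quotient of a good-filtration module by a good-filtration submodule, again by the $\Ext$-criterion), and by the composition-factor analysis above every Schur factor is $\ttS_\la$; a good filtration with all graded pieces isomorphic to a single $\ttS_\la$ with no $\on{Ext}^1(\ttW_\la,\ttS_\la)=\on{H}^1(G,\ttS_{\la^*}\otimes\ttS_\la)$ obstruction splits, so $Q\cong \ttS_\la^{\oplus m}$ where $m=\dim\Hom_G(\ttW_\la, V_{\preceq\la}/V_{\prec\la})=\dim\Hom_G(\ttW_\la,V)$ (the last equality because $\Hom_G(\ttW_\la,-)$ is exact on good-filtration modules and kills $V_{\prec\la}$ and $V/V_{\preceq\la}$, whose Schur factors have highest weights $\prec\la$ and $\not\preceq\la$ respectively). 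Hence $V_{\preceq\la}/V_{\prec\la}\cong \Hom_G(\ttW_\la,V)\otimes\ttS_\la$, as claimed.

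\medskip
The main obstacle I anticipate is the dévissage showing $V/V_{\preceq\la}$ has a good filtration: the subtlety is that $V_{\preceq\la}$ is not a priori a good-filtration submodule, so one cannot simply quote closure properties; the argument genuinely needs the cohomological characterization (Theorem~\ref{T:prop of good fil}(3)) together with the defining maximality of $V_{\preceq\la}$ to control which $\ttW_\nu$ can map in. Everything else — closure of the good-filtration class under extensions, sums, and well-behaved quotients, and the splitting of an iterated self-extension of $\ttS_\la$ — is routine given Theorem~\ref{T:prop of good fil}. I would also need to be slightly careful that $\Hom_G(\ttW_\mu,-)$ and $\on{Ext}^1_G(\ttW_\mu,-)$ behave as expected (e.g.\ $\on{Ext}^1_G(\ttW_\mu,\ttS_\nu)=0$ always, and $=0$ for $i>0$), all of which is contained in Theorem~\ref{T:prop of good fil}(3) and its consequences.
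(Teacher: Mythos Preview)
Your overall strategy is in the same spirit as the paper's, but you replace the paper's single clean trick with several vaguer steps, and one of those steps has a genuine gap.

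The paper's proof of (1) is direct: to show $V/V_{\preceq\la}$ has a good filtration, it suffices (by the $\Ext^1$-criterion) to show $\Hom_G(\ttW_\nu,V)\to\Hom_G(\ttW_\nu,V/V_{\preceq\la})$ is surjective for every dominant $\nu$. When $\nu\preceq\la$ the target vanishes by maximality of $V_{\preceq\la}$. When $\nu\not\preceq\la$, Frobenius reciprocity identifies the map with $\Hom_B(k_\nu,V)\to\Hom_B(k_\nu,V/V_{\preceq\la})$, and any $B$-stable line of weight $\nu$ in the quotient lifts to one in $V$ simply because $V_{\preceq\la}$ contains no weight $\succeq\nu$. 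This two-line argument replaces your unspecified ``standard d\'evissage.'' The same argument, applied with $V_{\preceq\la}$ in place of $V$ and the set $\{\mu:\mu\prec\la\}$ in place of $\{\mu:\mu\preceq\la\}$, handles (2).

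Your route through (2) has a real gap: you assert that $V_{\prec\la}=\sum_{\la'\prec\la}V_{\preceq\la'}$ has a good filtration because each summand does and ``the category of modules admitting a good filtration is closed under \ldots\ such sums.'' That closure is false in general (good filtrations are not preserved by arbitrary quotients, and $A+B$ is a quotient of $A\oplus B$), and your parenthetical about $\Ext^1$-vanishing being ``inherited by submodules generated by good-filtration submodules'' does not rescue it. The paper sidesteps this entirely by the Frobenius reciprocity argument above.

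A smaller point: in your splitting step you invoke $\Ext^1(\ttW_\la,\ttS_\la)=0$, but the obstruction to splitting an iterated self-extension of $\ttS_\la$ lives in $\Ext^1(\ttS_\la,\ttS_\la)$. This group does vanish (a highest-weight-line lifting argument shows any extension of $\ttS_\la$ by $\ttS_\la$ splits), but it is not the group you named.
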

\begin{proof}
This is a special case of \cite[12.1.6]{Do1}. We include a proof for completeness.
(1)
By Theorem~\ref{T:prop of good fil} (3), it is enough to show that for every dominant $\nu$, the map
\begin{equation}
\label{E:weyl to sub}
\Hom_G(\ttW_\nu, V)\to \Hom_G(\ttW_\nu, V/V_{\preceq \la})
\end{equation}
is surjective. If $\nu\preceq \la$, then $\Hom_G(\ttW_\nu, V/V_{\preceq \la})=\Hom_G(\ttW_\nu, (V/ V_{\preceq \la})_{\preceq\la})=0$, since $(V/ V_{\preceq \la})_{\preceq\la}$ is evidently zero. Assume $\nu\not\preceq \la$. 
By Frobenius reciprocity, the map \eqref{E:weyl to sub} may be identified with the map $\Hom_B(k_\nu, V) \to \Hom_B(k_\nu, V / V_{\preceq \lambda})$. 
Let $L \subset V(\nu)$ be a line, such that $B$ acts on its image in $V/V_{\preceq \la}$ via the character $B\to T\xrightarrow{\nu}\bG_m$. Then $B$ acts on $L$ by the same way (as any weights $\succeq \nu$ does not appear in $V_{\preceq \lambda}$). Therefore \eqref{E:weyl to sub} is an isomorphism in this case.

(2) The same argument as in the proof of Part (1) shows that 
\begin{itemize}
\item both $V_{\prec \lambda}$ and $V_{\preceq \lambda} / V_{\prec \lambda}$ admit good filtrations; and that
\item the natural map $\Hom_G(\ttW_\nu, V_{\preceq \lambda}) \to \Hom_G(\ttW_\nu, V_{\preceq \lambda}/V_{\prec \lambda})$ is zero unless $\nu = \lambda$, in which case, the map is an isomorphism. 
\end{itemize}
Now the isomorphism $V_{\preceq \la}/V_{\prec \la}\cong\Hom_G(\ttW_{\la},V)\otimes \ttS_{\la}$ follows by combining the isomorphisms
\begin{itemize}
\item $V_{\preceq \lambda} / V_{\prec \lambda}\cong (V_{\preceq \lambda} / V_{\prec \lambda})(\la)\otimes \ttS_{\la}$;
\item $(V_{\preceq \lambda} / V_{\prec \lambda})(\la)\cong \Hom_G(\ttW_\la, V_{\preceq \lambda} / V_{\prec \lambda})\cong \Hom_G(\ttW_\la, V_{\preceq \lambda})\cong \Hom_G(\ttW_\la, V)$.\qedhere
\end{itemize}
\end{proof}

We end this subsection with the following results.
\begin{lem}
Let $V$ be a $G$-module that admits a good filtration, and let $\la\in\xch(T)^+_{\on{pos}}$.
Then the following sequence is  exact
\begin{equation}
\label{L:mugradeKoszul}
0\to V_{\preceq \la-\al_1-\cdots-\al_r} \to \cdots\to\bigoplus_{i<j}V_{\preceq \la-\al_i-\al_j}\to  \bigoplus_i V_{\preceq \la-\al_i}\to V_{\preceq \la} \to V_{\preceq \la}/V_{\prec \la}\to 0.
\end{equation}
\end{lem}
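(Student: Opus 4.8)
The plan is to recognize \eqref{L:mugradeKoszul} as (the essential part of) an augmented \v{C}ech complex and to reduce, by d\'evissage along a good filtration, to the case of a single Schur module, where the sequence becomes the simplicial chain complex of a simplex.

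First I would set up notation: write $\al_1,\dots,\al_r$ for the simple roots and, for $I\subseteq\{1,\dots,r\}$, put $\al_I=\sum_{i\in I}\al_i$. By linear independence of the simple roots, a weight that is $\preceq\la-\al_i$ for every $i\in I$ is $\preceq\la-\al_I$, so $\bigcap_{i\in I}V_{\preceq\la-\al_i}=V_{\preceq\la-\al_I}$, and the inclusions $V_{\preceq\la-\al_I}\hookrightarrow V_{\preceq\la-\al_{I\setminus\{i\}}}$ with the usual alternating signs assemble into a complex
\[\calC(V,\la):\qquad 0\to V_{\preceq\la-\al_1-\cdots-\al_r}\to\cdots\to\bigoplus_i V_{\preceq\la-\al_i}\to V_{\preceq\la}\to 0,\]
which is just \eqref{L:mugradeKoszul} with the last term $V_{\preceq\la}/V_{\prec\la}$ deleted and $V_{\preceq\la}$ placed in homological degree $0$. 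Since the image of $\bigoplus_i V_{\preceq\la-\al_i}\to V_{\preceq\la}$ equals $\sum_i V_{\preceq\la-\al_i}=V_{\prec\la}$, exactness of \eqref{L:mugradeKoszul} at its last two terms is automatic, so it suffices to prove that $H_i(\calC(V,\la))=0$ for all $i\ge 1$.

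The main obstacle will be a preliminary fact: the functor $V\mapsto V_{\preceq\mu}$ is \emph{exact} on the category of $G$-modules admitting a good filtration, for each weight $\mu$. Left-exactness is clear, and for a submodule $A\subseteq B$ one has $A_{\preceq\mu}=A\cap B_{\preceq\mu}$; the content is surjectivity of $B_{\preceq\mu}\to C_{\preceq\mu}$ for a short exact sequence $0\to A\to B\to C\to 0$ of good-filtration modules. I would first replace $B$ by the preimage of $C_{\preceq\mu}$---which still admits a good filtration, as $C_{\preceq\mu}$ does by Lemma~\ref{L:good sub}(1) and the class of good-filtration modules is closed under extensions by the $\on{Ext}^1$-criterion of Theorem~\ref{T:prop of good fil}(3), and which does not change $B_{\preceq\mu}$---thereby reducing to the case where all weights of $C$ are $\preceq\mu$. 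Then the claim is $A+B_{\preceq\mu}=B$, i.e. the vanishing of $Q:=\on{coker}(A\to B/B_{\preceq\mu})$. Here $Q$ is a quotient of $C$, hence has all weights $\preceq\mu$; the module $D:=B/B_{\preceq\mu}$ has a good filtration (Lemma~\ref{L:good sub}(1)) and, by maximality of $B_{\preceq\mu}$, has no nonzero submodule with all weights $\preceq\mu$; and $\ker(D\to Q)\cong A/A_{\preceq\mu}$ again has a good filtration. If $Q\ne 0$ it has a nonzero simple submodule, whence $\Hom_G(\ttW_\nu,Q)\ne 0$ for some dominant $\nu$; as $\on{Ext}^1_G(\ttW_\nu,A/A_{\preceq\mu})=0$, this lifts to a nonzero map $\ttW_\nu\to D$, whose image in $Q$ is a nonzero quotient of $\ttW_\nu$ and therefore contains the weight $\nu$, forcing $\nu\preceq\mu$; but then the image of $\ttW_\nu$ in $D$ is a nonzero submodule all of whose weights are $\preceq\nu\preceq\mu$, a contradiction. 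Hence $V\mapsto V_{\preceq\mu}$ is exact, and consequently $V\mapsto\calC(V,\la)$ takes short exact sequences of good-filtration modules to short exact sequences of complexes.

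Finally I would carry out the d\'evissage. For $V=\ttS_\nu$ one has $(\ttS_\nu)_{\preceq\mu}=\ttS_\nu$ if $\nu\preceq\mu$ and $0$ otherwise (because $\ttS_\nu$ has simple socle): thus $\calC(\ttS_\nu,\la)=0$ if $\nu\not\preceq\la$, while if $\nu\preceq\la$, writing $\la-\nu=\sum_k c_k\al_k$ and $P=\{k:c_k\ge 1\}$, one gets $(\ttS_\nu)_{\preceq\la-\al_I}=\ttS_\nu$ exactly for $I\subseteq P$, so $\calC(\ttS_\nu,\la)$ is $\ttS_\nu$ tensored with the augmented simplicial chain complex of the full simplex on vertex set $P$---acyclic when $P\ne\emptyset$ and equal to $\ttS_\nu$ in degree $0$ when $P=\emptyset$. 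In every case $H_i(\calC(\ttS_\nu,\la))=0$ for $i\ge 1$. Because this vanishing is stable under extensions (via the long exact homology sequence of the short exact sequences of complexes from the previous step), induction on the length of a good filtration yields it for every finite-dimensional $V$ with a good filtration; and since $(-)_{\preceq\mu}$, hence $\calC(-,\la)$, commutes with filtered colimits of submodules, it then holds for all $V$ admitting a good filtration. Undoing the reduction of the first step gives the exactness of \eqref{L:mugradeKoszul}. I expect the exactness of $V\mapsto V_{\preceq\mu}$ to be the only real difficulty; the base case is the contractibility of a simplex and the d\'evissage is formal.
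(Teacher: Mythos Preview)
Your proof is correct and takes a genuinely different route from the paper's. The paper recognizes \eqref{L:mugradeKoszul} as the $\la$-graded piece of the Koszul complex for the sequence $(e^{\al_1},\dots,e^{\al_r})$ acting on the Rees module $R_{\xch(T)^+_{\on{pos}}}V$; since this sequence is regular in the polynomial ring $R=k[\xch(T_\ad)_{\on{pos}}]$ and $R_{\xch(T)^+_{\on{pos}}}V$ is $R$-flat by Proposition~\ref{P: good implies flat}, it remains regular on the Rees module and the Koszul complex is exact. You instead bypass the Rees machinery: you establish exactness of $V\mapsto V_{\preceq\mu}$ on good-filtration modules directly, then reduce by d\'evissage to a single Schur module, where the complex becomes the augmented chain complex of a simplex. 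The paper's argument is shorter once flatness (needed elsewhere anyway) is in hand and treats all $\la$ at once; your argument is more self-contained, avoids the Rees construction entirely, and makes the Schur-module base case completely explicit. Both routes ultimately rest on the same representation-theoretic input, namely Lemma~\ref{L:good sub}.
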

\begin{proof}
Note that $(e^{\al_1},\ldots,e^{\al_r})$ form a regular sequence in $R$ generating the ideal $I_0\subset R$, and since $R_{\xch(T)^+_{\on{pos}}}V$ is $R$-flat, they form a regular sequence in $R_{\xch(T)^+_{\on{pos}}}V$.
The lemma follows from taking the $\la$-graded piece of the corresponding Koszul complex.
\end{proof}

\begin{cor}
\label{C:good filtration via Koszul} In the exact sequence \eqref{L:mugradeKoszul}, both the kernel and the image of $\bigoplus_i V_{\preceq \la-\al_i}\to V_{\preceq \la}$ admit good filtrations.
\end{cor}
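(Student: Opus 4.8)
The plan is to use the whole Koszul complex~\eqref{L:mugradeKoszul}, not just the three-term stretch $\bigoplus_i V_{\preceq\la-\al_i}\to V_{\preceq\la}\to V_{\preceq\la}/V_{\prec\la}$ around the map in question, together with the following closure property of the class of $G$-modules admitting a good filtration: if $0\to A\to B\to C\to 0$ is exact and \emph{both} $A$ and $B$ admit good filtrations, then so does $C$. This is immediate from Theorem~\ref{T:prop of good fil}(3): the hypotheses give $\on{Ext}^i(\ttW_{\nu^*},A)=\on{Ext}^i(\ttW_{\nu^*},B)=0$ for all dominant $\nu$ and all $i>0$, so the long exact $\on{Ext}(\ttW_{\nu^*},-)$-sequence forces $\on{Ext}^1(\ttW_{\nu^*},C)=0$ for all dominant $\nu$, and one more application of Theorem~\ref{T:prop of good fil}(3) finishes it. I will also use that a finite direct sum of good-filtration modules has a good filtration, and that, by Lemma~\ref{L:good sub}(1), $V_{\preceq\mu}$ admits a good filtration for every weight $\mu$; in particular so does each term $D_p$ of~\eqref{L:mugradeKoszul}, i.e.\ the direct sum of the $V_{\preceq\la-\al_{j_1}-\cdots-\al_{j_p}}$ over $p$-element subsets $\{j_1,\dots,j_p\}$ of $\{1,\dots,r\}$ (here $\al_1,\dots,\al_r$ are the simple roots, and $D_0=V_{\preceq\la}$).

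I would first deal with the image $I$ of $\bigoplus_i V_{\preceq\la-\al_i}\to V_{\preceq\la}$. Unwinding definitions, $I=\sum_i V_{\preceq\la-\al_i}=V_{\prec\la}$: each summand $V_{\preceq\la-\al_i}$ is one of the terms defining $V_{\prec\la}$, and conversely every $V_{\preceq\la'}$ with $\la'\prec\la$ is contained in $V_{\preceq\la-\al_i}$ for a suitable $i$ (subtract from $\la$ one copy of a simple root occurring in $\la-\la'$). Then Lemma~\ref{L:good sub}(2) gives directly that $I=V_{\prec\la}$ admits a good filtration, so this half needs no further work.

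Next, for the kernel $K$ of $\bigoplus_i V_{\preceq\la-\al_i}=D_1\to D_0=V_{\preceq\la}$: by exactness of~\eqref{L:mugradeKoszul} at $D_1$, $K=\on{im}(D_2\to D_1)$. More generally I would set $K_p=\on{im}(D_p\to D_{p-1})$ for $2\le p\le r$, so $K_2=K$; exactness of the complex then gives short exact sequences $0\to K_{p+1}\to D_p\to K_p\to 0$ for $2\le p\le r-1$, together with $K_r\cong D_r$ (the leftmost differential $D_r\to D_{r-1}$ being injective). Since $K_r\cong D_r$ admits a good filtration, a downward induction on $p$ — at each step applying the closure property above to $0\to K_{p+1}\to D_p\to K_p\to 0$, whose first two terms admit good filtrations — shows that every $K_p$, and in particular $K=K_2$, admits a good filtration.

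The one step I expect a careful reader to linger over, and the only point that is not pure bookkeeping, is the closure property used in the induction: a quotient of a good-filtration module by a good-filtration submodule again has a good filtration. This is \emph{not} formal — quotients of good-filtration modules need not have good filtrations in general, nor need submodules — and it really does use the vanishing of $\on{Ext}^i(\ttW_{\nu^*},-)$ in \emph{all} positive degrees for the first two terms of each short exact sequence, i.e.\ the full strength of the equivalences in Theorem~\ref{T:prop of good fil}(3). Granting that, the rest is routine manipulation of the Koszul complex.
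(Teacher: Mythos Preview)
Your proof is correct and unpacks exactly what the paper's terse two-line argument has in mind: every term of the Koszul complex~\eqref{L:mugradeKoszul} admits a good filtration (Lemma~\ref{L:good sub}), and then the closure property ``good-filtration submodule of a good-filtration module has good-filtration quotient'' --- which you correctly extract from the $\on{Ext}$-criterion of Theorem~\ref{T:prop of good fil}(3) --- lets you peel from the left by downward induction. The one cosmetic difference is that you handle the image separately by identifying it as $V_{\prec\la}$ and citing Lemma~\ref{L:good sub}(2) directly; the paper would presumably continue the same induction one more step, via $0\to K\to D_1\to I\to 0$, to get the image as well.
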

\begin{proof}
By Theorem~\ref{T:prop of good fil} (4) and Lemma~\ref{L:good sub}, each term in \eqref{L:mugradeKoszul} admits a good filtration. The corollary follows from the criteria for existence of good filtrations in Theorem~\ref{T:prop of good fil} (3).
\end{proof}

\subsection{Vinberg monoid via the canonical filtration}
\label{SS: Vin}
We apply the previous discussion to the $G\times G$-modules $k[G]$, with the $G\times G$-module structure given by left and right translation of $G$ on itself.
We regard a pair of weights $(\nu_1,\nu_2)$ as a weight of $T\times T$. Then for $\nu\in\xch(T)$, by
regarding $k[G]$ as a $G\times G$-representation, we can define
\begin{equation}
\label{E: fil on k[G]}
\fil_\nu k[G]:= k[G]_{\preceq (\nu^*,\nu)}.
\end{equation}
\begin{lem}
\label{C: fil on kG}
The above $\xch(T)^+_{\on{pos}}$ filtration $\{\on{fil}_\mu k[G]\}_{\mu\in \xch(T)^+_{\on{pos}}}$ of $k[G]$ satisfies the following properties.
\begin{enumerate}
\item $\on{fil}_{\nu} k[G]\subset \on{fil}_{\nu+\la} k[G]$ if $\la\in\xch(T_\ad)_{\on{pos}}$. 
\item $\on{fil}_{\nu}k[G]\cdot\on{fil}_{\nu'}k[G]\subset\on{fil}_{\nu+\nu'}k[G]$;
\item Each $\on{fil}_\nu k[G]$ is a sub-coalgebra of $k[G]$.
\item $\on{gr}k[G]=\oplus_{\nu\in\xch(T)^+}\ttS_{\nu^*}\otimes \ttS_{\nu}$;
\end{enumerate}
\end{lem}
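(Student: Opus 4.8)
The plan is to verify the four properties in turn, using the dictionary between $k[G]$ as a $G\times G$-module and sections of line bundles, together with the already-established flatness of the Rees module for $k[G]$ (Proposition~\ref{P: good implies flat}).

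\textbf{Property (1).} This is immediate from the definition of the canonical filtration: if $\la\in\xch(T_\ad)_{\on{pos}}$, then $(\nu^*,\nu)\preceq(\nu^*+\la^*,\nu+\la) = ((\nu+\la)^*,\nu+\la)$ in the partial order on $\xch(T\times T)$ induced by simple roots of $G\times G$ — indeed $\la^*$ is again a nonnegative combination of simple roots since $*$ permutes $\Delta$. Hence $k[G]_{\preceq(\nu^*,\nu)}\subset k[G]_{\preceq((\nu+\la)^*,\nu+\la)}$ by the monotonicity built into the canonical filtration, which is exactly $\on{fil}_\nu k[G]\subset \on{fil}_{\nu+\la}k[G]$. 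Note that this says precisely that $\{\on{fil}_\nu k[G]\}$ is a genuine $\xch(T)^+_{\on{pos}}$-filtration in the sense of \S\ref{SS: monoid and partial order}, with $S=\xch(T)^+_{\on{pos}}$ sitting diagonally-up-to-$*$ inside $\xch(T\times T)^+_{\on{pos}}$.

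\textbf{Properties (2) and (3).} For the multiplicativity, I would argue that multiplication $m\colon k[G]\otimes k[G]\to k[G]$ is a map of $G\times G$-modules (both factors of $G$ act diagonally on the source since left and right translations are algebra homomorphisms). Therefore $m$ carries $(k[G]\otimes k[G])_{\preceq((\nu+\nu')^*,\nu+\nu')}$ into $k[G]_{\preceq((\nu+\nu')^*,\nu+\nu')}$; and $\on{fil}_\nu k[G]\otimes\on{fil}_{\nu'}k[G]$ lands inside the former because a tensor product of weight-bounded submodules is weight-bounded by the sum, using that $(k[G]\otimes k[G])_{\preceq(\mu_1,\mu_2)}$ is the maximal submodule with weights $\preceq(\mu_1,\mu_2)$ and that $\on{fil}_\nu k[G]\otimes\on{fil}_{\nu'}k[G]$ visibly has all its $T\times T$-weights $\preceq((\nu+\nu')^*,\nu+\nu')$. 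That $k\subset\on{fil}_0 k[G]$ holds because constants span the trivial subrepresentation, of weight $0=0^*$, which is the minuscule representative for the coset of $0$. This gives condition (Alg). The coalgebra statement (3) is dual: comultiplication $\Delta\colon k[G]\to k[G]\otimes k[G]$ is again $G\times G$-equivariant, and it restricts to $\on{fil}_\nu k[G]\to\on{fil}_\nu k[G]\otimes\on{fil}_\nu k[G]$ because — by Corollary~\ref{C:good filtration via Koszul} and Lemma~\ref{L:good sub}(1) — $\on{fil}_\nu k[G]=k[G]_{\preceq(\nu^*,\nu)}$ is a subrepresentation whose image under $\Delta$ is a $G\times G$-submodule of $k[G]\otimes k[G]$ all of whose weights lie $\preceq(\nu^*,\nu)$ in each tensor factor; comparing with the maximal such submodule $(\on{fil}_\nu k[G])\otimes(\on{fil}_\nu k[G])$ — which one checks is itself weight-bounded appropriately — yields the claim. (Equivalently one may invoke \S\ref{SSS: rees alg}: once (Alg) holds and each $\on{fil}_s M$ is a subcoalgebra, the Rees object is a bialgebra; but the content here is just the two equivariance observations.)

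\textbf{Property (4).} This is the substantive point. By Proposition~\ref{P: good implies flat}, the Rees module $R_{\xch(T)^+_{\on{pos}}}k[G]$ is $R$-flat, so by \eqref{E: Rees fiber dim} its fiber at $I_0$ computes $\gr k[G]$ with $\dim\gr k[G]=\dim k[G]$ in every finite-dimensional truncation; in particular, by Lemma~\ref{L:good sub}(2) applied to the $G\times G$-module $k[G]$, each graded piece $\gr_\mu k[G]=\on{fil}_\mu k[G]/\on{fil}_{\prec\mu}k[G]$ is isomorphic to $\Hom_{G\times G}(\ttW_{(\mu^*,\mu)},k[G])\otimes\ttS_{(\mu^*,\mu)}$, where $\ttS_{(\mu^*,\mu)}=\ttS_{\mu^*}\boxtimes\ttS_\mu$ for $G\times G$. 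Now $\Hom_{G\times G}(\ttW_{\mu^*}\boxtimes\ttW_\mu,\,k[G])$: by Theorem~\ref{T:prop of good fil}(4), the multiplicity of $\ttS_{\mu^*}\boxtimes\ttS_\mu$ in a good filtration of $k[G]$ is $1$ if $\mu$ (equivalently $\mu^*$) is dominant and $0$ otherwise — and when $\mu\in\xch(T)^+_{\on{pos}}$ but $\mu\notin\xch(T)^+$, this Hom space vanishes. Hence $\gr_\mu k[G]$ is $\ttS_{\mu^*}\otimes\ttS_\mu$ for $\mu\in\xch(T)^+$ and $0$ otherwise; summing over $\mu$ gives $\gr k[G]=\bigoplus_{\nu\in\xch(T)^+}\ttS_{\nu^*}\otimes\ttS_\nu$, as claimed. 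The one subtlety to handle carefully is the bookkeeping of the minuscule-coset decomposition: the index set $S=\xch(T)^+_{\on{pos}}$ contains many non-dominant weights, and one must be sure that for such $\mu$ the corresponding graded piece really is zero rather than merely of smaller dimension — this is exactly where Theorem~\ref{T:prop of good fil}(4) (each $\ttS_{\nu^*}\boxtimes\ttS_\nu$ appearing \emph{exactly once}, and no others appearing) does the work, and it is the step I expect to require the most care in writing up.
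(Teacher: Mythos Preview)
Your approach matches the paper's: (1)--(3) are declared ``clear'' there, and (4) is deduced from Theorem~\ref{T:prop of good fil}(4) together with Proposition~\ref{P: good implies flat} (via Lemma~\ref{L:good sub}). Your treatment of (1), (2), and (4) is correct; in particular, your identification of the bookkeeping issue in (4) --- passing from the $G\times G$-canonical filtration indexed by $\xch(T\times T)^+_{\on{pos}}$ to the anti-diagonal filtration indexed by $\xch(T)^+_{\on{pos}}$ --- is the right thing to flag, and it is resolved exactly by the observation you make (only pieces $\ttS_{\mu^*}\boxtimes\ttS_\mu$ occur, so the anti-diagonal restriction loses nothing).

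There is, however, a real gap in your argument for (3). The comultiplication $\Delta$ is $G\times G$-equivariant for the action in which the first $G$ acts by \emph{left} translation on the first tensor factor and the second $G$ by \emph{right} translation on the second. This bounds only the left-$T$-weights of the first factor and the right-$T$-weights of the second; it tells you nothing about the remaining two $T$-gradings on $k[G]\otimes k[G]$. So your assertion that the image of $\on{fil}_\nu k[G]$ under $\Delta$ has weights $\preceq(\nu^*,\nu)$ ``in each tensor factor'' does not follow from equivariance, and $\on{fil}_\nu k[G]\otimes\on{fil}_\nu k[G]$ is not obviously the maximal submodule satisfying just the two outer constraints.

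The clean fix is to characterize $\on{fil}_\nu k[G]$ as the set of $f\in k[G]$ whose right-$G$-translates span a submodule $V_f$ with all weights $\preceq\nu$. (One inclusion is clear; for the other, note that if a $G$-module $V$ has all weights $\preceq\nu$ then by $W$-invariance of the weight multiset it also has all weights $\succeq w_0(\nu)$, hence $V^*$ has weights $\preceq\nu^*$, so matrix coefficients of $V$ lie in $k[G]_{\preceq(\nu^*,\nu)}$.) With this description, if $\{e_i\}$ is a basis of $V_f$ and $\Delta(e_j)=\sum_i e_i\otimes a_{ij}$, then $e_i\in V_f\subset\on{fil}_\nu k[G]$ and each $a_{ij}$, being a matrix coefficient of $V_f$, also lies in $\on{fil}_\nu k[G]$; hence $\Delta(\on{fil}_\nu k[G])\subset\on{fil}_\nu k[G]\otimes\on{fil}_\nu k[G]$.
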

\begin{proof}
Properties (1)--(3)  are clear. 
Property (4) follows from Theorem~\ref{T:prop of good fil} (4) and Proposition~\ref{P: good implies flat}. 
\end{proof}
Write $T^+_\ad=\Spec R$. This is a natural monoid (in fact, the affine space with coordinate function indexed by simple roots, and equipped with  coordinate multiplication), containing the adjoint torus $T_\ad$ of $G$ as the open subset of the group of invertible elements. In particular, $R$ has a coalgebra structure. To avoid possible confusion of notations in later discussions, we write $\bar{e}^{\al}$ (instead of $e^\al$) for the coordinate function on $T^+_\ad$ corresponding the simple root $\al$. The comultiplication sends $\bar{e}^{\al}$ to $\bar{e}^{\al}\otimes \bar{e}^{\al}$.

According to the discussion of Rees algebra in \S~\ref{SSS: rees alg}, $R_{\xch(T)^+_{\on{pos}}}k[G]$ is an $R$-algebra, and the map $R\to R_{\xch(T)^+_{\on{pos}}}k[G]$ is also a coalgebra homomorphism. Then 
$$V_G:=\Spec R_{\xch(T)^+_{\on{pos}}}k[G]$$ is a monoid, which is usually called the  \emph{Vinberg monoid} (at least when $G$ is semisimple and simply-connected, see Remark~\ref{R: comp def} below). In addition, it is 
equipped with a monoid homomorphism 
\begin{equation}
\label{E:abel map}
\frakd: V_G\to T^+_\ad,
\end{equation}
usually called the \emph{abelianization map}. We pick two distinguished representatives for the open and closed  $T_\mathrm{ad}$-orbit on $T_\mathrm{ad}^+$: ${\bf 1} = (1, \dots, 1), {\bf 0} = (0, \dots, 0) \in \AAA^r \cong T_\mathrm{ad}^+$. Then by \eqref{E: Rees fiber dim},
\[\frakd^{-1}({\bf 1})\cong \Spec (k[G])=G, \quad \frakd^{-1}({\bf 0})\cong \Spec (\on{gr} k[G])=:\on{As}_G.\]
The affine scheme $\on{As}_G$ is usually called the \emph{asymptotic cone} of $G$. We will make use of the following basic facts.
\begin{prop}
\label{T:Vinberg}
\begin{enumerate}
\item Let $G\times^{Z_G}T$ be the quotient of $G\times T$ by the action of $Z_G$ given by $z\cdot (g,t)=(zg,zt)$. 
 The affine monoid $V_G$ contains the open affine scheme $G\times^{Z_G}T$ as the group of invertible elements, such that the abelianization map \eqref{E:abel map} extends the natural group homomorphism $G\times^{Z_G}T\to T_\ad, \ (g,t)\mapsto (t\mod Z_G)$.
In particular, there is a natural $G\times G\times T$-action on $V_G$, where $G\times G$ acts on $V_G$ by left and right translation  and $T$ acts on $V_G$ by multiplication. 
\item The map $\frakd$ is faithfully flat.
\end{enumerate}
\end{prop}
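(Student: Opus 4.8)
The plan is to read both assertions off the explicit presentation $V_G=\Spec A$, where $A=R_{\xch(T)^+_{\on{pos}}}k[G]=\bigoplus_{s\in\xch(T)^+_{\on{pos}}}\fil_s k[G]\cdot e^s$ and $\frakd$ is induced by the graded ring map $R=k[\xch(T_\ad)_{\on{pos}}]\to A$, $\bar e^\ga\mapsto 1\cdot e^\ga$.

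\emph{Part (1).} First I would compute the localization of $A$ at the multiplicative set generated by the $\bar e^\al$, $\al\in\Delta$; since $T_\ad\subset T^+_\ad$ is precisely the locus where all coordinate functions are invertible, this localization is $k[\frakd^{-1}(T_\ad)]$. Inverting these elements group-completes the grading from $\xch(T)^+_{\on{pos}}$ to $\xch(T)$, and the degree-$\la$ part of the localization is $\bigcup_{\ga}\fil_{\la+\ga}k[G]$. Using \eqref{E: fil on k[G]} and the fact that $\mathtt{Min}$ is a set of coset representatives for $\xch(T)/\xch(T_\ad)$, this union is exactly the subspace of $k[G]$ on which the central subgroup $Z_G$ (acting by translation, which from the left and from the right agree since $Z_G$ is central) acts through the character $\la|_{Z_G}$. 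Summing over $\la\in\xch(T)$ and comparing with $k[G\times^{Z_G}T]=(k[G]\otimes k[T])^{Z_G}$ identifies the localization with $k[G\times^{Z_G}T]$, compatibly with the structure map to $T_\ad=\Spec k[\xch(T_\ad)]$, which under this identification is $(g,t)\mapsto(t\bmod Z_G)$. Now $G\times^{Z_G}T$ is a group and is open in $V_G$ (being $\frakd^{-1}$ of the open subset $T_\ad$), while conversely any invertible element of the monoid $V_G$ is sent by the monoid homomorphism $\frakd$ to an invertible element of $T^+_\ad\cong\AAA^r$, hence into $T_\ad$; therefore the unit group of $V_G$ is exactly $\frakd^{-1}(T_\ad)=G\times^{Z_G}T$, and on it $\frakd$ is the homomorphism just described. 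Finally, the left/right translation action of $G\times G$ preserves each $\fil_s k[G]$ (these are $G\times G$-submodules by construction) and fixes $1\in\fil_0 k[G]$, so $G\times G$ acts on $V_G$ over $T^+_\ad$; and the $\xch(T)^+_{\on{pos}}$-grading of $A$, which group-completes to $\xch(T)$, gives a commuting $T$-action on $V_G$ covering the $T_\ad$-action on $T^+_\ad$. This yields the asserted $G\times G\times T$-action.

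\emph{Part (2).} Flatness of $\frakd$ is the statement that $A$ is flat over $R$, which follows from Proposition~\ref{P: good implies flat} applied to $k[G]$ with its good filtration as a $G\times G$-module (Theorem~\ref{T:prop of good fil}(4)), using that its graded pieces $\ttS_{\nu^*}\otimes\ttS_\nu$ are carried by the degrees $\nu\in\xch(T)^+_{\on{pos}}$ (Lemma~\ref{C: fil on kG}(4)) and passing to finite-dimensional submodules as in the proof of that proposition. To upgrade flatness to faithful flatness I would show $A$ is actually free over $R$: the ring $R=k[\xch(T_\ad)_{\on{pos}}]$ is graded by the sharp monoid $\xch(T_\ad)_{\on{pos}}$ with $R_0=k$, and $A$ is a graded $R$-module in the sense of \S\ref{SS:filtered ring}. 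Choosing homogeneous liftings $\{\widetilde n_i\}\subset A$ of a $k$-basis of $A/I_0A=\gr k[G]$, Lemma~\ref{L: graded Nakayama2}(2) gives a graded surjection $\pi$ from a free $R$-module $F=\bigoplus_i R\widetilde n_i$ onto $A$ whose reduction modulo $I_0$ is an isomorphism $F/I_0F\xrightarrow{\ \sim\ }\gr k[G]$. Since $A$ is flat, $\on{Tor}^R_1(A,R/I_0)=0$, so $\ker(\pi)\otimes_R R/I_0$ injects into $F/I_0F$ with image contained in $\ker(F/I_0F\to\gr k[G])=0$; hence $\ker(\pi)/I_0\ker(\pi)=0$, and the graded Nakayama lemma (Lemma~\ref{L: graded Nakayama2}(1)) forces $\ker(\pi)=0$. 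Thus $A\cong F$ is a nonzero free $R$-module, so $\frakd$ is faithfully flat.

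The routine but delicate points are the weight bookkeeping in Part (1) — matching $\bigcup_\ga\fil_{\la+\ga}k[G]$ with a $Z_G$-isotypic subspace of $k[G]$ and lining up the two $\xch(T)$-gradings — and, in Part (2), the passage from flatness to faithful flatness, which is where the graded-freeness argument carries the weight (a non-graded flat module over $\AAA^r$ need not be faithfully flat). I do not anticipate any essentially new difficulty beyond these.
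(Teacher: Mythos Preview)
Your proposal is correct and follows the paper's own (very terse) proof: Part~(1) is declared ``clear'' there and your localization computation is a reasonable unpacking of that, while for Part~(2) the paper simply cites Proposition~\ref{P: good implies flat}, exactly as you do. Your additional argument upgrading flatness to faithful flatness via freeness (flat $+$ graded Nakayama $+$ a $\mathrm{Tor}$ computation) is valid but more than strictly needed---since the fiber $\frakd^{-1}(\mathbf{0})=\on{As}_G$ is nonempty and the image of the flat map $\frakd$ is open and $T_\ad$-stable, surjectivity (hence faithful flatness) follows immediately from the fact that every nonempty closed $T_\ad$-stable subset of $T^+_\ad$ contains $\mathbf{0}$.
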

\begin{proof}
Part (1) is clear. Part (2) follows from Proposition~\ref{P: good implies flat}.
\end{proof}
\begin{rmk}
\label{R: comp def}
The original construction of the Vinberg monoid as  in \cite{Vi} (when $\cha k=0$) and in \cite{Ri3} (in general) is different. However, it is easy to see that $V_G$ is uniquely characterized by the properties in Proposition~\ref{T:Vinberg} and the fact that $\frakd^{-1}({\bf 0})=\on{As}_G$. Therefore $V_G$ is indeed what people usually call the Vinberg monoid.
\end{rmk}

Let $V_T$ be the closure of $T\times^{Z_G}T\subset G\times^{Z_G}T$  in $V_G$. The image of the $\xch(T)^+_{\on{pos}}$-filtration on $k[G]$ under the map $k[G]\to k[T]$ defines an $\xch(T)^+_{\on{pos}}$-filtration on $k[T]$ given by
\begin{equation}
\label{E:fil on k[T]}
\fil_\nu k[T]=\bigoplus_{\la_{\on{dom}}\preceq \nu}k\cdot e^{\la},
\end{equation}
where for a weight $\la$, $\la_{\on{dom}}$ denotes the unique dominant element in the Weyl group orbit $W\la$ of $\la$. 
Then the embedding $T\times^{Z_G}T\to V_T$ is given by
\begin{equation*}
\label{E:k[VT]}k[V_T]=\bigoplus_{(\la,\nu),\ \nu-\la_{\on{dom}}\in \xch(T_\ad)_{\on{pos}}} k(e_1^\la\otimes e_2^\nu)\subset\bigoplus_{(\la,\nu),\ \nu-\la\in \xch(T_\ad)} k(e_1^\la\otimes e_2^\nu)=k[T\times^{Z_G}T],
\end{equation*}
where $e_i^\mu$ are the corresponding character functions on the $i$th factor of $T\times^{Z_G}T$,
and the map $V_T\to T_\ad^+$ is given by $\bar e^\la\mapsto 1\otimes e_2^\la$ for $\la\in\xch(T_\ad)_{\on{pos}}$.

\subsection{The filtration on the weight spaces}
\label{S:the filtration}
To prepare our study of vector-valued conjugation invariant functions $\bfJ_G(V)$, we need to introduce a different filtration on each weight space of a representation $V$ of $G$. This is not directly related to the filtration we discussed in the previous subsections. 
Fix a simple root $\alpha$ of $G$.
Let $V$ be a representation of $G$ and let $\nu$ be a weight of $T$. Define a filtration on $V(\nu)$ as follows
\begin{equation}
\label{E: equiv def 1}
\on{fil}^\al_iV(\nu):=V(\nu)\cap (\Res_{G_\al}^GV)_{\preceq_\al\nu+i\al}.
\end{equation}
There are two equivalent descriptions of the filtration.
First, we claim that
\begin{equation}
\label{E:filtration on V(nu) 1}
\on{fil}^{\al}_{i}V(\nu)=\ker\Big(\bigoplus_{j\geq 1} E_{\al}^{(i+j)}: V(\nu)\to \bigoplus_{j\geq 1}V(\nu+(i+j)\al)\Big),
\end{equation}
Indeed, let $v$ be a vector in the right hand side of \eqref{E:filtration on V(nu) 1}. Then $\on{Dist}(G_\al)v$ is a $G_\al$-module whose weights $\preceq_\al \nu+i\al$. Conversely, if $v\in V(\nu)\cap V_{\preceq_\al\nu+i\al}$, then clearly $E_\al^{(i+j)}v=0$ for every $j\geq 1$. The claim follows.

In addition, recall that if $V$ is a finite dimensional representation of $G_\al$ whose weights $\preceq_{\al}\la$, then all of its weights $\succeq_\al s_\al(\la)$. It follows that we can also define the filtration as
\begin{equation}
\label{E: equiv def 2}
\on{fil}^\al_iV(\nu)=\ker\Big(\bigoplus_{j\geq 1}F^{(\langle\nu,\al^\vee\rangle+i+j)}_\al: V(\nu)\to \bigoplus_{j\geq 1} V\big(\nu-(\langle\nu,\al^\vee\rangle+i+j)\al\big)\Big).
\end{equation}

Next we define the multi-filtration we need. Let $T_\s$ denote the preimage of $T$ in the simply-connected cover $G_\s$ of $G$, and $\Gamma=S=\xch(T_\s)^+$ be the monoid of dominant weights, which acts on itself by translations. We identify 
\begin{equation}
\label{E: semigroup of dom weight}
\xch(T_\s)^+\cong \bN^\Delta, \ \mu\mapsto (\langle\mu,\al^\vee\rangle)_{\al\in\Delta}.
\end{equation}  Under this identification, the partial order on $\xch(T_\s)^+$ induced by the translation action (as in \S~\ref{SS: monoid and partial order}) is just the standard partial order on $\bN^\Delta$, as in Example~\ref{E: classical and mult fil} (2). (Note that this is different from the restriction to $\xch(T_\s)^+$ of partial order $\preceq$ on $\xch(T_\s)$  induced by the action of $\xch(T_\ad)_{\on{pos}}$.)
We define an $\xch(T_\s)^+$-filtration on $V(\nu)$ as in Example~\ref{E: classical and mult fil} (2), i.e.
\begin{equation*}
\label{E:filtration on V(nu) 2}
\on{fil}_\la V(\nu)=\bigcap_{\al\in\Delta} \on{fil}^\al_{\langle\la,\al^\vee\rangle}V(\nu).
\end{equation*}
We obtain a quadruple $(\Ga, S, M, \{\on{fil}_sM\}_{s\in S})=\big(\xch(T_\s)^+,\xch(T_\s)^+, V(\nu), \{\on{fil}_\la V(\nu)\}_{\la\in\xch(T_\s)^+}\big)$. 
The main result of this section is
\begin{theorem}
\label{P: equi dim}
Assume that $V$ admits a good filtration, then $\dim \gr V(\nu)=\dim V(\nu)$. 
\end{theorem}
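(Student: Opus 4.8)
The plan is to verify the criterion of Lemma~\ref{L: bases criterion}: taking $r=|\Delta|$ and the filtrations $\on{fil}^\al_\bullet V(\nu)$, $\al\in\Delta$, it suffices to produce a single basis $\mathcal B$ of $V(\nu)$ whose symbols $\{\overline{m}^\al : m\in\mathcal B\}$ form a basis of $\gr^\al V(\nu)$ for every simple root $\al$ --- that is, a basis adapted simultaneously to all $|\Delta|$ filtrations. When $|\Delta|\le 2$ there is nothing to prove: this is exactly Example~\ref{E: classical and mult fil}(2) with $r\le 2$, covered by Remark~\ref{E: Rees dimension jump}. So the content lies in the case $|\Delta|\ge 3$, where an arbitrary family of filtrations need not admit a common adapted basis; one must use that the $\on{fil}^\al$ come from the rank-one Levi subgroups $G_\al$ and that $V$ has a good filtration (so that $\Res_{G_\al}^G V$ does too, by Theorem~\ref{T:prop of good fil}(1)).

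First I would record two exactness facts. For a subrepresentation $V'\subset V$, maximality gives $(\Res_{G_\al}^G V')_{\preceq_\al\la}=V'\cap(\Res_{G_\al}^G V)_{\preceq_\al\la}$, so each $\on{fil}^\al$ is strict on subobjects and the Rees modules satisfy $R_S V'(\nu)\hookrightarrow R_S V(\nu)$ degreewise. Secondly, the functor $M\mapsto M_{\preceq_\al\la}$ is exact on good-filtered $G_\al$-modules: for $0\to V'\to V\to V''\to 0$ of such modules, Lemma~\ref{L:good sub} makes $V'_{\preceq_\al\la}$, $V_{\preceq_\al\la}$, $V''_{\preceq_\al\la}$, $V_{\preceq_\al\la}/V'_{\preceq_\al\la}$ and the cokernel $Q$ of $V_{\preceq_\al\la}/V'_{\preceq_\al\la}\hookrightarrow V''_{\preceq_\al\la}$ all good-filtered; then, testing against Weyl modules --- using $\on{Ext}^1_{G_\al}(\ttW_\mu,-)=0$ on good filtrations (Theorem~\ref{T:prop of good fil}(3)), the identifications $\Hom_{G_\al}(\ttW_\mu,M_{\preceq_\al\la})\cong\Hom_{G_\al}(\ttW_\mu,M)$ for $\mu\preceq_\al\la$, and surjectivity of $\Hom_{G_\al}(\ttW_\mu,V)\to\Hom_{G_\al}(\ttW_\mu,V'')$ --- one gets $\Hom_{G_\al}(\ttW_\mu,Q)=0$ for all $\mu$, hence $Q=0$. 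Passing to $\nu$-weight spaces, each $\on{fil}^\al_\bullet$ is exact on short exact sequences of good-filtered modules.

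Next I would reduce to $V=\ttS_\mu$ a Schur module: fix a good filtration $0=V_0\subset\cdots\subset V_n=V$ with $V_k/V_{k-1}\cong\ttS_{\mu_k}$ and build $\mathcal B$ inductively, at each step lifting an adapted basis of $\ttS_{\mu_k}(\nu)$ through the (termwise surjective, by the above) maps $\on{fil}^\al_i V_k(\nu)\twoheadrightarrow\on{fil}^\al_i\ttS_{\mu_k}(\nu)$ to basis vectors of $V_k(\nu)$ of the correct multidegree, compatibly with $\mathcal B_{k-1}$. For a Schur module I would use the geometric model $\ttS_\mu=\Gamma(G/B^-,\mathcal O(\mu))$: along a reduced word for $w_0$ the Demazure (Schubert) filtration of $\Gamma(G/B^-,\mathcal O(\mu))$ realizes the operators $E_\al^{(\bullet)}$ --- hence each $\on{fil}^\al_\bullet$ --- geometrically, and the basis read off from the iterated $\PP^1$-bundle structure of a Bott--Samelson resolution (a standard-monomial-type basis, cf.\ \S\ref{SS:fil via BW}) is adapted to all of them simultaneously; restricting to weight $\nu$ yields $\mathcal B$ for $\ttS_\mu(\nu)$.

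The crux, and the only genuinely non-formal step, is producing one basis compatible with all $|\Delta|$ filtrations at once. This is also what makes the inductive lifting subtle: an intersection $\bigcap_\al\on{fil}^\al_{\la_\al}$ need not surject onto the corresponding intersection in $\ttS_{\mu_k}(\nu)$ even though each individual $\on{fil}^\al$ does, so one cannot glue the individual lifts formally. The rigidity required is supplied by the flag-variety geometry above (or, equivalently, by the geometric-Satake / MV-cycle description of \S\ref{SS: fil via geomSat}), and verifying that the chosen geometric basis is compatible with every $\on{fil}^\al$ at once is where the real work lies.
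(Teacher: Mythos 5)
The high-level architecture matches the paper's: reduce to Schur modules, then invoke Lemma~\ref{L: bases criterion} together with a basis of $V(\nu)$ simultaneously adapted to all $\fil^\alpha$. But there is a genuine gap in the reduction step, and you have in fact already located it yourself without resolving it.

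Your argument shows that each \emph{single} filtration $\fil^\alpha_\bullet$ is exact on short exact sequences of good-filtered modules (via $\Res^G_{G_\alpha}$, Theorem~\ref{T:prop of good fil} and Lemma~\ref{L:good sub}); this part is correct. But, as you observe, exactness of each $\fil^\alpha_\bullet$ does not give exactness of the intersections $\fil_\lambda = \bigcap_\alpha \fil^\alpha_{\langle\lambda,\check\alpha\rangle}$, and the inductive lifting you propose requires precisely the surjectivity $\fil_\lambda V_k(\nu)\twoheadrightarrow \fil_\lambda \ttS_{\mu_k}(\nu)$, i.e.\ exactness of the \emph{multi}-filtration. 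The fix you gesture at — flag-variety or MV geometry — only addresses the other half of the problem (constructing an adapted basis of a single $\ttS_\mu(\nu)$); it says nothing about $\fil_\lambda$ being exact across a short exact sequence, so the reduction to Schur modules remains unjustified. The paper resolves this with Proposition~\ref{L: fil via inv}: the identification $\fil_\nu V(\mu-\nu)\cong (\ttS_{\mu^*}\otimes\ttS_\nu\otimes V)^G$ packages the entire multi-filtration as a single space of $G$-invariants, and then right-exactness follows in one stroke from $\on{H}^1(G,\ttS_{\mu^*}\otimes\ttS_\nu\otimes V')=0$ (Theorem~\ref{T:prop of good fil}(2),(3)), with no need to intersect $|\Delta|$ conditions by hand. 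That proposition is the missing ingredient; you cite \S\ref{SS:fil via BW} only in passing for the Schur-module basis, but its real use in this proof is for the reduction.

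A secondary issue: the claim that a Bott--Samelson / standard-monomial basis of $\ttS_\mu(\nu)$ is adapted to every $\fil^\alpha$ simultaneously is asserted rather than argued, and I don't believe it is immediate. The paper instead takes the MV basis (where the compatibility is seen geometrically from the behaviour of MV cycles in $\mathring{\Gr}_{\hat G,\mu}$ under the functor $\on{CT}_\alpha$, as in \S\ref{SS: fil via geomSat}), or Lusztig's canonical/semicanonical bases with a citation to \cite{Lu2}. If you want to use standard monomial theory you would need to actually establish Proposition~\ref{P: canonical basis} for that basis.
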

We prove this theorem here, assuming two ingredients that will be established in \S~\ref{SS:fil via BW}--\S~\ref{SS: fil via geomSat}.
\begin{proof}
We first reduce to the case when $V$ is a Schur module. Indeed, suppose we have a short exact sequence of $G$-modules
\[0\to V'\to V\to V''\to 0.\]
We deduce easily from \eqref{E:filtration on V(nu) 1} or \eqref{E: equiv def 2} an exact sequence $0\to \fil_\la V'(\nu)\to \fil_\la V(\nu)\to \fil_{\la} V''(\nu)$. In addition, if $V'$ admits a good filtration, by Theorem~\ref{T:prop of good fil} and Proposition~\ref{L: fil via inv} below, this sequence is also exact on the right.

It follows that it is enough to prove the theorem for a Schur module $V$. In this case, by Lemma~\ref{L: bases criterion}, it is a consequence of the following proposition.
\end{proof}
\begin{prop}
\label{P: canonical basis}
Let $V$ be a Schur module.
Then there exists a basis $\{v_j\}$ of $V(\nu)$ such that for every simple root $\al$, the corresponding symbols $\{\overline{v_j}\}$ for the filtration $\on{fil}^\al$ form a basis of the associated graded $\gr^\al V(\nu)=\oplus_i \on{fil}^\al_iV(\nu)/\on{fil}^\al_{i-1}V(\nu)$.
\end{prop}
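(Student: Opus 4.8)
The plan is to realize $V=\ttS_\lambda=\Gamma(G/B^-,\mO_{G/B^-}(\lambda))$ and to extract the basis from the geometry of the flag variety. What makes such a basis exist at all — recall that for an arbitrary $\xch(T_\s)^+$-filtered space there is in general no common adapted basis (Remark~\ref{E: Rees dimension jump}) — is that a Schur module carries a basis that is crystal-compatible in \emph{all} simple directions at once. Since no hypothesis on $\cha k$ is in force, this basis has to be characteristic-free, so I would work with the Lakshmibai--Seshadri / Littelmann path basis $\{p_\pi\}$ of $\Gamma(G/B^-,\mO_{G/B^-}(\lambda))$, indexed by the Lakshmibai--Seshadri paths $\pi$ of shape $\lambda$ (equivalently, by the crystal $\mathbb B(\lambda)$), rather than with the canonical basis of a quantum group. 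Each $p_\pi$ is a $T$-weight vector of weight $\pi(1)$. Fix the weight $\nu$ and set $\{v_j\}:=\{p_\pi:\pi(1)=\nu\}$, a basis of $V(\nu)$; the claim is that this basis works for every simple $\al$ simultaneously.

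For a simple root $\al$, write $\epsilon_\al(\pi)$ for the maximal $n$ with $\tilde e_\al^{\,n}\pi\neq 0$, where $\tilde e_\al$ is the root operator on paths. The first step is to show $p_\pi\in\on{fil}^\al_{\epsilon_\al(\pi)}V(\nu)$. By \eqref{E:filtration on V(nu) 1} this amounts to $E_\al^{(j)}p_\pi=0$ for all $j>\epsilon_\al(\pi)$, i.e.\ to the compatibility of the path basis with the divided powers of $E_\al$ along the $\al$-strings of $\mathbb B(\lambda)$; this is the input that must be extracted from the geometry of $G/B^-$ (Frobenius splitting, and the behaviour of path vectors under the $\bP^1$-fibration $G/B^-\to G/P_\al^-$), and is the content of \S\ref{SS:fil via BW}. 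Granting it, $\on{span}\{p_\pi:\pi(1)=\nu,\ \epsilon_\al(\pi)\le i\}\subseteq\on{fil}^\al_iV(\nu)$ for every $i$.

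The second step upgrades this inclusion to an equality by a dimension count. Since $\al$ is simple, $G_\al$ is the Levi factor of the minimal parabolic $P_\al\supset B$, so $\Res^G_{G_\al}V$ admits a good filtration by Theorem~\ref{T:prop of good fil}(1); writing its associated graded as a direct sum of $G_\al$-Schur modules and applying $\mathfrak{sl}_2$-theory (conveniently through \eqref{E: equiv def 2}) gives $\dim\on{fil}^\al_iV(\nu)=\#\{\pi:\pi(1)=\nu,\ \epsilon_\al(\pi)\le i\}$, the right-hand side being read off Littelmann's character formula together with the standard fact that the $\al$-strings of $\mathbb B(\lambda)$ have the same multiset of shapes as that good filtration. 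Combined with the first step this forces $\on{fil}^\al_iV(\nu)=\on{span}\{p_\pi:\pi(1)=\nu,\ \epsilon_\al(\pi)\le i\}$. In particular $p_\pi$ lies in $\on{fil}^\al_{\epsilon_\al(\pi)}V(\nu)$ but not in $\on{fil}^\al_{\epsilon_\al(\pi)-1}V(\nu)$ (being a basis vector, it is not a combination of the others), so its symbol $\overline{p_\pi}$ for $\on{fil}^\al$ is homogeneous of degree $\epsilon_\al(\pi)$ and the $\{\overline{p_\pi}\}$ form a basis of $\gr^\al V(\nu)$. As $\al$ was an arbitrary simple root, the same basis $\{v_j\}$ does the job for all of them; plugging it into Lemma~\ref{L: bases criterion} (together with Proposition~\ref{L: fil via inv}) then yields Theorem~\ref{P: equi dim}.

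I expect the main obstacle to be the first step: producing a single basis of $V(\nu)$ whose interaction with every divided power $E_\al^{(j)}$ is governed by one combinatorial statistic per simple direction. Over $\QQ$ this is classical crystal- and canonical-basis theory, but characteristic-free it is the real work, and is exactly what one must obtain from the flag variety in \S\ref{SS:fil via BW} — via standard monomial theory on $G/B^-$, or alternatively via a Bott--Samelson resolution $Z\to G/B^-$, whose iterated $\bP^1$-bundle structure yields a PBW-type basis once the relevant higher cohomology is shown to vanish (again a consequence of Frobenius splitting). Everything else — the $\mathfrak{sl}_2$ bookkeeping and the reduction along short exact sequences already performed in the proof of Theorem~\ref{P: equi dim} — is formal given that input.
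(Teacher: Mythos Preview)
Your proposal takes a genuinely different route from the paper. The paper's proof (\S\ref{SSS:constr of basis}) uses the geometric Satake correspondence: it identifies $V(\nu)$ with the top Borel--Moore homology of $S_\nu\cap\mathring\Gr_{\hat G,\mu}$ and takes the fundamental classes of its irreducible components --- the MV basis --- as the $\{v_j\}$. Proposition~\ref{P:fil alpha in terms of geometric Satake} realizes $\fil^\al_i V(\nu)$ geometrically as the kernel of restriction to an open subset $S_\nu^{>\nu+i\al}\subset S_\nu$; a fundamental class lies in this kernel exactly when the corresponding cycle misses that open set, and those that do visibly span. The paper also remarks that Lusztig--Kashiwara's canonical basis or Lusztig's semicanonical basis would work equally well, citing \cite[Theorem~3.1, Corollary~3.9]{Lu2}, so your instinct that a crystal-compatible basis suffices is on target.

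One caution: \S\ref{SS:fil via BW} does not contain what you expect. It proves Proposition~\ref{L: fil via inv} via a restricted dual BGG complex and constructs no basis at all. Your Step~1 --- that $E_\al^{(j)}p_\pi=0$ whenever $j>\epsilon_\al(\pi)$, equivalently that the span of the $p_\pi$ with $\epsilon_\al(\pi)\le i$ across all weights is $G_\al$-stable --- is precisely the string property that \cite{Lu2} secures for the (semi)canonical basis, but for the Lakshmibai--Seshadri/Littelmann basis in arbitrary characteristic you would need to supply an independent argument or reference; the standard compatibility results for that basis (with Demazure modules, with Schubert varieties) point in a somewhat different direction and do not give this directly. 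If you can locate that input, your dimension-count Step~2 via the good filtration of $\Res^G_{G_\al}V$ is fine and the outline goes through; otherwise the MV-cycle construction or the direct appeal to \cite{Lu2} is the cleaner path.
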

There are several ways to construct such a basis. For example, the canonical basis constructed by Lusztig and Kashiwara, or the semi-canonical basis constructed by Lusztig satisfies the required properties in the proposition (see \cite[Theorem 3.1 and Corollary 3.9]{Lu2}).
At the end of \S~\ref{SS: fil via geomSat}, we will give an alternative construction of a basis with the needed properties using the MV basis from the geometric Satake correspondence.\footnote{This is the set of basis we use in \cite{XZ}. On the other hand, it is expected that the MV basis coincide with the semi-canonical basis. We thank Lusztig for pointing this out.}

\begin{rmk}
\label{R:mult weight}
For $\mu,\nu\in\xch(T)$ with $\mu$ dominant, we define
\[P_{\mu,\nu}(q):=\sum_{\la\in\xch(T_\s)^+} \dim \gr_{\la}\mathtt S_\mu(\nu)q^\la\in \bZ[\xch(T_\s)^+]\]
as an element in the monoid algebra for $\xch(T_\s)^+$. (Here we use $q^\la$ instead of $e^\la$ to denote the element in $\bZ[\xch(T_\s)^+]$ corresponding to $\la$.)
If we identify $\xch(T_\s)^+$ with $\bN^l$ as above, it becomes to a polynomial of $l$-variables
\[P_{\mu,\nu}(q_1,\ldots,q_l)=\sum \dim \gr_{(s_1,\ldots,s_l)}\mathtt S_\mu(\nu)q_1^{s_1}\cdots q_l^{s_l}.\]
Then Theorem~\ref{P: equi dim} implies that 
$P_{\mu,\nu}(1)=\dim \mathtt S_\mu(\nu)$. This may be viewed as a multivariable analogue of the Lusztig--Kato polynomials (which give the $q$-analogue of the weight multiplicities \cite{Lu}).
\end{rmk}

\subsection{The filtration via Borel--Weil}
\label{SS:fil via BW}
We will explain how to obtain the filtration defined in \S~\ref{S:the filtration} using the geometry of flag varieties.

Recall that by Frobenius reciprocity, there is a canonical morphism of $B^-$-modules $\mathtt S_\mu\to k_\mu$, and a canonical morphism of $B$-modules $\mathtt S_\mu\to k_{w_0(\mu)}$.

Let $\mu,\nu$ be two dominant weights. Let $V$ be a representation of $G$.
The map $\mathtt S_{\mu^*}\otimes \mathtt S_\nu\otimes V\to k_{-\mu}\otimes k_\nu\otimes V$ induces a natural map 
\begin{equation*}
\label{E: leading term}
\ell_{\mu,\nu}: (\mathtt S_{\mu^*}\otimes \mathtt S_\nu\otimes V)^G\to (k_{-\mu}\otimes k_{\nu}\otimes V)^T\cong V(\mu-\nu).
\end{equation*}
Note that the collection of the maps $\{\ell_{\mu,\nu}\}$ has the following property. 
For a dominant weight $\eta$, there is a canonical $G$-equivariant map in $\Hom_G(\mathtt W_\eta, \mathtt S_\eta)$, which gives a $G$-invariant element $\sigma_\eta\in \mathtt S_{\eta^*}\otimes \mathtt S_\eta$.
Multiplying by $\sigma_\eta$ induces
\[ (\mathtt S_{\mu^*}\otimes \mathtt S_\nu\otimes V)^G\to (\mathtt S_{\mu^*+\eta^*}\otimes \mathtt S_{\nu+\eta}\otimes V)^G.\]
We denote this map still by $\sigma_\eta$. Then it is clear that
\begin{equation}
\label{E: comp mult}
\ell_{\mu+\eta, \nu+\eta}\circ \sigma_\eta=\ell_{\mu,\nu}.
\end{equation}

This subsection is devoted to proving the following.
\begin{prop}
\label{L: fil via inv}
The map $\ell_{\mu, \nu}$ above induces an isomorphism $$(\mathtt S_{\mu^*}\otimes \mathtt S_\nu\otimes V)^G\cong \on{fil}_{\nu}V(\mu-\nu)\subset V(\mu-\nu).$$
Here by abuse of notations, the image of $\nu$ under the map $\xch(T)\to\xch(T_\s)$ is still denoted by $\nu$.
\end{prop}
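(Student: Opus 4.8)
The plan is to identify $(\mathtt S_{\mu^*}\otimes\mathtt S_\nu\otimes V)^G$ with a space of sections on the flag variety, and then compute that space of sections in terms of the distributions $E_\al^{(n)}$ acting on $V$, matching the kernel description in \eqref{E:filtration on V(nu) 1}. First I would use the geometric interpretation $\mathtt S_\nu=\Gamma(G/B,\mO(w_0(\nu)))$ together with the fact that $\mathtt S_{\mu^*}=\mathtt S_\mu^*$ (by definition of Weyl modules, $\mathtt W_\mu = \mathtt S_{\mu^*}^*$, so $\mathtt S_{\mu^*}=\mathtt W_\mu^*$); thus $(\mathtt S_{\mu^*}\otimes \mathtt S_\nu\otimes V)^G=\Hom_G(\mathtt W_\mu, \mathtt S_\nu\otimes V)$. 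By Frobenius reciprocity, $\mathtt S_\nu\otimes V=\on{ind}_{B^-}^G(k_\nu\otimes V)$, so this becomes $\Hom_{B^-}(\mathtt W_\mu, k_\nu\otimes V)$. Since $\mathtt W_\mu$ is generated as a $G$-module (hence as a $\on{Dist}(G)$-module) by a highest weight vector of weight $\mu$, a $B^-$-homomorphism $\mathtt W_\mu\to k_\nu\otimes V$ is determined by the image of this highest weight vector, which must land in the $(\mu-\nu)$-weight space of $V$; this recovers the map $\ell_{\mu,\nu}$ and shows it is injective.

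The remaining task is to pin down the image. A vector $v\in V(\mu-\nu)$ arises this way if and only if the assignment (highest weight vector) $\mapsto v\otimes(\text{canonical generator of }k_\nu)$ extends to a $B^-$-map out of $\mathtt W_\mu$, equivalently a $\on{Dist}(B^-)$-map. The relations in $\mathtt W_\mu$ among the highest weight generator are exactly those coming from the Serre-type relations and, crucially, the vanishing $E_\al^{(n)}\cdot(\text{h.w.\ vector})=0$ for $n\geq 1$ and — the key one — $E_\al^{(\langle\mu,\al^\vee\rangle+1)}$ applied after lowering, or dually, using the $F$-side of \eqref{E: equiv def 2}. Concretely, I would argue via the opposite unipotent: $k_\nu\otimes V$ restricted to $B^-=TU^-$ is controlled by the $\on{Dist}(U^-)$-action, and the condition that $v$ extends is that $v$ is killed by the same operators that kill the highest weight vector of $\mathtt W_\mu$ modulo the weight shift by $\nu$. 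Since $\mathtt W_\mu$ has, for each simple $\al$, the defining relation that the $\al$-string through the highest weight vector has length $\langle\mu,\al^\vee\rangle$ (it is the $\al$-highest module of that dimension), one checks that $v\otimes k_\nu$ extends over the $\al$-root $\SL_2$ iff $v\in\on{fil}^\al_{\langle\nu_\s,\al^\vee\rangle}V(\mu-\nu)$, which is precisely \eqref{E: equiv def 1}. Taking the intersection over all simple $\al$, and using that $\on{Dist}(B^-)$ is generated by $\on{Dist}(T)$ and the $\on{Dist}(G_\al^-)$, gives $\on{Im}(\ell_{\mu,\nu})=\bigcap_\al \on{fil}^\al_{\langle\nu,\al^\vee\rangle}V(\mu-\nu)=\on{fil}_\nu V(\mu-\nu)$.

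I expect the main obstacle to be the rank-one reduction step: verifying precisely that extendability of the $B^-$-map over the $\SL_2$ (or $\on{Dist}(G_\al)$) corresponding to $\al$ is equivalent to the kernel condition $\bigoplus_{j\geq 1}E_\al^{(i+j)}v=0$ with $i=\langle\nu,\al^\vee\rangle$. This requires a careful analysis of $\mathtt W_\mu$ as a module over $\on{Dist}(G_\al)$ — in particular that, after restricting to the $\al$-root subgroup, the cyclic $\on{Dist}(G_\al)$-submodule generated by the highest weight vector is the Weyl module for $G_\al$ of highest weight $\langle\mu,\al^\vee\rangle$ (or a quotient thereof), and then translating the Frobenius-reciprocity computation for $G_\al$ into the explicit distribution condition. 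The characteristic-free bookkeeping with divided powers $E_\al^{(n)}$, $F_\al^{(n)}$ is where one must be most careful; but all the ingredients are standard $\SL_2$-representation theory over $\on{Dist}(\SL_2)$, and the compatibility \eqref{E: comp mult} with the maps $\sigma_\eta$ provides a useful consistency check (increasing $\eta$ shifts $i$ and should match the nesting $\on{fil}^\al_i\subset\on{fil}^\al_{i+1}$).
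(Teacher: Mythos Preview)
Your setup and the injectivity argument are fine: the identification
\[
(\mathtt S_{\mu^*}\otimes\mathtt S_\nu\otimes V)^G\;=\;\Hom_G(\mathtt W_\mu,\mathtt S_\nu\otimes V)\;=\;\Hom_{B^-}(\mathtt W_\mu,k_\nu\otimes V)
\]
is correct, and evaluating at the highest weight vector gives the map $\ell_{\mu,\nu}$ into $V(\mu-\nu)$. The inclusion $\on{Im}(\ell_{\mu,\nu})\subset\on{fil}_\nu V(\mu-\nu)$ also follows as you say: since $F_\al^{(n)}v_\mu=0$ in $\mathtt W_\mu$ for $n>\langle\mu,\al^\vee\rangle$ (the weight $\mu-n\al$ simply does not occur), and $U^-$ acts trivially on $k_\nu$, any $v$ in the image is annihilated by $F_\al^{(n)}$ for $n>\langle\mu,\al^\vee\rangle$, which is exactly the condition in \eqref{E: equiv def 2} with $i=\langle\nu,\al^\vee\rangle$.

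The gap is in the reverse inclusion. You need that whenever $v$ satisfies the rank-one conditions for all simple $\al$, the assignment $v_\mu\mapsto 1\otimes v$ extends to a $\on{Dist}(U^-)$-module map. This is equivalent to the assertion that the annihilator of $v_\mu$ in $\on{Dist}(U^-)$ is generated, as a \emph{left ideal}, by the elements $F_\al^{(n)}$ with $n>\langle\mu,\al^\vee\rangle$ and $\al$ simple. Your justification --- ``$\on{Dist}(B^-)$ is generated by $\on{Dist}(T)$ and the $\on{Dist}(G_\al^-)$'' --- does not establish this: generation of the \emph{algebra} by rank-one pieces says nothing about generation of an \emph{ideal of relations} in a cyclic module by rank-one relations. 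This is precisely a BGG-type statement (exactness of $\bigoplus_\al M(s_\al\cdot\mu)\to M(\mu)\to\mathtt W_\mu\to 0$), which is a genuine theorem even in characteristic $0$ and is not just ``standard $\SL_2$ representation theory over $\on{Dist}(\SL_2)$''.

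The paper proceeds dually: it rewrites the invariant space as $(\mathtt S_\nu\otimes k_{-\mu}\otimes V)^B$ and uses the exact sequence \eqref{E: DBGG}
\[
0\to\mathtt S_\nu\to\on{ind}_T^Bk_\nu\to\bigoplus_\al\on{ind}_{B_\al}^B\ttM^\al_{s_\al(\nu)-\al},
\]
whose exactness is established \emph{geometrically} (via the sheaf sequence \eqref{E: exact on Cdelta} on the codimension-$\leq 1$ open $C_\Delta\subset G/B^-$). Taking $B$-invariants of this sequence is left exact, and the rank-one computation \eqref{E: express kernel} then identifies each $\al$-kernel with $\on{fil}^\al_{\langle\nu,\al^\vee\rangle}V(\mu-\nu)$. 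So the global ``patching'' of rank-one conditions is supplied by the exactness of \eqref{E: DBGG}, not by an algebra-generation argument. Your approach is the dual of this, and to complete it you would need the corresponding presentation of $\mathtt W_\mu$; that statement is true, but it is essentially equivalent to (the dual of) \eqref{E: DBGG} and requires independent proof.
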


We will later in \S~\ref{SS:proof of Borel Weil filtration} deduce this proposition from the following natural exact sequence of $B$-modules
\begin{equation}
\label{E: DBGG}
0\to \mathtt S_{\nu}\to \on{ind}_{T}^{B} k_{\nu}\to \bigoplus_{\al} \on{ind}_{B_\al}^B \ttM^\al_{s_\al(\nu)-\al}.
\end{equation}
Here, for a weight $\la\in \xch(T)$, 
$$\ttM^\al_\la:=\dist(G_\al)\otimes_{\dist(B_\al)}k_\la$$ is the \emph{restricted Verma module} of $G_\al$ of highest weight $\la$. 
The sequence \eqref{E: DBGG} is in fact the first two terms of the (restricted) dual BGG complex. Since this sequence in the above form (and in characteristic $p>0$) might be not familiar to some readers, we give a self-contained construction.

\subsubsection{Case of $\SL_2$}
\label{SS: SL2}
First, we review some facts about representations of $G=\SL_2$. Let $B$ (resp. $B^-$) be the subgroup of upper (resp. lower) triangular matrices in $\SL_2$, and let $T=B\cap B^-\cong \bG_m$ be the group of diagonal matrices. We identify $G/B^-=\bP^1$ in the way such that 
\begin{itemize}
\item $j:\bA^1\to \bP^1$ corresponds to the open $B$-orbit and $i:\{\infty\}\to\bP^1$ corresponds to the closed $B$-orbit, 
\item $0\in \bA^1$ is fixed by $B^-$. 
\end{itemize}
For $n \in \NN$, consider the exact sequence of sheaves
\begin{equation}
\label{E: exact on P1}
0\to \mO(n)\to j_*j^*\mO(n)\to j_*j^*\mO(n)/\mO(n)\to 0 
\end{equation}
on $\bP^1$. Since the sequence is $B$-equivariant, we may also regard it as an exact sequence on $[B\backslash G/B^-]$ via descent.

We fix two nonzero sections $t_0$ and $t_\infty$ of $\mO(1)$ that vanish at $0$ and $\infty$ respectively and view $x=t_0/t_\infty$ as a coordinate function on $\bA^1$. Then
\[\Gamma(\bA^1, \mO(n))= k[x]t_\infty^n\cong k[x]=:\ttM^{\vee}_n\]
is a $(\on{Dist}(G),B)$-module with highest weight $n$, on which $e$ acts as $\frac{d}{dx}$, $h$ acts as $n-2x\frac{d}{dx}$ and $f$ acts as $x(n-x\frac{d}{dx})$. 
Note that as a $\on{Dist}(G)$-module, it is isomorphic to the restricted dual Verma module, and as a  $B$-module, it is isomorphic to the induced representation $\on{ind}_T^B k_n$. The subspace 
$$\Gamma(\bP^1,\mO(n))\cong \{f(x) \in k[x]\mid \deg f\leq n\}:=\mathtt S_n$$ is the Schur module for $\SL_2$ of highest weight $n$. The section of the quotient sheaf is 
$$\Gamma(\bP^1,j_*j^*\mO(n)/\mO(n))\cong x^{n+1}k[x]=:\ttM_{-n-2}, $$ 
which as a $\on{Dist}(G)$-module is isomorphic to the restricted Verma module for $\SL_2$ of highest weight $-n-2$. 

For a $B$-module $V$, write $V=\oplus V(j)$ for the weight decomposition with respect to $T$, then
\[(V\otimes \ttM^\vee_n)^B=\Big\{ \sum_i (-1)^ie^{(i)}v\otimes x^i\mid v\in V(-n)\Big\}\cong V(-n). \]
It follows that the following diagram is commutative with horizontal sequences exact 
\begin{equation}
\label{E: express kernel}
\xymatrix@C=30pt{
0 \ar[r] & (V\otimes \mathtt S_n)^B \ar@{=}[d] \ar[r] & (V\otimes \ttM^\vee_n)^B \ar[rr] \ar[d]^\cong && (V\otimes \ttM_{-n-2})^B \ar[d] \\
0 \ar[r] &  (V\otimes \mathtt S_n)^B \ar[r]& V(-n)  \ar[rr]^-{\oplus_{i\geq 1}(-1)^{n+i} e^{(n+i)}} && \bigoplus_{i\geq 1}V(n+2i),
}
\end{equation}
where the right vertical map is the inclusion 
$(V\otimes \ttM_{-n-2})^B \hookrightarrow (V\otimes \ttM_{-n-2})^T \cong \bigoplus_{i\geq 1}V(n+2i)$.

Note that the above discussions may equally apply to $G_\al$, with $n$ replaced by a weight $\nu$ of $T$ such that $\langle\nu,\al^\vee\rangle\geq 0$. The corresponding $(\on{Dist}(G_\al), B_\al)$-modules $\ttM_n^\vee$, $\ttS_n$, and $\ttM_{-n-2}$ will be denoted by $\ttM^{\al,\vee}_\nu$, $\ttS^\al_\nu$, and $ \ttM^\al_{s_\al(\nu)-\al}$, respectively.

\subsubsection{General case}Recall that the $B$-orbits on $G/B^-$ are parameterized by the Weyl group $W$. For $w\in W$, let $C_w$ denote the corresponding $B$-orbit through $\dot{w}\in G/B^-$, where $\dot{w}$ is any lifting of $w$ to $N_G(T)$. In particular, $C_e$ is open and isomorphic to $B/T$, and $C_{s_\al}$'s are of codimension one, where $s_{\al}$ is the simple reflection  corresponding to the simple root $\al$. In addition,
the natural map $B\times^{B_\al} (G_{\al}/B_{\al}^-)\to G/B^-$ is an open embedding, with the image $C_{\leq s_\al}=C_e\sqcup C_{s_\al}$.
Let $C_\Delta=\cup_{\al\in\Delta} C_{\leq s_\al}$ be the open subset of $G/B^-$ complement to the union of $B$-orbits of codimension at least two. The inclusion $j: C_e\to C_\Delta$ is open and the inclusion $i: \sqcup C_{\al}\to C_\Delta$ is closed. For simplicity, the restriction of $\mO_{G/B^-}(\nu)$ to $C_\Delta$ is denoted by $\mO(\nu)$. Consider the following exact sequence of $B$-equivariant quasi-coherent sheaves on $C_\Delta$
\begin{equation}
\label{E: exact on Cdelta}
0\to \mO(\nu)\to j_*j^*\mO(\nu)\to  j_*j^*\mO(\nu)/\mO(\nu)\to 0.
\end{equation} Note that $C_e\cong B/T$, and $\mO(\nu)|_{C_e}\cong B\times^Tk_\nu$. In addition, the restriction of the map $j_*j^*\mO(\nu)\to  j_*j^*\mO(\nu)/\mO(\nu)$ to $C_{\leq s_\al}$ is the pullback of \eqref{E: exact on P1} (with $n$ replaced by $\nu$) along the natural projection $C_{\leq s_\al}\cong B\times^{B_\al}(G_\al/B_{\al}^-)\to[B_\al\backslash G_\al/B_\al^-]$. It follows from these observations and the previous discussions about $\SL_2$ that \eqref{E: DBGG} is obtained by taking the global sections of \eqref{E: exact on Cdelta}.

\subsubsection{Proof of Proposition~\ref{L: fil via inv}}
\label{SS:proof of Borel Weil filtration}
We may assume that $V$ is finite dimensional. 
Using Frobenius reciprocity, we have
\[
(\mathtt S_{\mu^*}\otimes \mathtt S_\nu\otimes V)^G = \Hom_G(\mathtt W_{\nu^*},\mathtt S_{\mu^*}\otimes V) =  \Hom_B(\mathtt W_{\nu^*}, k_{-\mu}\otimes V) = \Hom_B(k, \mathtt S_{\nu}\otimes k_{-\mu}\otimes V).
\]                                                      
Then by \eqref{E: DBGG}, there is an exact sequence
\[0\to \Hom_B(k, \mathtt S_{\nu}\otimes k_{-\mu}\otimes V)\to (k_\nu\otimes k_{-\mu}\otimes V)^T\to \bigoplus_\al(\ttM_{s_\al(\nu)-\al}^\al\otimes k_{-\mu}\otimes V)^{B_\al}.\]
It is easy to check that the resulting map $(\mathtt S_{\mu^*}\otimes \mathtt S_\nu\otimes V)^G= \Hom_B(k, \mathtt S_{\nu}\otimes k_{-\mu}\otimes V)\to (k_\nu\otimes k_{-\mu}\otimes V)^T$ is $\ell_{\mu,\nu}$. In addition, using \eqref{E: express kernel} and \eqref{E:filtration on V(nu) 1}, we see that the kernel of the map
$$V(\mu-\nu)=(k_\nu\otimes k_{-\mu}\otimes V)^T\to (\mathtt M_{s_\al(\nu)-\al}^\al\otimes k_{-\mu}\otimes V)^{B_\al}$$ is exactly $\on{fil}^\al_{\langle\nu,\al^\vee\rangle}V(\mu-\nu)$.
Proposition~\ref{L: fil via inv} now follows from the definition of the $\xch(T)^+$-filtration on $V(\mu-\nu)$.

\subsection{The filtration via geometric Satake correspondence}
\label{SS: fil via geomSat} 
We give a geometric construction of the above filtrations via the geometric Satake correspondence and in particular give a proof of Proposition~\ref{P: canonical basis}.

We refer to \cite{Gi, MV} for the geometric Satake correspondence (see also \cite{Z16, BR17} for an exposition).
Let $\hat G$ be the Langlands dual group of $G$ over $\bC$. Let $\Gr_{\hat G}=L\hat G/L^+\hat G$ denote its affine Grassmannian over $\bC$, equipped with the analytic topology. Here for an affine variety $Z$ over $\bC$, let $LZ$ (resp. $L^+Z$) denote its loop (resp. jet) space as usual, so $LZ(\bC) = Z(\bC((t)))$ and $L^+Z(\bC) = Z(\bC[[t]])$. Let $\on{P}_{L^+\hat G}(\Gr_{\hat G})$ be the category of $L^+\hat G$-equivariant perverse sheaves on $\Gr_{\hat G}$, with $k$-coefficients as in \cite{MV}. It is known from \emph{loc. cit.} that this is an abelian tensor category.
Recall that the geometric Satake correspondence is a natural equivalence of abelian tensor categories
\[\Sat: \Rep^f(G)\to \on{P}_{L^+\hat G}(\Gr_{\hat G}),\]
such that its composition with hypercohomology functor $\on{H}^*(\Gr_{\hat G},-)$ is isomorphic to the forgetful functor from $\Rep^f(G)$ to the category of finite dimensional $k$-vector spaces.
Let us also recall from {\it loc. cit.} the dictionary between some geometry and representation theory under this equivalence. For a weight $\nu\in\xch(T)=\xcoch(\hat T)$, let $t^\nu$ denote the $\hat T$-fixed point of $\Gr_{\hat G}$ corresponding to $\nu$ as usual, i.e. it is the image of $t\in \bC((t))^\times=L\bG_m(\bC)$ under the map $L\bG_m\xrightarrow{\nu} L\hat{T}\to L\hat{G}\to \Gr_{\hat{G}}$.

\begin{enumerate}
\item For each $\la\in\xch(T)^+$, let $\mathring{\Gr}_{\hat G,\la}$ denote the $L^+\hat G$-orbit through $t^\la$, and $\Gr_{\hat G,\la}$ its closure. They are $\langle2\check\rho,\la\rangle$-dimensional, where $2\check\rho$ is the sum of positive coroots of $G$.
Let $i_\la: \Gr_{\hat G,\la}\to \Gr_{\hat G}$ denote the closed embedding and $\mathring{i}_\la:\mathring{\Gr}_{\hat G,\la}\to \Gr_{\hat G}$ the locally closed embedding. Let $k[\langle2\check\rho,\la\rangle]$ denote the constant sheaf on $\mathring{\Gr}_{\hat G,\la}$, shifted to degree $-\langle2\check\rho,\la\rangle$. Then 
$$\Sat(\mathtt W_\la) \simeq {^p}\on{H}^0(\mathring{i}_{\la})_!k[\langle2\check\rho,\la\rangle],\quad \Sat(\mathtt S_\la)\simeq {^p}\on{H}^0(\mathring{i}_{\la})_*k[\langle2\check\rho,\la\rangle].$$ 
\item We fix a Borel subgroup $\hat B$ of $\hat G$, and a maximal torus $\hat T \subset \hat B$. Let $\hat U\subset \hat B$ denote the unipotent radical.  
Let $S_\nu$ be the $L\hat U$-orbit through $t^\nu$. Then the functor
\begin{equation}
\label{E:Fnu}F_\nu:= \on{H}^{\langle2\check\rho,\nu\rangle}_{S_\nu}(\Gr_{\hat G}, -): D_c^b(\Gr_{\hat{G}},k)\to \on{Vect}_k
\end{equation}
is exact when restricted to $\on{P}_{L^+\hat{G}}(\Gr_{\hat{G}})$, which corresponds to the weight functor $V\mapsto V(\nu)$ under the geometric Satake correspondence. 
\item For a simple root $\al$ of $G$, let $\hat P_\al$ be the standard parabolic subgroup whose Levi quotient $\hat G_\al$ is the Langlands dual group of $G_\al$.
There is the following diagram
\[\xymatrix{
&\Gr_{\hat P_\al}\ar_{q_\al}[dl]\ar^{i_\al}[dr]&\\
\Gr_{\hat G_\al}&&\Gr_{\hat G}.
}\]
The morphism $i_\alpha$ is a locally closed embedding and hence we also regard $S_\nu$ as a subscheme of $\Gr_{\hat P_\alpha}$.
The connected components of the affine Grassmannian $\Gr_{\hat G_\al}$ are parameterized by $\xch(T)/\bZ \al$. For $\theta\in \xch(T)/\bZ \al$, let $\Gr^\theta_{\hat G_\al}$ be the corresponding component, and $\Gr_{\hat P_\al}^\theta= q_\al^{-1}(\Gr_{\hat G_\al}^\theta)$.  The restrictions of $i_\al$ and $q_\al$ to $\Gr_{\hat P_\al}^\theta$ are denoted by $i^\theta_\al$ and $q_\al^\theta$ respectively. Recall that $\langle 2\check\rho,\al\rangle=\langle\check\al, \al\rangle$ and therefore $\langle 2\check\rho-\check\al, \theta\rangle$ makes sense, which we denote by $l_\theta$. Then there is a perverse exact functor 
$$\on{CT}_\al:=\bigoplus_\theta (q^\theta_{\al})_*(i_\al^{\theta})^![l_\theta]: \on{P}_{L^+\hat G}(\Gr_{\hat G})\to \on{P}_{L^+\hat G_\al}(\Gr_{\hat G_\al})$$ 
which corresponds under the geometric Satake correspondence to the restriction functor from $G$-representations to $G_\al$-representations. 
For a weight $\nu$, let $S_\nu^\al\subset\Gr_{\hat G_\al}$ be the $L\hat U_\al$-orbit through $t^\nu$, and $F^\al_\nu=\on{H}^{\langle\check\al,\nu\rangle}_{S^\alpha_\nu}(\Gr_{\hat G_\al}, -)$ the corresponding weight functor on $\on{P}_{L^+\hat G_\al}(\Gr_{\hat G_\al})$. Since $q_{\al}^{-1}(S_\nu^\al)=S_\nu$, the proper base change theorem implies that there is a canonical isomorphism $F_\nu\cong F_\nu^{\al}\circ\on{CT}_\al$, which corresponds to the natural identification $V(\nu)=(\Res^G_{G_\al}V)(\nu)$ under the geometric Satake correspondence.
\end{enumerate}

Now we give a geometric construction of the filtration defined in \S~\ref{S:the filtration}. First, we define a filtration on $S_\nu$ by open subsets 
\[S_\nu^{>\nu+i\al}:= q_\al^{-1}(S_\nu^\al\cap (\Gr_{\hat G_\al}-\Gr_{\hat G_\al, \nu+i\al})).\]

\begin{prop}
\label{P:fil alpha in terms of geometric Satake}
Let $V$ be a finite dimensional representation of $G$. Then we have an exact sequence
$$0\to \on{fil}^\al_iV(\nu)\to \on{H}^{\langle2\check\rho,\nu\rangle}_{S_\nu}(\Gr_{\hat G}, \Sat(V))\to \on{H}^{\langle2\check\rho,\nu\rangle}_{S_\nu^{>\nu+i\al}}(\Gr_{\hat G}, \Sat(V)).$$
\end{prop}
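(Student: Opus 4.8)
The plan is to reduce to the rank one group $G_\al$ and then to analyze the resulting statement by a recollement argument on $\Gr_{\hat G_\al}$ along the orbit closure $\Gr_{\hat G_\al,\nu+i\al}$. For the reduction I would use the canonical isomorphism of functors $F_\nu\cong F^\al_\nu\circ\on{CT}_\al$ recalled above, which comes from proper base change along $q_\al$ together with $q_\al^{-1}(S^\al_\nu)=S_\nu$ (viewed inside $\Gr_{\hat P_\al}$). Since $S_\nu^{>\nu+i\al}=q_\al^{-1}\big(S^\al_\nu\cap(\Gr_{\hat G_\al}-\Gr_{\hat G_\al,\nu+i\al})\big)$ is the $q_\al$-preimage of the \emph{open} subset $S^\al_\nu\cap(\Gr_{\hat G_\al}-\Gr_{\hat G_\al,\nu+i\al})\subset S^\al_\nu$, the same base change identifies the restriction map
\[
\on{H}^{\langle2\check\rho,\nu\rangle}_{S_\nu}(\Gr_{\hat G},\Sat(V))\longrightarrow\on{H}^{\langle2\check\rho,\nu\rangle}_{S_\nu^{>\nu+i\al}}(\Gr_{\hat G},\Sat(V))
\]
with the analogous restriction map on $\Gr_{\hat G_\al}$ for the perverse sheaf $\on{CT}_\al\Sat(V)$ (which corresponds to $\Res^G_{G_\al}V$), namely from $\on{H}^{\langle\check\al,\nu\rangle}_{S^\al_\nu}$ to $\on{H}^{\langle\check\al,\nu\rangle}_{S^\al_\nu\cap(\Gr_{\hat G_\al}-\Gr_{\hat G_\al,\nu+i\al})}$; here one should keep track of the cohomological shift so that the relevant degree $\langle\check\al,\nu\rangle$ on $\Gr_{\hat G_\al}$ matches $\langle2\check\rho,\nu\rangle$ on $\Gr_{\hat G}$. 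Since $\on{fil}^\al_iV(\nu)$ depends only on $\Res^G_{G_\al}V$, this reduces us to the rank one assertion: for a finite dimensional $G_\al$-module $W$, the kernel of
\[
\on{H}^{\langle\check\al,\nu\rangle}_{S^\al_\nu}(\Gr_{\hat G_\al},\Sat(W))\longrightarrow\on{H}^{\langle\check\al,\nu\rangle}_{S^\al_\nu\cap(\Gr_{\hat G_\al}-\Gr_{\hat G_\al,\nu+i\al})}(\Gr_{\hat G_\al},\Sat(W))
\]
is $\on{fil}^\al_iW(\nu)=W_{\preceq_\al\nu+i\al}(\nu)$. (One may assume $\nu+i\al$ is $\al$-dominant; otherwise $W_{\preceq_\al\nu+i\al}=0$ and the claim is trivial.)

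For the rank one assertion I would write $Z=\Gr_{\hat G_\al,\nu+i\al}\hookrightarrow\Gr_{\hat G_\al}$ with open complement $j\colon U\hookrightarrow\Gr_{\hat G_\al}$, and apply $\on{H}^\bullet_{S^\al_\nu}(\Gr_{\hat G_\al},-)$ to the gluing triangle $(i_Z)_*(i_Z)^!\Sat(W)\to\Sat(W)\to j_*j^*\Sat(W)\xrightarrow{+1}$. Using $\on{H}^\bullet_{S^\al_\nu}(\Gr_{\hat G_\al},j_*j^*-)=\on{H}^\bullet_{S^\al_\nu\cap U}(U,-)$ (and the analogous identity over $Z$), the resulting long exact sequence shows that the kernel in question is the image of $\on{H}^{\langle\check\al,\nu\rangle}_{S^\al_\nu\cap Z}\big(Z,(i_Z)^!\Sat(W)\big)$ in $\on{H}^{\langle\check\al,\nu\rangle}_{S^\al_\nu}(\Gr_{\hat G_\al},\Sat(W))=W(\nu)$. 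To compute this image I would use the Mirkovi\'c--Vilonen theorem in the form $\on{H}^m_{S^\al_\nu}(\Gr_{\hat G_\al},\mathcal P)=0$ for $\mathcal P$ perverse and $m\neq\langle\check\al,\nu\rangle$: the perverse-cohomology spectral sequence then collapses and yields, for every $Y\in{^p}D^{\geq 0}(\Gr_{\hat G_\al})$, a natural isomorphism $\on{H}^{\langle\check\al,\nu\rangle}_{S^\al_\nu}(\Gr_{\hat G_\al},Y)\cong F^\al_\nu\big({^p}\on{H}^0(Y)\big)$. Applying this to $Y=(i_Z)_*(i_Z)^!\Sat(W)$, whose ${^p}\on{H}^0$ is the maximal perverse subsheaf $\mathcal A'\subseteq\Sat(W)$ supported on $Z$, identifies the above map with $F^\al_\nu(\mathcal A'\hookrightarrow\Sat(W))$, which is injective since $F^\al_\nu$ is exact; thus the long exact sequence already reads $0\to F^\al_\nu(\mathcal A')\to W(\nu)\to\on{H}^{\langle\check\al,\nu\rangle}_{S^\al_\nu\cap U}(U,\Sat(W))$ (giving in particular the left-exactness asserted in the proposition), and the kernel is $F^\al_\nu(\mathcal A')$. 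Finally, under geometric Satake for $G_\al$ the Serre subcategory of $G_\al$-modules all of whose weights are $\preceq_\al\nu+i\al$ corresponds to the Serre subcategory of perverse sheaves on $\Gr_{\hat G_\al}$ supported on $Z$ (both being generated by the simple objects indexed by the $\al$-dominant weights $\leq\nu+i\al$), whence $\mathcal A'=\Sat\big(W_{\preceq_\al\nu+i\al}\big)$ and $F^\al_\nu(\mathcal A')=W_{\preceq_\al\nu+i\al}(\nu)=\on{fil}^\al_iW(\nu)$, as required.

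The step I expect to be the main obstacle is this last identification --- that the image of the ``local cohomology along $Z$'' term is exactly $W_{\preceq_\al\nu+i\al}(\nu)$ and not something strictly larger. This is precisely where the exactness of the Mirkovi\'c--Vilonen weight functor (equivalently, the concentration of $\on{H}^\bullet_{S^\al_\nu}$ in a single degree on perverse sheaves) is indispensable: without it the higher perverse cohomology sheaves of $(i_Z)^!\Sat(W)$ would contribute extra classes. A secondary, essentially bookkeeping, issue is to verify that the various base-change isomorphisms (along $q_\al$, along $j$, and along $i_Z$) are compatible with the restriction maps and with the cohomological shifts that appear.
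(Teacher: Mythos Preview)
Your proposal is correct and follows essentially the same approach as the paper: reduce to $G_\al$ via the identification $F_\nu\cong F^\al_\nu\circ\on{CT}_\al$ and proper base change along $q_\al$, apply the recollement triangle $(i_Z)_*(i_Z)^!\to\id\to j_*j^*\xrightarrow{+1}$ on $\Gr_{\hat G_\al}$, use that $j_*j^*$ of a perverse sheaf lies in ${^p}D^{\geq 0}$ together with the exactness of $F^\al_\nu$ to obtain the left-exact sequence with first term $F^\al_\nu\big((i_Z)_*{^p}\on{H}^0(i_Z)^!\on{CT}_\al\Sat(V)\big)$, and finally invoke the lemma that under geometric Satake $(i_\la)_*{^p}\on{H}^0(i_\la)^!$ corresponds to $W\mapsto W_{\preceq\la}$. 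Your spectral-sequence justification that $\on{H}^{\langle\check\al,\nu\rangle}_{S^\al_\nu}(Y)\cong F^\al_\nu({^p}\on{H}^0 Y)$ for $Y\in{^p}D^{\geq 0}$ is exactly the content of the paper's terser remark that the injectivity follows from $j_*j^*\on{CT}_\al(\mathcal A)\in{^p}D^{\geq 0}$ and the exactness of $F^\al_\nu$.
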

\begin{rmk}
It will follow from the arguments in \S~\ref{SSS:constr of basis} that if $V$ is a Schur module (or more generally $V$ admits a good filtration), the above sequence is also surjective at the right. 
\end{rmk}
\begin{proof}
For an $\alpha$-dominant weight $\nu$ of $G$,
we denote the closed embedding $\Gr_{\hat G_\al,\nu}\to \Gr_{\hat G_\al}$ by $i^\al_\nu$, and the complementary open embedding by $j^\al_\nu$. Then for $\mA \in \on{P}_{L^+\hat G}(\Gr_{\hat G})$, there is the following distinguished triangle of sheaves on $\Gr_{\hat G_\al}$
\begin{equation*}
\label{E:dis tri}
 (i^\al_{\nu+i\al})_*(i^\al_{\nu+i\al})^!\on{CT}_\al(\mA)\to \on{CT}_\al(\mA)\to (j^\al_{\nu+i\al})_*(j^\al_{\nu+i\al})^*\on{CT}_\al(\mA)\to. 
 \end{equation*}
Applying the functor $F^\al_\nu$ as in \eqref{E:Fnu}, and noticing by the proper base change,
$$F^\al_\nu((j^\al_{\nu+i\al})_*(j^\al_{\nu+i\al})^*\on{CT}_\al(\mA))=\on{H}^{\langle2\check\rho,\nu\rangle}_{S_\nu^{>\nu+i\al}}(\Gr_{\hat G}, \mA)),$$ 
we obtain
\begin{equation*}
\label{E:exact right}
0\to F^\al_\nu(i^\al_{\nu+i\al})_*{^p}\on{H}^0(i^\al_{\nu+i\al})^!\on{CT}_\al(\mA)\to \on{H}^{\langle2\check\rho,\nu\rangle}_{S_\nu}(\Gr_{\hat G}, \mA)\to \on{H}^{\langle2\check\rho,\nu\rangle}_{S_\nu^{>\nu+i\al}}(\Gr_{\hat G}, \mA).
\end{equation*}
The injectivity at the left follows from the fact that $(j^\al_{\nu+i\al})_*(j^\al_{\nu+i\al})^*\on{CT}_\al(\mA)$ lives in perverse cohomological degree $\geq 0$ and the exactness of $F_\nu^\al$. Now let $\mA=\Sat(V)$.
To conclude the proof, we apply the following lemma (which follows from the description of the Weyl modules under the geometric Satake correspondence as in (1) above) and the definition of the filtration \eqref{E: equiv def 1} to replace $F^\al_\nu(i^\al_{\nu+i\al})_*{^p}\on{H}^0(i^\al_{\nu+i\al})^!\on{CT}_\al(\Sat(V))$ by $\fil^\al_i V(\nu)$.
\end{proof}
\begin{lem}
Let $\la$ be a dominant weight of $G$.
Under the geometric Satake correspondence, the left exact functor $V\mapsto V_{\preceq \la}$ corresponds to $(i_\la)_*{^p}\on{H}^0(i_\la)^!: \on{P}_{L^+\hat G}(\Gr_{\hat G})\to \on{P}_{L^+\hat G}(\Gr_{\hat G})$.
\end{lem}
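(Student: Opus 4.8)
The plan is to identify the functor $V \mapsto V_{\preceq\la}$ with $(i_\la)_* {}^p\!\on{H}^0(i_\la)^!$ by characterizing both sides by a universal property: they are the right adjoint (or ``coreflection'') onto a suitable subcategory, and such a right adjoint, if it exists, is unique. First I would describe, on the representation-theoretic side, exactly which subcategory $V_{\preceq\la}$ is the largest subobject inside. By definition, $V_{\preceq\la}$ is the maximal subrepresentation all of whose $T$-weights lie $\preceq\la$. So let $\mathcal{C}_{\preceq\la} \subset \Rep^f(G)$ be the full subcategory of representations whose weights are all $\preceq\la$; this is closed under subobjects, quotients, and extensions (being a condition on weights), hence a Serre subcategory, and moreover closed under arbitrary sums, so the inclusion $\mathcal{C}_{\preceq\la} \hookrightarrow \Rep^f(G)$ admits a right adjoint, which sends $V$ to its maximal subrepresentation in $\mathcal{C}_{\preceq\la}$, i.e.\ precisely to $V_{\preceq\la}$. (Left-exactness of $V \mapsto V_{\preceq\la}$, already noted in \S\ref{SS:connon}, is consistent with this.)

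Next I would transport this picture through $\Sat$. The key input is item (1) of the dictionary together with the stratification of $\Gr_{\hat G}$ by the orbits $\mathring{\Gr}_{\hat G,\mu}$: under geometric Satake, a perverse sheaf $\mA = \Sat(V)$ is supported on $\Gr_{\hat G,\la}$ (equivalently, $i_\la^* \mA$ recovers $\mA$, equivalently $(i_\la)_* i_\la^! \mA \xrightarrow{\sim} \mA$) if and only if every weight $\nu$ with $F_\nu(\mA) \neq 0$ satisfies $\nu \preceq \la$. This follows because the $\hat U$-orbit $S_\nu$ meets $\Gr_{\hat G,\la}$ exactly when $\nu \preceq \la$ (a standard fact about semi-infinite orbits and the closure order, corresponding on the dual side to $\mathring{\Gr}_{\hat G,\mu} \subset \Gr_{\hat G,\la} \iff \mu \preceq \la$), and the weight functors $F_\nu$ detect the support of $\mA$ along the stratification. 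Hence $\Sat$ carries $\mathcal{C}_{\preceq\la}$ isomorphically onto the full subcategory $\on{P}_{L^+\hat G}(\Gr_{\hat G,\la}) \subset \on{P}_{L^+\hat G}(\Gr_{\hat G})$ of perverse sheaves supported on the closed subvariety $\Gr_{\hat G,\la}$.

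It then remains to identify the right adjoint of the inclusion $\on{P}_{L^+\hat G}(\Gr_{\hat G,\la}) \hookrightarrow \on{P}_{L^+\hat G}(\Gr_{\hat G})$ with $(i_\la)_* {}^p\!\on{H}^0(i_\la)^!$. This is the standard recollement computation: for $\mA$ perverse on $\Gr_{\hat G}$, the object $(i_\la)_* i_\la^! \mA$ lives in perverse degrees $\geq 0$, its ${}^p\!\on{H}^0$ is perverse and supported on $\Gr_{\hat G,\la}$, and applying $\Hom(-, (i_\la)_* i_\la^! \mA)$ against any perverse $\mB$ supported on $\Gr_{\hat G,\la}$ (so $\mB = (i_\la)_* i_\la^* \mB$) gives, by adjunction $(i_\la^*, (i_\la)_*)$ and $((i_\la)_*, i_\la^!)$, a natural isomorphism $\Hom(\mB, \mA) \cong \Hom(i_\la^*\mB, i_\la^!\mA) \cong \Hom(\mB, {}^p\!\on{H}^0 (i_\la)_* i_\la^! \mA)$, using that $\mB$ is perverse and hence only sees ${}^p\!\on{H}^0$ on the right. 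So $\mA \mapsto (i_\la)_* {}^p\!\on{H}^0(i_\la)^! \mA$ is the desired right adjoint, and uniqueness of adjoints finishes the proof.

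The main obstacle I expect is the precise matching of support conditions in the second paragraph: namely verifying cleanly that ``all weights of $V$ are $\preceq\la$'' corresponds to ``$\Sat(V)$ is supported on $\Gr_{\hat G,\la}$.'' This needs the combinatorial fact $S_\nu \cap \Gr_{\hat G,\la} \neq \emptyset \iff \nu \preceq \la$ and that the family $\{F_\nu\}$ jointly detects nonvanishing of stalks along strata — both are standard in the geometric Satake literature (e.g.\ \cite{MV}) but should be cited carefully rather than reproved. Everything else is formal recollement and uniqueness of adjoints.
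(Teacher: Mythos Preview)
Your proposal is correct. The paper does not actually give a proof of this lemma: it merely records in a parenthetical that it ``follows from the description of the Weyl modules under the geometric Satake correspondence as in (1) above,'' and moves on. Your argument via right adjoints is the natural way to unpack that hint, and everything you wrote is standard and valid.

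One small remark on the second paragraph: rather than matching the subcategory $\mathcal{C}_{\preceq\la}$ to perverse sheaves supported on $\Gr_{\hat G,\la}$ via the weight functors $F_\nu$ and the combinatorics of $S_\nu \cap \Gr_{\hat G,\la}$, it is slightly quicker to match them via composition factors. Namely, ``all weights of $V$ are $\preceq\la$'' is equivalent to ``all composition factors of $V$ are $L(\mu)$ with $\mu\preceq\la$'' (each direction is immediate from the fact that the highest weight of $L(\mu)$ is $\mu$ and all its weights are $\preceq\mu$), and under $\Sat$ the simple object $L(\mu)$ goes to the intersection cohomology sheaf $\mathrm{IC}_\mu$, which is supported exactly on $\Gr_{\hat G,\mu}$. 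This avoids having to cite the precise statement about $S_\nu \cap \Gr_{\hat G,\la}$, and is closer in spirit to what the paper's one-line hint likely has in mind (Weyl and simple modules being supported on the same closed Schubert variety). But your route via weight functors is equally correct.
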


\subsubsection{Proof of Proposition~\ref{P: canonical basis}}
\label{SSS:constr of basis}
We construct the needed basis explicitly via MV cycles.
Let $V$ be a Schur module so $\Sat(V)={^p}\on{H}^0(\mathring{i}_\mu)_*k[\langle2\check\rho,\mu\rangle]$ for some dominant weight $\mu$ of $G$. In this case, the natural map ${^p}\on{H}^0(\mathring{i}_\mu)_*k[\langle2\check\rho,\mu\rangle]\to(\mathring{i}_\mu)_*k[\langle2\check\rho,\mu\rangle]$ induces the following commutative diagram
\begin{equation}
\label{E:diagram of restricting to Gr Galpha}
\begin{CD}
\on{H}^{\langle2\check\rho,\nu\rangle}_{S_\nu}(\Gr_{\hat G},{^p}\on{H}^0(\mathring{i}_\mu)_*k[\langle2\check\rho,\mu\rangle])@>>>\on{H}^{\langle2\check\rho,\nu\rangle}_{S_\nu^{>\nu+i\al}}(\Gr_{\hat G},{^p}\on{H}^0(\mathring{i}_\mu)_*k[\langle2\check\rho,\mu\rangle])\\
@VVV@VVV\\
\on{H}^{\langle2\check\rho,\nu+\mu\rangle}_{S_\nu\cap \mathring{\Gr}_{\hat G,\mu}}(\mathring{\Gr}_{\hat G,\mu},k)@>>>\on{H}^{\langle2\check\rho,\nu+\mu\rangle}_{S^{>\nu+i\al}_\nu\cap \mathring{\Gr}_{\hat G,\mu}}(\mathring{\Gr}_{\hat G,\mu},k).
\end{CD}
\end{equation}
The two vertical arrows are isomorphisms by (the proof of) \cite[Proposition 3.10]{MV}, and therefore we identify their sources and targets.
The groups in the bottom row are canonically isomorphic to the top Borel--Moore homology of $S_\nu\cap \mathring{\Gr}_{\hat G,\mu}$ and of $S^{>\nu+i\al}_\nu\cap \mathring{\Gr}_{\hat G,\mu}$ respectively, and therefore have a basis given by the fundamental classes of their irreducible components.
Such a fundamental class in the lower left corner maps to zero under the horizontal arrow if and only if the corresponding irreducible component does not intersect with $S^{>\nu+i\al}_\nu$. 

Now we take the basis of $V(\nu)=F_\nu(\Sat(V))$ given by the aforementioned fundamental classes $\{v_j\}$ in $\on{H}^{\langle2\check\rho,\nu+\mu\rangle}_{S_\nu\cap \mathring{\Gr}_{\hat G,\mu}}(\mathring{\Gr}_{\hat G,\mu},k)$, i.e. the MV basis. The discussion above says that, for every $\alpha \in \Delta$ and every $i\in \ZZ_{\geq 0}$,  those $v_j$'s which lie in the kernel of the top horizontal arrow of \eqref{E:diagram of restricting to Gr Galpha}, or equivalently in  $\fil_i^\alpha V(\nu)$ by Proposition~\ref{P:fil alpha in terms of geometric Satake}, in fact span $\fil_i^\alpha V(\nu)$. Therefore, their symbols $\overline{v_j}$ for the filtration $\fil^\alpha$ form a basis of the associated graded $\gr^\alpha V(\nu)$. This concludes the proof of Proposition~\ref{P: canonical basis}.

\section{Vector-valued twisted conjugation invariant functions}
\label{S:vector-valued twisted invariant functions}

In this section, we start to study the space of vector-valued (twisted) conjugation invariant functions on a group.

We will continue making use of the conventions and notations in \S~\ref{S: filtration on weight space}. In particular, we have a pinned reductive group $(G,B,T,\{x_\al\}_{\al\in\Delta})$ over $k$. Let $\sigma$ be an automorphism of $G$ preserving $(B,T)$, and hence the root system $\Phi(G, T)$. We define the $\sigma$-action on the character group $\XX^\bullet(T)$ by
\begin{equation*}
\label{E:sigma acts on X(T)}
\sigma(\alpha) (t) = \alpha(\sigma^{-1}(t)), \quad \textrm{for } \alpha \in \XX^\bullet(T),\ t \in T.
\end{equation*}
so that $\sigma(U_\al) = U_{\sigma(\al)}$.
For a $G$-representation $V$, let $\sigma V$ denote the representation 
$$G \xrightarrow{\sigma^{-1}}G \to \GL(V).$$ We identify elements in $\sigma V$ with $\{ \sigma v\,|\, v \in V\}$ and thus $g(\sigma v) = \sigma(\sigma^{-1}(g)v)$ for $g \in G$.  This way, we have $\sigma \ttS_\mu = \ttS_{\sigma(\mu)}$ and $\sigma \ttW_\mu = \ttW_{\sigma(\mu)}$ for $\mu \in \XX^\bullet(T)^+$.

\subsection{Vector-valued twisted conjugation invariant functions on a group}
\label{SS: vvif on cone}

Let $H$ be a linear algebraic group over $k$ equipped with an automorphism $\tau$. We still denote the $\tau$-twisted conjugation action of $H$ on itself (as defined in \eqref{E:Ginvfun}) by $c_\tau$, i.e 
$$c_\tau(h)(g)=hg\tau(h)^{-1}, \quad h,g\in H.$$ 
Let  $\bfJ_H=k[H]^{c_\tau(H)}$ denote the space of $\tau$-twisted conjugation invariant functions on $H$, i.e. those $f\in k[H]$ satisfying $f(hg\tau(h)^{-1})=f(g)$ for $h,g\in H$.
More generally, for a representation $V$ of $H$, we will denote by
\[\bfJ_H(V)=(k[H]\otimes V)^{c_\tau(H)}\] 
the space of vector-valued $\tau$-twisted conjugation invariant functions on $H$. Equivalently, let $[H/c_\tau(H)]$ (or sometimes $[H\tau/H]$) denote the quotient stack of $H$ by the $\tau$-twisted conjugation action of $H$ on itself, and let $\widetilde V:=V_{[H/c_\tau(H)]}$ denote the corresponding vector bundle on $[H/c_\tau(H)]$ (see \S~\ref{SS: NC} for the notations and conventions). 
Then
$$\bfJ_H=\Gamma([H\tau/H], \mO),\quad \bfJ_H(V)=\Gamma([H\tau/H], \widetilde V).$$
Moreover, $\bfJ_H(V)$ is naturally a $\bfJ_H$-module.
\begin{rmk}
\label{R: special tau}
Note that the spaces $\bfJ_H(V)$ depend only on the image of $\tau$ in the outer automorphism group $\on{Out}(H)$ of $H$. Indeed, if $\tau_2=c(h)\circ \tau_1$, where $c(h):H\to H, \ \ g\mapsto hgh^{-1}$ is the inner automorphism of $H$ induced by some $h\in H$, then the map $H\to H,\ x\mapsto xh$ is an isomorphism intertwining the action $c_{\tau_1}$ and $c_{\tau_2}$, and therefore induces isomorphisms between the spaces of vector-valued twisted conjugation invariant functions.
\end{rmk}

We discuss a few basic properties of these spaces. First, if $H=H_1\times H_2$ and $V=V_1\boxtimes V_2$ is the exterior tensor product of two representations of $H_1$ and $H_2$ respectively, then clearly $\bfJ_H(V)$ is compatible with tensor product
\begin{equation}
\label{E:JV tensor}
\bfJ_H(V)=\bfJ_{H_1}(V_1)\otimes_k\bfJ_{H_2}(V_2).
\end{equation}
Next, if $K$ is another linear algebraic group equipped with an automorphism $\tau$ and there is a $\tau$-equivariant homomorphism $K\to H$, we have a natural homomorphism 
\begin{equation*}
\label{E:ChRes1}
\Res^\tau_V: \bfJ_H(V) \to \bfJ_K(V)
\end{equation*} 
by restricting (a.k.a. pulling back) of the $V$-valued functions on $H$ to $K$; this is compatible with the $\bfJ_H$-module and $\bfJ_K$-module structure through the natural restriction map 
$\Res^\tau_{\bf 1}: \bfJ_H \to \bfJ_K$, and
in particular induces the map of $\bfJ_K$-modules
\begin{equation*}
\label{E:ChRes2}
\Res^\tau_V\otimes 1: \bfJ_H(V)\otimes_{\bfJ_H}\bfJ_K\to \bfJ_K(V).
\end{equation*}

\medskip

Next we discuss the compatibility of $\bfJ_H(V)$ with tensor induction.
Let $H_0$ be a linear algebraic group equipped with an automorphism $\tau_0$. Suppose we have an embedding $\langle\tau_0\rangle\subset\langle\tau\rangle$ of cyclic groups of finite index so that $\tau_0 = \tau^d$. Then we can form the tensor induction $H=\Ind_{\langle \tau_0\rangle }^{\langle\tau\rangle}H_0$, which is the group of $\tau_0$-equivariant maps from $\langle\tau\rangle$ to $H$, where $\tau_0$ acts on $\langle\tau\rangle$ by translation.
In addition, $\tau$ acts on $H$ since it acts on $\langle\tau\rangle$ by translation. 
Since $\{\id, \tau, \dots, \tau^{d-1}\}$ forms a set of representatives of $\langle \tau\rangle / \langle \tau_0 \rangle$, we may explicitly identify $H$ with the product $\prod_{i=0}^{d-1} H_0$, on which $\tau$ acts by $(h_0, \dots, h_{d-1}) \mapsto (\tau_0(h_{d-1}), h_0, \dots, h_{d-2})$.

We have two natural maps from $H_0$ to $H$, given by embedding $i_0$ into the $0$th factor of $H$ and by the diagonal embedding $\Delta$, i.e.
$$
i_0(h) = (h, 1, \dots, 1) , \quad \Delta(h) = (h, \dots, h).
$$
It is straightforward to check that these two embeddings satisfy the following relation: for $g, h \in H_0$,
$$
c_\tau(\Delta(h))(i_0(g)) = c_\tau(h, \dots, h)(g, 1, \dots, 1) = (hg\tau_0(h)^{-1}, h h^{-1}, \dots, hh^{-1}) = 
i_0 \big(c_{\tau_0}(h)(g)  \big),
$$
i.e. 
$i_0$ intertwines the $\tau_0$-twisted conjugation action of $H_0$ on the source, and the $\tau$-twisted conjugation action of $\Delta(H_0)$ on the target:
\[
\xymatrix@R=15pt@C=40pt{
H_0 \ar@^{(->}[r]^-{i_0} 
 \ar@(dl,dr)_-(.2){c_{\tau_0}}
& H\ar@(dl, dr)_-(.8){c_{\tau}}
\\
H_0 \ar@^{(->}[r]^-{\Delta} & H.
}
\]
From this, we naturally deduce a morphism of stacks 
\begin{equation}
\label{E:G_0/G_0 isom G/G}
(i_0/\Delta): [H_0 / c_{\tau_0}(H_0)] \to [H/ c_\tau(H)].
\end{equation}
If $K_0\subset H_0$ is a $\tau_0$-stable subgroup, then $K=\Ind_{\langle \tau_0\rangle }^{\langle\tau\rangle}K_0$ is a $\tau$-stable subgroup of $H$, and we obtain a commutative diagram
\[
\begin{CD}
[K_0 / c_{\tau_0}(K_0)] @>>> [K/ c_\tau(K)]\\
@VVV@VVV\\
[H_0 / c_{\tau_0}(H_0)] @>>> [H/ c_\tau(H)].
\end{CD}
\]

\begin{lem}
\label{L:reduce from G to G0}
\begin{enumerate}
\item 
The morphism $(i_0/\Delta)$ in \eqref{E:G_0/G_0 isom G/G} is an isomorphism of stacks.

\item
If $V$ is an $H$-module, regarded as a representation of $H_0$ via the \emph{diagonal} embedding $\Delta$, then we may form $\bfJ_{H_0}(V)$ and $\bfJ_{K_0}(V)$ with respect to the $\tau_0$-twisted conjugation action, and $\bfJ_{H}(V)$ and $\bfJ_{K}(V)$ with respect to the $\tau$-twisted conjugation action. Then we have a natural commutative diagram, 
$$
\xymatrix@C=40pt{
\bfJ_H(V) \otimes_{\bfJ_H} \bfJ_K \ar[d]_{(i_0 / \Delta)^*} \ar[r]^-{\Res_V^\tau \otimes 1} & \bfJ_K(V)
\ar[d]^{(i_0 / \Delta)^*}
\\
\bfJ_{H_0}(V) \otimes_{\bfJ_{H_0}} \bfJ_{K_0} \ar[r]^-{\Res_V^{\tau_0} \otimes 1} & \bfJ_{K_0}(V)
}
$$
where the two vertical morphisms are isomorphisms.
\end{enumerate}
\end{lem}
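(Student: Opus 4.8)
The statement to prove is the Lemma labelled \texttt{L:reduce from G to G0}. The crux is part (1); once it is known, part (2) is formal, so I will describe the plan for (1) first.

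\smallskip
\noindent\emph{Plan for part (1).} Using the identification $H\cong\prod_{i=0}^{d-1}H_0$ on which $\tau$ acts by the twisted cyclic shift $(h_0,\dots,h_{d-1})\mapsto(\tau_0(h_{d-1}),h_0,\dots,h_{d-2})$, I would set $H'=\ker(H\twoheadrightarrow H_0)$, the kernel of projection $\pr_0$ to the $0$th factor. This $H'$ is a normal subgroup with $H/H'\cong H_0$, and both $i_0$ and $\Delta$ are sections of $H\to H_0$. The idea is to compute $[H/c_\tau(H)]$ as the iterated quotient $\big[[H/c_\tau(H')]\,/\,H_0\big]$ (valid since $H'\triangleleft H$). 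The two steps are: (a) show that the $\tau$-twisted conjugation action of $H'$ on $H$ is free and that $i_0(H_0)\subset H$ is a scheme-theoretic slice; concretely, for $x=(x_0,\dots,x_{d-1})$ the unique $h'=(1,h_1,\dots,h_{d-1})\in H'$ with $c_\tau(h')(x)\in i_0(H_0)$ is $h_j=(x_j x_{j-1}\cdots x_1)^{-1}$, and then $c_\tau(h')(x)=i_0\big(x_0\,\tau_0(x_{d-1}\cdots x_1)\big)$, so that the action map $H'\times i_0(H_0)\to H$, $(h',s)\mapsto c_\tau(h')^{-1}(s)$, is an isomorphism of schemes and hence $[H/c_\tau(H')]\cong i_0(H_0)\cong H_0$; (b) identify the residual action of $H_0=H/H'$ on this quotient by lifting $g\in H_0$ to $\Delta(g)\in H$ and computing $c_\tau(\Delta(g))(i_0(s_0))=i_0\big(g\,s_0\,\tau_0(g)^{-1}\big)$, i.e. the residual action is exactly $c_{\tau_0}$. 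Combining, $[H/c_\tau(H)]\cong[H_0/c_{\tau_0}(H_0)]$, and chasing the identifications shows the composite isomorphism is precisely $(i_0/\Delta)$. Equivalently, one may exhibit the inverse directly: the ``norm'' map $N\colon H\to H_0$, $N(x)=x_0\,\tau_0(x_{d-1}\cdots x_1)$, satisfies $N(c_\tau(h)(x))=c_{\tau_0}(\pr_0(h))(N(x))$, hence descends to $\bar N\colon[H/c_\tau(H)]\to[H_0/c_{\tau_0}(H_0)]$ with $\bar N\circ(i_0/\Delta)=\id$, while the required $2$-isomorphism $(i_0/\Delta)\circ\bar N\simeq\id$ is given by the gauge transformation $x\mapsto(1,x_1^{-1},(x_2x_1)^{-1},\dots,(x_{d-1}\cdots x_1)^{-1})\in H'$.

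\smallskip
The main obstacle is not conceptual but the bookkeeping in steps (a) and (b): one must keep careful track of the $d$ coordinates under the cyclic-shift description of $\tau$ on $\prod_i H_0$, and in particular of the extra application of $\tau_0$ produced each time the shift wraps from the last coordinate around to the first (this is what makes $N$ carry a $\tau_0$ and what makes the slice-representative of $x$ have $0$th coordinate $x_0\tau_0(x_{d-1}\cdots x_1)$ rather than just a product of the $x_i$). Once the slice and the residual action are pinned down, everything else is mechanical; the same computation underlies both the iterated-quotient argument and the direct-inverse argument.

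\smallskip
\noindent\emph{Part (2).} Granting (1), pullback along the isomorphism $(i_0/\Delta)$ induces isomorphisms on global sections. Since the composite $[H_0/c_{\tau_0}(H_0)]\xrightarrow{(i_0/\Delta)}[H/c_\tau(H)]\to\bB H$ factors as $[H_0/c_{\tau_0}(H_0)]\to\bB H_0\xrightarrow{\bB\Delta}\bB H$, the bundle $\widetilde V$ on $[H/c_\tau(H)]$ pulls back to the bundle attached to $V$ restricted to $H_0$ along $\Delta$; hence $(i_0/\Delta)^*$ gives isomorphisms $\bfJ_H(V)\xrightarrow{\ \sim\ }\bfJ_{H_0}(V)$ and $\bfJ_H\xrightarrow{\ \sim\ }\bfJ_{H_0}$, and likewise for the pair $(K_0,K)$ — note that (1) applies verbatim to $K=\Ind_{\langle\tau_0\rangle}^{\langle\tau\rangle}K_0$. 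Applying functoriality of pullback to the commutative square of stacks displayed just before the lemma yields the commutative square of the lemma, and its left vertical arrow is obtained from the isomorphisms $\bfJ_H(V)\xrightarrow{\sim}\bfJ_{H_0}(V)$, $\bfJ_H\xrightarrow{\sim}\bfJ_{H_0}$, $\bfJ_K\xrightarrow{\sim}\bfJ_{K_0}$ by extension of scalars, hence is itself an isomorphism.
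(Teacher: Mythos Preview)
Your proposal is correct and is essentially the same argument as the paper's. Both reduce part~(1) to the single computation that the action map
\[
(\{1\}\times H_0^{d-1})\times i_0(H_0)\longrightarrow H,\qquad (h',g)\longmapsto c_\tau(h')(i_0(g)),
\]
is an isomorphism of schemes, and both write down the explicit inverse (your $h_j=(x_j\cdots x_1)^{-1}$ and the resulting $0$th coordinate $x_0\,\tau_0(x_{d-1}\cdots x_1)$ match the paper's formula verbatim). The only difference is packaging: the paper phrases this as ``$\{1\}\times H_0^{d-1}$ is a set of coset representatives for $H/\Delta(H_0)$'', while you phrase it as ``the normal subgroup $H'=\ker(\pr_0)$ acts freely with slice $i_0(H_0)$, and the residual $H_0$-action is $c_{\tau_0}$''; your norm map $N$ is exactly the paper's explicit inverse. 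Part~(2) in both cases is the formal consequence of part~(1).
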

\begin{proof} 
(1) The group $\{1\} \times H_0^{d-1}$ is a coset representative of $H / \Delta(H_0)$. It is enough to show that the following map is an isomorphism
\begin{align*}
c_\tau(-, i_0(-)): (\{1\} \times H_0^{d-1}) \times H_0 \ \longto&\  H = H_0^d
\\
\big( (h_1, \dots, h_{d-1}) , g \big)   \ \longmapsto& \ c_\tau(1, h_1, \dots, h_{d-1})(i_0(g))
\\
= &\ \big(g\tau_0(h_{d-1}^{-1}), h_1, h_2h_1^{-1}, \dots, h_{d-1} h_{d-2}^{-1}\big).
\end{align*}
But this is clear, and the inverse map is given by
$$
(g_0, \dots, g_{d-1}) \mapsto \big(g_1, g_2g_1, \dots, g_{d-1}g_{d-2} \cdots g_1), g_0 \tau_0(g_{d-1}g_{d-2} \cdots g_1) \big).
$$
 So \eqref{E:G_0/G_0 isom G/G} is  an isomorphism.

(2) follows from (1) immediately. In an explicit form, the inverse of the pull back $(i_0/\Delta)^*$ is given by sending $\bar f \in \bfJ_{H_0}(V)$ to a $\tau$-twisted conjugation invariant function $f: H \to V$ defined by the following formula: for $g_0, \dots, g_{d-1} \in H_0$
\[
f(g_0, \dots, g_{d-1}) = (g_1^{-1}g_2^{-1}\cdots g_{d-1}^{-1}, \dots, g_{d-2}^{-1} g_{d-1}^{-1}, g_{d-1}^{-1}, 1) \cdot \bar f(g_{d-1}g_{d-2} \cdots g_0).\qedhere
\]
\end{proof}

\medskip

Finally, let us discuss the compatibility of $\bfJ_H(V)$ with central homomorphisms.
Let $F\subset H$ denote a $\tau$-stable central subgroup of $H$. We assume that $F$ is of multiplicative type, i.e. if $\Lambda=\Hom(F,\bG_m)$ denote its character group, then $F=\Spec k[\Lambda]$. The exact sequence 
$$1\to {\Lambda}^\tau\to \Lambda\xrightarrow{1-\tau}\Lambda\to \Lambda_\tau\to 1$$
induces 
$$1\to F^\tau\to F\xrightarrow{1-\tau} F\to F_\tau\to 1.$$ 
Let $H'=H/(1-\tau)F$ and $H''=H/F$. Then the kernel of the map $H'\to H''$ is $F_\tau$.
Left multiplication by $F$ induces an $F$-action on $k[H]$ via $(z\cdot f)(h)=f(zh)$.
Then we have the decomposition $k[H]=\oplus_{\psi\in\Lambda} k[H]_{\psi}$ according to the weights of $F$, where $k[H]_{\psi=\mathbf{1}}=k[H'']$ and each $k[H']_{\psi}$ is an invertible $k[H'']$-module. In addition, $\oplus_{\psi\in \Lambda^{\tau}} k[H]_{\psi}=k[H']$, and for given $\chi\in (1-\tau)\Lambda$, $\oplus_{\tau(\psi)-\psi=\chi} k[H]_{\tau(\psi)}$ is an invertible $k[H']$-module.

Let $V$ be an $H$-module, decomposed as $\oplus_{\chi}V_{\chi}$ according to the weights of $F$. Then
\begin{equation*}
\label{E:JHV and F}
\bfJ_H(V)=\bigoplus_{\psi\in\Lambda} \big(k[H]_{\tau(\psi)}\otimes V_{\tau(\psi)-\psi} \big)^{c_\tau(H'')}=\bigoplus_{\chi\in(\tau-1)\Lambda}  \Big(\bigoplus_{\tau(\psi)-\psi=\chi}k[H]_{\tau(\psi)}\otimes V_{\chi}\; \Big)^{c_\tau(H'')}.
\end{equation*}
Each direct summand $\big(\oplus_{\tau(\psi)-\psi=\chi}k[H]_{\tau(\psi)}\otimes V_{\chi} \big)^{c_\tau(H'')}$ is acted on by $F_{\tau}$. In particular,
\[\bfJ_H= k[H']^{c_{\tau}(H'')}\]
is acted by $F_{\tau}$, and the $\bfJ_H$-module structure on $\bfJ_H(V)$ is compatible with this action. Note that if $V$ is a representation of $H''$, then 
\begin{equation}
\label{E:invariants for JV}
\bfJ_H(V)^{F_\tau}=\bfJ_{H''}(V).
\end{equation}
In particular, $\bfJ_H^{F_\tau}=\bfJ_{H''}$. 
\begin{lem}
\label{L: fpqc descent for JV}
Let $V$ be a representation of $H''$.
Assume that the above $F_\tau$-action realizes $\Spec \bfJ_{H}$ as an $F_\tau$-torsor (in fpqc topology) over $\Spec \bfJ_{H''}$, then the natural map
\[\Res_V^\tau\otimes1: \bfJ_{H''}(V)\otimes_{\bfJ_{H''}}\bfJ_H\to \bfJ_H(V)\]
is an isomorphism.
\end{lem}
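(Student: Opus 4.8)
The plan is to deduce the lemma from faithfully flat descent along the $F_\tau$-torsor $\pi\colon \Spec\bfJ_H\to\Spec\bfJ_{H''}$. First I would record the structures in play. Since $F$ is of multiplicative type, so is its quotient $F_\tau=\Spec k[\Lambda_\tau]$, and an $F_\tau$-action on a $k$-vector space is the same datum as a $\Lambda_\tau$-grading. The weight decompositions of $k[H]$ and of $V$ recalled just before the lemma equip $\bfJ_H$ with a $\Lambda_\tau$-grading whose degree-zero part is $\bfJ_{H''}$, and equip $\bfJ_H(V)$ with a compatible $\Lambda_\tau$-grading; by \eqref{E:invariants for JV} its degree-zero part is $\bfJ_H(V)^{F_\tau}=\bfJ_{H''}(V)$. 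In other words, $\bfJ_H(V)$ is an $F_\tau$-equivariant $\bfJ_H$-module whose module of $F_\tau$-invariants is $\bfJ_{H''}(V)$, and the map $\Res_V^\tau\colon \bfJ_{H''}(V)\to\bfJ_H(V)$ (pullback of $V$-valued functions along the quotient $H\to H''$, which makes sense because $V$ is an $H''$-module) is exactly the inclusion of this degree-zero part. Consequently the natural map $\Res_V^\tau\otimes 1\colon \bfJ_{H''}(V)\otimes_{\bfJ_{H''}}\bfJ_H\to\bfJ_H(V)$ is the multiplication map $\bar f\otimes a\mapsto a\cdot\Res_V^\tau(\bar f)$, i.e. the evaluation map $M^{F_\tau}\otimes_{\bfJ_{H''}}\bfJ_H\to M$ for the equivariant module $M=\bfJ_H(V)$.

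Next I would invoke the torsor hypothesis: $\pi$ is faithfully flat, and the action and projection maps give an isomorphism $\Spec\bfJ_H\times_{\Spec\bfJ_{H''}}\Spec\bfJ_H\cong F_\tau\times\Spec\bfJ_H$, equivalently $\bfJ_H\otimes_{\bfJ_{H''}}\bfJ_H\cong k[\Lambda_\tau]\otimes_k\bfJ_H$. Faithfully flat descent along $\pi$ then yields an equivalence between the category of $F_\tau$-equivariant quasi-coherent $\bfJ_H$-modules and the category of quasi-coherent $\bfJ_{H''}$-modules, with quasi-inverse functors $N\mapsto N\otimes_{\bfJ_{H''}}\bfJ_H$ (with its tautological equivariant structure) and $M\mapsto M^{F_\tau}$. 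In particular, the counit of this adjunction, namely the evaluation map $M^{F_\tau}\otimes_{\bfJ_{H''}}\bfJ_H\to M$, is an isomorphism for every $F_\tau$-equivariant $\bfJ_H$-module $M$. Applying this to $M=\bfJ_H(V)$ and using the identification of $\Res_V^\tau$ from the previous paragraph gives the claimed isomorphism $\bfJ_{H''}(V)\otimes_{\bfJ_{H''}}\bfJ_H\xrightarrow{\ \sim\ }\bfJ_H(V)$.

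The main obstacle is essentially bookkeeping rather than a genuine difficulty: one must carefully match the $F_\tau$-action on $\bfJ_H(V)$ defined through the weight decompositions of $k[H]$ and $V$ with an honest $F_\tau$-equivariant structure on the associated quasi-coherent sheaf over $\Spec\bfJ_H$, and check that the abstract descent isomorphism is precisely $\Res_V^\tau\otimes 1$. If one prefers to avoid citing descent as a black box, the isomorphism can instead be verified directly: since $\Res_V^\tau\otimes 1$ is a map of $\bfJ_H$-modules and $\bfJ_{H''}\to\bfJ_H$ is faithfully flat, it suffices to check it becomes an isomorphism after $-\otimes_{\bfJ_{H''}}\bfJ_H$, and there the torsor isomorphism $\bfJ_H\otimes_{\bfJ_{H''}}\bfJ_H\cong k[\Lambda_\tau]\otimes_k\bfJ_H$ together with the $\Lambda_\tau$-grading on $\bfJ_H(V)$ reduces the statement to the usual ``Hilbert 90 for diagonalizable torsors'' computation. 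No geometric input beyond the torsor hypothesis is required; the point of the lemma is exactly that this hypothesis makes the always-present equivariant structure effective for descent.
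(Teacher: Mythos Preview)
Your proposal is correct and follows essentially the same approach as the paper's proof: observe that the $F_\tau$-action on $\bfJ_H(V)$ is a descent datum for the faithfully flat map $\bfJ_{H''}\to\bfJ_H$, identify the invariants as $\bfJ_{H''}(V)$ via \eqref{E:invariants for JV}, and conclude by faithfully flat descent. The paper compresses all of this into two sentences, whereas you have carefully spelled out the bookkeeping (the $\Lambda_\tau$-grading interpretation, the identification of $\Res_V^\tau\otimes 1$ with the counit, and even an alternative direct verification), but the underlying argument is the same.
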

\begin{proof}
Note that the action of $F_\tau$ on $\bfJ_H(V)$ equips $\bfJ_H(V)$ with a descent datum for the map $\bfJ_{H''}\to \bfJ_{H}$. The lemma now follows from \eqref{E:invariants for JV} and faithfully flat descent.
\end{proof}

\subsection{Vector-valued twisted conjugation invariant functions on the Vinberg monoid}
Now let $H=G$ be the reductive group as fixed at the beginning of the section. Let $\tau$ be an automorphism of $G$. Let $V$ be a representation of $G$.  Sometimes we write $\bfJ(V)=\bfJ_G(V)$ and $\bfJ=\bfJ_G$ for simplicity.
We now define their their analogues $\bfJ_+(V)$ (resp. $\bfJ_0(V)$) of vector-valued twisted conjugation invariant functions on $V_G$ (resp. $\mathrm{As}_G$).

We consider the $G$-action on $V_G$ via the twisted diagonal embedding $G\to G\times G,\ g\mapsto (g,\tau(g))$ so that its restriction to $\frakd^{-1}({\bf 1})=G$ is the $\tau$-twisted conjugation action of $G$ on itself. For this reason, we also denote this action of $G$ on $V_G$ and on $\mathrm{As}_G$ by $c_\tau$.
Let $[V_G/c_\tau(G)]$ and $[\mathrm{As}_G/c_\tau(G)]$ denote the stack quotients.
The map $\frakd$ induces 
$$[\frakd]: [V_G/c_\tau(G)]\to T_\ad^+,\quad [\frakd]^{-1}({\bf 1})=[G/c_\tau(G)], \quad [\frakd]^{-1}({\bf 0})=[\mathrm{As}_G/c_\tau(G)].$$ 
Write $\widetilde{V}_+:=V_{[V_G/c_\tau(G)]}$ (resp. $\widetilde{V}_0=V_{[\on{As}_G/c_\tau(G)]}$) for the vector bundle on $[V_G/c_\tau(G)]$ (resp. $[\on{As}_G/c_\tau(G)]$) corresponding to $V$. Then clearly
$\widetilde V:=\widetilde V_+|_{[\frakd]^{-1}({\bf 1})}$ and $\widetilde V_0=\widetilde V_+|_{[\frakd]^{-1}({\bf 0})}$.
Now we can define 
$$\bfJ_+=\Gamma([V_G/c_\tau(G)],\mO), \quad \bfJ_+(V)=\Gamma([V_G/c_\tau(G)],\widetilde V_+),$$
and
$$ \bfJ_0=\Gamma([\mathrm{As}_G / c_\tau(G)],\mO),\quad \bfJ_0(V)=\Gamma([\mathrm{As}_G / c_\tau(G)],\widetilde V_0).$$
Then $\bfJ_+(V)$ (resp. $\bfJ_0(V)$) is a $\bfJ_+$ (resp. $\bfJ_0$)-module. 

Remark~\ref{R: special tau} also applies to the study of the spaces $\bfJ_+(V)$ and $\bfJ_0(V)$.
Recall that a choice of a pinning of $G$ defines a section of the projection $\Aut(G)\to \on{Out}(G)$. Therefore, to study  $\bfJ_*(V)$ for $*=+,0,\emptyset$, without loss of generality we may and will assume that $\tau=\sigma$ is an automorphism preserving the pinning $(G,B,T,\{x_\al\}_{\al\in\Delta})$ we fix at the beginning of this section.

We explain the relations between $\bfJ_+(V), \bfJ_0(V)$ and $\bfJ(V)$.
Let $V_G/\!\!/c_\sigma(G)=\Spec \bfJ_+$ be the GIT quotient. Then $[\frakd]$ factors as 
$$[V_G/c_\sigma(G)]\to V_G/\!\!/c_\sigma(G)\xrightarrow{\bar\frakd} T_\ad^+.$$
 \begin{lem}
 \label{L: gr specialization vs specialization gr}
 \begin{enumerate}
\item For any representation $V$, $\bfJ_+(V)\otimes_{\bfJ_+}k[\bar\frakd^{-1}({\bf 1})]\cong \bfJ(V)$. In particular, $k[\bar\frakd^{-1}({\bf 1})]\cong \bfJ$.
\item If $V$ admits a good filtration, then  $\bfJ_+(V)\otimes_{\bfJ_+}k[\bar\frakd^{-1}({\bf 0})]\cong \bfJ_0(V)$. In particular, $k[\bar\frakd^{-1}({\bf 0})]\cong \bfJ_0$.
\end{enumerate}
\end{lem}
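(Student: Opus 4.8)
The plan is to reduce everything to the single $R$-module $\mathscr M := \big(R_{\xch(T)^+_{\on{pos}}}k[G]\big)\otimes_k V$, where $R=k[\xch(T_\ad)_{\on{pos}}]=k[T_\ad^+]$, together with its $R$-linear $c_\sigma(G)$-action: the $c_\sigma(G)$-action on $V_G$ preserves the $\xch(T)^+_{\on{pos}}$-grading and commutes with the $R$-action (which comes from the monoid structure on $T_\ad^+$). By construction $\bfJ_+(V)=\mathscr M^{c_\sigma(G)}$, as an $R$-module via $\bar\frakd^*\colon R\to\bfJ_+$, and by \eqref{E: Rees fiber dim} the two relevant fibers are $\mathscr M/I_1\mathscr M=k[G]\otimes V$ and $\mathscr M/I_0\mathscr M=\on{gr}k[G]\otimes V$. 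Since $k[\bar\frakd^{-1}(\mathbf{j})]=\bfJ_+\otimes_R R/I_j$ and $I_j$ restricts to zero along $\frakd$ over $\mathbf{j}$, both statements amount to showing that the natural comparison map
\[
\mathscr M^{c_\sigma(G)}\otimes_R R/I_j\longrightarrow\big(\mathscr M/I_j\mathscr M\big)^{c_\sigma(G)}
\]
is an isomorphism: for $j=1$ and arbitrary $V$ (part (1)), and for $j=0$ and $V$ admitting a good filtration (part (2)). The ``in particular'' clauses are the case $V=\triv$.

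For part (1) I would use the multiplication action of $T$ on $V_G$ from Proposition~\ref{T:Vinberg}(1). It preserves the group of units $\frakd^{-1}(T_\ad)=G\times^{Z_G}T$, covers through $T\twoheadrightarrow T_\ad$ the simply transitive action of $T_\ad$ on itself, and commutes with $c_\sigma(G)$ (which operates by left/right translation on the $G$-part only). Hence, after base change along the faithfully flat $m\colon T\to T_\ad$, one checks directly that $[\frakd^{-1}(T_\ad)/c_\sigma(G)]\times_{T_\ad,m}T$ is the constant family $[\frakd^{-1}(\mathbf 1)/c_\sigma(G)]\times T=[G/c_\sigma(G)]\times T$, the isomorphism being $c_\sigma(G)$-equivariant (using that $Z_G$ is central in $G$). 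Restricting over the open $T_\ad\subset T_\ad^+$ and then base changing along $k[T_\ad]\to k[T]$, and using that $c_\sigma(G)$-invariants commute with flat base change, we obtain $\bfJ_+(V)\otimes_R k[T]\cong\bfJ(V)\otimes_k k[T]$, free over $k[T]$; faithfully flat descent along $k[T_\ad]\to k[T]$ then shows $\bfJ_+(V)\otimes_R k[T_\ad]$ is flat over $k[T_\ad]$, and computing its fiber at $\mathbf 1$ (after the further faithfully flat extension by $k[Z_G]=k[T]\otimes_{k[T_\ad]}\kappa(\mathbf 1)$) yields $\bfJ_+(V)\otimes_{\bfJ_+}k[\bar\frakd^{-1}(\mathbf 1)]\cong\bfJ(V)$, realized by the restriction map.

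For part (2) there is no transitivity near $\mathbf 0$, so I would argue via cohomology vanishing. First, $\fil_\nu k[G]=k[G]_{\preceq(\nu^*,\nu)}$ admits a good filtration as a $G\times G$-module by Lemma~\ref{L:good sub}(1) and Theorem~\ref{T:prop of good fil}(4); restricting this filtration along the twisted diagonal $c_\sigma\colon G\to G\times G$ and using that $\sigma$ (preserving the pinning) permutes Schur modules, each graded piece becomes a tensor product of two Schur modules, hence has a good filtration by Theorem~\ref{T:prop of good fil}(2). Thus $\fil_\nu k[G]|_{c_\sigma(G)}$, and then $\fil_\nu k[G]\otimes V$ whenever $V$ has a good filtration, has a good filtration, whence $\on{H}^i(c_\sigma(G),\fil_\nu k[G]\otimes V)=0$ for $i>0$ by Theorem~\ref{T:prop of good fil}(3); summing over $\nu$, $\on{H}^i(c_\sigma(G),\mathscr M)=0$ for $i>0$. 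Now $R=k[\bar e^\al:\al\in\Delta]$ is a polynomial ring, $I_0=(\bar e^\al)_\al$ is generated by the regular sequence of coordinates, and $\mathscr M$ is $R$-flat (Proposition~\ref{T:Vinberg}(2)), so the coordinates form an $\mathscr M$-regular sequence and the Koszul complex $K_\bullet\otimes_R\mathscr M$ resolves $\mathscr M/I_0\mathscr M=\on{gr}k[G]\otimes V$. Each term of this resolution is a finite direct sum of copies of $\mathscr M$, hence has vanishing higher $c_\sigma(G)$-cohomology, so a standard d\'evissage (splitting the resolution into short exact sequences) shows $(-)^{c_\sigma(G)}$ is exact on it; therefore $K_\bullet\otimes_R\mathscr M^{c_\sigma(G)}=(K_\bullet\otimes_R\mathscr M)^{c_\sigma(G)}$ is a resolution of $\big(\on{gr}k[G]\otimes V\big)^{c_\sigma(G)}=\bfJ_0(V)$, and reading off $H_0$ gives $\bfJ_+(V)\otimes_{\bfJ_+}k[\bar\frakd^{-1}(\mathbf 0)]=\mathscr M^{c_\sigma(G)}/I_0\mathscr M^{c_\sigma(G)}\cong\bfJ_0(V)$. (The same d\'evissage with $I_1$ reproves part (1) for good-filtration $V$, but does not reach the general $V$ of part (1).)

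The step I expect to be the real obstacle is precisely part (1) for an arbitrary $V$: there $\on{H}^{>0}(c_\sigma(G),\mathscr M)$ need not vanish, so the Koszul argument is unavailable and one must instead extract rigidity from the $T$-action on $V_G$ — the fiddly part being the $Z_G$-torsor bookkeeping for $T\to T_\ad$ needed to trivialize $[\frakd^{-1}(T_\ad)/c_\sigma(G)]$ after base change and to verify that the comparison map one lands on really is the natural restriction. A secondary point to handle with care is that the restriction of $\fil_\nu k[G]$ along the \emph{twisted} diagonal $c_\sigma$ still admits a good filtration; this is exactly where the hypothesis that $\sigma$ preserves the pinning enters.
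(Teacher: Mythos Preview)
Your Part (2) argument is correct and is a cleaner, more global packaging of exactly what the paper does. The paper argues degree by degree: for each $\mu$ it writes down a commutative diagram comparing $\big(\bfJ_+(V)\otimes_{\bfJ_+}k[\bar\frakd^{-1}({\bf 0})]\big)_\mu$ with $(\gr_\mu k[G]\otimes V)^{c_\sigma(G)}$, and the needed exactness comes from the fact that the image $\sum_i\fil_{\mu-\al_i}k[G]$ of the first Koszul differential admits a good filtration (this is Corollary~\ref{C:good filtration via Koszul}). Your global Koszul resolution plus $\mathrm H^{>0}(c_\sigma(G),\fil_\nu k[G]\otimes V)=0$ is the same mechanism, just said once rather than $\mu$ by $\mu$; the input about good filtrations along the twisted diagonal is identical.

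Where you diverge sharply from the paper is Part (1), and here you have made your life much harder than necessary. The paper's argument is a single line: quotienting the $\xch(T)^+_{\on{pos}}$-graded module $\bfJ_+(V)$ by $I_1$ is literally the filtered colimit
\[
\bfJ_+(V)\otimes_{\bfJ_+}k[\bar\frakd^{-1}({\bf 1})]=\bigoplus_{\omega\in\Min}\ \varinjlim_{\mu\in\omega+\xch(T_\ad)_{\on{pos}}}\big(\fil_\mu k[G]\otimes V\big)^{c_\sigma(G)},
\]
and since taking $G$-invariants commutes with filtered colimits (e.g.\ because $(-)^G$ is the kernel of the coaction map, and filtered colimits of $k$-vector spaces are exact), this is $\big(\bigcup_\mu\fil_\mu k[G]\otimes V\big)^{c_\sigma(G)}=(k[G]\otimes V)^{c_\sigma(G)}=\bfJ(V)$. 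No hypothesis on $V$ is needed. Your $T$-equivariance/descent argument is not wrong, but it is doing a lot of work (trivializing the family over $T_\ad$ after base change to $T$, checking invariants commute with finite flat base change, then descending back through the $Z_G$-torsor) to prove something that follows immediately from ``invariants commute with direct limits''. The very step you single out as the ``real obstacle'' is in fact the trivial one; it is Part (2) that genuinely requires the good-filtration hypothesis.
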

\begin{proof}
Recall that there is an $\xch(T)^+_{\on{pos}}$-filtration on $k[G]$ by \eqref{E: fil on k[G]}. Then
\[\bfJ_+(V)\otimes_{\bfJ_+}k[\bar\frakd^{-1}({\bf 1})]=
\bigoplus_{\omega \in \Min}\varinjlim_{\mu\in \omega +  \xch(T)^+_{\on{pos}}}\big (\fil_\mu k[G]\otimes V\big)^{c_\sigma(G)}.\]
Since taking $G$-invariants commutes with taking direct limits,  Part (1) follows.

To prove Part (2), first notice that for $\mu\in \xch(T)^+_{\on{pos}}$, there is the following commutative diagram with all rows exact.
\begin{equation}
\label{E:lifting grmu to fil mu}
\xymatrix@C=20pt{
&\bigoplus_{i} \big(\fil_{\mu-\al_i}k[G]\otimes V\big)^{c_\sigma(G)} \ar[r] \ar[d] & (\fil_\mu k[G]\otimes V)^{c_\sigma(G)} \ar[r] \ar@{=}[d] & \big(\bfJ_+(V)\otimes_{\bfJ_+}k[\bar\frakd^{-1}({\bf 0})]\big)_\mu \ar[r] \ar[d] & 0
\\
0 \ar[r] & \big(\sum_{i} \fil_{\mu-\al_i}k[G]\otimes V\big)^{c_\sigma(G)} \ar[r] & (\fil_\mu k[G]\otimes V)^{c_\sigma(G)} \ar[r] & (\gr_\mu k[G]\otimes V)^{c_\sigma(G)}\ar[r] & 0 .
} 
\end{equation}
Here $ (\bfJ_+(V)\otimes_{\bfJ_+}k[\bar\frakd^{-1}({\bf 0})])_\mu$ denotes the $\mu$-graded piece of $\bfJ_+(V)\otimes_{\bfJ_+}k[\bar\frakd^{-1}({\bf 0})]$, and $\sum_i\fil_{\mu-\al_i}k[G]$ denotes the image of $\bigoplus_i \fil_{\mu-\al_i}k[G]\to \fil_\mu k[G]$. The second row is clearly left exact, and the right exactness follows from the fact that $\sum_{i} \fil_{\mu-\al_i}k[G]$ has a good filtration as a $G\times G$-module, by Theorem~\ref{T:prop of good fil}, Corollary~\ref{C:good filtration via Koszul}. By the same reasoning,
the left vertical map is also surjective. It follows that the right vertical map is an isomorphism. Part (2) follows.
 \end{proof}

\begin{rmk}
\label{R: inv function split}
If $V$ is the trivial representation, one can directly show the surjectivity of the map
\begin{equation*}
\label{E:grmu}
(\fil_{\mu}k[G])^{c_\sigma(G)}\to (\gr_\mu k[G])^{c_\sigma(G)}
\end{equation*} 
by constructing a splitting. Namely, note that
$$\dim (\gr_\mu k[G])^{c_\sigma(G)}=\left\{\begin{array}{cc} 0 & \sigma(\mu)\neq \mu\\ 1 & \sigma(\mu)=\mu. \end{array}\right.$$
In the latter case, the composition 
$$(\mathtt W_{\mu}^*\otimes \mathtt W_\mu)^{c_\sigma(G)}\to \fil_\mu k[G]^{c_\sigma(G)}\to (\ttS_{\mu^*}\otimes \ttS_\mu)^{c_\sigma(G)}$$ is an isomorphism, giving the splitting.

In addition, by applying Lemma~\ref{L: criterion free Rees} to the case $M=k[V_G]^{c_\sigma(G)}$, we conclude that $\bar\frakd:V_G/\!\!/c_\sigma(G)\to T_\ad^+$ is flat.
\end{rmk}

We finish this subsection with the following \emph{twisted Chevalley isomorphism}. 
Recall that we denote by $W=N(T)/T$ the Weyl group. Let  $W_0=W^\sigma$ and let $N_0$ be the preimage of $W_0$ in $N$. It acts on $T$ via the twisted conjugation $c_\sigma$ and it also acts on $V_T$ via $c_\sigma$.
\begin{prop}
\label{AE:classical HC morphism}
The restriction of a function on $V_G$ to $V_T$ induces an isomorphism $$\Res_{+,\boldsymbol{1}}^\sigma: k[V_G]^{c_\sigma(G)}\xrightarrow\cong k[V_T]^{c_\sigma(N_0)}.$$
In particular, restricting a $\sigma$-conjugation invariant function on $ G$ to $T$ induces the twisted Chevalley isomorphism 
$$
\on{Res}_{\boldsymbol 1}^\sigma: \bfJ=k[G]^{c_\sigma( G)} \xrightarrow{\cong} k[T]^{c_\sigma(N_0)}.
$$
\end{prop}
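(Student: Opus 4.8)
The plan is to reduce the displayed isomorphism (and hence its ``in particular'' consequence) to a computation on the asymptotic cone, using the Rees formalism of \S\S\ref{S: fil vect and Rees}--\ref{SS: Vin}. Since each $\fil_\mu k[G]=k[G]_{\preceq(\mu^*,\mu)}$ is a $G\times G$-submodule and $c_\sigma(G)$ acts on $V_G$ through the twisted diagonal $g\mapsto(g,\sigma(g))$, the action preserves each $\fil_\mu k[G]$; hence taking $c_\sigma(G)$-invariants commutes with the Rees construction, and $\bfJ_+=k[V_G]^{c_\sigma(G)}$ is the Rees algebra $R_{\xch(T)^+_{\on{pos}}}\bfJ$ for the induced filtration $\fil_\mu\bfJ=\bfJ\cap\fil_\mu k[G]$. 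The same holds on the torus side: $c_\sigma(N_0)$ preserves each $\fil_\nu k[T]$ of \eqref{E:fil on k[T]} (the Weyl part of $N_0$ twist-permutes the characters within a $W$-orbit and $T$ acts by scalars), so $k[V_T]^{c_\sigma(N_0)}=R_{\xch(T)^+_{\on{pos}}}\big(k[T]^{c_\sigma(N_0)}\big)$ for the induced filtration. Under these identifications $\Res_{+,\boldsymbol 1}^\sigma$ is exactly the Rees functor applied to the filtered restriction map $\Res_{\boldsymbol 1}^\sigma\colon\bfJ\to k[T]^{c_\sigma(N_0)}$, and reducing modulo the ideal $I_1$ recovers $\Res_{\boldsymbol 1}^\sigma$ by \eqref{E: Rees fiber dim}. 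So it suffices to prove that $\Res_{+,\boldsymbol 1}^\sigma$ is an isomorphism.

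Both Rees modules are flat over $R=k[\xch(T_\ad)_{\on{pos}}]$: for $\bfJ_+$ this is Proposition~\ref{P: good implies flat} together with Remark~\ref{R: inv function split} (equivalently, $\bar\frakd$ is flat), and for $k[V_T]^{c_\sigma(N_0)}$ it is a direct verification, either from the explicit toric description of $V_T$ in \S\ref{SS: Vin} or via a Koszul argument as in Corollary~\ref{C:good filtration via Koszul}, using that every torus module has a good filtration. Granting flatness, the graded Nakayama lemma (Lemma~\ref{L: graded Nakayama2}, via Corollary~\ref{C: dim equal} and Lemma~\ref{L: criterion free Rees}) reduces the bijectivity of $\Res_{+,\boldsymbol 1}^\sigma$ to that of its reduction modulo $I_0$: a homogeneous basis of $\gr\bfJ$ lifts to an $R$-basis of $\bfJ_+$, its image is a set of liftings of the image basis of $\gr\big(k[T]^{c_\sigma(N_0)}\big)$, and by flatness on the torus side any such liftings form an $R$-basis. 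Thus everything comes down to showing that the reduced map $k[\on{As}_G]^{c_\sigma(G)}\longrightarrow k[V_T|_{\mathbf 0}]^{c_\sigma(N_0)}$ is an isomorphism, where $V_T|_{\mathbf 0}$ is the fibre of $V_T\to T_\ad^+$ over $\mathbf 0$, with $k[V_T|_{\mathbf 0}]=\gr k[T]$.

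For the source, $\gr k[G]=\bigoplus_{\nu\in\xch(T)^+}\ttS_{\nu^*}\otimes\ttS_\nu$ by Lemma~\ref{C: fil on kG}(4), and $c_\sigma(G)$ acts on the $\nu$-summand through the twisted diagonal, so
\[
\big(\ttS_{\nu^*}\otimes\ttS_\nu\big)^{c_\sigma(G)}\;\cong\;\Hom_G\big(\ttW_\nu,\ \sigma\ttS_\nu\big)\;=\;\Hom_G\big(\ttW_\nu,\ \ttS_{\sigma(\nu)}\big),
\]
which is one-dimensional when $\sigma(\nu)=\nu$ and zero otherwise; hence $\bfJ_0=\bigoplus_{\nu\in\xch(T)^+,\ \sigma\nu=\nu}k$, the generator in degree $\nu$ being the image of the canonical $G$-equivariant map $\ttW_\nu\to\ttS_\nu$. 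For the target, $\gr k[T]=\bigoplus_\la k\,e^\la$, graded so that $e^\la$ lies in degree $\la_{\on{dom}}$; taking $c_\sigma(N_0)$-invariants first kills every $\la$ with $\sigma\la\neq\la$ (the $c_\sigma(T)$-scalars) and then the residual finite group $W_0$ acts on the $\sigma$-fixed characters in each $W$-orbit, and one checks that the $\nu$-graded piece is one-dimensional precisely when $\sigma(\nu)=\nu$ -- spanned by the $W_0$-orbit sum of $e^\nu$ -- and zero otherwise. Since $\Res_{\boldsymbol 1}^\sigma$ sends the generator in degree $\nu$ to a nonzero multiple of that orbit sum, the reduced map is an isomorphism in each degree, which finishes the proof.

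The main obstacle is the torus-side input: proving that $k[V_T]^{c_\sigma(N_0)}$ is flat over $R$ and that the $\nu$-graded piece of $k[V_T|_{\mathbf 0}]^{c_\sigma(N_0)}$ is at most one-dimensional -- that is, that among the $W_0$-orbits on the set of $\sigma$-fixed characters in a single $W$-orbit only the orbit through $\nu$ supports an invariant. Should this direct route prove awkward, one can instead derive the group case $\bfJ\cong k[T]^{c_\sigma(N_0)}$ from the twisted analogue of Steinberg's structure theory developed in \S\ref{S:automorphism}: density of the $\sigma$-regular semisimple locus and the fact that a $\sigma$-semisimple class meets $T$ in a single $c_\sigma(N_0)$-orbit show that $\Spec k[T]^{c_\sigma(N_0)}\to G/\!\!/c_\sigma(G)$ is a finite birational morphism onto the normal variety $G/\!\!/c_\sigma(G)$, hence an isomorphism; one then feeds this back through the Rees reduction above, using Lemma~\ref{L: gr specialization vs specialization gr} for the compatibility with the filtrations.
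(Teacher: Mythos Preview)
Your approach is correct and is essentially the paper's argument dressed in heavier machinery. The paper's proof is two lines: it writes down
\[
(\fil_\nu k[T])^{c_\sigma(N_0)}=\bigoplus_{\substack{\la\in\xch(T)^{+,\sigma}\\ \nu-\la\in\xch(T_\ad)_{\on{pos}}}}k\cdot\!\!\sum_{\mu\in W_0\la}e^\mu,
\]
observes (via Remark~\ref{R: inv function split}) that the induced map on associated graded pieces is an isomorphism in each degree, and concludes. Your Rees/flatness/Nakayama framework is a valid formalization of exactly this step, but the direct computation of the filtered torus invariants makes the flatness and the graded description immediate, so the detour through the special fiber $k[V_T|_{\mathbf 0}]^{c_\sigma(N_0)}$ buys nothing.

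The ``obstacle'' you isolate is real and is the only nontrivial input on the torus side: for $\sigma$-fixed dominant $\nu$, the set $(W\nu)^\sigma$ is a single $W_0$-orbit. The paper uses this implicitly in the displayed formula above (it is what makes the indexing by $\la\in\xch(T)^{+,\sigma}$ exhaustive). The proof is standard: if $w\nu$ is $\sigma$-fixed then $w^{-1}\sigma(w)\in W_J:=\mathrm{Stab}_W(\nu)$, so the coset $wW_J$ is $\sigma$-stable; its unique minimal-length representative is therefore $\sigma$-fixed, giving an element of $W_0$ carrying $\nu$ to $w\nu$. Once you have this, the commutation of $N_0$-invariants with $\gr$ (your point~(a)) is automatic, since the grading on $k[A]$ by $W$-orbits is $W_0$-equivariantly split. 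Your fallback via \S\ref{S:automorphism} would also work but is unnecessary.
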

The last statement was also proved in \cite[Theorem~1]{springer}, essentially by the same argument given below.
\begin{proof}
Recall the $\xch(T)^+_{\on{pos}}$-filtration on $k[T]$ defined by \eqref{E:fil on k[T]}, which is the image of the $\xch(T)^+_{\on{pos}}$-filtration on $k[G]$ defined by \eqref{E: fil on k[G]}. Then   
$$(\fil_\nu k[T])^{c_\sigma(N_0)}: = \bigoplus_{\substack{\la \in \XX^\bullet(T)^{+,\sigma}\\ \mu - \lambda \in \XX^\bullet(T_\ad)_\mathrm{pos}}} \Big(k \cdot \sum_{\nu \in W_0 \la} e^{\nu} \Big),
$$
By Remark~\ref{R: inv function split}, it is straightforward to see that $\gr k[G]^{c_\sigma(G)}\cong \gr k[T]^{c_\sigma(N_0)}$. Therefore, $\Res_{+, \boldsymbol{1}}^\sigma$ is an isomorphism.
\end{proof}

\subsection{Freeness}

Here are the main results of this section. We define a number
\[
r_V:=\dim V|_{T^\sigma}(0).
\]

\begin{assumption}
\label{A:G simply connected}
In this subsection, let $G$ be a simply-connected semisimple group over $k$ and $V$ a $G$-representation that admits a good filtration.
\end{assumption}

\begin{theorem}
\label{T:Kostant}
Keep Assumption~\ref{A:G simply connected}. Then  $\bfJ_+(V)$ (resp. $\bfJ_0(V)$, resp. $\bfJ(V)$) is a free $\bfJ_+$-module (resp. $\bfJ_0$-module, resp. $\bfJ$-module) of rank
$r_V$. 
\end{theorem}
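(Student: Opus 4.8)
The plan is to obtain all three statements from a single Rees--deformation argument built on the canonical filtration of $k[G]$. Give $\bfJ_+=k[V_G]^{c_\sigma(G)}$ and $\bfJ_+(V)=(k[V_G]\otimes V)^{c_\sigma(G)}$ the $\xch(T)^+_{\on{pos}}$-grading induced by the canonical filtration \eqref{E: fil on k[G]} on $k[G]$, so that $\bfJ_+(V)=\bigoplus_\mu(\fil_\mu k[G]\otimes V)^{c_\sigma(G)}$ is an $\xch(T)^+_{\on{pos}}$-graded module over the $\xch(T)^+_{\on{pos}}$-graded $R$-algebra $\bfJ_+$, with $R=k[\xch(T_\ad)_{\on{pos}}]=k[T^+_\ad]$; in fact $\bfJ_+(V)$ is exactly the Rees module $R_{\xch(T)^+_{\on{pos}}}\bfJ(V)$ of the canonically filtered vector space $\bfJ(V)$, and reducing modulo $I_1$ (resp.\ $I_0$) recovers $\bfJ(V)$ (resp.\ $\bfJ_0(V)$) by Lemma~\ref{L: gr specialization vs specialization gr}. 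I will: (i) show $\bfJ_0(V)$ is free over $\bfJ_0$ of rank $r_V$; (ii) show $\on{Tor}^R_{>0}(\bfJ_+(V),R/I_0)=0$; (iii) lift a homogeneous $\bfJ_0$-basis of $\bfJ_0(V)$ through $I_0$ and combine (i) and (ii) with graded Nakayama to get that $\bfJ_+(V)$ is free over $\bfJ_+$ of rank $r_V$; and (iv) specialize along $I_1$ to conclude the statement for $\bfJ(V)$.

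For step (i): Lemma~\ref{L: gr specialization vs specialization gr} identifies the $\mu$-graded piece $\gr_\mu\bfJ(V)$ with $(\gr_\mu k[G]\otimes V)^{c_\sigma(G)}$, Lemma~\ref{C: fil on kG}(4) identifies $\gr_\mu k[G]$ with $\ttS_{\mu^*}\boxtimes\ttS_\mu$, and restricting along the $\sigma$-twisted diagonal $g\mapsto(g,\sigma(g))$ together with Proposition~\ref{L: fil via inv} identifies $\gr_\mu\bfJ(V)$ with the filtered piece $\fil_{\sigma^{-1}\mu}V\big((1-\sigma^{-1})\mu\big)$ of the weight space, where $\fil$ is the multi-filtration of \S\ref{S:the filtration}. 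Taking $V=\mathbf 1$ gives $\bfJ_0\cong k[\xch(T)^{+,\sigma}]$, a polynomial ring because $G$ is simply connected. Decomposing $\xch(T)^+$ into its $\xch(T)^{+,\sigma}$-translation orbits $O$, the weight $(1-\sigma^{-1})\mu$ is constant $=\chi_O$ along $O$, and the part of $\bfJ_0(V)$ supported on $O$ is the Rees module over $k[\xch(T)^{+,\sigma}]$ of $V(\chi_O)$ for a shifted sub-filtration of the multi-filtration of \S\ref{S:the filtration}, in the sense of Lemma~\ref{L: sub-fil}. Since $\dim\gr V(\chi_O)=\dim V(\chi_O)$ by Theorem~\ref{P: equi dim} and Lemma~\ref{L: sub-fil}, Lemma~\ref{L: criterion free Rees} makes each such summand $\bfJ_0$-free; hence $\bfJ_0(V)$ is $\bfJ_0$-free of rank $\sum_O\dim V(\chi_O)$. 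Finally $O\mapsto\chi_O$ is a bijection between $\xch(T)^{+,\sigma}$-orbits on $\xch(T)^+$ and weights trivial on $T^\sigma$ --- an elementary check using that $\xch(T)^+\cong\NN^\Delta$ is free on the fundamental weights and $\sigma$ permutes them --- so $\sum_O\dim V(\chi_O)=\dim V|_{T^\sigma}(0)=r_V$.

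For step (ii): tensor the exact Koszul-type complex \eqref{L:mugradeKoszul} for $k[G]$ --- exact because $R_{\xch(T)^+_{\on{pos}}}k[G]$ is $R$-flat (Proposition~\ref{P: good implies flat}) --- with $V$ over $k$; by Lemma~\ref{L:good sub}, Corollary~\ref{C:good filtration via Koszul} and Theorem~\ref{T:prop of good fil}(2) every term of the resulting complex has a good filtration as a $c_\sigma(G)$-module, so applying the functor $(-)^{c_\sigma(G)}$, which is exact on complexes of good-filtration modules by Theorem~\ref{T:prop of good fil}(3), shows that the Koszul complex of the regular sequence $(e^\al)_{\al\in\Delta}$ of $R$ acting on $\bfJ_+(V)$ is exact in positive degrees; this is exactly $\on{Tor}^R_{>0}(\bfJ_+(V),R/I_0)=0$. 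For steps (iii)--(iv): choosing a homogeneous $\bfJ_0$-basis $\{\bar m_1,\dots,\bar m_{r_V}\}$ of $\bfJ_0(V)$ from (i) and homogeneous lifts $m_i\in\bfJ_+(V)$ of the same degrees, graded Nakayama (Lemma~\ref{L: graded Nakayama2}) applied to $\bfJ_+(V)/\sum_i\bfJ_+ m_i$ --- whose reduction modulo $I_0$ is $\bfJ_0(V)/\sum_i\bfJ_0\bar m_i=0$ --- shows $\{m_i\}$ generate $\bfJ_+(V)$ over $\bfJ_+$; the kernel $K$ of the resulting surjection $\bfJ_+^{\oplus r_V}\twoheadrightarrow\bfJ_+(V)$ satisfies $K/I_0K\hookrightarrow\bfJ_0^{\oplus r_V}$ by (ii), with image $\ker(\bfJ_0^{\oplus r_V}\xrightarrow{\ \sim\ }\bfJ_0(V))=0$, so $K/I_0K=0$ and $K=0$ by graded Nakayama again. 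Thus $\bfJ_+(V)$ is $\bfJ_+$-free of rank $r_V$; tensoring over $\bfJ_+$ with $\bfJ_+/I_1\bfJ_+=\bfJ$ (Lemma~\ref{L: gr specialization vs specialization gr}(1)) shows $\bfJ(V)$ is $\bfJ$-free of rank $r_V$, and (i) already gave the $\bfJ_0$ case.

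The main obstacle is step (i). Step (ii) is essentially formal once one has the good-filtration machinery of \S\ref{SS:connon} and the flatness of the Vinberg monoid, and steps (iii)--(iv) are standard graded commutative algebra. But a priori the multi-filtration on $V(\chi_O)$ could satisfy $\dim\gr>\dim$ (Remark~\ref{E: Rees dimension jump}), so the freeness of $\bfJ_0(V)$ over the polynomial ring $\bfJ_0$ genuinely requires Theorem~\ref{P: equi dim} --- hence the good-filtration hypothesis on $V$ and the canonical (or MV) basis input from the Borel--Weil / geometric Satake pictures of \S\ref{SS:fil via BW}--\S\ref{SS: fil via geomSat} --- together with the sub-filtration comparison of Lemma~\ref{L: sub-fil}. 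The bijection $O\leftrightarrow\chi_O$ that pins the rank down to $r_V$ is the remaining point needing (elementary but not automatic) care.
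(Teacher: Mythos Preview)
Your argument is correct and diverges from the paper's at the passage from $\bfJ_0(V)$ to $\bfJ_+(V)$. Step (i) is the paper's Proposition~\ref{P: Rees summand} (your ``orbits'' $O$ are the fibers $\xch(T)^+_\xi$ of $\nu\mapsto\sigma(\nu)-\nu$; the bijection you need is Lemma~\ref{L: minimalnusimplyconnected} together with the elementary identity $\ker(\xch(T)\to\xch(T^\sigma))=(\sigma-1)\xch(T)$, valid for $G$ simply connected). The divergence is at steps (ii)--(iii): the paper lifts generators as you do, but then shows injectivity of $\bfJ_+^{\,r_V}\to\bfJ_+(V)$ by computing the \emph{generic} rank of $\bfJ_+(V)$ over $\bfJ_+$ to be $r_V$ (Lemma~\ref{L:generic dimension}), using the existence of strongly $\sigma$-regular semisimple elements developed in \S\ref{S:automorphism}. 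Your $\on{Tor}^R$-vanishing argument replaces this geometric input by the purely cohomological observation that $(-)^{c_\sigma(G)}$ preserves exactness of the Koszul complex; it stays entirely within \S\ref{S: fil vect and Rees}--\S\ref{S:vector-valued twisted invariant functions} and avoids the forward reference to \S\ref{S:automorphism}. The paper's route, on the other hand, yields Lemma~\ref{L:generic Chevalley} as a byproduct, which is needed again in \S\ref{S:determinant}.

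One small imprecision in your step (ii): the functor $(-)^{c_\sigma(G)}$ is not exact on an arbitrary exact complex whose \emph{terms} have good filtrations; you need the \emph{cycles} to have good filtrations. Corollary~\ref{C:good filtration via Koszul} only gives this for the bottom two. But the full statement follows by descending induction from the top of the Koszul complex \eqref{L:mugradeKoszul}: the leftmost cycle $Z_{r-1}$ equals the leftmost term $\fil_{\mu-\al_1-\cdots-\al_r}k[G]\otimes V$ and has a good filtration, and then each short exact sequence $0\to Z_p\to K_p\to Z_{p-1}\to 0$ with $Z_p,K_p$ good forces $Z_{p-1}$ good by Theorem~\ref{T:prop of good fil}(3).
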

\begin{cor}
\label{L:cl flat}
Keep Assumption~\ref{A:G simply connected}.
The morphisms 
$$\chi_+: V_G\to V_G/\!\!/c_\sigma(G), \quad \chi_0: \on{As}_G\to \on{As}_G/\!\!/c_\sigma(G), \quad \chi: G\to G/\!\!/c_\sigma(G)$$ 
are faithfully flat.
\end{cor}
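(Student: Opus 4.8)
The plan is to deduce all three flatness assertions simultaneously from Theorem~\ref{T:Kostant}, by realizing the coordinate ring of each of $V_G$, $\on{As}_G$ and $G$ as a filtered colimit of the free modules furnished by that theorem.

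First, an elementary reduction. Since $G$ is reductive (indeed geometrically reductive, in every characteristic), each GIT quotient morphism $\chi_\ast\colon X\to X/\!\!/c_\sigma(G)$ is surjective, and $\bfJ_\ast:=k[X]^{c_\sigma(G)}$ is a finitely generated $k$-algebra by Nagata's theorem; as a flat ring homomorphism that is surjective on spectra is faithfully flat, it suffices to prove that $k[X]$ is a \emph{flat} $\bfJ_\ast$-module, where $(X,\bfJ_\ast)$ runs through the three pairs $(V_G,\bfJ_+)$, $(\on{As}_G,\bfJ_0)$, $(G,\bfJ)$, with $G$ acting throughout by the twisted conjugation $c_\sigma$.

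The key is a \emph{regular-representation} argument. Let $k[G]_{\mathrm{left}}$ denote the coordinate ring of $G$ regarded as a $c_\sigma(G)$-module through left translation, i.e.\ the regular representation of $c_\sigma(G)\cong G$. For an arbitrary $c_\sigma(G)$-algebra $A$ the tensor identity yields a natural isomorphism of $A^{c_\sigma(G)}$-modules
\[
\bigl(A\otimes_k k[G]_{\mathrm{left}}\bigr)^{c_\sigma(G)}\;\cong\;A,
\]
concretely because a $c_\sigma(G)$-equivariant morphism $G\to\Spec A$ is determined by its value at the identity. Moreover $k[G]_{\mathrm{left}}$ admits a good filtration as a $G$-module (Theorem~\ref{T:prop of good fil}(4)), and each Schur module appears in it with finite multiplicity, so the terms of this good filtration can be chosen finite-dimensional; thus $k[G]_{\mathrm{left}}=\varinjlim_i V_i$ is a filtered union of finite-dimensional $G$-submodules $V_i$ each admitting a good filtration. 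Because taking $c_\sigma(G)$-invariants commutes with filtered colimits, applying $(k[X]\otimes_k-)^{c_\sigma(G)}$ gives, in each of the three cases, an isomorphism of $\bfJ_\ast$-modules
\[
k[X]\;\cong\;\varinjlim_i\bigl(k[X]\otimes_k V_i\bigr)^{c_\sigma(G)}\;=\;\varinjlim_i\bfJ_\ast(V_i).
\]
By Theorem~\ref{T:Kostant}, whose hypothesis is precisely Assumption~\ref{A:G simply connected} (which is in force here), each $\bfJ_\ast(V_i)$ is free over $\bfJ_\ast$; and a filtered colimit of flat modules is flat. Hence $k[X]$ is flat over $\bfJ_\ast$, completing the proof.

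Beyond Theorem~\ref{T:Kostant} itself, which carries all the real content, I expect no genuine difficulty: the remaining points are bookkeeping, namely checking that the isomorphism in the tensor identity respects the $\bfJ_\ast$-module structures---immediate, since $c_\sigma(G)$ acts on $k[X]$ by algebra automorphisms fixing $\bfJ_\ast$---and that each Schur module occurs in $k[G]_{\mathrm{left}}$ with finite multiplicity, which follows from Theorem~\ref{T:prop of good fil}(4) (equivalently from the $G\times G$-module description $\gr k[G]=\bigoplus_{\nu\in\xch(T)^+}\ttS_{\nu^*}\otimes\ttS_\nu$ of Lemma~\ref{C: fil on kG}(4)). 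One could instead run the same argument fibrewise over $T_\ad^+$, using the flatness of $\bar{\frakd}$ (Remark~\ref{R: inv function split}) together with the critère de platitude par fibres, but the regular-representation route above is shorter and treats the three cases uniformly.
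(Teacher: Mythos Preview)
Your proof is correct and follows essentially the same approach as the paper: apply Theorem~\ref{T:Kostant} to the regular representation $\on{Reg}=k[G]$ (under left translation), using the tensor identity $(k[X]\otimes\on{Reg})^{c_\sigma(G)}\cong k[X]$. The paper states this in a single sentence, applying Theorem~\ref{T:Kostant} directly to $\on{Reg}$; you are more careful in writing $\on{Reg}$ as a filtered union of finite-dimensional good-filtration submodules before invoking the theorem, but the substance is identical.
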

We call these morphisms the \emph{(twisted) Chevalley maps}. When $\sigma=\id$, this corollary is also proved by \cite{Bou} by a different method.
\begin{proof}
Let $\on{Reg}$ denote the regular representation of $G$, i.e. the representation of $G$ on $k[G]$ induced by left multiplication, which as mentioned before admits a good filtration. Since for any affine variety $X=\Spec R$ with a $G$-action, $(R\otimes\on{Reg})^G=R$, the statement follows from Theorem~\ref{T:Kostant}.
\end{proof}

\begin{rmk}
\label{R: flatness in general}
The Chevalley map $\chi: G\to G/\!\!/c_\sigma(G)$ is not flat in general, and therefore Theorem~\ref{T:Kostant} cannot hold for arbitrary reductive group. We refer to \cite[Proposition 4.1]{Richardson} for a discussion of this point when $\sigma=\id$.

Nevertheless, there is a sufficient condition for the flatness of $\bfJ(V)$ over $\bfJ$ when $G$ is a general connected reductive group. Let $G_\s$ be the simply-connected cover of the derived subgroup of $G$, and let $F$ be the kernel of the central isogeny $1\to F\to G':=G_\s\times Z_G\to G\to 1$. 
It is known that $\sigma$ lifts to a unique automorphism of $G_\s$ (e.g. see \cite[\S 9.16]{St}). 
We decompose $V=\oplus_{\psi} V_\psi\otimes k_\psi$ according to the central character for the action of $Z_{G}$ on $V$ so that each $V_\psi$ is a $G_{\s}$-module. Then by \eqref{E:JV tensor}
\[\bfJ_{G'}(V)=\bigoplus_{\psi} \bfJ_{G_{\s}}(V_\psi)\otimes \bfJ_{Z_{G}}(k_\psi)=\bigoplus_{\psi|_{Z^\sigma_{G}}=\mathbf{1}} \bfJ_{G_{\s}}(V_\psi)\otimes \bfJ_{Z_{G}}(k_\psi),\]
which is free over $G'/\!\!/c_\sigma(G')\cong G_s/\!\!/c_\sigma(G_s)\times (Z_{G})_\sigma$ of rank $r_V$ by Theorem~\ref{T:Kostant}. It follows from Lemma~\ref{L: fpqc descent for JV} that if the action of $F_\sigma$ on $G'/\!\!/c_\sigma(G')$ is free, then $\bfJ_G(V)$ is finite projective of rank $r_V$ over $G/\!\!/c_\sigma(G)$. In particular, if the map $F_\sigma\to (Z_G)_\sigma$ is injective, then $J_G(V)$ is finite projective over $J_G$. For example, this is the case if $G_\s=G_\der$ and $\sigma=\id$.
\end{rmk}

The rest of this subsection is devoted to the proof of Theorem~\ref{T:Kostant}.
We will first prove the statement for $\bfJ_0(V)$, and then deduce from it the statement for $\bfJ_+(V)$ and $\bfJ(V)$.
Note that by Lemma~\ref{C: fil on kG} (4),
\begin{equation*}
\bfJ_0(V)=\Gamma([ \on{As}_G / c_\sigma(G)],\widetilde V_0)= \bigoplus_{\nu\in \xch(T)^+}(\ttS_{\sigma(\nu^*)}\otimes \ttS_{\nu}\otimes V)^G,
\end{equation*}
Let us fix $\xi\in\xch(T)$. Clearly, the part of Theorem~\ref{T:Kostant} for $\bfJ_0(V)$ will follow from the following refinement.
\begin{prop}
\label{P: Rees summand}
Assume that $V$ admits a good filtration. Then
\[
\bfJ_0(V)_\xi: = \bigoplus_{\sigma(\nu)-\nu=\xi}(\ttS_{\sigma(\nu^*)}\otimes \ttS_{\nu}\otimes V)^G\]
is a finite free $\bfJ_0$-module of rank $=\dim V(\xi)$. Moreover, we may choose a basis $\{e_i\}$ of $\bfJ_0(V)_\xi$ as a $\bfJ_0$-module such that each $e_i\in (\gr_{\nu_i}k[G]\otimes V)^{c_\sigma(G)}$ for some $\nu_i\in\xch(T)^+$ satisfying $\sigma(\nu_i)-\nu_i=\xi$.
\end{prop}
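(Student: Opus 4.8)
The plan is to realize $\bfJ_0(V)_\xi$ as a Rees module over the monoid algebra $\bfJ_0$, compute its two distinguished fibres from the Borel--Weil picture of \S\ref{SS:fil via BW}, and then invoke the freeness criterion Lemma~\ref{L: criterion free Rees}. \emph{Set-up:} since $G$ is simply-connected, $\xch(T)^+\cong\NN^\Delta$ on the fundamental weights and $\sigma$ acts through the induced permutation of $\Delta$, so $\Gamma:=\xch(T)^{+,\sigma}$ is free on the set $\Delta/\sigma$ of $\sigma$-orbits, in particular sharp and satisfying (DCC). Taking $V=\triv$ in the displayed description of $\bfJ_0$ one has $(\ttS_{\sigma(\nu^*)}\otimes\ttS_\nu)^G=\Hom_G(\ttW_{\sigma(\nu)},\ttS_\nu)$, which is $k$ if $\sigma(\nu)=\nu$ and $0$ otherwise; together with Cartan multiplication this identifies $\bfJ_0\cong k[\Gamma]$ (equivalently, via the twisted Chevalley isomorphism of Proposition~\ref{AE:classical HC morphism} and Remark~\ref{R: inv function split}). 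Put $S_\xi:=\{\nu\in\xch(T)^+\mid\sigma(\nu)-\nu=\xi\}$; if $S_\xi=\varnothing$ the summand does not occur in the decomposition of $\bfJ_0(V)$ and there is nothing to prove, so fix $\nu_0\in S_\xi$. Since elements of $\xch(T)^\sigma\cong\ZZ^{\Delta/\sigma}$ are constant on $\sigma$-orbits, the dominance condition on $\nu_0+\xch(T)^\sigma$ cuts out a single shifted orthant; thus $S_\xi$ is a free $\Gamma$-torsor whose translation order agrees with the restriction of the coordinatewise order on $\NN^\Delta$. Hence $(\Gamma,S_\xi)$ satisfies (Can) and (DCC), and $\bfJ_0(V)_\xi=\bigoplus_{\nu\in S_\xi}(\ttS_{\sigma(\nu^*)}\otimes\ttS_\nu\otimes V)^G$ is an $S_\xi$-graded $\bfJ_0$-module, the degree-raising maps being multiplication by the canonical invariants $\sigma_\gamma\in(\ttS_{\gamma^*}\otimes\ttS_\gamma)^G$, $\gamma\in\Gamma$.

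\emph{Computing the fibres.} For $\nu\in S_\xi$ the leading-term map $\ell_{\sigma(\nu),\nu}$ of \S\ref{SS:fil via BW}, applied with the dominant pair $(\sigma(\nu),\nu)$, carries $(\ttS_{\sigma(\nu^*)}\otimes\ttS_\nu\otimes V)^G$ to $V(\sigma(\nu)-\nu)=V(\xi)$, and by Proposition~\ref{L: fil via inv} it is injective with image the multifiltration piece $\on{fil}_\nu V(\xi)=\bigcap_{\al}\on{fil}^\al_{\langle\nu,\al^\vee\rangle}V(\xi)$ of \S\ref{S:the filtration}. Since every $\gamma\in\Gamma$ is $\sigma$-fixed, the compatibility \eqref{E: comp mult} gives $\ell_{\sigma(\nu+\gamma),\nu+\gamma}\circ\sigma_\gamma=\ell_{\sigma(\nu),\nu}$, so under these identifications multiplication by $\sigma_\gamma$ becomes the inclusion $\on{fil}_\nu V(\xi)\hookrightarrow\on{fil}_{\nu+\gamma}V(\xi)$; in particular the degree-raising maps are injective, and as $\nu$ runs over the orthant $S_\xi$ these subspaces exhaust $V(\xi)$. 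Consequently $\bfJ_0(V)_\xi$ is exactly the Rees module $R_{S_\xi}M$ of $M:=V(\xi)$ with the $S_\xi$-filtration $\on{fil}_\nu M:=\on{fil}_\nu V(\xi)$, and by \eqref{E: Rees fiber dim} its fibres are $\bfJ_0(V)_\xi/I_1\bfJ_0(V)_\xi\cong M$ and $\bfJ_0(V)_\xi/I_0\bfJ_0(V)_\xi\cong\gr_{S_\xi}M$.

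\emph{Conclusion.} The inclusion $S_\xi\hookrightarrow\xch(T_\s)^+=\NN^\Delta$ satisfies the hypothesis of Lemma~\ref{L: sub-fil}: for $\lambda\in\NN^\Delta$ the set $\{\nu\in S_\xi\mid\lambda\preceq\nu\}$ has a unique least element, whose coordinate on a $\sigma$-orbit $O$ equals $\max\bigl(0,\max_{\al\in O}(\langle\lambda,\al^\vee\rangle-\langle\nu_0,\al^\vee\rangle)\bigr)$ above $\nu_0$. Hence Lemma~\ref{L: sub-fil} together with Theorem~\ref{P: equi dim} (where the good filtration hypothesis on $V$ is used) yields
\[
\dim\gr_{S_\xi}M=\dim\gr_{S_\xi}V(\xi)=\dim V(\xi)=\dim M<\infty .
\]
By Lemma~\ref{L: criterion free Rees}, $\bfJ_0(V)_\xi=R_{S_\xi}M$ is then a finite free $\bfJ_0$-module of rank $\dim V(\xi)$, with a basis $\{\widetilde m_i\,e^{\nu_i}\}_i$ in which each $\widetilde m_i$ lifts a homogeneous basis element of $\gr_{S_\xi}M$ and therefore lies in $\on{fil}_{\nu_i}M=(\ttS_{\sigma(\nu_i^*)}\otimes\ttS_{\nu_i}\otimes V)^G$ for some $\nu_i\in\xch(T)^+$ with $\sigma(\nu_i)-\nu_i=\xi$; rewriting this summand as $(\gr_{\nu_i}k[G]\otimes V)^{c_\sigma(G)}$ through the canonical filtration gives precisely the asserted basis.

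\emph{Main obstacle.} The argument is essentially a formal assembly of the machinery already developed, so the only delicate point is that the filtration on $V(\xi)$ appearing here is the \S\ref{S:the filtration}-multifiltration indexed not by all of $\xch(T_\s)^+$ but only by the sub-poset $S_\xi$, and one must verify this restriction does not increase $\dim\gr$ --- this is exactly what Lemma~\ref{L: sub-fil} (combined with the shifted-orthant shape of $S_\xi$) is for, and it is why Theorem~\ref{P: equi dim}, rather than just a Kostant-type statement about the zero weight space, is needed. Everything else (the identification $\bfJ_0\cong k[\Gamma]$, the orthant structure of $S_\xi$, and the fact that the $\sigma$-twist is absorbed into the indices in the leading-term maps) is routine bookkeeping.
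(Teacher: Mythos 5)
Your proof is correct and follows the same route as the paper: identify $\bfJ_0(V)_\xi$ with the Rees module of $V(\xi)$ for the shifted-orthant filtration via Proposition~\ref{L: fil via inv} and \eqref{E: comp mult}, reduce the dimension count to Theorem~\ref{P: equi dim} using Lemma~\ref{L: sub-fil}, and conclude by Lemma~\ref{L: criterion free Rees}. (The only small slip is in your explicit formula for the coordinates of the least element above $\lambda$ in $S_\xi$, which is valid only when your chosen base point $\nu_0$ is in fact the minimal element $\nu_0^h$ of $S_\xi$; this existence and uniqueness of $\nu_0^h$ is precisely the content of the paper's Lemma~\ref{L: minimalnusimplyconnected}, which you are implicitly reproving.)
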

\begin{proof}
Set $\xch(T)^{+,\sigma}=\{\nu\in\xch(T)^+\mid \sigma(\nu)=\nu\}$ and $\xch(T)^+_\xi=\{\nu\in\xch(T)^+\mid \sigma(\nu)-\nu=\xi\}$. Then $\xch(T)^{+,\sigma}$ naturally acts on $\xch(T)^{+}_\xi$.
We shall apply the discussions from \S~\ref{S: fil vect and Rees} to the following tuple
$$(\Ga,S,M,\{\on{fil}_sM\}_{s\in S})=\big(\xch(T)^{+,\sigma}, \xch(T)^+_\xi, V(\xi), \{\on{fil}_\nu V(\xi)\}_{\nu\in\xch(T)^+_\xi}\big).$$
The following lemma implies that  $(\xch(T)^{+,\sigma}, \xch(T)^+_\xi)$ satisfies (Can) and (DCC), and Lemma~\ref{L: sub-fil} is applicable (with $S'\subset S$ being $\XX^\bullet(T)^+_\xi\subset \XX^\bullet(T)^+ = \XX^\bullet(T_\s)^+$).

\begin{lem}
\label{L: minimalnusimplyconnected}
We equip $\xch(T)^+$ with a partial order by identifying $\xch(T)^+\cong \bN^\Delta$ as in \eqref{E: semigroup of dom weight}, i.e. $\la_1\geq \la_2$ if and only if $\langle\la_1,\check\al\rangle\geq \langle\la_2,\check\al\rangle$ for every simple coroot $\check \alpha$. Then for every $\nu_0\in \xch(T)^+$, the set
\[\{\nu\in\xch(T)^+\mid \sigma(\nu)-\nu=\xi, \nu\geq  \nu_0\}\]
has a unique minimal element, denoted by $\nu_0^h$. In addition, in each $\sigma$-orbit of simple coroots, there is at least one $\check \al$ such that $\langle \nu_0^h,\check\al\rangle=\langle \nu_0,\check \al\rangle$.

Specializing this discussion to $\nu_0 =0$, we deduce that $\XX^\bullet(T)^+_\xi = \nu_0^h + \XX^\bullet(T)^{+,\sigma}$.
\end{lem}
\begin{proof}
The last sentence of the lemma is a direct corollary of the existence and the properties of $\nu_0^h$. We focus on constructing the needed element $\nu_0^h$.
Since $G$ is a simply-connected semisimple group, we may take the set of fundamental weights $\{\omega_\al\}_{\al\in\Delta}$ such that $\langle\omega_\al,\check \beta\rangle=\delta_{\al\beta}$ for any pair $\al,\beta\in\Delta$. 
Then every weight can be written as $\xi=\sum\langle \xi,\check \al\rangle \omega_\al$.

If we write $\nu = \sum_\alpha \nu_\alpha \omega_\alpha$ with $\nu_\alpha \in \ZZ$,  the equality $\sigma(\nu)-\nu= \xi$ is equivalent to the system of equations
\[\nu_\al-\nu_{\sigma(\al)}=\langle\xi, \sigma(\check \al)\rangle, \quad \alpha \in \Delta,\]
with variables $\{\nu_\al\}\in \bZ^{\Delta}$, and the condition $\nu \geq \nu_0$ is equivalent to that \begin{equation}
\label{E:nu alpha bigger than nu0alpha}\nu_\alpha \geq \langle\nu_0, \check \al\rangle \quad \textrm{for any} \quad \alpha \in \Delta.
\end{equation}
For every $\sigma$-orbit $\mO\subset \Delta$, let $\omega_\mO=\sum_{\al\in\mO}\omega_\al$. Then after modifying a solution $\nu$ by a multiple of $\omega_\mO$ if necessary, we can always find $\nu$ satisfying the additional inequalities \eqref{E:nu alpha bigger than nu0alpha} and such that in each $\sigma$-orbit $\mO\subset\Delta$, there is at least one $\al\in\mO$ such that the equality in \eqref{E:nu alpha bigger than nu0alpha} holds. Then this is the desired $\nu_0^h$. 
\end{proof}

We now return to the proof of Proposition~\ref{P: Rees summand}.
By Proposition~\ref{L: fil via inv} and \eqref{E: comp mult}, we have an isomorphism
\[
\bfJ_0(V)_\xi=  \bigoplus_{\nu\in \xch(T)^+_\xi}(\ttS_{\sigma(\nu^*)}\otimes \ttS_{\nu}\otimes V)^G \stackrel{\oplus_\nu \ell_{\sigma(\nu),\nu}}{\cong}  \bigoplus_{\sigma(\nu)-\nu=\xi} \on{fil}_\nu V(\xi)=R_{\xch(T)^+_\xi}V(\xi),
\]
as modules over $\bfJ_0\cong k[\xch(T)^{+,\sigma}]$.

Therefore, by Lemma~\ref{L: criterion free Rees}, the lemma will follow if we can show that 
\begin{equation}
\label{E:dim of Vxi versus graded}\dim \gr_{\xch(T)^+_\xi}V(\xi)=\dim V(\xi).
\end{equation}
But by Lemma~\ref{L: minimalnusimplyconnected}, we can apply Lemma~\ref{L: sub-fil} to deduce this equality from Theorem~\ref{P: equi dim}. 
\end{proof}

By Proposition~\ref{P: Rees summand},we may choose a basis $\{e_i\}$ of $\bfJ_0(V)_\xi$ as a $\bfJ_0$-module such that each $e_i\in (\gr_{\nu_i}k[G]\otimes V)^{c_\sigma(G)}$ for some $\nu_i\in\xch(T)^+$. By the exactness of the bottom row of \eqref{E:lifting grmu to fil mu}, we can lift each element in  $\{e_i\}$ to $ \bfJ_+(V)$ so that $\widetilde{e_i}\in  (\on{fil}_{\nu_i}k[G]\otimes V)^{c_\sigma (G)}$.
By Lemma~\ref{L: graded Nakayama2} (2), the natural map
\begin{equation*}
\label{frame map}
\bigoplus_i\bfJ_+\widetilde{e_i}\to \bfJ_+(V)
\end{equation*}
is surjective. In particular, $\bfJ_+(V)$ is finitely generated over $\bfJ_+$. By Lemma~\ref{L: gr specialization vs specialization gr} (1), $\bfJ(V)$ is also finitely generated over $\bfJ$. 

Now Theorem~\ref{T:Kostant} is reduced to show that for every point $x\in V_G/\!\!/c_\sigma(G)$, the fiber of the module $\bfJ_+(V)$ over $x$ has dimension $\geq r_V$. By the semi-continuity, it is a consequence of the following. 
\begin{lem}
\label{L:generic dimension}
Over the generic point of $V_G/\!\!/c_\sigma(G)$, the rank of $\bfJ_+(V)$ is $r_V$.
\end{lem}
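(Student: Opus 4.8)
The plan is to compute the generic rank by passing to the generic fibre of the twisted Chevalley map $\chi_+\colon V_G\to V_G/\!\!/c_\sigma(G)$. Since $\bar\frakd$ is faithfully flat (Remark~\ref{R: inv function split}), the generic point of $V_G/\!\!/c_\sigma(G)$ lies over the generic point of $T_\ad^+$, hence over the open torus $T_\ad\subset T_\ad^+$; there Proposition~\ref{T:Vinberg}(1) identifies $V_G$ with $G\times^{Z_G}T$, on which $c_\sigma(h)$ acts by $[g,t]\mapsto[hg\sigma(h)^{-1},t]$, so the $T$-coordinate is inert. Inverting the $e^\al$ ($\al\in\Delta$) — i.e. restricting to $\bar\frakd^{-1}(T_\ad)$ — and using that taking invariants commutes with this flat base change, one obtains identifications $\bfJ_+[\{(e^\al)^{-1}\}]\cong(\bfJ_G\otimes k[T])^{Z_G}$ and $\bfJ_+(V)[\{(e^\al)^{-1}\}]\cong(\bfJ_G(V)\otimes k[T])^{Z_G}$, in which $Z_G\subset T$ acts on the $k[T]$-factor by translation, hence freely. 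Thus $\Spec(\bfJ_G\otimes k[T])$ is a $Z_G$-torsor over $\Spec\bfJ_+[\{(e^\al)^{-1}\}]$, and faithfully flat descent reduces the lemma to showing that $\bfJ_G(V)=(k[G]\otimes V)^{c_\sigma(G)}$ has generic rank $r_V$ over $\bfJ_G=k[G]^{c_\sigma(G)}$.

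For this I would invoke the analysis of $\sigma$-regular twisted conjugacy classes in \S\ref{S:automorphism}. Over an algebraic closure $\bar K$ of $K:=\mathrm{Frac}(\bfJ_G)$, the fibre of $\chi\colon G\to G/\!\!/c_\sigma(G)$ over the generic point is a single $\sigma$-regular twisted conjugacy class, which meets $T$ in an element $t_0$; by the structure theory of \S\ref{S:automorphism} its $c_\sigma$-stabilizer is $\Stab_{c_\sigma}(t_0)=T^\sigma$. (One checks directly that $\Stab_{c_\sigma}(t_0)=G^{\Ad(t_0)\circ\sigma}$, that $\Ad(t_0)\circ\sigma$ acts on $\frakt$ as $\sigma$ and has no nonzero fixed vector on $\bigoplus_\al\frakg_\al$ for generic $t_0$, so $\Lie\big(\Stab_{c_\sigma}(t_0)\big)=\frakt^\sigma$, and $T^\sigma$ lies in the stabilizer of every element of $T$.) Hence, after base change to $\bar K$, the generic fibre of $[G/c_\sigma(G)]\to G/\!\!/c_\sigma(G)$ is the classifying stack $[\Spec\bar K/(T^\sigma)_{\bar K}]$, so
\[
\bfJ_G(V)\otimes_{\bfJ_G}\bar K\;\cong\;(V\otimes_k\bar K)^{T^\sigma}\;=\;V|_{T^\sigma}(0)\otimes_k\bar K ,
\]
which has dimension $r_V$ over $\bar K$; this equals the rank of $\bfJ_G(V)$ over $\bfJ_G$. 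Combined with the previous paragraph, this proves the lemma.

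Alternatively, one can avoid the passage to $\bar K$ by using the twisted Chevalley isomorphism $\bfJ_G\cong k[T]^{c_\sigma(N_0)}$ (Proposition~\ref{AE:classical HC morphism}): restriction of functions defines a map $\bfJ_G(V)\to(k[T]\otimes V)^{c_\sigma(N_0)}$ which becomes an isomorphism over the dense open $\sigma$-regular locus, since every $\sigma$-regular class meets $T$ in a single $c_\sigma(N_0)$-orbit and the $c_\sigma$-stabilizer in $G$ of a regular $t_0\in T$ already lies in $N_0$; and the target has generic rank $\dim(V\otimes K)^{T^\sigma}=r_V$ by the same stabilizer computation with $G$ replaced by $N_0$.

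The step I expect to be the main obstacle is the input from \S\ref{S:automorphism}: that the generic fibre of the twisted Chevalley map is a single $\sigma$-regular twisted conjugacy class, and that its $c_\sigma$-stabilizer is \emph{exactly} $T^\sigma$ (not a strictly larger group, which would spoil the lower bound on the rank). These are the twisted analogues — obtained via the twisted Grothendieck--Springer resolution — of the classical facts underlying Kostant's theorem (\cite{Ko}) in the case $\sigma=\id$. A secondary technical point is the bookkeeping of the central subgroup $Z_G$ in the reduction from $V_G$ to $G$, in particular checking that the relevant $Z_G$-quotients are genuine torsors, which uses that $Z_G\subset T$ acts by translations on the $T$-factor.
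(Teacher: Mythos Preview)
Your alternative (second) approach is exactly the paper's proof: the paper also reduces to $\bfJ(V)$, introduces the twisted Chevalley restriction $\Res_V^\sigma:\bfJ(V)\to (k[T]\otimes V)^{c_\sigma(N_0)}$, and uses the open embedding $G\times^{N_0}T^{\textrm{s.}\sigma\textrm{-reg}}\hookrightarrow G$ (the content of Remarks~\ref{R:strong rs} and~\ref{R:srs locus of G}) to see that this map is an isomorphism over a nonempty open, where the target visibly has fiber $V^{T^\sigma}$.

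Two points of comparison. First, your reduction from $V_G$ to $G$ via the $Z_G$-torsor is correct but heavier than necessary: the paper simply observes that the $T$-action on $V_G$ (by central multiplication) commutes with $c_\sigma(G)$ and covers the $T_\ad$-action on $T_\ad^+$, so the generic rank of $\bfJ_+(V)$ equals its rank over the fiber $\bar\frakd^{-1}(\mathbf 1)$, which is $\bfJ(V)$ by Lemma~\ref{L: gr specialization vs specialization gr}(1). This avoids any $Z_G$-bookkeeping.

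Second, your first approach contains an imprecision you should be aware of: the generic fiber of $\chi$ is \emph{not} a single $G$-orbit in general, so the generic fiber of $[G/c_\sigma(G)]\to G/\!\!/c_\sigma(G)$ is not literally $\bB T^\sigma$. What is true (Corollary~\ref{C:fiber regular}) is that the $\sigma$-regular part of each fiber is a single orbit whose complement has codimension $\geq 2$, and the fiber is normal; from this one can still conclude $\bfJ_G(V)\otimes\bar K\cong V^{T^\sigma}\otimes\bar K$ by Hartogs-type extension. But that corollary sits logically after Lemma~\ref{L:generic dimension} (it uses the flatness of $\chi$, which is Corollary~\ref{L:cl flat}, itself a consequence of Theorem~\ref{T:Kostant}). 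The paper's route through $\Res_V^\sigma$ sidesteps this circularity: it needs only the existence of the strongly $\sigma$-regular open $T^{\textrm{s.}\sigma\textrm{-reg}}\subset T$ (where $I_t=T^\sigma$) and the open embedding \eqref{E:conjugation}, both of which are elementary and independent of freeness. Your Lie-algebra computation $\Lie I_{t_0}=\frakt^\sigma$ alone does not pin down $I_{t_0}$; the equality $I_{t_0}=T^\sigma$ on a nonempty open genuinely uses the simply-connected hypothesis (cf.\ Remark~\ref{R:strong rs}).
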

\begin{proof}
Since the map $V_G\to T_\ad^+$ is $T$-equivariant, and this $T$-action commutes with the $G\times G$-action, it is enough to prove a similar statement for $\bfJ_+(V)|_{\frakd^{-1}(\mathbf{1})}=\bfJ(V)$.

If $V$ is a representation of $G$, the restriction from $G$ to $T$ gives a natural map
\begin{equation}
\label{AE:generalized HC morphism}
\on{Res}_{V}^\sigma: \bfJ(V) \to (k[T]\otimes V)^{c_\sigma(N_0)},
\end{equation}
compatible with the isomorphism $\bfJ\cong k[T]^{c_\sigma(N_0)}$ from Proposition~\ref{AE:classical HC morphism}.  We call $\Res_V^\sigma$ the \emph{twisted Chevalley restriction homomorphism}. 
But unlike the case when $V$ is the trivial representation, the map \eqref{AE:generalized HC morphism} in general fails to be an isomorphism and the failure is studied in details in \cite{balagovic,khoroshkin-nazarov-vinberg} (when $\sigma=\id$). In the next section, we will also study in details (a variant of) this map.
Currently, we just notice the following. By Remark~\ref{R:strong rs} and Remark~\ref{R:srs locus of G} below, there is an affine open $N_0$-stable subset 
$$T^{\sigma\textrm{-reg}}=\big\{t\in T\,\big|\, I_t:=\{g\in G\mid gt\sigma(g)^{-1}=t\}=T^\sigma\big\}\subset T,$$ an open subset $G^{\sigma\textrm{-rs}}$ of $G$, and an isomorphism
$G \times^{N_0} T^{\sigma\textrm{-reg}}\xrightarrow \cong G^{\sigma\textrm{-rs}}$. It follows that
\begin{lem}
\label{L:generic Chevalley}
The map \eqref{AE:generalized HC morphism}, regarded as a map of coherent sheaves on $G/\!\!/c_\sigma(G)\cong T/\!\!/c_\sigma(N_0)$, restricts to an isomorphism over a (non-empty) open subset of $T^{\sigma\textrm{-}\mathrm{reg}}/\!\!/c_\sigma(N_0)$.
\end{lem}

Note that the fiber of $(k[T]\otimes V)^{c_\sigma(N_0)}$ over any point of $T^{\sigma\textrm{-reg}}/\!\!/c_\sigma(N_0)$ is isomorphic to $V^{T^{\sigma}}$, whose dimension is $\dim V^{T^\sigma}=r_V$. Therefore, the generic fiber of $\bfJ(V)$ over $\on{Spec} \bfJ$ is of dimension $r_V$. Again, since $\frakd: V_G\to T_\ad^+$ is $T$-equivariant, Lemma~\ref{L:generic dimension} follows.
 \end{proof}

\subsection{Construction of the basis}
\label{SS: basis}
In this subsection, we assume that $\cha k=0$, and $G$ is semisimple and simply-connected. Then every representation $V$ of $G$ admits a good filtration.
The proof of Theorem~\ref{T:Kostant} in fact gives a method to construct a basis of $\bfJ(V)$, from certain basis of $V$.

For every weight $\xi\in(\sigma-1)\xch(T)$, let $\bB(\xi)$ be a basis of $V(\xi)$ satisfying the conditions in Proposition~\ref{P: canonical basis}; for example, we may choose the MV basis as in \S~\ref{SS: fil via geomSat}.

For each $\bbb\in\bB(\xi)$ and each simple root $\al$, let $\varepsilon_\al(\bbb)\geq 0$ be the integer such that $\bbb\in\fil^{\al}_{\varepsilon_\al(\bbb)}V(\xi)-\fil^{\al}_{\varepsilon_\al(\bbb)-1}V(\xi)$, i.e.
\[E_\al^{\varepsilon_\al(\bbb)}(\bbb)\neq 0,\quad E_\al^{\varepsilon_\al(\bbb)+1}(\bbb)=0.\]
By Lemma~\ref{L: minimalnusimplyconnected}, there is a unique minimal $\nu_\bbb \in \XX^\bullet(T)^+$ such that $\sigma(\nu_\bbb)-\nu_\bbb=\xi$ and $\langle\nu_\bbb,\check \al\rangle\geq \varepsilon_\al(\bbb)$ for every $\al \in \Delta$.
Then by Proposition~\ref{L: fil via inv}, we have an isomorphism
\[
\ell_{\sigma(\nu_\bbb),\nu_\bbb}: (\ttS_{\sigma(\nu^*_\bbb)}\otimes \ttS_{\nu_\bbb}\otimes V)^G\to \on{fil}_{\nu_\bbb}V(\sigma(\nu_\bbb)-\nu_\bbb).\]
It follows that $\bbb$ comes from a unique element in $(\ttS_{\sigma(\nu^*_\bbb)}\otimes \ttS_{\nu_\bbb}\otimes V)^G$, denoted by $f_{\bbb,0}$. 

\begin{rmk}
\label{r:special point}
Recall that $\bfJ_0=k[\xch(T)^{+,\sigma}]$. For $\omega\in \xch(T)^{+,\sigma}$, let $q^{\omega}$ the corresponding element in $\bfJ_0$. Let $x_1\in\Spec \bfJ_0$ be the point defined by $q^{\omega}(x_1)=1$ for any $\omega$. Then by the proof of Proposition~\ref{P: Rees summand}, the fiber of $\bfJ_0(V)$ at $x_1$ is canonically isomorphic to $\oplus_{\xi\in (\sigma-1)\xch(T)}V(\xi)$, and the restriction of $f_{\bbb,0}$ to $x_1$ is just $\bbb$.
\end{rmk}

Since in characteristic zero, Schur and Weyl modules are isomorphic, there is a canonical $G$-equivariant map 
\begin{equation}
\label{E:matrix coef}
\ttS_{\nu_\bbb}\otimes \ttS_{\sigma(\nu^*_\bbb)}\cong \ttW_{\nu_\bbb}\otimes \ttS_{\sigma(\nu^*_\bbb)}\to  k[G\sigma]
\end{equation} 
given by taking (a twisted version of) matrix coefficients. Thus 
$f_{\bbb,0}$ defines an element  $f_\bbb\in (k[G\sigma]\otimes V)^G=\bfJ(V)$. 
Explicitly, we write $f_{\bbb,0}$ as $\sum_{i,j}\sigma e_i^* \otimes e_j \otimes v_{ij}$, where $\{e_i\}$ is a basis of $\ttS_{\nu_\bbb}$ and $\{e_i^*\}$ the dual basis, and $v_{ij} \in V$. Then
\begin{equation}
\label{E:Xi map}
f_{\bbb}(g\sigma) = \sum_{i,j} \langle \sigma e_i^*, g\sigma e_j\rangle \cdot  v_{ij},
\end{equation}
where $\langle \cdot, \cdot \rangle$ is the natural pairing between $\ttS_{\sigma(\nu_\bbb)} $ and $\ttS_{\sigma(\nu^*_\bbb)}$. In fact, \eqref{E:Xi map} is valid as long as $\ttS_{\nu_\bbb} \cong \ttW_{\nu_\bbb}$ (even in positive characteristic).

\begin{prop}
The collection $\{f_\bbb\mid \bbb\in \sqcup_{\xi\in (\sigma-1)\xch(T)} \bB(\xi) \}$ forms a basis of $\bfJ(V)$ as a $\bfJ$-module.
\end{prop}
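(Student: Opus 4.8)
The plan is to reduce the assertion to a generation statement over $\bfJ$, and then to deduce that statement from the corresponding (graded) picture over the Vinberg monoid, exactly along the lines of the proof of Theorem~\ref{T:Kostant}.

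\emph{Step 1: counting and reduction to generation.} By Theorem~\ref{T:Kostant} the $\bfJ$-module $\bfJ(V)$ is free of rank $r_V$, and $\bfJ\cong k[T]^{c_\sigma(N_0)}$ is a Noetherian domain by Proposition~\ref{AE:classical HC morphism}. On the other hand the fibre of $\bfJ_0(V)$ at the point $x_1$ of Remark~\ref{r:special point} is $\bigoplus_{\xi\in(\sigma-1)\xch(T)}V(\xi)$, so $r_V=\sum_{\xi\in(\sigma-1)\xch(T)}\dim V(\xi)$, which is precisely the cardinality of $\{f_\bbb\mid\bbb\in\sqcup_\xi\bB(\xi)\}$ since $\#\bB(\xi)=\dim V(\xi)$. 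A surjection between free modules of the same finite rank over a commutative Noetherian ring is an isomorphism; hence it suffices to show that the $f_\bbb$ generate $\bfJ(V)$ as a $\bfJ$-module.

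\emph{Step 2: a $\bfJ_0$-basis of $\bfJ_0(V)$.} I claim that for each $\xi\in(\sigma-1)\xch(T)$ the family $\{f_{\bbb,0}\mid\bbb\in\bB(\xi)\}$ is a free $\bfJ_0$-basis of $\bfJ_0(V)_\xi$. By construction $f_{\bbb,0}$ is the preimage of $\bbb\in\on{fil}_{\nu_\bbb}V(\xi)$ under the isomorphism $\ell_{\sigma(\nu_\bbb),\nu_\bbb}\colon(\ttS_{\sigma(\nu_\bbb^*)}\otimes\ttS_{\nu_\bbb}\otimes V)^G\xrightarrow{\ \sim\ }\on{fil}_{\nu_\bbb}V(\xi)$ of Proposition~\ref{L: fil via inv}; under the identification $\bfJ_0(V)_\xi\cong R_{\xch(T)^+_\xi}V(\xi)$ from the proof of Proposition~\ref{P: Rees summand} (with $\bfJ_0\cong k[\xch(T)^{+,\sigma}]$), the element $f_{\bbb,0}$ lies in the degree-$\nu_\bbb$ summand $(\gr_{\nu_\bbb}k[G]\otimes V)^{c_\sigma(G)}$, and its image in $\gr_{\xch(T)^+_\xi}V(\xi)=R_{\xch(T)^+_\xi}V(\xi)/I_0$ is the symbol of $\bbb$. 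By Proposition~\ref{P: canonical basis} together with Lemmas~\ref{L: bases criterion}, \ref{L: minimalnusimplyconnected} and~\ref{L: sub-fil} — exactly the chain of arguments used in the proofs of Theorem~\ref{P: equi dim} and Proposition~\ref{P: Rees summand} — these symbols form a homogeneous basis of $\gr_{\xch(T)^+_\xi}V(\xi)$, and $\dim\gr_{\xch(T)^+_\xi}V(\xi)=\dim V(\xi)$. Hence by the graded Nakayama lemma (Lemma~\ref{L: graded Nakayama2}(2)) the $f_{\bbb,0}$ generate $\bfJ_0(V)_\xi$ over $\bfJ_0$, and being $\dim V(\xi)=\mathrm{rank}$ in number over the domain $\bfJ_0$ (Proposition~\ref{P: Rees summand}) they form a basis. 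Summing over $\xi$ shows $\{f_{\bbb,0}\}$ is a free $\bfJ_0$-basis of $\bfJ_0(V)$.

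\emph{Step 3: lifting and specialization.} Since $\cha k=0$, the good filtration splits: $k[G]=\bigoplus_{\lambda\in\xch(T)^+}\ttW_{\lambda^*}\boxtimes\ttW_\lambda$, the subspace $\on{fil}_{\nu}k[G]=k[G]_{\preceq(\nu^*,\nu)}$ is the sum of the summands with $\lambda\preceq\nu$, and $\gr_\nu k[G]=\ttW_{\nu^*}\boxtimes\ttW_\nu$ splits off canonically as a $G\times G$-submodule of $\on{fil}_\nu k[G]\subset k[G]$ (cf.\ Remark~\ref{R: inv function split}). Pushing $f_{\bbb,0}\in(\gr_{\nu_\bbb}k[G]\otimes V)^{c_\sigma(G)}$ through this splitting produces $\widetilde{f_\bbb}\in(\on{fil}_{\nu_\bbb}k[G]\otimes V)^{c_\sigma(G)}\subset\bfJ_+(V)$: it is a lift of $f_{\bbb,0}$ under $\bfJ_+(V)\to\bfJ_0(V)$, and by \eqref{E: Rees fiber dim} its image under $\bfJ_+(V)\to\bfJ_+(V)\otimes_{\bfJ_+}\bfJ\cong\bfJ(V)$ (Lemma~\ref{L: gr specialization vs specialization gr}(1)) is the element of $k[G]\otimes V$ given by the twisted matrix coefficients \eqref{E:matrix coef}, i.e.\ exactly $f_\bbb$ as described by \eqref{E:Xi map}. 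Now, just as in the proof of Theorem~\ref{T:Kostant} (using the exactness of the bottom row of \eqref{E:lifting grmu to fil mu} and the graded Nakayama Lemma~\ref{L: graded Nakayama2}(2) with the basis of Step 2), the lifts $\{\widetilde{f_\bbb}\}$ generate $\bfJ_+(V)$ over $\bfJ_+$; applying $-\otimes_{\bfJ_+}\bfJ$ and Lemma~\ref{L: gr specialization vs specialization gr}(1), the images $\{f_\bbb\}$ generate $\bfJ(V)$ over $\bfJ$. Together with Step~1 this completes the proof.

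The main obstacle I anticipate is the bookkeeping in Step 3: matching the element $f_\bbb$, defined intrinsically in $\bfJ(V)$ via twisted matrix coefficients, with the restriction to $\frakd^{-1}(\mathbf 1)$ of the canonical $\gr$-to-$\on{fil}$ lift of $f_{\bbb,0}$, which requires carefully threading the $\sigma$-twist through the identifications $\gr_\nu k[G]\cong\ttS_{\nu^*}\boxtimes\ttS_\nu$ and $(\,-\,)^{c_\sigma(G)}\cong(\,-\,)^{G}$ (obtained after $\sigma$-twisting one tensor factor). A secondary point requiring care is verifying in Step 2 that the $f_{\bbb,0}$ form, rather than merely generate, a $\bfJ_0$-basis of each $\bfJ_0(V)_\xi$, which rests on the dimension equality $\dim\gr_{\xch(T)^+_\xi}V(\xi)=\dim V(\xi)$ running through the cited chain of lemmas.
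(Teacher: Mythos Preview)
Your proof is correct and follows essentially the same route as the paper's: both establish that the $f_{\bbb,0}$ form a $\bfJ_0$-basis of $\bfJ_0(V)$ (via the Rees-module identification and the dimension equality \eqref{E:dim of Vxi versus graded}), lift to homogeneous elements of $\bfJ_+(V)$, invoke the graded Nakayama lemma for generation, and then use the known freeness/rank from Theorem~\ref{T:Kostant} to upgrade generation to a basis. The only cosmetic difference is that the paper first argues $\{f_{\bbb,+}\}$ is a $\bfJ_+$-basis of $\bfJ_+(V)$ and then specializes to $\bfJ$, whereas you specialize first and count ranks over $\bfJ$; the ``bookkeeping obstacle'' you flag in Step~3 is exactly the observation the paper makes, namely that the matrix-coefficient map \eqref{E:matrix coef} lands in $(\fil_{\nu_\bbb}k[G]\otimes V)^{c_\sigma(G)}$, so $f_\bbb$ is genuinely the image of a homogeneous lift of $f_{\bbb,0}$.
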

\begin{proof}
This follows from the proof of the freeness of $\bfJ(V)$ over $\bfJ$. Indeed, since $\bfJ_+(V)=\sum_\nu (\fil_\nu k[G]\otimes V)^{c_\sigma(G)}$ and  the matrix coefficient map \eqref{E:matrix coef} lands in $ (\fil_{\nu_\bbb} k[G]\otimes V)^{c_\sigma(G)}$, we may regard $f_\bbb$ as a homogeneous element in $\bfJ_+(V)$, denoted as $f_{\bbb,+}$. By definition, Lemma~\ref{L: criterion free Rees} and \eqref{E:dim of Vxi versus graded} from the proof of Proposition~\ref{P: Rees summand}, the image of $\{f_{\bbb,+}\}$ under the restriction $\bfJ_+(V)\otimes_{\bfJ_+}\bfJ_0\cong \bfJ_0(V)$ forms a basis. The proof of Theorem~\ref{T:Kostant} shows that   $\{f_{\bbb,+}\}$ forms a basis of $\bfJ_+(V)$ over $\bfJ_+$. Therefore, $\{f_\bbb\}$ forms a basis of $\bfJ(V)=\bfJ_+(V)\otimes_{\bfJ_+}\bfJ$ over $\bfJ$.
\end{proof}

\section{Chevalley groups with an automorphism}
\label{S:automorphism}
In this section, we establish a few results about Chevalley groups equipped with a pinned automorphism $\sigma$. Our conventions and notations are as in \S~\ref{S:vector-valued twisted invariant functions}. In particular, we have a pinned reductive group $(G,B,T,\{x_\al\}_{\al\in\Delta})$ over $k$. We further assume that $\sigma$ is a finite order automorphism of $G$ preserving the pinning, i.e. $\sigma\circ x_\al=x_{\sigma(\al)}$ for $\al\in\Delta$.
Put $A=T/(\sigma-1)T$. Let  $W_0=W^\sigma$, which acts on $A$, and let $N_0$ be the preimage of $W_0$ in $N$, which acts on $T$ by twisted conjugation $c_\sigma$. Let $(\frakg,\frakb,\frakt,\fraku)$ denote the Lie algebra of $(G,B,T,U)$.  Write $G_\s$ for the simply-connected cover of $G$, and $T_\s$ the maximal torus of $G_\s$ which is the preimage of $T$.

\subsection{Root datum with an automorphism}
\label{SS:Root aut}
We start with some discussion of a version of folding of root systems. 
We also refer to \cite{springer} for some related discussions.

For each $\sigma$-orbit $\calO\subset \Phi(G, T)$, we write
$$
\alpha_\calO : = \sum _{\gamma \in \calO} \gamma,
$$
which belongs to $\XX^\bullet(A) \cong \XX^\bullet(T)^\sigma \subset \XX^\bullet(T)$. If we pick $\gamma\in \mO$, then $\alpha_\mO=\gamma + \sigma\gamma + \dots+ \sigma^{|\mO|-1}\gamma$, where $|\mO|$ denotes the cardinality of $\mO$.
Note that $\alpha_\calO$ may be different from the image of $\gamma$ under the usual norm map $\XX^\bullet(T) \to \XX^\bullet(A)$.

\begin{lemma}
\label{L:Phi'(G,A) is a root system}
The collection of $\alpha_\mO$ for all $\sigma$-orbits $\mO\subset \Phi( G, T)$, regarded as a subset of $\xch(A)$, has a structure of a root datum.
Let $G_\sigma$ denote the corresponding reductive group over $k$ containing $A$ as a maximal torus,\footnote{We choose the notation $G_\sigma$ for the group because it has a maximal torus $A=T_\sigma$. This does not suggest that it relates to the $\sigma$-coinvariants of $G$, whatever it means.} and let $\Phi(G_\sigma, A)$ denote the corresponding root system. Then the map $\calO \mapsto \alpha_\calO$ establishes a bijection between the set of $\sigma$-orbits in $\Delta$ and a set of  simple roots in $\Phi(G_\sigma, A)$. With this choice of simple roots of  $\Phi(G_\sigma, A)$, its subset of positive roots are  $\Phi(G_\sigma, A)^+=\{\al_\mO\mid\mO\subset\Phi(G,T)^+\}$. 
\end{lemma}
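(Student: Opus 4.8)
The plan is to reduce everything to the well-known classification of pinned automorphisms $\sigma$ of connected reductive groups and the corresponding "folded" root systems, but to carry out the verification intrinsically so that it works over an arbitrary algebraically closed field $k$ (the root datum is a combinatorial object, so the characteristic plays no role once we are reduced to combinatorics). First I would set up the folded coroot datum: for a $\sigma$-orbit $\calO\subset\Phi(G,T)$ I put $\check\alpha_\calO:=\sum_{\gamma\in\calO}\check\gamma$ when $\calO$ spans a root subsystem of type $A_1\times\cdots\times A_1$ (the generic case), and $\check\alpha_\calO := 2\sum_{\gamma\in\calO}\check\gamma$ (resp. the appropriate rescaling) in the single exceptional case where two roots in one $\sigma$-orbit are not orthogonal — this occurs only for the order-$2$ automorphism of a type $A_{2n}$ factor, where an orbit $\{\gamma,\sigma\gamma\}$ has $\gamma+\sigma\gamma$ also a root. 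I would then check that $\langle \alpha_\calO,\check\alpha_{\calO'}\rangle\in\ZZ$ for all pairs of orbits and that the reflection $s_{\alpha_\calO}$ (defined by $x\mapsto x-\langle x,\check\alpha_\calO\rangle\alpha_\calO$ on $\XX^\bullet(A)$) permutes the set $\{\alpha_{\calO''}\}$.

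The key observation that makes this work is that $\sigma$ acts on $\Phi(G,T)$, and for a $\sigma$-orbit $\calO$ the element $s_{\calO}:=\prod_{\gamma\in\calO}s_\gamma\in W$ (a well-defined commuting product, except in the $A_{2n}$ case where one instead takes the longest element of the rank-one-or-two subsystem generated by $\calO$) lies in $W^\sigma=W_0$, and under the canonical isomorphism $W_0\cong W(G_\sigma,A)$ (Steinberg; see \cite[\S 9.16]{St} or \cite{springer}) it corresponds to the reflection $s_{\alpha_\calO}$. Since $W$ preserves $\Phi(G,T)$ and commutes with $\sigma$, the element $s_{\calO}$ preserves the set of $\sigma$-orbits, hence $s_{\alpha_\calO}$ preserves $\{\alpha_{\calO''}\}$; and the integrality of the pairings follows by restricting the $W$-invariant pairing on $\XX^\bullet(T)$ along $\XX^\bullet(T)^\sigma\hookrightarrow\XX^\bullet(T)$ and summing over orbits. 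This gives the root datum and identifies its Weyl group with $W_0$. For the statement about simple roots: the decomposition $\Phi(G,T)=\Phi(G,T)^+\sqcup\Phi(G,T)^-$ is $\sigma$-stable (since $\sigma$ preserves $B$), so every $\sigma$-orbit lies entirely in $\Phi(G,T)^+$ or entirely in $\Phi(G,T)^-$, giving the claimed positive system $\Phi(G_\sigma,A)^+=\{\alpha_\calO\mid\calO\subset\Phi(G,T)^+\}$; and the $\sigma$-orbits in $\Delta$ map to the indecomposable elements of this positive system because any $\alpha_\calO$ with $\calO\not\subset\Delta$ can, orbit-member by orbit-member, be written using the $\sigma$-equivariant expansion of roots in simple roots, and $\sigma$-invariance of the coefficients lets one group the terms into a nonnegative combination of the $\alpha_{\calO'}$ with $\calO'\subset\Delta$.

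The main obstacle, and the step requiring genuine care rather than bookkeeping, is the exceptional $A_{2n}$-type behavior: when $G$ has a simple factor of type $A_{2n}$ and $\sigma$ swaps the two ends, there is an orbit $\calO=\{\gamma,\sigma\gamma\}$ of a simple root with $\langle\gamma,\sigma\check\gamma\rangle=-1$, so $\gamma+\sigma\gamma$ is a root, the subgroup $G_\calO$ generated by $\calO$ has type $A_2$ rather than $A_1\times A_1$, and the naive sum $\sum_\gamma\check\gamma$ does not pair integrally to give a reflection; one must instead use $2(\gamma+\sigma\gamma)$ on the character side (so that $G_\sigma$ acquires a long root of type $C$) — this is exactly the classical fact that folding $A_{2n}$ by its diagram automorphism yields a non-reduced root system, or, after the standard modification, the root datum of type $C_n$ (equivalently $B_n$ on the dual side). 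I would handle this by splitting into the two cases ``$\calO$ generates $A_1^{|\calO|}$'' versus ``$\calO$ generates $A_2$'' at the outset, treating the latter by an explicit rank-$\le 2$ computation, and then assembling the global datum from the rank-one and rank-two pieces; after that, the general integrality and reflection-stability claims follow formally from the $W_0$-equivariance explained above, and the simple-root statement is purely combinatorial.
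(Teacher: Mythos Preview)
Your overall strategy—defining $s_\calO\in W_0$, showing it acts on $\XX^\bullet(A)$ as the reflection $s_{\alpha_\calO}$, and hence permutes the set $\{\alpha_{\calO'}\}$—is sound and more intrinsic than the paper's proof, which simply reduces to $G$ simple simply-connected and reads off a case-by-case table ($A_{2n}\to\SO_{2n+1}$, $A_{2n+1}\to\Spin_{2n+3}$, $D_{n+1}\to\Sp_{2n}$, $E_6\to F_4$, $D_4\to G_2$), with the detailed checks deferred to worked examples later in the section.

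However, your handling of the $A_{2n}$ case is incorrect and would change the statement being proved. The lemma fixes $\alpha_\calO=\sum_{\gamma\in\calO}\gamma$; you are not free to rescale it. In the $A_{2n}$ situation the orbit $\calO^-=\{\gamma,\sigma\gamma\}$ with $\langle\gamma,\sigma\check\gamma\rangle=-1$ has $\alpha_{\calO^-}=\gamma+\sigma\gamma=\beta$, which is itself a $\sigma$-fixed root; its singleton orbit $\calO^+=\{\beta\}$ gives $\alpha_{\calO^+}=\beta$ as well. Thus the two orbits contribute the \emph{same} element to the subset $\{\alpha_\calO\}\subset\XX^\bullet(A)$, and that subset is already a reduced root system of type $B_n$ (whence $G_\sigma=\SO_{2n+1}$ in the paper's table), not $C_n$ or $BC_n$. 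Your proposed coroot $2(\check\gamma+\sigma\check\gamma)$ pairs with $\alpha_{\calO^-}=\beta$ to $4$, so it is not a coroot; the correct coroot is simply $\check\beta=\check\gamma+\sigma\check\gamma$, and no rescaling is needed anywhere. Once you drop the rescaling and note the coincidence $\alpha_{\calO^-}=\alpha_{\calO^+}$, your $W_0$-based argument goes through and lands on $B_n$. (You may be conflating this orbit-sum construction in $\XX^\bullet(T)^\sigma$ with the dual construction via \emph{images} of roots in $\XX^\bullet(T)_\sigma$; the latter does yield the non-reduced $BC_n$ for $A_{2n}$, but it is not what is defined here.)
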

\begin{proof}
Since $\sigma$ lifts to a unique automorphism of the simply-connected cover of the derived group of $G$ (e.g. see \cite[\S 9.16]{St}), we may assume that $G$ is semisimple and simply-connected. Then $(G,T)=\prod (G_i, T_i)$, where $G_i$ is simple and simply-connected, and the action of $\sigma$ permutes the simple factors. To prove the lemma, clearly we may assume that there are $r$ simple factors, all of which are isomorphic to $(G_0,T_0)$ and are cyclically permuted by $\sigma$, and that $\sigma^r$ is an automorphism of $(\xch(T_0), \Phi(G_0,T_0))$ of order $d$.
Then  if $d = 1$, $G_\sigma=G_0$.
If $d>1$, then $\Phi(G,T)$ is of type $\mathsf A_n$, $\mathsf D_n$, or $\mathsf E_6$, and one can check case by case that the reductive group $G_\sigma$ is determined by the following table ($n \geq 1$).
	\begin{center}
		\begin{tabular}{|c|c|c|c|c|c|}
			\hline
			$G_0$ & $\SL_{2n+1}$ & $\SL_{2n+2}$ & $ \on{Spin}_{2n+2}, d=2$ & $\mathsf E_6$ & $\on{Spin}_8, d =3$
			\\
			\hline
			$G_\sigma$ & $\SO_{2n+1}$ & $\on{Spin}_{2n+3}$ & $ \on{Sp}_{2n}$ & $\mathsf F_4$ & $\mathsf G_2$
			\\
			\hline
		\end{tabular}
	\end{center}
The detailed calculations for the case $G_0=\SL_{2n+1}, \SL_{2n+2}$ and $\on{Spin}_{2n+2}, d=2$ can be found in Example~\ref{Ex:folding A2n}, Example~\ref{Ex:folding A2n+1} and Example~\ref{E: even Spin} below.	
\end{proof}
\begin{rmk}
Assume that $G$ is semisimple and simply-connected.
Note that the group of invariants $G^\sigma$ of $G$ under the $\sigma$-action is a connected semisimple group by \cite[Theorem 8.2]{St} containing $T^{\sigma}$ as a maximal torus (note however that the group of invariants is denoted by $G_\sigma$ in \emph{loc. cit.}). The root system $\Phi(G^\sigma,T^\sigma)$ is isogenous to $\Phi(G_\sigma, T_\sigma)$ but in general is not isomorphic to it. For example, if $G=\SL_{2n}$ ($n>1$) with $\sigma$ nontrivial, then $G^\sigma=\on{Sp}_{2n}$, whereas $G_\sigma=\on{Spin}_{2n+1}$.
\end{rmk}

\begin{remark}
Let $\hat G$ denote the (adjoint semisimple) Deligne--Lusztig dual group with dual torus $\hat T$ over a finite field $\kappa$ (so that the absolute root datum of $(\hat G, \hat T)$ is dual to the root datum $\Phi(G, T) \subset \XX^\bullet(T)$ and the Weil descent datum defining $(\hat G, \hat T)$ over $\kappa$ induces the $\sigma$-action on $\Phi(G,T)$). Then the maximal split torus $\hat A$ of $\hat G$ is dual to $A$. For example, if $G=\SL_n$ with the non-trivial involution $\sigma$ as in Example~\ref{Ex:folding A2n}, Example~\ref{Ex:folding A2n+1} below, then $\hat G$ is isomorphic to the projectivel unitary group $\on{PU}_n$ over $\kappa$.
Then the dual root datum of $\Phi(G_\sigma, A)$ is equal to the sub-root system of the relative root system $\Phi_\mathrm{rel}(\hat G, \hat A) \subset \XX^\bullet(\hat A)$ consisting of those $\al^\vee\in \Phi_\mathrm{rel}(\hat G, \hat A)$ such that $2\al^\vee\not\in  \Phi_\mathrm{rel}(\hat G, \hat A)$.
\end{remark}

It is easy to see (e.g. by a case-by-case inspection) that every root in $\Phi(G_\sigma,A)$ comes from one or two $\sigma$-orbits in $\Phi(G,T)$. In the latter case, the cardinality of one orbit is twice of the cardinality of the other.
\begin{dfn}
\label{D: type of roots}
A root in $\Phi(G_\sigma,A)$ is called of \emph{type $\mathsf A$} if there is a unique $\sigma$-orbit $\mO\subset\Phi(G,T)$ such that this root is $\al_\mO$.
In this case, the corresponding orbit $\mO$ is also called of \emph{type $\mathsf A$}.
Note that $\langle \al, \check \beta \rangle = 0$ for all pairs of distinct roots $\al,\beta\in\mO$. If $\al_\mO$ is a simple root in $\Phi(G_\sigma,A)$, then $\mO$ is a $\sigma$-orbit of simple roots of $\Phi(G,T)$ and all vertices in the sub Dynkin diagram corresponding to $\mO$ are isolated.

A root in $\Phi(G_\sigma,A)$ is called of \emph{type $\sfB\sfC$} if there are two $\sigma$-orbits $\mO^-$ and $\mO^+$ such that this root is $\al_{\mO^-}=\al_{\mO^+}$, and that $|\mO^-|=2|\mO^+|$. The orbit $\mO^-$ (resp. $\mO^+$) is called of \emph{type $\sfB\sfC^-$} (resp. \emph{type $\sfB\sfC^+$}).
In this case,  $\langle \sigma^{|\mO^+|}\alpha, \check \alpha\rangle =1$ for every $\alpha \in \mO^-$, and $\beta := \alpha + \sigma^{|\mO^+|}\alpha \in \calO^+$. If $\al_{\mO^-}$ is a simple root in $\Phi(G_\sigma,A)$, then $\mO^-$ is a $\sigma$-orbit of simple roots in $\Phi(G,T)$, and the sub Dynkin diagram corresponding to $\mO^-$ is a product of $|\mO^+|$ copies of the root system $\mathsf A_2$.
\end{dfn}

\begin{ex}
\label{Ex:folding A2n}
Let $G$ be $\SL_{2r+1}$ ($r\geq 1$) with row and column indices in $\{-r, \dots, r\}$, the pinning $(B,T, e)$ given by the group of standard upper triangular matrices, the subgroup of diagonal matrices, and $e=\sum_{i=-r}^{r-1} E_{i,i+1}$. Then the unique non-trivial pinned automorphism $\sigma$ is given by 
\begin{equation}
\label{Ex:sigma A2n}
\sigma(X)=J\,{^t\!X}^{-1}J \quad \textrm{for }X \in \SL_{2r+1},
\end{equation}
where $J$ is the anti-diagonal matrix, with entries  $J_{i,-i}=(-1)^{r+i}$. Then $$\Phi(G, T) = \{\,\varepsilon_i-\varepsilon_j\;|\; -r\leq i, j\leq r,\, i \neq j\},$$
where $\varepsilon_i$ is the character of $T$ given by evaluating at the $(i,i)$-entry.
Since $\sigma$ acts on $\Phi(G, T)$ by $\sigma(\varepsilon_i) = -\varepsilon_{-i}$, the $\sigma$-orbits on $\Phi(G, T)$ are
\begin{equation}
\label{E:sigma orbits in SL2r+1}
\calO_{i,j}= \{ \varepsilon_{-i} -  \varepsilon _{-j},\, \varepsilon_j -  \varepsilon _i\}, \quad 
\calO^-_i= \{\varepsilon_{-i} -  \varepsilon _0,\, \varepsilon_0 -  \varepsilon _i\},\quad  \calO_i^+= \{ \varepsilon_{-i} -  \varepsilon _i\}
\end{equation}
for $i\in \{ \pm 1, \dots, \pm r\}$ and $j \in \{ \pm1, \dots, \pm( |i|-1)\}$. They are of type $\sfA$, $\sfB\sfC^-$, and $\sfB\sfC^+$ respectively.

As $A =T / (\sigma-1)T$, its character group is
$$
\XX^\bullet(A) = \xch( T)^{\sigma}=\bigoplus_{i=1}^r\bZ(\varepsilon_{-i}-\varepsilon_i).
$$
The simple roots of $\Phi(G_\sigma, A)$ are $\varepsilon_{-i-1} - \varepsilon_{-i} + \varepsilon_{i} -\varepsilon_{i+1}$ for $i=1, \dots, r-1$ and $\varepsilon_{-1} - \varepsilon_1$. The former ones are of type $\sfA$ (being equal to $\alpha_{\mO_{i+1,i}}$) and the latter one is of type $\sfB\sfC$ (being equal to $\al_{\calO_1^-} = \al_{\calO_1^+}$).
\end{ex}

\begin{ex}
\label{Ex:folding A2n+1}
Let $G$ be $\SL_{2r}$ ($r \geq 2$) with row and column indices in $\{-r, \dots,-1, 1, \dots, r\}$, the pinning $(B,T, e)$ given by the group of standard upper triangular matrices, the subgroup of diagonal matrices, and $e=\sum_{i=-r}^{-2} (E_{i,i+1} + E_{-i-1, -i}) + E_{-1,1}$. Then the unique non-trivial pinned automorphism $\sigma$ is given by 
\begin{equation*}
\sigma(X)=J\,{^t\!X}^{-1}J \quad \textrm{for }X \in \SL_{2r},
\end{equation*}
where $J$ is the anti-diagonal matrix, with entries  $J_{i,-i} = - J_{-i,i} =(-1)^{r+i}$ for $i =1, \dots, r$. Then $$\Phi(G, T) = \{\,\varepsilon_i-\varepsilon_j\;|\; -r\leq i, j\leq r,\, i \neq j,\, ij \neq 0\},$$
where $\varepsilon_i$ is the character of $T$ given by evaluating at the $(i,i)$-entry.
Since $\sigma$ acts on $\Phi(G, T)$ by $\sigma(\varepsilon_i) = -\varepsilon_{-i}$, the $\sigma$-orbits on $\Phi(G, T)$ are
\begin{equation*}
\calO_{i,j}= \{ \varepsilon_{-i} -  \varepsilon _{-j},\, \varepsilon_j -  \varepsilon _i\}, \quad  \calO_i= \{ \varepsilon_{-i} -  \varepsilon _i\}
\end{equation*}
for $i\in \{ \pm 1, \dots, \pm r\}$ and $j \in \{ \pm1, \dots, \pm( |i|-1)\}$. They are all of type $\sfA$.

As $A =T / (\sigma-1)T$, its character group is
$$
\XX^\bullet(A) = \xch( T)^{\sigma} =\Big( \bigoplus_{i=1}^r\ZZ(\varepsilon_{-i}-\varepsilon_i) \Big) + \ZZ(\varepsilon_{-r} + \cdots+ \varepsilon_{-1}).
$$
The simple roots of $\Phi(G_\sigma, A)$ are $\varepsilon_{-i-1} - \varepsilon_{-i} + \varepsilon_{i} -\varepsilon_{i+1}$ for $i=1, \dots, r-1$ and $\varepsilon_{-1} - \varepsilon_1$. 
\end{ex}

\begin{lem}
\label{L:action Weyl group}
The action of the Weyl group $N_{G_\sigma}(A)/A$ on $A$ is identified with the natural action of $W_0$ on $A$.
\end{lem}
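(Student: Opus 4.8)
The plan is to construct an isomorphism $W_0 = W^\sigma \cong N_{G_\sigma}(A)/A$ intertwining the two actions on $A$. Passing to character lattices and using $\xch(A) = \xch(T)^\sigma$, the $W_0$-action on $A$ becomes the restriction to $\xch(T)^\sigma$ of the standard $W$-action on $\xch(T)$, so it suffices to show that the resulting homomorphism $\phi\colon W_0 \to \GL(\xch(T)^\sigma)$ has image the Weyl group $N_{G_\sigma}(A)/A$ of $\Phi(G_\sigma,A)$ acting on $\xch(A)$, and is injective. As in the proof of Lemma~\ref{L:Phi'(G,A) is a root system}, we may reduce to $G$ semisimple and simply-connected, since $\sigma$ lifts uniquely to the simply-connected cover and this changes neither $W$ with its $\sigma$-action nor the $\sigma$-orbit combinatorics on $\Phi(G,T)$.

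The main input will be the standard folding description of $W^\sigma$ (cf. \cite[\S 9.16]{St}, \cite{springer}): for a $\sigma$-orbit $\mO\subset\Delta$ let $w_\mO\in W$ be the longest element of the finite parabolic subgroup $W_\mO$ generated by $\{s_\gamma : \gamma\in\mO\}$; then $\{w_\mO\}$ generates $W^\sigma$, and $(W^\sigma,\{w_\mO\})$ is a Coxeter system whose Coxeter matrix is that of $\Phi(G_\sigma,A)$, under the correspondence $w_\mO\leftrightarrow s_{\al_\mO}$. Since $N_{G_\sigma}(A)/A$ is generated by the simple reflections $s_{\al_\mO}$, the whole statement reduces to the single identity $\phi(w_\mO) = s_{\al_\mO}$ in $\GL(\xch(T)^\sigma)$: surjectivity onto $N_{G_\sigma}(A)/A$ is then immediate, and injectivity of $\phi$ follows because $\phi$ carries the Coxeter generators $w_\mO$ of $W^\sigma$ bijectively onto the Coxeter generators $s_{\al_\mO}$ of $N_{G_\sigma}(A)/A$, which satisfy the same braid relations.

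To prove $\phi(w_\mO)=s_{\al_\mO}$, note first that $w_\mO\in W^\sigma$ preserves $\xch(T)^\sigma$ and that $\phi(w_\mO)^2=1$, and check that $\phi(w_\mO)$ is a reflection with $(-1)$-eigenline $\QQ\al_\mO$: in the notation of Definition~\ref{D: type of roots}, if $\mO$ is of type $\mathsf{A}$ then $w_\mO=\prod_{\gamma\in\mO}s_\gamma$ negates each $\gamma$, hence $\al_\mO$; if $\mO$ is of type $\mathsf{BC}$ the restriction of $w_\mO$ to each $\mathsf{A}_2$-block of $\mO$ is that block's longest element, which negates the sum of its two simple roots, so again $w_\mO(\al_\mO)=-\al_\mO$. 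Moreover the fixed space of $w_\mO$ on $\xch(T)\otimes\QQ$ is the form-orthogonal complement (for a $W$-invariant form) of $\mathrm{span}\,\mO$, whose intersection with $\xch(T)^\sigma\otimes\QQ$ is a hyperplane. Finally one identifies this hyperplane with $\ker\al_\mO^\vee$: by the explicit root datum of $G_\sigma$ from Lemma~\ref{L:Phi'(G,A) is a root system}, $\al_\mO^\vee$ is a positive multiple of $\sum_{\gamma\in\mO}\gamma^\vee$; and since for $x\in\xch(T)^\sigma$ the quantity $\langle x,\gamma^\vee\rangle$ does not depend on $\gamma\in\mO$, the subspace $\ker\al_\mO^\vee\cap(\xch(T)^\sigma\otimes\QQ)$ is cut out by the single equation $\langle x,\gamma^\vee\rangle=0$ for any fixed $\gamma\in\mO$ — that is, it is the intersection of $\xch(T)^\sigma\otimes\QQ$ with the form-orthogonal complement of $\mathrm{span}\,\mO$, which is exactly the fixed hyperplane of $w_\mO$. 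Since a linear involution is determined by its $\pm1$-eigenspaces, $\phi(w_\mO)=s_{\al_\mO}$.

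I expect the only genuinely non-formal step to be this last hyperplane-matching, i.e. correctly normalizing the coroot $\al_\mO^\vee$ in the type $\mathsf{A}$ and type $\mathsf{BC}$ cases against the explicit description of the root datum of $G_\sigma$. One may bypass it by invoking Steinberg's theorem instead: the fixed-point group $G^\sigma$ is connected semisimple with maximal torus $T^\sigma$ and $W(G^\sigma,T^\sigma)\cong W^\sigma$, compatibly with the action on $A$ via the isogeny $T^\sigma\to A$; since $\Phi(G^\sigma,T^\sigma)$ is isogenous to $\Phi(G_\sigma,A)$ the two root systems define the same reflection group inside $\GL(\xch(A)\otimes\QQ)$, and that group acts faithfully on $\xch(A)\otimes\QQ$, which is precisely the assertion.
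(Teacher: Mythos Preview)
Your proof is correct and follows essentially the same approach as the paper. Your generator $w_\mO$ (the longest element of $W_\mO$) coincides with the paper's explicit element---$\prod_{\gamma\in\mO}s_\gamma$ in type $\sfA$ and $\prod_{\beta\in\mO^+}s_\beta$ in type $\sfB\sfC$---since $W_\mO$ is a product of $\mathsf{A}_1$'s (resp.\ $\mathsf{A}_2$'s) in these cases; the paper simply asserts ``one checks easily'' that this element acts on $\xch(A)$ as $s_{\alpha_\mO}$ where you carry out an eigenspace analysis, and both arguments finish with the abstract Coxeter isomorphism $W^\sigma\cong N_{G_\sigma}(A)/A$ (you via the folding description, the paper via the table in Lemma~\ref{L:Phi'(G,A) is a root system}).
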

\begin{proof}For a root $\al\in\Phi(G,T)$, let $s_{\al}$ denote the corresponding reflection acting on $\xch(T)$.
Let $\al_\mO$ be a simple root of $\Phi(G_\sigma,A)$ and $s_{\al_\mO}$ the corresponding simple reflection. One checks easily that $s_{\al_\mO}=\prod_{\ga\in\mO} s_{\al}\in W_0$ if $\al_\mO$ is of type $\sfA$, and $s_{\al_\mO}=\prod_{\al\in \mO^+}s_{\al}\in W_0$ if $\al_\mO$ is of type $\sfB\sfC$. Therefore, $N_{G_\sigma}(A)/A\subset W_0$ as automorphisms of $A$. It remains to notice that they are isomorphic as abstract Coxeter groups, as can be checked easily from the table in Lemma~\ref{L:Phi'(G,A) is a root system}.
\end{proof}

We discuss subgroups of $G$ associated to $\sigma$-orbits of roots, which specialize to ``root $\SL_2$" when $\sigma=\id$. First, we have the following lemma.
\begin{lem}
\label{L:type BC+ root}
Let $\alpha \in \Phi(G, T)$. If the $\sigma$-orbit $\mO$ containing $\alpha$ is of type $\sfA$ or  $\sfB\sfC^-$, then 
$\sigma^{|\mO|}\circ x_\alpha = x_\alpha$. If the $\sigma$-orbit $\mO$ containing $\alpha$ is of type $\sfB\sfC^+$, then $\sigma^{|\mO|}\circ x_\alpha =- x_\alpha$.
\end{lem}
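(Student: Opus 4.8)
The plan is to turn the statement into the computation of a single scalar and then reduce to the classification in Lemma~\ref{L:Phi'(G,A) is a root system}. Since $|\mO|$ is the size of the orbit, $\sigma^{|\mO|}$ fixes $\alpha$, hence is a finite order automorphism of $G$ preserving the root subgroup $U_\alpha$. As every automorphism of $\bG_a$ as an algebraic group over $k$ is multiplication by a scalar, there is a unique $c_\alpha\in k^\times$ (a root of unity) with $\sigma^{|\mO|}(x_\alpha(u))=x_\alpha(c_\alpha u)$ for all $u$; differentiating, $c_\alpha$ is also the scalar by which $\sigma^{|\mO|}$ acts on $\frakg_\alpha=\Lie U_\alpha$, so in particular $c_\alpha$ does not depend on the normalization of $x_\alpha$. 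The assertion to be proved is thus: $c_\alpha=1$ when $\mO$ is of type $\sfA$ or $\sfB\sfC^-$, and $c_\alpha=-1$ when $\mO$ is of type $\sfB\sfC^+$. As in the proof of Lemma~\ref{L:Phi'(G,A) is a root system}, I would replace $G$ by the simply-connected cover of its derived group (harmless, as $\sigma$ lifts uniquely by \cite[\S 9.16]{St}) and use that $\sigma$ permutes the simple factors, reducing to the case where $G$ is one of the groups in the table there equipped with its pinned automorphism.

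Next I would handle the type $\sfB\sfC^+$ case by reducing it to the type $\sfB\sfC^-$ case; this is the one genuinely conceptual point. Write $m=|\mO^+|$, and for $\alpha\in\mO^-$ put $\gamma:=\sigma^{m}(\alpha)$, so that $\gamma\neq\alpha$, $\sigma^{m}(\gamma)=\alpha$, and $\beta:=\alpha+\gamma\in\mO^+$ with $\sigma^{m}(\beta)=\beta$. By the properties recorded in Definition~\ref{D: type of roots}, $\{\alpha,\gamma\}$ is the base of a sub-root-system of $\Phi(G,T)$ of type $\mathsf A_2$; since $\Phi(G,T)$ is reduced, $\beta$ is the only root of the form $i\alpha+j\gamma$ with $i,j\geq 1$, so $[x_\alpha(u),x_\gamma(v)]=x_\beta(Nuv)$ with $N=\pm 1$ a structure constant. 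Applying $\sigma^{m}$ to this identity, using that $\sigma^{m}$ interchanges $U_\alpha$ and $U_\gamma$, together with $[a,b]^{-1}=[b,a]$ and $x_\beta(t)^{-1}=x_\beta(-t)$, one gets $c_\beta=-c_\alpha$, where on the right $c_\alpha$ is the scalar attached to the type $\sfB\sfC^-$ orbit $\mO^-$ (of size $2m$). Hence the type $\sfB\sfC^+$ statement $c_\beta=-1$ follows from the type $\sfB\sfC^-$ statement $c_\alpha=1$.

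It then remains to show $c_\alpha=1$ when $\mO$ is of type $\sfA$ or $\sfB\sfC^-$. If $\mO$ is an orbit of \emph{simple} roots this is immediate from $\sigma\circ x_\al=x_{\sigma(\al)}$, $\al\in\Delta$; in particular it covers the simple roots of $G_\sigma$, and one could try to deduce the general case by conjugating by a lift to $N_0$ of an element of $W_0$ carrying $\al_\mO$ to a simple root of $G_\sigma$, but the resulting inner twist on $T$ makes this slightly delicate, so I would instead verify $c_\alpha=1$ by a direct case-by-case computation along the table in Lemma~\ref{L:Phi'(G,A) is a root system}, i.e. for $G=\SL_{2n+1}$, $\SL_{2n+2}$, $\Spin_{2n+2}$, $\mathsf E_6$ and $\Spin_8$ (triality), using the explicit matrix models of Examples~\ref{Ex:folding A2n} and~\ref{Ex:folding A2n+1}: one computes $\sigma^{|\mO|}$ on each root subgroup and checks that it is trivial except on the type $\sfB\sfC^+$ root subgroups, which occur only for $G=\SL_{2n+1}$ and on which $\sigma$ acts by $-1$ (already visible in Example~\ref{Ex:folding A2n}).

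I expect this last step to be the main obstacle, though it is routine rather than deep. In the $\SL_{2n+1}$ (that is, $\mathsf A_{2n}$) model the individual maps $\sigma\colon U_\gamma\to U_{\sigma(\gamma)}$ already carry signs, and the point is to check that these signs cancel around the orbit for types $\sfA$ and $\sfB\sfC^-$ but fail to cancel --- leaving exactly $-1$ --- for type $\sfB\sfC^+$; one must also carry along the matrix $J$ with ${}^tJ=-J$ defining $\sigma$ for $\SL_{2n+2}$. Conceptually the previous paragraph already isolates why the sign $-1$ for type $\sfB\sfC^+$ is forced: it comes from the $\mathsf A_2$-geometry of $\{\alpha,\sigma^{|\mO^+|}\alpha\}$ together with the antisymmetry of the commutator, and it is the familiar source of the non-reducedness of the relative root system in the $\mathsf A_{2n}$ case.
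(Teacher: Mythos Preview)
Your reduction of the type $\sfB\sfC^+$ case to the type $\sfB\sfC^-$ case via the commutator identity is exactly what the paper does (phrased there in Lie-algebra form: $E_\alpha=c[E_\beta,E_{\sigma^{|\mO^+|}(\beta)}]$, and $\sigma^{|\mO^+|}$ swaps the two bracket entries). Where you diverge is in the treatment of types $\sfA$ and $\sfB\sfC^-$: you propose a case-by-case verification from the classification table, explicitly setting aside the $W_0$-conjugation argument as ``slightly delicate''. The paper takes precisely that $W_0$-conjugation route, and the point you are missing is that any $w\in W_0=W^\sigma$ admits a lift $\dot w\in N$ with $\sigma(\dot w)=\dot w$ (this is \cite[(5), p.~55]{St}). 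With such a $\sigma$-invariant lift there is no inner twist: writing $\Ad_{\dot w}(E_\alpha)=cE_{w(\alpha)}$ and applying $\sigma^{|\mO|}$ to both sides immediately transports $c_\alpha=1$ from the simple-root orbit $w(\mO)$ back to $\mO$. Your case-by-case approach is correct but more laborious; the paper's argument is uniform and avoids the explicit matrix computations you anticipate as the ``main obstacle''.
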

\begin{proof}
First,  if $\al$ is a simple root, then $\sigma^{|\mO|}\circ x_\alpha = x_\alpha$ since $\sigma$ acts by pinned automorphisms. 
In general, every $\sigma$-orbit $\calO$ of type $\sfA$ or of type $\sfB\sfC^-$ is conjugated to a $\sigma$-orbit of simple roots by an element $w\in W_0=W^\sigma$. 
We can choose a lifting of $w$ to a $\sigma$-invariant element $\dot{w}$ in $N$ (see \cite[(5), p. 55]{St}). For a root $\alpha \in \calO$, write $\Ad_{\dot{w}}(E_\al)=c E_{w(\al)}$ for some invertible constant $c$. Then $\ord(|\mO|)=\ord(|w(\mO)|)$ and
$$\Ad_{\dot{w}}(E_\al)=cE_{w(\al)}=\sigma^{|\mO|}(cE_{w(\al)})=\sigma^{|\mO|}(\Ad_{\dot{w}}(E_\al))=\Ad_{\dot{w}}(\sigma^{|\mO|}(E_\al)).$$
Therefore, $\sigma^{|\mO|}\circ x_\al=x_\al$. 

Next, assume that $\alpha$ belongs to a $\sigma$-orbit $\calO^+$ of type $\sfB \sfC^+$, then $\al=\beta + \sigma^{|\mO^+|}(\beta)$ for some root $\beta$ in a $\sigma$-orbit $\mO^-$ of type $\sfB\sfC^-$, and $E_\alpha=c[E_\beta, E_{\sigma^{|\mO^+|}(\beta)}]$ for some invertible constant $c$. Note that $\sigma^{|\mO^+|}$ will send $E_\beta$ to $c' E_{\sigma^{|\mO^+|}(\beta)}$ and $E_{\sigma^{|\mO^+|}(\beta)}$ to $c'^{-1}E_\beta$ for some invertible $c'$. Hence $\sigma^{|\mO^+|}(E_\alpha) = -E_\alpha$, and therefore $\sigma^{|\mO^+|} \circ x_\alpha = -x_\alpha$.
\end{proof}

Now, for a $\sigma$-orbit $\calO \subset \Phi(G, T)^+$ of positive roots, let $G_{\mO}$ be the subgroup of $G$ generated by $T$ and root subgroups $U_{\pm \alpha}$ for $\alpha\in \mO$. Clearly, this is a $\sigma$-stable reductive subgroup of $G$.  Let $U_{\mO}$ (resp. $U_{\mO, -}$) be the subgroup of $U$ generated by $U_\al$ for $\al\in \mO$ (resp. $-\al \in \mO$), and let $B_{\mO}=U_{\mO}T$. If $\mO$ is a $\sigma$-orbit of simple roots, we also let $U^{\mO}$ denote the subgroup of $U$ generated by $U_\beta$, for those positive roots $\beta$ that are not in the sub-root system spanned by $\mO$. Then $B=U^\mO B_\mO$ is a semi-direct product decomposition, and $P_\mO=U_\mO G_\mO$ is a $\sigma$-stable standard parabolic subgroup of $G$.

Note that $(G_{\mO},B_{\mO}, T, \{x_\al\}_{\al\in \mO})$ is a pinning of $G_{\mO}$.
When $G$ is semisimple and simply-connected, there are essentially two cases for its derived subgroup $G_{\mO,\on{der}}$ (which is always simply-connected). The following lemma also follows from Lemma~\ref{L:type BC+ root} (and in fact is equivalent to Lemma~\ref{L:type BC+ root}).
\begin{lem}
\label{L:rankonegroup} 
Assume that $G$ is semisimple and simply-connected.
\begin{enumerate}
\item If $\mO$ is of type $\sfA$, then $G_{\mO, \on{der}}\cong \prod_{i=1}^{|\mO|} \SL_2$ where the $\sigma$ acts by permuting factors and preserves the pinning (up to possibly rescaling the $x_\alpha$'s).
\item If $\calO$ is of type $\sfB\sfC^-$, then $G_{\mO, \on{der}}\cong\prod_{i=1}^{|\mO|/2} \SL_3$ where the $\sigma$ acts by permuting factors and $\sigma^{|\mO|/2}$ acts on each factor $\SL_3$ as in Example~\ref{Ex:folding A2n}, which also preserves the pinning (up to possibly rescaling the $x_\alpha$'s).

\item 
If $\mO$ is of type $\sfB\sfC^+$, then $G_{\mO, \on{der}}\cong \prod_{i=1}^{|\mO|} \SL_2$, where the $\sigma$ acts by
\begin{equation}
\label{E:BC+ type sigma action}
(g_1, \dots, g_{|\mO|}) 
\longmapsto \Big(\Ad_{\big(\begin{smallmatrix}
1& 0 \\0 & -1
\end{smallmatrix}\big)}(g_{|\mO|}), g_1, \dots, g_{|\mO|-1}\Big).
\end{equation}
\end{enumerate}
\end{lem}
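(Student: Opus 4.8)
The plan is to reduce each of the three cases to an explicit computation inside a ``rank one along a $\sigma$-orbit'' subgroup, using the conjugacy of a general orbit to an orbit of simple roots together with the scalar bookkeeping supplied by Lemma~\ref{L:type BC+ root}. First I would treat orbits $\mathcal O$ of type $\sfA$ and type $\sfB\sfC^-$ by a common reduction: as recalled in the discussion preceding Lemma~\ref{L:type BC+ root}, such an $\mathcal O$ is $W_0$-conjugate to a $\sigma$-orbit of simple roots, and one may pick a $\sigma$-fixed lift $\dot w\in N$ of the conjugating $w\in W_0$ (by \cite[(5), p.~55]{St}); since $\dot w$ is $\sigma$-fixed, $\Ad_{\dot w}$ carries $(G_{\mathcal O},B_{\mathcal O},T,\{x_\al\})$ to the analogous data for $w(\mathcal O)$, $\sigma$-equivariantly and up to rescaling the $x_\al$'s, and in the type $\sfB\sfC$ case carries $\mathcal O^+$ to the $\mathcal O^+$ attached to $w(\mathcal O^-)$ because $w$ commutes with $\sigma$. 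Hence we may assume $\mathcal O$ (resp.\ $\mathcal O^-$) consists of simple roots, so Definition~\ref{D: type of roots} identifies the sub-Dynkin diagram on it: $|\mathcal O|$ isolated vertices in the type-$\sfA$ case, and a disjoint union of $|\mathcal O^+|$ copies of $\mathsf A_2$ in the type-$\sfB\sfC^-$ case.

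Since $G$ is simply connected, $G_{\mathcal O,\on{der}}$ is semisimple and simply connected, hence it is the direct product of the simply connected groups attached to the connected components of that sub-diagram: $G_{\mathcal O,\on{der}}\cong\prod_{|\mathcal O|}\SL_2$ in the type-$\sfA$ case and $\prod_{|\mathcal O^+|}\SL_3$ in the type-$\sfB\sfC^-$ case. Because $\sigma$ acts transitively on $\mathcal O$, it permutes these factors cyclically and transitively. To pin down the $\sigma$-action I would transport a pinning of one factor around the orbit: fix a representative $\al\in\mathcal O$ (resp.\ a representative pair spanning one $\mathsf A_2$) together with an isomorphism of the corresponding factor with the standard $\SL_2$ (resp.\ $\SL_3$) matching pinnings, and define the identifications of the remaining factors by applying powers of $\sigma$. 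With this choice $\sigma$ acts as the cyclic shift on all but the last factor, and the ``roll-over'' from the last factor back to the first is the restriction of $\sigma^{|\mathcal O|}$ (resp.\ $\sigma^{|\mathcal O^+|}$) to that factor. In the type-$\sfA$ case Lemma~\ref{L:type BC+ root} gives $\sigma^{|\mathcal O|}\circ x_\al=x_\al$, so the roll-over is trivial and $\sigma$ is the pure cyclic permutation. In the type-$\sfB\sfC^-$ case $\sigma^{|\mathcal O^+|}$ restricted to one $\SL_3$ is a pinned automorphism (as $\sigma$ preserves the global pinning) which interchanges $\al$ and $\sigma^{|\mathcal O^+|}\al$, i.e.\ swaps the two simple roots of that $\mathsf A_2$; it is therefore the unique non-trivial pinned automorphism of $\SL_3$, namely the one of Example~\ref{Ex:folding A2n} with $r=1$, and $\sigma^{|\mathcal O|}=(\sigma^{|\mathcal O^+|})^2$ is then trivial on that factor, consistently with Lemma~\ref{L:type BC+ root}.

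For type $\sfB\sfC^+$ I would not reduce $\mathcal O=\mathcal O^+$ itself (it need not be conjugate to an orbit of simple roots), but use its companion $\mathcal O^-$: by the previous step $G_{\mathcal O^-,\on{der}}\cong\prod_{i=1}^{|\mathcal O^+|}\SL_3$ with $\sigma$ acting as described, and $G_{\mathcal O^+,\on{der}}$ is the subgroup generated by $T$ and the highest-root subgroups of the factors — one copy of $\SL_2$ inside each $\SL_3$ — which are mutually orthogonal since they lie in distinct factors, whence $G_{\mathcal O^+,\on{der}}\cong\prod_{i=1}^{|\mathcal O^+|}\SL_2$. The $\sigma$-action is again the cyclic shift whose roll-over is the restriction to the highest-root $\SL_2$ of the automorphism $X\mapsto J\,{^t\!X}^{-1}J$ of $\SL_3$; a one-line matrix computation with $J=\mathrm{antidiag}(1,-1,1)$ shows this restriction is $g\mapsto\Ad_{\big(\begin{smallmatrix}1&0\\0&-1\end{smallmatrix}\big)}(g)$, which is exactly \eqref{E:BC+ type sigma action}. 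Alternatively, and independently of the reduction, one can argue directly inside $G$: choosing $x_\beta$ for one $\beta\in\mathcal O^+$ and transporting by powers of $\sigma$, the roll-over after $|\mathcal O^+|$ steps is $\sigma^{|\mathcal O^+|}\circ x_\beta=-x_\beta$ by Lemma~\ref{L:type BC+ root}, i.e.\ $t\mapsto-t$ on the root coordinate, which is conjugation by $\mathrm{diag}(1,-1)$ in the corresponding $\SL_2$.

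The main obstacle is the bookkeeping of scalars along the $\sigma$-orbit: determining precisely which pinning rescalings are forced and why the roll-over automorphism is what it is. This is exactly the content of Lemma~\ref{L:type BC+ root}, which is why that lemma is the crux and why the present lemma is in effect equivalent to it; granting it, everything else is either a citation (Steinberg's $\sigma$-fixed Weyl lifts and simple-connectedness of subgroups attached to sub-root-systems) or a short case inspection already encoded in Definition~\ref{D: type of roots}.
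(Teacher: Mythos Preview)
Your proposal is correct and takes essentially the same approach as the paper, which simply states that the lemma follows from (and is in fact equivalent to) Lemma~\ref{L:type BC+ root}. You have spelled out in detail exactly the argument the paper leaves implicit: the $W_0$-conjugation to an orbit of simple roots using a $\sigma$-fixed lift, the identification of the sub-Dynkin diagram via Definition~\ref{D: type of roots}, and the computation of the ``roll-over'' automorphism from the sign in Lemma~\ref{L:type BC+ root}.
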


Let $\mathbf T=B/U$ denote the abstract Cartan. Note that $\sigma$ acts on $\mathbf T$,  and $T\to B\to \mathbf T$ is a $\sigma$-equivariant isomorphism. Let $\mathbf A=\mathbf T/(1-\sigma)\mathbf T$, which is canonically isomorphic to $A$. By transport of structures, $W_0$ acts on $\mathbf A$, and for every root $\al_\mO\in\Phi(G_\sigma,A)$, $e^{\al_\mO}$ can be regarded as a regular function on $\mathbf A$.
 Let 
\begin{equation}
\label{E: btoaquot}
q_B: B \to \mathbf T\to \mathbf A
\end{equation} 
denote the quotient map.

\begin{dfn}
\label{D: divisor for roots}
Let $\mO\subset \Phi(G,A)$ be a $\sigma$-orbit. We define a divisor
\begin{equation}
 \label{E:disc divisors}
\nonumber
\mathbf A_{\mO} = \begin{cases}
\{t\in \mathbf A\mid e^{\al_\mO}(t)=1\}&  \mO \mbox{ is of type } \sfA \textrm{ or } \sfB\sfC^-,
\\
\{t\in \mathbf A\mid e^{\al_\mO}(t)=-1\} &  \mO \mbox{ is of type } \sfB\sfC^+.
\end{cases}
\end{equation}
Note that $\mathbf A_{\mO}=\mathbf A_{-\mO}$.
Let
\[\mathring{\mathbf A}=\mathbf A-\cup_{\mO} \mathbf A_{\mO},\]
where the union is taken over all $\sigma$-orbits $\mO\subset \Phi(G,T)$.
This is a $W_0$-invariant open subset of $\mathbf A$. For a $\sigma$-orbit $\mO\subset\Phi(G, T)$, let 
$$
\mathbf A^{[\mO]}=\mathbf A-\bigcup_{{\mO'}\neq \mO}\mathbf A_{{\mO'}}.
$$
In particular, it is an open neighborhood of the generic points of $\bfA_\calO$ in $\bfA$.

Let $\mathring{B}$ and $\mathring{T}$ be the preimage of $\mathring{\mathbf A}$ under the natural projections $q_B: B\to \mathbf A$ and $T\cong \mathbf T \to  \mathbf A$. SImilarly, let $B^{[\mO]}$ and $B_{\mO'}^{[\mO]}$ be the preimage of $\mathbf A^{[\mO]}$ under the projections $B\to\mathbf A$ and $B_{\mO'}\to \mathbf A$.
Finally, via the isomorphism $A\cong \mathbf A$, we have similarly defined spaces $A_\mO,\mathring{A}$ and $A^{[\mO]}$.
\end{dfn}

\begin{remark}
\label{R: irr and red of AO}
Note that $A_\mO$ may not be irreducible nor reduced in general. For example, if $G_\sigma=\on{Sp}_{2n}$, then every long root in $\Phi(G_\sigma,A)$ is twice of a weight of $A$ and therefore the corresponding divisor $A_{\mO}$ consists of two connected components if $\on{char} k>2$, and is non-reduced if $\on{char} k=2$. On the other hand, if $\on{char} k>2$, then $A_{\mO}$ is always reduced.
\end{remark}

\begin{lem}
\label{L:branched locus}
Assume that $G$ is semsimple and simply-connected. Then $\mathbf A/\!\!/W_0$ is isomorphic to an affine space, and the natural morphism $\mathbf A \to \mathbf A/\!\!/W_0$ is a finite and flat $W_0$-cover, with branch loci $\cup_{\mO} \mathbf A_{\mO}$. In particular, the restriction $\mathring{\mathbf A}\to\mathring{\mathbf A}/\!\!/W_0$ is a finite \'etale Galois cover.
\end{lem}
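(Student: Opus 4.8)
The plan is to reduce everything to a classical fact about Chevalley quotients of semisimple groups. Since $G$ is simply-connected semisimple, Lemmas~\ref{L:Phi'(G,A) is a root system} and~\ref{L:action Weyl group} identify $(\mathbf A,W_0)$ with the maximal torus and Weyl group of the semisimple group $G_\sigma$, and identify the reflections of $W_0$ with the elements $s_{\alpha_\mathcal O}$ attached to the $\sigma$-orbits $\mathcal O\subset\Phi(G,T)$, where $s_{\alpha_{\mathcal O^-}}=s_{\alpha_{\mathcal O^+}}$ for a type $\sfB\sfC$ pair $\{\mathcal O^-,\mathcal O^+\}$. So I would establish, in order: (i) $\mathbf A/\!\!/W_0$ is an affine space; (ii) $\mathbf A\to\mathbf A/\!\!/W_0$ is finite and faithfully flat; (iii) for each orbit $\mathcal O$ the fixed locus $\mathbf A^{s_{\alpha_\mathcal O}}$ equals $\mathbf A_\mathcal O$ if $\mathcal O$ is of type $\sfA$ and equals $\mathbf A_{\mathcal O^-}\cup\mathbf A_{\mathcal O^+}$ if $\{\mathcal O^-,\mathcal O^+\}$ is a type $\sfB\sfC$ pair; and (iv) $W_0$ acts freely on $\mathring{\mathbf A}$. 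Granting these, the last sentence of the lemma follows: a free action of the finite (constant) group $W_0$ on the smooth affine scheme $\mathring{\mathbf A}$ makes $\mathring{\mathbf A}\to\mathring{\mathbf A}/W_0=\mathring{\mathbf A}/\!\!/W_0$ a $W_0$-torsor, hence finite étale Galois, and this is the open subscheme of $\mathbf A/\!\!/W_0$ complementary to the (closed) image of $\bigcup_\mathcal O\mathbf A_\mathcal O$; conversely the generic point of each component of each $\mathbf A_\mathcal O$ has stabilizer $\langle s_{\alpha_\mathcal O}\rangle\neq1$, so the cover is genuinely ramified there and the branch locus is exactly $\bigcup_\mathcal O\mathbf A_\mathcal O$.

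For (i) I would invoke Steinberg's theorem \cite{St}: when $G_\sigma$ is simply-connected, $k[\mathbf A]^{W_0}=k[\chi_{\omega_1},\dots,\chi_{\omega_\ell}]$ is a polynomial ring on the characters of the fundamental representations. Scanning the table in Lemma~\ref{L:Phi'(G,A) is a root system} one simple factor at a time, the only $G_\sigma$ that is not simply-connected is $\SO_{2n+1}$; there I would write $\mathbf A=\widetilde{\mathbf A}/\mu_2$ with $\widetilde{\mathbf A}$ the torus of $\Spin_{2n+1}$ and $\mu_2=Z(\Spin_{2n+1})$, note that $\mu_2$ is central and hence acts on $k[\widetilde{\mathbf A}]^{W_0}=k[\chi_{\omega_1},\dots,\chi_{\omega_n}]$ through the central characters of the $V_{\omega_i}$ — trivially on $\chi_{\omega_1},\dots,\chi_{\omega_{n-1}}$ and by $-1$ on the spin character $\chi_{\omega_n}$ — so that $k[\mathbf A]^{W_0}=k[\chi_{\omega_1},\dots,\chi_{\omega_{n-1}},\chi_{\omega_n}^{2}]$ is again polynomial (equivalently, use the coefficients of the characteristic polynomial of the standard representation of $\SO_{2n+1}$). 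Then (ii) is formal: $k[\mathbf A]$ is module-finite over $k[\mathbf A]^{W_0}$ as $W_0$ is finite, $\mathbf A$ is smooth hence Cohen--Macaulay, $\mathbf A/\!\!/W_0$ is regular by (i), and a finite morphism from a Cohen--Macaulay scheme to a regular scheme of equal dimension is flat by miracle flatness; surjectivity is automatic.

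The technical heart is (iii): computing $\mathbf A^{s_{\alpha_\mathcal O}}=\{t\in\mathbf A:\alpha_\mathcal O^\vee(e^{\alpha_\mathcal O}(t))=1\}$, where $\alpha_\mathcal O^\vee\in\XX_\bullet(\mathbf A)$ is the coroot of $\alpha_\mathcal O$ in the root datum of $G_\sigma$. Since $G$ is simply-connected, $\XX_\bullet(T)$ has the simple coroots as a $\ZZ$-basis, which $\sigma$ permutes, so $\XX_\bullet(\mathbf A)=\XX_\bullet(T)_\sigma$ is \emph{free} with basis the classes $[\delta^\vee]$ of the simple coroots, one per $\sigma$-orbit. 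Descending the descriptions $s_{\alpha_\mathcal O}=\prod_{\gamma\in\mathcal O}s_\gamma$ (type $\sfA$) and $s_{\alpha_\mathcal O}=\prod_{\beta\in\mathcal O^+}s_\beta$ (type $\sfB\sfC$) of Lemma~\ref{L:action Weyl group} along $T\to\mathbf A$, and using the mutual orthogonality of the roots in $\mathcal O$ (resp.\ in $\mathcal O^+$) together with the $A_2$-structure inside $\mathcal O^-$ and the sign bookkeeping of Lemma~\ref{L:type BC+ root}, I expect to find $\alpha_\mathcal O^\vee=[\gamma^\vee]$ (primitive) in the type $\sfA$ case and $\alpha_\mathcal O^\vee=[\beta^\vee]=2[\gamma^\vee]$ (twice a primitive element) in the type $\sfB\sfC$ case. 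It follows that $\mathbf A^{s_{\alpha_\mathcal O}}=\{e^{\alpha_\mathcal O}=1\}=\mathbf A_\mathcal O$ in the first case — reducible or non-reduced precisely when $\alpha_\mathcal O$ is divisible in $\XX^\bullet(\mathbf A)$, cf.\ Remark~\ref{R: irr and red of AO} — and $\mathbf A^{s_{\alpha_\mathcal O}}=\{e^{\alpha_\mathcal O}=\pm1\}=\mathbf A_{\mathcal O^-}\cup\mathbf A_{\mathcal O^+}$ in the second, the value $-1$ being exactly the sign in Definition~\ref{D: divisor for roots}. These coroot computations can alternatively be verified directly from the table, e.g.\ via Examples~\ref{Ex:folding A2n} and~\ref{Ex:folding A2n+1}.

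For (iv) I would appeal to the classical fact, essentially due to Steinberg, that the $W_0$-stabilizer of any point of the torus $\mathbf A$ is generated by the reflections of $W_0$ fixing that point; combined with (iii), any point with nontrivial stabilizer lies in some $\mathbf A_\mathcal O$, so $W_0$ acts freely on $\mathring{\mathbf A}$. I expect the main obstacle to be step (iii): confirming, through the folding, that the type $\sfB\sfC^+$ coroot is exactly twice a primitive cocharacter — equivalently that the reflection hypersurface picks up the extra sheet $\{e^{\alpha_\mathcal O}=-1\}$ — which is what forces the sign in Definition~\ref{D: divisor for roots}; the remaining ingredients are standard or can be read off the explicit table.
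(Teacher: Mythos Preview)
Your proposal is correct and follows essentially the same route as the paper's proof: identify $(\mathbf A,W_0)$ with the maximal torus and Weyl group of $G_\sigma$ via Lemmas~\ref{L:Phi'(G,A) is a root system} and~\ref{L:action Weyl group}, invoke Steinberg's result \cite{St3} (handling the $\SO_{2n+1}$ factors separately) for the polynomial ring, and then compute the ramification locus by determining the kernel of each coroot $\check\alpha_{\mathcal O}:\GG_m\to A$ case-by-case from the table. Your write-up is somewhat more explicit than the paper's --- you spell out miracle flatness, the Steinberg stabilizer fact for step (iv), and the relation $[\check\beta]=2[\check\gamma]$ in the $\sfB\sfC$ case --- but these are exactly the details the paper's terse proof is implicitly using, and your identification of the coroot-kernel computation as the crux matches the paper's ``by checking the table''.
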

\begin{proof}
We may identify the action of $W_0$ on $\mathbf A$ with the action of $W_0\cong N_{G_\sigma}(A)/A$ on $A$ by Lemma~\ref{L:action Weyl group}.
Note that by table in the proof of Lemma~\ref{L:Phi'(G,A) is a root system}, $G_\sigma$ is isomorphic to a product of simply-connected groups and odd orthogonal groups. Therefore, $k[\mathbf A]^{W_0}$ is a polynomial algebra (see \cite{St3}), and $\mathbf A\to \mathbf A/\!\!/W_0$ is finite flat.
The branch locus of the covering $\mathbf A \to \mathbf A/\!\!/W_0$ is the union of divisors $\{t\in \mathbf A\mid \check{\al}_\mO(e^{\al_\mO}(t))=1, \al_\mO\in \Phi(G_\sigma,A)\}$. Here $ \check{\al}_\mO$ denotes the coroot of $\al_\mO$, regarded as a homomorphism $ \check{\al}_\mO:\bG_m\to A\cong \mathbf A$. Its construction is as follows. Let $\ga\in \mO$ if $\al_\mO$ is of type $\sfA$ or $\ga\in\mO^+$ if $\al_\mO$ is of type $\sfB\sfC$. Let $\check{\ga}$ be the corresponding coroot, regarded as a homomorphism $\bG_m\to T$. Then $\check{\al}_\mO$ is the composition of this homomorphism with the projection $T\to A\cong \mathbf A$.
Again, by checking the table in the proof of Lemma~\ref{L:Phi'(G,A) is a root system}, one sees that $\ker \check{\al}_\mO$ is trivial if $\al_\mO$ is of type $\sfA$ and is $\{\pm 1\}$ if $\al_\mO$ is of type $\sfB\sfC$. The lemma follows.
\end{proof}

\subsection{Twisted conjugacy classes}
In this subsection, we study $\sigma$-regular elements of $G$, generalizing some of the well-known results of Steinberg \cite{St2} for $\sigma=\id$. Some results with restriction of the characteristic of $k$ were also obtained by Mohrdieck \cite{M} before.

Let $G$ be a reductive group over $k$.
Recall the $\sigma$-twisted conjugation (or $\sigma$-conjugation for brevity)
\[c_\sigma: G\times G\sigma\to G\sigma,\quad (h,g\sigma)=hg\sigma(h)^{-1}\sigma=:c_\sigma(h)(g).\]
Since $\sigma$ preserves $(B,T)$, it acts on the set $\Delta\subset \Phi(G,T)$ of simple roots.

Let $I$ be the \emph{centralizer group scheme} for the action of $G$ on itself by $c_\sigma$, i.e. it is the group scheme over $G$ defined by the Cartesian diagram
\[
\begin{CD}
I@>>> G\times G\sigma\\
@VVV@VVc_\sigma\times\pr_2 V\\
G@>>>G\times G\sigma,
\end{CD}
\]
where $\pr_2$ denotes the projection to the second factor and $G\to G\times G\sigma$ denotes the ``diagonal" embedding $g\mapsto (g,g\sigma)$. Its fiber over $g\in G$ is denoted by $I_g$.
\begin{dfn}
Let $G^{\sigma\textrm{-reg}}$ denote the \emph{$\sigma$-regular locus} of $G$. Namely,  $$G^{\sigma\textrm{-reg}} = \{g\in G\mid \dim I_g= \dim T^\sigma=:r\} \subset G.$$
Let $B^{\sigma\textrm{-reg}}:=G^{\sigma\textrm{-reg}}\cap B$, $T^{\sigma\textrm{-reg}}=G^{\sigma\textrm{-reg}}\cap T$ and $U^{\sigma\textrm{-reg}}=G^{\sigma\textrm{-reg}}\cap U$.
\end{dfn}
\begin{rmk}
\label{R:general reg element}
(1) It will be clear from the following discussion that $\dim I_g\geq r$ for any $g\in G$. Therefore, $G^{\sigma\textrm{-reg}}$ is an open subset of $G$ by semi-continuity.

(2) When $\on{char} k = 0$, $\dim I_g = \on{Lie} I_g = \ker(\id - \Ad_g\sigma: \frakg \to \frakg)$. Complication arises in the positive characteristic case; for example, the center $\frakz$ of $\frakg$, i.e.
$$
\frakz: = \bigcap _{\alpha \in \Phi(G, T)} \Ker(d\alpha: \frakt \to  k),
$$
may not be trivial even if $G$ is semisimple. 

(3) For every automorphism $\tau$ of the algebraic group $G$, one can define the open subset of $\tau$-regular elements $G^{\tau\textrm{-reg}}\subset G$ as those $g\in G$ such that $I_g$ achieves the minimal dimension. But since every $\tau$ differs from some pinned automorphism $\sigma$ by an inner automorphism, the study of $G^{\tau\textrm{-reg}}$ reduces to the study of $G^{\sigma\textrm{-reg}}$, by virtual of Remark~\ref{R: special tau}. 
\end{rmk}

We first study $U^{\sigma\textrm{-reg}}$. Let  $V= [U,U]\cdot (\sigma-1)(\prod_{\al\in\Delta} U_{\al})\subset U$, and let
$$q_U:U\to W=U/V$$
denote the quotient map. 
The natural conjugation of $T^\sigma$ on $U$ induces an action of $T^\sigma$ on $W$, and there is a unique $T^\sigma$-open orbit $\mathring{W}\subset W$. Explicitly, if we choose an element $\al\in\mO$ for each $\sigma$-orbit of simple roots so that there is an isomorphism 
$$\prod_{\textrm{chosen }\al} x_\al: \bG_a^{|\Delta^\sigma|}\to U\to W$$
giving a basis of $W$ (as a vector space over $k$), then $v\in \mathring{W}$ if and only if all the coefficients of $v$ with respect to this basis are non-zero. Let $\mathring{U}=q_U^{-1}(\mathring{W})\subset U$.

We say that an element $w\in W$ is a \emph{$\sigma$-twisted Coxeter element} in $W$, if one can choose an element $\al\in\mO$ for each $\sigma$-orbit $\mO$ of simple roots and write $w$ as the product of simple reflections corresponding to these chosen $\al$'s (in some order).   Let $\dot{w}$ be a representative of $w$ in $N$.
\begin{lem}
\label{L:Steinberg lemma}
Assume that $G$ is simply-connected.
Let $w$ be a $\sigma$-twisted Coxeter element in $W$. Then for every $b\in B$,
$$\dim \ker(\id-\Ad_{b\dot{w}}\sigma: \frakg\to\frakg)\leq \dim \frakz^\sigma+r.$$
\end{lem}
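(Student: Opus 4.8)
The plan is to reduce the computation of $\dim\ker(\id-\Ad_{b\dot w}\sigma:\frakg\to\frakg)$ to a problem about the semisimple part and to exploit the fact that a $\sigma$-twisted Coxeter element has no nonzero fixed vectors on the relevant reflection representation. The center of $\frakg$ is dealt with separately: the splitting $\frakg = \frakz \oplus [\frakg,\frakg]$ need not hold in positive characteristic, so instead one filters $\frakg$ by $\Ad_{b\dot w}\sigma$-stable subspaces. Concretely, let $\frakg_0=\frakz$ and observe $\Ad_{b\dot w}\sigma$ acts on $\frakz$ through $\sigma$ alone (since $\Ad$ of $B$ and of $N$ is trivial on the center), contributing at most $\dim\frakz^\sigma$ to the kernel. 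On a suitable complementary $\Ad_{b\dot w}\sigma$-stable piece — e.g. the span of all root spaces $\frakg_\alpha$ together with the image of the coroots in $\frakt$, which is exactly $\Lie(G_\s)$ pushed into $\frakg$ — it then suffices to bound the fixed space by $r = \dim T^\sigma = \dim A$.

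First I would reduce to $G$ simply-connected semisimple, as already assumed, and pass to $G_\s=\prod G_i$ with $\sigma$ permuting simple factors; using the product structure one reduces to a single $\sigma$-orbit of simple factors cyclically permuted, and then via the tensor-induction formalism of Lemma~\ref{L:reduce from G to G0} (or a direct argument) to the case of a single simple factor $G_0$ with $\sigma^r$ of order $1,2,3$. Second, I would analyze the operator $\Ad_{b\dot w}\sigma$ acting on $\frakg$: write $b=tu$ with $t\in T$, $u\in U$, and note that since $w$ is a twisted Coxeter element, $\dot w\sigma$ permutes the $T$-weight spaces of $\frakg$ in a single ``large cycle'' structure when restricted to the root spaces — this is the combinatorial heart and mirrors Steinberg's argument. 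The key point is that $w\sigma$, acting on $\XX^\bullet(T)\otimes\RR$, has the property that its fixed space is precisely $\XX^\bullet(A)\otimes\RR$ (dimension $r$), because a twisted Coxeter element of $W_0$ acting on the folded root lattice $\XX^\bullet(A)$ is an ordinary Coxeter element of $\Phi(G_\sigma,A)$, which has no nonzero fixed vector on the span of roots.

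The main estimate is then: on $\frakt$, $\ker(\id-\Ad_{b\dot w}\sigma)=\frakt^{w\sigma}$ has dimension at most $r$ plus a contribution from $\frakz^\sigma$ (the failure of $\frakt$ to be spanned by coroots modulo center in bad characteristic), and on $\bigoplus_\alpha\frakg_\alpha$ the operator $\Ad_{tu\dot w}\sigma$ is, up to the nilpotent perturbation coming from $u$, a ``weighted permutation'' of the root spaces whose cycles each have length equal to a $\langle\sigma\rangle\rtimes\langle w\rangle$-orbit on $\Phi(G,T)$; an eigenvalue-$1$ vector supported on such a cycle forces the product of the weights around the cycle to be $1$, and one counts that the number of independent such constraints leaves at most $0$ dimensions from the root-space part — here is where one uses that $u$ can be chosen (or the inequality is uniform in $u$) and that no cycle is ``balanced'' for generic, hence all, $t$; more robustly, one shows the rank of $\id-\Ad_{b\dot w}\sigma$ on $\bigoplus\frakg_\alpha$ equals $|\Phi(G,T)|$ minus the number of $\langle w\sigma\rangle$-orbits, and pairs this with the fact that $w\sigma$ contributes the maximal possible movement. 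Summing the $\frakt$-part ($\le r+\dim\frakz^\sigma$) and the root-part ($\le 0$ extra beyond what is already counted) gives the bound.

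The hard part will be the careful bookkeeping on the root spaces in positive characteristic: controlling the interaction between the semisimple twist $t$, the unipotent part $u$, and the permutation $\dot w\sigma$, and in particular verifying that the eigenvalue-$1$ generalized eigenspace (not just the honest kernel) does not exceed the expected dimension — this is exactly the subtlety Steinberg handles by a clever choice of representative $\dot w$ and an induction on the rank, and I expect the proof in the paper to follow the same template, possibly using Lemma~\ref{L:type BC+ root} to track the signs $\sigma^{|\mO|}x_\alpha=\pm x_\alpha$ that distinguish the type $\sfB\sfC$ orbits and account for the $-1$ appearing in the definition of $\mathbf A_\mO$. I would also keep in mind that one only needs the inequality, not an exact formula, so it suffices to exhibit enough linearly independent vectors in the image of $\id-\Ad_{b\dot w}\sigma$, which may be cleaner than computing the kernel directly.
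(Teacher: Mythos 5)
Your high-level plan---follow Steinberg's Lemma 4.3 and invoke a regularity property of the twisted Coxeter element---does match the paper, whose proof is in fact a one-line citation: the argument of [St2, Lemma 4.3] is declared to carry over verbatim once one has the property of $\sigma$-twisted Coxeter elements from [Sp1, Theorem 7.6]. So you have correctly located the template.

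However, the way your sketch decomposes the bound $\dim\frakz^\sigma + r$ is wrong, and the error is conceptual, not cosmetic. You assert that the fixed space of $w\sigma$ on $\XX^\bullet(T)\otimes\RR$ is precisely $\XX^\bullet(A)\otimes\RR$, of dimension $r$, and in the same sentence that $w$ acting on $\XX^\bullet(A)$ has no nonzero fixed vector; these two claims contradict each other. The correct fact (this is what Springer's Theorem 7.6 supplies, via Lemma~\ref{L:action Weyl group}) is the second one: since a $\sigma$-twisted Coxeter element acts on $\XX^\bullet(A)\otimes\RR$ as an ordinary Coxeter element of $\Phi(G_\sigma,A)$, its fixed space there is trivial, and more generally $w\sigma$ has no nonzero fixed vector on $\frakt\otimes\bQ$ for simply-connected semisimple $G$. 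The $r$ in the bound therefore does not come from the $\frakt$-part at all; it comes from the root-space part, because $w\sigma$ acts on $\Phi(G,T)$ with exactly $r$ orbits (each of length the twisted Coxeter number), so the permutation part of $\Ad_{b\dot w}\sigma$ on $\bigoplus_\alpha \frakg_\alpha$ can contribute at most one dimension of kernel per orbit. Your proposed split (``$\frakt$-part $\le r + \dim\frakz^\sigma$, root-part $\le 0$'') is thus exactly backwards: it accidentally produces the right total, but the premise needed to justify it---that $w\sigma$ fixes $\XX^\bullet(A)\otimes\RR$ pointwise---is false, and any attempt to make the sketch rigorous along those lines would collapse. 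Steinberg's argument requires the twisted Coxeter element to have \emph{no} fixed vectors on the relevant reflection representation; it does not proceed by exhibiting a large fixed space on $\frakt$.
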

\begin{proof}
Given the properties of $\sigma$-twisted Coxeter elements presented in \cite[Theorem 7.6]{Sp1}, 
the proof of \cite[Lemma 4.3]{St2} applies literally to our situation.
\end{proof}

Now we come to the first key result.
\begin{prop}
\label{L: unipotent regular}
For each $\sigma$-orbit of simple roots $\mO \subset \Phi(G, T)$, choose one $\al\in\mO$ and define $$x:=\prod_{\al} x^{-1}_{\al}(1) \in U,$$ where the product is taken over the chosen simple roots in a fixed order. Then
\begin{enumerate}
\item $(\id-\Ad_x\sigma)^{-1}(\fraku)=\frakt^\sigma+\fraku$.
\item Let $g\in G$. If $gx\sigma(g)^{-1}\in U$, then $g\in B$. In addition, $I_x\subset B$.
\item $x \in U^{\sigma\textrm{-}\mathrm{reg}}$; in particular, $U^{\sigma\textrm{-}\mathrm{reg}}$ is nonempty. 
\item $\mathring{U}=U^{\sigma\textrm{-}\mathrm{reg}}$ is a single orbit under the $\sigma$-conjugation action of $T^\sigma\cdot U$ on $U$. In particular, $I_u\subset B$ and $(\id-\Ad_u\sigma)^{-1}(\fraku)=\frakt^\sigma+\fraku$ for every $u\in  U^{\sigma\textrm{-}\mathrm{reg}}$.
\end{enumerate}
\end{prop}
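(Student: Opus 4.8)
The plan is to prove the four assertions in order, each time bootstrapping from the previous one and from the $\mathrm{SL}_2$ / $\mathrm{SL}_3$ structure of the rank-one subgroups $G_{\mathcal O,\mathrm{der}}$ recorded in Lemma~\ref{L:rankonegroup}. Throughout I reduce to $G$ semisimple and simply-connected (legitimate since $\sigma$ lifts uniquely to the simply-connected cover and the relevant loci are preserved).

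\textbf{Part (1).} I want to show $(\id-\Ad_x\sigma)$ carries $\frakt^\sigma+\fraku$ onto $\fraku$, and conversely that $(\id-\Ad_x\sigma)^{-1}(\fraku)$ is no bigger than $\frakt^\sigma+\fraku$. The inclusion $(\id-\Ad_x\sigma)(\frakt^\sigma+\fraku)\subseteq\fraku$ is a direct computation: $\Ad_x\sigma$ preserves $\frakb=\frakt\oplus\fraku$, and on $\frakt$ the map $\id-\sigma$ lands in the span of the coroots, which is inside $\frakt$; the $\frakt^\sigma$-part is where one must check the image actually hits $\fraku$, using that $\Ad_x$ moves a $\sigma$-fixed toral element into $\frakt\oplus\fraku$ with a nonzero $\fraku_{\alpha}$-component for each chosen simple $\alpha$ (because $x$ has nonzero coordinate in each $U_\alpha$), and using Lemma~\ref{L:type BC+ root} to control the sign in the $\sfB\sfC^+$ case. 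For the reverse inclusion, suppose $X\in\frakg$ with $(\id-\Ad_x\sigma)X\in\fraku$. Decompose $X$ along the root spaces plus $\frakt$; projecting onto the lowest-root components and onto $\frakt/\frakt^\sigma$ and arguing inductively on height (as in Steinberg's original argument for $\sigma=\id$, which Lemma~\ref{L:Steinberg lemma} already imports in the parallel setting of $\sigma$-twisted Coxeter elements) forces the non-$(\frakt^\sigma+\fraku)$ components of $X$ to vanish. Here the key input is again Lemma~\ref{L:type BC+ root}, to see that $\Ad_x\sigma$ acts on $\fraku/[\fraku,\fraku]$, modulo the $\sigma$-action, the way a Coxeter-type element does on the simple root spaces, so no nonzero negative-root or extra toral vector survives.

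\textbf{Parts (2) and (3).} For (2), let $g\in G$ with $gx\sigma(g)^{-1}\in U$. Write $g=u_1 t \dot w u_2$ via the Bruhat decomposition $G=\bigsqcup_w BwB$ relative to $(B,T)$, with $w\in W$, $u_i\in U$, $t\in T$. Pushing $gx\sigma(g)^{-1}\in U$ through the $B$-$B$ double cosets and using that $x$ and the ambient element both lie in $U$, one shows $w$ must be trivial: otherwise the product lands in a cell $BwB$ with $w\ne e$, contradiction. Hence $g\in B$, and in particular $I_x\subseteq B$. Combining with Lemma~\ref{L:Steinberg lemma} applied to $\dot w$ a $\sigma$-twisted Coxeter element — which $x$ realizes ``unipotently'' — bounds $\dim I_x$ from above by $\dim\frakz^\sigma + r$; but a sharper analysis inside $B$ using Part (1) (the fiber of $\id-\Ad_x\sigma$ over $\fraku$ computed there) pins $\dim\Lie I_x$ down to exactly $r=\dim T^\sigma$ — essentially because $\Lie I_x \subseteq \frakb$ maps into the kernel of $(\id-\Ad_x\sigma)$ on $\frakt^\sigma+\fraku$, which by (1) is $\frakt^\sigma$, of dimension $r$. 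That gives $\dim I_x\le r$; since $\dim I_g\ge r$ always (Remark~\ref{R:general reg element}(1)), equality holds and $x\in U^{\sigma\text{-reg}}$, so $U^{\sigma\text{-reg}}\ne\varnothing$.

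\textbf{Part (4).} The $T^\sigma\cdot U$-orbit of $x$ under $c_\sigma$ is open in $U$ (its tangent space at $x$ is the image of $\mathfrak t^\sigma+\fraku$ under $\id-\Ad_x\sigma$, which is all of $\fraku$ by Part (1), plus one checks the orbit map is dominant onto $U$), hence it contains a Zariski-open $T^\sigma$-stable set; by the $T^\sigma$-weight bookkeeping on $W=U/V$ this open orbit is exactly $\mathring U=q_U^{-1}(\mathring W)$. Conversely $U^{\sigma\text{-reg}}$ is open (semicontinuity) and $T^\sigma\cdot U$-stable, and it meets $\mathring U$ (since $x\in\mathring U\cap U^{\sigma\text{-reg}}$ by Part (3)); because $\mathring U$ is a single orbit, $\mathring U\subseteq U^{\sigma\text{-reg}}$, and a dimension count on the complement of $\mathring U$ — every element there has strictly larger centralizer dimension, being a degeneration — gives $U^{\sigma\text{-reg}}\subseteq\mathring U$. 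So $\mathring U=U^{\sigma\text{-reg}}$ is that single $T^\sigma\cdot U$-orbit, and the remaining assertions ($I_u\subseteq B$ and $(\id-\Ad_u\sigma)^{-1}(\fraku)=\frakt^\sigma+\fraku$ for all $u\in U^{\sigma\text{-reg}}$) follow by transporting the corresponding statements for $x$ from Parts (1)–(2) along the $c_\sigma$-action of $T^\sigma\cdot U$, which conjugates $B$ to $B$ and $\fraku$ to $\fraku$.

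\textbf{Main obstacle.} I expect the technical heart to be Part (1) in positive characteristic: controlling $\ker(\id-\Ad_x\sigma)$ and its preimage of $\fraku$ requires careful tracking of the structure constants $[E_\beta,E_{\sigma^{|\mathcal O^+|}\beta}]$ and the signs from Lemma~\ref{L:type BC+ root}, especially for orbits of type $\sfB\sfC$ where the relevant rank-one group is $\mathrm{SL}_3$ rather than $\mathrm{SL}_2$, and where $\frakz^\sigma$ may be nonzero (Remark~\ref{R:general reg element}(2)). Once Part (1) is secured, Parts (2)–(4) are formal consequences via Bruhat decomposition, semicontinuity, and the orbit argument, closely paralleling Steinberg's treatment of the untwisted case.
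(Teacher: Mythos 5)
Your outline captures the right ingredients---Lemma~\ref{L:Steinberg lemma}, Bruhat decomposition, an open-orbit argument---but as written it has gaps that block the proof.

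The most serious one is in Part~(1). You propose to show $(\id-\Ad_x\sigma)^{-1}(\fraku)\subseteq\frakt^\sigma+\fraku$ by decomposing a vector along root spaces and ``arguing inductively on height.'' But $\Ad_x$ raises height while $\sigma$ permutes root spaces within a height, so no graded induction closes without further input, and none is given. The paper instead proves a dimension statement first (Lemma~\ref{L: aux for reg}): $\dim\fraku_x=|\Delta^\sigma|$ and $\frakg_x=\frakb_x\subset\frakt^\sigma+\fraku$, from which Part~(1) follows by linear algebra. To get that lemma one conjugates $x$ by a lift $\dot w_0\in N_0$ of the longest Weyl element, so that $\dot w_0 x\dot w_0$ lies in a cell $B\dot wB$ with $w$ a $\sigma$-twisted Coxeter element---this is precisely the hypothesis under which Lemma~\ref{L:Steinberg lemma} gives the upper bound $\dim\frakg_x\le\dim\frakz^\sigma+r$---and then balances it against the lower bound $\dim(I_x\cap U)\ge\dim W$ coming from the $U$-action on fibers of $q_U$. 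You cite Lemma~\ref{L:Steinberg lemma} but never set up the $\dot w_0$-conjugation that brings $x$ into the form to which the lemma applies; without it the crucial upper bound is unavailable.

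Part~(3) has a false step and a circularity. You claim the kernel of $\id-\Ad_x\sigma$ on $\frakt^\sigma+\fraku$ equals $\frakt^\sigma$; this is wrong, since Part~(1) shows the map is \emph{injective} on $\frakt^\sigma$ (its image generates $\fraku/\frakv$), so $\frakt^\sigma$ meets the kernel only in $0$. The kernel does have dimension $r$, so $\dim\Lie I_x\le r$ still follows, but you then close by invoking ``$\dim I_g\ge r$ always'' from Remark~\ref{R:general reg element}(1)---which the paper explicitly defers as a \emph{consequence} of the present discussion---so this is circular. The paper instead computes $\dim I_x=\dim T^\sigma$ directly: for $b=tu\in I_x\subset B$, projecting to $\mathbf T$ gives $\sigma(t)=t$, then $t^{-1}xtx^{-1}\in V$ forces $t\in(\bigcap_{\alpha\in\Delta}\ker\alpha)^\sigma$ (dimension $\dim T^\sigma-|\Delta^\sigma|$) and $u\in I_x\cap U$ (dimension $|\Delta^\sigma|$). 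Finally, your Part~(2) is too compressed: because of the $\sigma$-twist, $gx\sigma(g)^{-1}$ a priori lies in $B\dot wB\sigma(\dot w)^{-1}B$, which meets the identity cell for many $w\neq e$, so one cannot simply read off $w=e$; the paper first extracts $n=\sigma(n)$ by Bruhat uniqueness from $n(u_2x\sigma(u_2)^{-1})\sigma(n)^{-1}=u_1^{-1}u\sigma(u_1)$, and only then uses $q_U(u_2x\sigma(u_2)^{-1})\in\mathring W$ to force $n\in T$. Part~(4) is fine as a sketch and parallels the paper's, though it inherits whatever is missing above.
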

\begin{proof}
Write $\frakg_x$ for the Lie algebras of $I_x$, and similarly $\frakb_x$ and $\fraku_x$ for the Lie algebra of $I_x\cap B$ and $I_x\cap U$. We have the following lemma. 
\begin{lem}
\label{L: aux for reg}
We have $\dim \fraku_x=|\Delta^\sigma|$ and $\frakg_x=\frakb_x\subset\frakt^\sigma+\fraku$. 
\end{lem}
\begin{proof}The inclusion $\frakb_x\subset\frakt^\sigma+\fraku$ is clear. We prove the two equalities in the statement.
We first assume that $G$ is semisimple and simply-connected. Then $r = \dim T^\sigma = |\Delta^\sigma|$.
Let $w_0$ be the longest element in the Weyl group and $\dot{w}_0\in N_0$ a representative of $w_0$.  Then $\dot{w}_0x\dot{w}_0\in B\dot{w}B$ for some $\sigma$-twisted Coxeter element $w$. 
Then as in \cite[Theorem 4.6]{St2}, 
\begin{equation*}
\label{E:bg inequality}
\dim \frakz^\sigma+\dim \frak u_x\leq \dim \frakb_x\leq \dim  \frakg_x\leq \dim  \frakz^\sigma+r.
\end{equation*}
Therefore
$\dim \fraku_x\leq r$ and $\dim (I_x\cap U)\leq r$. On the other hand, $U$ acts on fibers of $q_U$ via $\sigma$-conjugation. Therefore, $\dim(I_x\cap U)\geq \dim W=r$. 
Putting them together gives $\dim \fraku_x=\dim (I_x\cap U)=r$ and $\frakb_x=\frakg_x$.  Now for a general reductive group $G$, let $G_\s$ be the simply-connected cover of its derived group. Then $\sigma$ lifts to a unique automorphism of $G_\s$ (e.g. see \cite[\S 9.16]{St}). Since the central isogeny $G_\s\to G$ induces an isomorphism on unipotent subgroups, we have $\dim(I_x \cap U) = |\Delta^\sigma|$. In addition, since the kernels and the cokernels of the two maps $\frakb_{\s}\to \frakb$ and $\frakg_{\s}\to \frakg$ are equal, and since $(\frakb_{\s})_x = (\frakg_{\s})_x$, $\frakb_x = \frakg_x$ also holds for $G$. 
\end{proof}
Now we prove the proposition.
(1) First, note that $(\id-\Ad_x\sigma)(\fraku)\subset \frakv$, where $\frakv$ is the Lie algebra of $V$. But since $\dim \fraku_x=\dim \fraku-\dim \frakv$, we have $(\id-\Ad_x\sigma)(\fraku)=\frakv$. 
Next, if $G$ is of adjoint type, then the composition of maps 
$$\frakt^\sigma\xrightarrow{\id-\Ad_x\sigma} \fraku\to \fraku/\frakv$$ 
is an isomorphism. It follows that in this case $(\id-\Ad_x\sigma)(\frakt^\sigma+\fraku)=\fraku$. Since $\frakg_x=\frakb_x\subset \frakt^\sigma+\fraku$, we see that $(\id-\Ad_x\sigma)^{-1}(\fraku)=\frakt^\sigma+\fraku$. 
In general, let $G_\ad$ be the adjoint quotient of $G$. Since the central isogeny induces an isomorphism $\fraku\cong \fraku_\ad$, it is easy to see that $(\id-\Ad_x\sigma)^{-1}(\fraku)\subset\frakb$. In addition, if $Y\in\frakb$ such that $Y-\Ad_x\sigma(Y)\in \fraku$, then $(Y \mod \fraku)\in \frakt^\sigma$. Therefore, $(\id-\Ad_x\sigma)^{-1}(\fraku)=\frakt^\sigma+\fraku$ holds in general.

(2) Write $g=u_1nu_2$ under the Bruhat decomposition for some $n\in N, u_1,u_2\in U$. Then $nu_2x\sigma(u_2)^{-1}=u_1^{-1}u\sigma(u_1)\sigma(n)$ for some $u\in U$. It follows that $n=\sigma(n)$ by the uniqueness of the Bruhat decomposition. Then $nu_2x\sigma(u_2)^{-1}n^{-1}= u_1^{-1}u\sigma(u_1)\in U$.  Since $q_U(u_2x\sigma(u_2)^{-1})=q_U(x)\in \mathring{W}$,  we must have $n \in T$ and therefore $g\in B$. It follows that $I_x^\red\subset  B$. Together with $\frakg_x = \frakb_x$, we deduce that $I_x \subset B$.

(3) Let $b\in I_x\subset B$. We write $b=tu$ according to the decomposition $B=TU$. Then
$tux = x\sigma(t) \sigma (u)$. By projecting along $B\to \mathbf T$, we see that $\sigma(t)=t$, and thus $t^{-1}xtx^{-1}=ux\sigma(u)^{-1}x^{-1}\in V$. This shows that $t\in (\cap_{\al\in\Delta} \ker\al)^\sigma$, which has dimension $\dim T^\sigma - |\Delta^\sigma|$, and that $u\in I_x\cap U$. 
Therefore, 
$$\dim I_x= (\dim T^\sigma - |\Delta^\sigma|) + \dim( I_x \cap U )= \dim T^\sigma,
$$ i.e. $x$ is $\sigma$-regular. 

(4) Note that $q_U^{-1}(x)\subset U$ is a single orbit under the $\sigma$-conjugation action of $U$ on itself. Indeed, since $U$ is unipotent, the orbit through $x$ is a closed subset of $q_U^{-1}(x)$. On the other hand, since $\dim(I_x\cap U)=\dim W$, the dimension of this orbit is equal to the dimension of $q_U^{-1}(x)$. Therefore, $q_U^{-1}(x)$ is an orbit.
Now, let $u\in \mathring{U}$. After taking a conjugation by elements in $T^\sigma$, we may assume that $q_U(x)=q_U(u)$, and therefore $x$ and $u$ are $\sigma$-conjugate by an element in $U$. This shows that $\mathring{U}\subset U^{\sigma\textrm{-}\mathrm{reg}}$, and $\mathring{U}$ is a single $T^\sigma\cdot U$-orbit. 
On the other hand, let $u\in U^{\sigma\textrm{-}\mathrm{reg}}$. Since $\dim I_u= \dim T^\sigma$, the $T^\sigma\cdot U$-orbit through $u$ is $\dim U$-dimensional, and therefore must meet $\mathring{U}$. This implies that $\mathring{U}= U^{\sigma\textrm{-}\mathrm{reg}}$. The last statement follows from Part (1) and (2).
\end{proof}

\begin{lem}
\label{L: regular springer fiber}
An element $u\in U$ is $\sigma$-regular if and only if the set $\mB_u:=\{gB\in G/B\mid g^{-1}u\sigma(g)\in B\}$ is finite.
\end{lem}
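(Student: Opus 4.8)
The plan is to relate the twisted Springer fiber $\mathcal{B}_u$ to the centralizer $I_u$ and then invoke the characterization of $\sigma$-regularity through $\dim I_u$. First I would observe that $\mathcal{B}_u$ is a closed subscheme of $G/B$ (cut out by the condition $g^{-1}u\sigma(g)\in B$, which is a closed condition since $B$ is closed in $G$), and that it is acted on by $I_u$ via left multiplication: if $h\in I_u$, i.e. $hu\sigma(h)^{-1}=u$, and $g^{-1}u\sigma(g)\in B$, then $(hg)^{-1}u\sigma(hg) = g^{-1}(h^{-1}u\sigma(h))\sigma(g) = g^{-1}u\sigma(g)\in B$. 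So $I_u$ acts on $\mathcal{B}_u$, and the stabilizer of a point $gB$ is $I_u\cap gB g^{-1}$, which is contained in $gBg^{-1}$ and hence has dimension at most $\dim B$.

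For the forward direction, suppose $u$ is $\sigma$-regular, so $\dim I_u = r = \dim T^\sigma$. I want to conclude $\mathcal{B}_u$ is finite, equivalently $0$-dimensional. The key input is Proposition~\ref{L: unipotent regular}(4), which gives $I_u\subset B$ for $u\in U^{\sigma\text{-reg}}$ — but here $u$ is an arbitrary $\sigma$-regular unipotent element, and by part (4) the $\sigma$-regular unipotent locus $U^{\sigma\text{-reg}}$ is a single $T^\sigma\cdot U$-orbit, so after replacing $u$ by a $\sigma$-conjugate (which is harmless, since $\sigma$-conjugation by $g_0$ induces an isomorphism $\mathcal{B}_u \xrightarrow{\sim} \mathcal{B}_{c_\sigma(g_0)(u)}$ via $gB\mapsto g_0 g B$), I may assume $u$ lies in a fixed convenient position. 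Then each component of $\mathcal{B}_u$, being an $I_u$-orbit closure intersected appropriately, has dimension bounded by $\dim I_u - \dim(I_u\cap gBg^{-1})$; since $I_u\subset B$ has dimension $r$ and the generic stabilizer for the $I_u$-action must contain $I_u\cap T^\sigma\cdot U$, which by the analysis in Proposition~\ref{L: unipotent regular}(3)--(4) already accounts for all of $\dim I_u$, the orbits are finite. More cleanly: a standard dimension count shows $\dim\mathcal{B}_u = \dim\{(g,x)\in G\times U : g^{-1}u\sigma(g) = x\}/\!\!\sim$ can be computed via the fibration over $U$, giving $\dim\mathcal{B}_u \le \dim U - \dim(\text{$\sigma$-conjugacy orbit of }u\text{ in }U)$, and $\sigma$-regularity of $u$ forces this orbit to be open in $U$, so $\dim\mathcal{B}_u = 0$; finiteness then follows since $\mathcal{B}_u$ is of finite type.

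For the converse, suppose $\mathcal{B}_u$ is finite; I must show $\dim I_u = r$. Since always $\dim I_u \ge r$ (Remark~\ref{R:general reg element}(1)), it suffices to rule out $\dim I_u > r$. If $\dim I_u > r$, then by the $I_u$-action on the nonempty (it contains $eB$, since $u\in U\subset B$) finite set $\mathcal{B}_u$, the identity component $I_u^\circ$ acts trivially, so $I_u^\circ \subset \bigcap_{gB\in\mathcal{B}_u} gBg^{-1}$; in particular $I_u^\circ \subset B$, hence $I_u^\circ \subset B\cap \sigma(\text{stuff})$. Then the argument of Proposition~\ref{L: unipotent regular}, Lemma~\ref{L: aux for reg}, applied to $u$ in place of $x$ — or rather, the bound $\dim I_u \le \dim\mathfrak{z}^\sigma + r$ coming from Lemma~\ref{L:Steinberg lemma} via conjugating $u$ to a position involving a $\sigma$-twisted Coxeter element — combined with $I_u^\circ \subset B$ forces $u$ to actually lie in $U^{\sigma\text{-reg}}$: the point is that for non-regular unipotent $u$, the fiber $\mathcal{B}_u$ genuinely has positive dimension, which one sees by exhibiting a positive-dimensional family, e.g. using that $u$ lies in a proper parabolic's unipotent radical after conjugation and that the corresponding partial flag variety fibers contribute dimension. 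The honest way to organize this: combine Lemma~\ref{L: regular springer fiber}'s geometry with the fibration $\{g : g^{-1}u\sigma(g)\in B\} \to \mathcal{B}_u$ with fibers $B$, giving $\dim I_u + \dim(\text{orbit of }u) = \dim G$ when orbits are considered correctly, so $\dim\mathcal{B}_u = \dim G - \dim B - \dim(\sigma\text{-orbit of }u) = \dim U - \dim(\sigma\text{-orbit of }u \text{ in } U)$ once one checks the $\sigma$-conjugacy orbit of $u$ lies in $U$ up to the torus part; this is $0$ iff the orbit is dense in $U$ iff $u\in U^{\sigma\text{-reg}}$.

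The main obstacle I anticipate is making the dimension bookkeeping rigorous in positive characteristic, where $\dim I_u$ need not equal $\dim \ker(\mathrm{id} - \mathrm{Ad}_u\sigma)$ (Remark~\ref{R:general reg element}(2)) and where $\mathcal{B}_u$ could a priori be non-reduced; the cleanest route is probably to avoid Lie-algebra computations entirely and argue via the orbit-stabilizer fibration $G^{g^{-1}u\sigma(g)\in B} \to G/B$, carefully identifying the image as $\mathcal{B}_u$ and the generic fiber dimension, then using the known structure of $U^{\sigma\text{-reg}}$ from Proposition~\ref{L: unipotent regular} to pin down exactly when the relevant $\sigma$-conjugacy orbit in $U$ is dense. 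I would also want to double-check the base-change compatibility with the central isogeny $G_{\mathrm{sc}}\to G$, since the unipotent parts are isomorphic but $\mathcal{B}_u$ is computed upstairs and downstairs on possibly different flag varieties — though since $G\to G_{\mathrm{ad}}$ induces an isomorphism on flag varieties, this should cause no trouble.
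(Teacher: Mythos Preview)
Your proposal circles around the right ideas but is much more convoluted than needed, and the dimension-counting arguments you lean on are not fully justified. The paper's proof is essentially two lines, and the key insight you mention only in passing (``using that $u$ lies in a proper parabolic's unipotent radical after conjugation and that the corresponding partial flag variety fibers contribute dimension'') is \emph{exactly} the argument for the converse.

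For the forward direction, the paper simply observes that Proposition~\ref{L: unipotent regular} already gives $\mathcal{B}_u=\{eB\}$ for $u\in U^{\sigma\text{-reg}}$: indeed, if $g^{-1}u\sigma(g)\in B$, the Bruhat-decomposition argument of part (2) (with $B$ in place of $U$) shows the Weyl component $w$ of $g$ is $\sigma$-fixed, whence $u'=u_2u\sigma(u_2)^{-1}\in\mathring U$ lies in $U\cap n^{-1}Bn$; if $w\neq e$, some simple $\sigma$-orbit $\mathcal{O}$ satisfies $w\gamma<0$ for all $\gamma\in\mathcal{O}$, forcing the $\mathcal{O}$-component of $q_U(u')$ to vanish, contradicting $u'\in\mathring U$. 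No dimension count is needed.

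For the converse, the paper does not go through $I_u^\circ$ at all (your $I_u^\circ$-acts-trivially argument does not lead to a contradiction). Instead: if $u\notin U^{\sigma\text{-reg}}=\mathring U$, then $q_U(u)$ has a zero $\mathcal{O}$-component for some $\sigma$-orbit $\mathcal{O}$ of simple roots, and after $\sigma$-conjugation by an element of $U$ one may assume $u\in U^{\mathcal{O}}$, the unipotent radical of the standard parabolic $P_{\mathcal{O}}=U^{\mathcal{O}}G_{\mathcal{O}}$. Since $G_{\mathcal{O}}$ normalizes $U^{\mathcal{O}}$, for any $g\in G_{\mathcal{O}}$ with $gB_{\mathcal{O}}\in (G_{\mathcal{O}}/B_{\mathcal{O}})^\sigma$ one has $g^{-1}u\sigma(g)=(g^{-1}ug)\cdot(g^{-1}\sigma(g))\in U^{\mathcal{O}}\cdot B_{\mathcal{O}}\subset B$. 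Thus $(G_{\mathcal{O}}/B_{\mathcal{O}})^\sigma\subset\mathcal{B}_u$, and this fixed-point set is positive-dimensional (by Lemma~\ref{L:rankonegroup}, $G_{\mathcal{O}}/B_{\mathcal{O}}$ is a product of $\mathbb{P}^1$'s or full flag varieties of $\SL_3$ with $\sigma$ acting by permutation plus possibly an involution, so its $\sigma$-fixed locus contains at least a $\mathbb{P}^1$).

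Your proposed formula $\dim\mathcal{B}_u=\dim U-\dim(\text{$\sigma$-orbit of $u$ in $U$})$ would require carefully identifying $B\cap\mathrm{Orb}_G(u)$ with a $B$-orbit in $U$, which is extra work and not needed here. Drop the dimension bookkeeping and the $I_u^\circ$ detour; the direct construction of $(G_{\mathcal{O}}/B_{\mathcal{O}})^\sigma$ is both shorter and characteristic-free.
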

\begin{proof} Proposition~\ref{L: unipotent regular} implies that if $u\in U^{\sigma\textrm{-}\mathrm{reg}}$, then $\mB_u$ consists of only one element. Now, let $u\in U- U^{\sigma\textrm{-}\mathrm{reg}}$. Then after a $\sigma$-conjugation by an element in $U$, we may assume that there is a $\sigma$-orbit $\mO$ of simple roots, such that $u\in U^\mO$, where $U^\mO$ is the subgroup of $U$ introduced before Lemma~\ref{L:rankonegroup}. Then $\mB_u$ contains a positive dimensional subvariety $(G_\mO/B_\mO)^\sigma$. The lemma is proved.
\end{proof}

Next, we study $T^{\sigma\textrm{-reg}}$, and then $B^{\sigma\textrm{-reg}}$. Recall the map $q_B$ as in \eqref{E: btoaquot} and Defintion~\ref{D: divisor for roots}.
For $t\in T(k)$, let $\Phi(G,T)_t\subset\Phi(G,T)$ be the smallest sub-root system containing those $\sigma$-orbits $\mO$ such that $q_B(t\sigma)\in \mathbf A_\mO(k)$. We allow $\Phi(G,T)_t=\emptyset$ if $t\in\mathring{T}$.
Let $G_t\subset G$ denote the corresponding reductive subgroup, and $B_t=G_t\cap B$ a Borel subgroup of $G_t$. Its unipotent radical $U_t$ is the subgroup generated by $\{U_\al\}$, for those positive roots $\al\in \Phi(G,T)_t$. We write $B_t=U_tT$.  
Let $\frakg_t$ denote the Lie algebra of $G_t$.

\begin{lem}
\label{L: root system Phit}
The sub-root system $\Phi(G,T)_t$ is exactly the union of $\sigma$-orbits of roots in $\Phi(G,T)$ of the following three types: 
\begin{enumerate}
\item $\mO$, if $\mO$ is of type $\sfA$ and $e^{\al_\mO}(t)=1$; 
\item  $\mO^-\cup \mO^+$, if $\mO^\pm$ is of type $\sfB\sfC^\pm$ and $e^{\al_\mO}(t)=1$; 
\item $\mO$, if $\mO$ is of type $\sfB\sfC^+$ and $e^{\al_\mO}(t)=-1$. 
\end{enumerate}
\end{lem}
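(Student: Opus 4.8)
The plan is to unravel the definition of $\Phi(G,T)_t$, match it with the stated answer up to a ``redundancy'' coming from the $\sfB\sfC$-pairs, and then verify that the stated answer is a closed sub-root system. Write $\Psi_t\subseteq\Phi(G,T)$ for the union of the $\sigma$-orbits of the three listed types, and recall that a sub-root system of $\Phi(G,T)$ is symmetric, so the smallest one containing a subset $S\subseteq\Phi(G,T)$ is $\ZZ S\cap\Phi(G,T)$. First, by Definition~\ref{D: divisor for roots} the condition $q_B(t\sigma)\in\mathbf A_\calO$ holds precisely when $\calO$ is of type $\sfA$ or $\sfB\sfC^-$ with $e^{\al_\calO}(t)=1$, or of type $\sfB\sfC^+$ with $e^{\al_\calO}(t)=-1$; let $S_t$ be the union of these orbits, so that $\Phi(G,T)_t=\ZZ S_t\cap\Phi(G,T)$. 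If $\calO^-$ is of type $\sfB\sfC^-$ with $e^{\al_\calO}(t)=1$, then by Definition~\ref{D: type of roots} every root of $\calO^+$ has the form $\al+\sigma^{|\calO^+|}\al$ with $\al\in\calO^-$, so $\calO^+\subseteq\ZZ\calO^-$, and $e^{\al_{\calO^+}}(t)=e^{\al_{\calO^-}}(t)=1$; hence $S_t\subseteq\Psi_t\subseteq\ZZ S_t$ and $\Phi(G,T)_t=\ZZ S_t\cap\Phi(G,T)=\ZZ\Psi_t\cap\Phi(G,T)\supseteq\Psi_t$.

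It remains to prove $\ZZ\Psi_t\cap\Phi(G,T)\subseteq\Psi_t$, i.e. that $\Psi_t$ is closed; note $\Psi_t$ is $\sigma$-stable and symmetric (since $e^{-\al_\calO}(t)=e^{\al_\calO}(t)^{-1}$ and the three types are preserved by $\gamma\mapsto-\gamma$), so it suffices to show that $\gamma,\delta\in\Psi_t$ with $\gamma+\delta\in\Phi(G,T)$ forces $\gamma+\delta\in\Psi_t$. By a routine reduction ($\sigma$ lifts uniquely to the simply connected cover $G_\s$ of the derived group, \cite[\S9.16]{St}, and a central isogeny carries over the root spaces and the functions $e^{\al_\calO}$; the formation of $\Phi(G,T)_t$, $\Psi_t$ and $\mathbf A_\calO$ is compatible with products and, via the tensor-induction argument of Lemma~\ref{L:reduce from G to G0}, with grouping together the simple factors cyclically permuted by $\sigma$) we may assume $G$ is simple. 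If $\sigma=\id$, then $\Psi_t=\{\gamma\in\Phi(G,T)\mid e^\gamma(t)=1\}$ and closedness is immediate from the homomorphism $\gamma\mapsto e^\gamma(t)$. If $\sigma\ne\id$, then $\Phi(G,T)$ is of type $\mathsf A_n$, $\mathsf D_n$ or $\mathsf E_6$ and one checks the inclusion by a direct computation with the explicit root data of Examples~\ref{Ex:folding A2n}, \ref{Ex:folding A2n+1}, \ref{E: even Spin} (and the analogous descriptions for $\mathsf E_6$ and $\on{Spin}_8$): the local models of Lemma~\ref{L:rankonegroup} show that inside the subgroup $G_\calO$ (type $\sfA$) or $G_{\calO^-\cup\calO^+}$ (type $\sfB\sfC$)---a product of copies of $\SL_2$, resp. $\SL_3$---the roots of $\Phi(G,T)$ in the $\ZZ$-span of the orbit are exactly $\pm\calO$, resp. $\pm\calO^-\cup\pm\calO^+$, while a lone $\sfB\sfC^+$-orbit contributes only $\pm\calO^+$.

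The main obstacle is the interaction between two orbits of $\Psi_t$ that do not sit in ``direct-sum position'' inside $\Phi(G,T)$ (e.g. in $\SL_{2r+1}$ the $\SL_3$-subgroups attached to distinct type-$\sfB\sfC$ roots of $\Phi(G_\sigma,A)$ overlap in the central coordinate), which forces the $\ZZ$-span of $\Psi_t$ to contain ``mixed'' roots; one must check that every such root still lies in an orbit of one of the three listed types. Here the folded group $G_\sigma$ of Lemma~\ref{L:Phi'(G,A) is a root system} provides convenient bookkeeping---under the correspondence of Definition~\ref{D: type of roots} between $\sigma$-orbits of $\Phi(G,T)$ and roots of $\Phi(G_\sigma,A)$, the $e^{\al_\calO}$ become characters of $A$ and the divisors $\mathbf A_{\calO^-}$, $\mathbf A_{\calO^+}$ are the two components of a type-$\sfB\sfC$ reflection wall (Lemma~\ref{L:branched locus})---but the verification still amounts to a case check along the table in Lemma~\ref{L:Phi'(G,A) is a root system}, with $\cha k\ne2$ used only to keep $\mathbf A_{\calO^-}$ and $\mathbf A_{\calO^+}$ disjoint.
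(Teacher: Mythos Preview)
Your overall plan coincides with the paper's: both arguments reduce the lemma to showing that the set $\Psi_t$ of orbits listed in (1)--(3) is itself a sub-root system of $\Phi(G,T)$ (and contains the generating set $S_t$), so that $\Phi(G,T)_t\subseteq\Psi_t$; the reverse inclusion is then essentially immediate.  Two points deserve comment.

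First, your framing via ``the smallest sub-root system containing $S$ equals $\ZZ S\cap\Phi(G,T)$'' is neither justified nor true in general (for instance, the short roots in $G_2$ form a sub-root system but are not additively closed).  You do not need this identity.  For $\Psi_t\subseteq\Phi(G,T)_t$ you only need that each $\calO^+$ in case~(2) lies in the \emph{reflection} closure of the corresponding $\calO^-$; this is immediate since for $\al\in\calO^-$ one has $\langle\sigma^{|\calO^+|}\al,\al^\vee\rangle=-1$ and hence $s_\al(\sigma^{|\calO^+|}\al)=\al+\sigma^{|\calO^+|}\al\in\calO^+$.  For $\Phi(G,T)_t\subseteq\Psi_t$ you need $\Psi_t$ closed under reflections; since $\Phi(G,T)$ is simply laced (it is a product of types $\sfA$, $\sfD$, $\sfE$ whenever $\sigma\neq\id$), additive closure together with symmetry does give reflection closure.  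So the logical skeleton survives, but the reason you give is wrong.

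Second, the heart of the lemma---closure of $\Psi_t$---is in your write-up only a promissory ``direct computation'' and ``case check along the table''.  The paper's argument is more concrete and avoids a full case-by-case verification: after reducing to a single irreducible factor $(\Phi,\tau)$, it distinguishes the $A_{2n}$ case (the only situation with type-$\sfB\sfC$ orbits, which is checked directly) from all other cases, where every orbit is of type~$\sfA$; in the latter case the paper argues that for $\beta=s_{\al_1}(\al_2)$ the orbit sum $\al_\calO$ is an \emph{integer} linear combination of $\al_{\calO_1}$ and $\al_{\calO_2}$, whence $e^{\al_\calO}(t)=1$ follows from $e^{\al_{\calO_1}}(t)=e^{\al_{\calO_2}}(t)=1$.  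Your gesture toward ``bookkeeping via $G_\sigma$'' points in this direction but stops short of producing the linear relation that does the work.  (Note also that the lemma in the paper carries no characteristic restriction, so your remark that $\cha k\ne2$ is used is extraneous here.)
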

\begin{proof}
Indeed, the smallest sub-root system containing those $\sigma$-orbits $\mO$ such that $q_B(t\sigma)\in \mathbf A_\mO(k)$ would necessarily contain the these roots. Therefore, it is enough to show that they indeed form a sub-root system of $\Phi(G,T)$, or equivalently, if $\mO_1,\mO_2$ are two $\sigma$-orbits from the above types, and $\al_i\in\mO_i$, then the $\sigma$-orbit $\mO$ containing $s_{\al_i}(\al_j)$ would also be one of the above types. To check this, write $\beta=s_{\al_i}(\al_j)$, and $\beta_\mO$ the sum over the $\sigma$-orbit of $\beta$. We may assume that $\beta\neq \pm\al_j$ so in particular $\al_i$ and $\al_j$ are in the same irreducible factor of the root system $\Phi(G,T)$. Assume that $\tau=\sigma^r$ fixes this irreducible factor $\Phi$. 
If $(\Phi,\tau)$ is as in Example~\ref{Ex:folding A2n}, one checks directly that $\beta_\mO$ also belongs to one of the above types. If $(\Phi,\tau)$ is not as in Example~\ref{Ex:folding A2n}, then it is readily to see that $\beta_\mO$ is an integral linear combination of $\al_{\mO_1}$ and $\al_{\mO_2}$ and all $\al_{\mO_1}$, $\al_{\mO_2}$ and $\beta_\mO$ are of type (1). The lemma is proven.
\end{proof}

\begin{lem}
\label{L: pinned auto of Gt}
Let $\Delta_t\subset \Phi(G,T)_t$ be the set of simple roots (with respect to $(B_t,T)$). Then
after possible rescaling $x_a$'s, the automorphism $G_t\to G_t, g\mapsto t\sigma(g)t^{-1}$ preserves the pinning $(G_t,B_t,T, \{x_\al\}_{\al\in\Delta_t})$.
\end{lem}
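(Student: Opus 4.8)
I would present this as a short structural reduction followed by one sign computation, roughly as follows.

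Write the automorphism in question as $\phi\colon g\mapsto t\sigma(g)t^{-1}$, and split the statement into a structural part and a normalization part. For the structural part: conjugation by $t$ acts trivially on $\XX^\bullet(T)$, so $\phi$ acts on $\XX^\bullet(T)$, on $\Phi(G,T)$ and on $\Delta$ exactly as $\sigma$ does. By Lemma~\ref{L: root system Phit} the set $\Phi(G,T)_t$ is a union of $\sigma$-orbits, hence $\phi$-stable, and since $\sigma$ preserves $\Phi(G,T)^+$ it follows at once that $\phi$ stabilizes $G_t=\langle T,\ U_\al:\al\in\Phi(G,T)_t\rangle$, $T$, $U_t$, $B_t=U_tT$, and permutes $\Delta_t$. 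So $\phi|_{G_t}$ already preserves every part of the desired pinning except, possibly, the choice of root vectors $x_\al$, $\al\in\Delta_t$.

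For the normalization part I would fix arbitrary isomorphisms $x_\al\colon\bG_a\xrightarrow{\ \sim\ }U_\al$ for $\al\in\Phi(G,T)_t$, write $\sigma\circ x_\al=x_{\sigma\al}\circ[\epsilon_\al]$ and hence $\phi\circ x_\al=x_{\sigma\al}\circ[c_\al]$ with $c_\al=\epsilon_\al\,e^{\sigma\al}(t)\in k^\times$, where $[c]$ denotes multiplication by $c$ on $\bG_a$. Along a $\sigma$-orbit $\mO=\{\al,\sigma\al,\dots,\sigma^{m-1}\al\}\subset\Delta_t$ with $m=|\mO|$ this yields
\[
\phi^{m}\circ x_\al=x_\al\circ\Bigl[\,\prod_{i=0}^{m-1}c_{\sigma^i\al}\,\Bigr],\qquad
\prod_{i=0}^{m-1}c_{\sigma^i\al}=\Bigl(\,\prod_{i=0}^{m-1}\epsilon_{\sigma^i\al}\,\Bigr)\cdot e^{\al_\mO}(t),
\]
using $\al_\mO=\sum_{i=0}^{m-1}\sigma^i\al$. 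A rescaling of the $x_\al$'s making $\phi$ pinning-preserving exists if and only if $\prod_{i=0}^{m-1}c_{\sigma^i\al}=1$ on every such orbit, i.e.\ if and only if $\phi^{m}$ acts trivially on $U_\al$; and if so, I would just pick one $\al$ in each orbit, keep $x_\al$, and set $x'_{\sigma^i\al}:=\phi^{i}\circ x_\al$ for $0<i<m$, which is then well defined and satisfies $\phi\circ x'_\be=x'_{\sigma\be}$ for all $\be\in\Delta_t$ by construction.

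So the whole proof reduces to checking $\prod_{i=0}^{m-1}c_{\sigma^i\al}=1$. The factor $\prod_i\epsilon_{\sigma^i\al}$ is the scalar by which $\sigma^{m}$ acts on $U_\al$ (hence independent of the chosen $x_\al$), so by Lemma~\ref{L:type BC+ root} it is $+1$ when $\mO$ has type $\sfA$ or $\sfB\sfC^-$ and $-1$ when $\mO$ has type $\sfB\sfC^+$. The factor $e^{\al_\mO}(t)$ is pinned down by the hypothesis $\al\in\Delta_t$, which forces $\mO\subset\Phi(G,T)_t$: by Lemma~\ref{L: root system Phit} one gets $e^{\al_\mO}(t)=1$ if $\mO$ has type $\sfA$ or $\sfB\sfC^-$ (recall $\al_{\mO^-}=\al_{\mO^+}$ in the $\sfB\sfC$ case), while if $\mO$ has type $\sfB\sfC^+$ I would show it must enter $\Phi(G,T)_t$ through case~(3) of that lemma --- in case~(2) every $\gamma\in\mO^+$ equals $\be+\sigma^{|\mO^+|}\be$ with both $\be,\sigma^{|\mO^+|}\be\in\Phi(G,T)_t^+$ by Definition~\ref{D: type of roots}, so $\gamma$ is not simple there --- whence $e^{\al_\mO}(t)=-1$. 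In every case the two $\pm1$'s cancel, so $\prod_{i=0}^{m-1}c_{\sigma^i\al}=1$, as needed. The main (indeed essentially only) obstacle is exactly this last sign bookkeeping: one must match the $\pm1$ of Lemma~\ref{L:type BC+ root} against the $\pm1$ built into the divisors $\mathbf A_\mO$ in Definition~\ref{D: divisor for roots} --- and it is precisely this compatibility that explains why $\mathbf A_\mO$ is cut out by $e^{\al_\mO}=-1$ rather than $e^{\al_\mO}=1$ for orbits of type $\sfB\sfC^+$; the accompanying claim about which $\sfB\sfC^+$-orbits can actually supply a simple root of $\Phi(G,T)_t$ needs only a small case inspection.
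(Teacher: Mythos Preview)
Your proof is correct and follows the same approach as the paper's, just with the details spelled out: the paper's proof asserts without argument that $e^{\al_\mO}(t)=1$ for $\mO\subset\Delta_t$ of type $\sfA$ or $\sfB\sfC^-$ and $e^{\al_\mO}(t)=-1$ for type $\sfB\sfC^+$, and then invokes Lemma~\ref{L:rankonegroup} (which the paper notes is equivalent to the Lemma~\ref{L:type BC+ root} you use). Your argument fills in exactly what the paper leaves implicit, including the observation that a $\sfB\sfC^+$-orbit contained in $\Delta_t$ must arise from case~(3) of Lemma~\ref{L: root system Phit} because in case~(2) each $\gamma\in\mO^+$ decomposes as a sum of two positive roots already in $\Phi(G,T)_t$.
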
 
\begin{proof}Note that if $\mO\subset \Delta_t$ is a $\sigma$-orbit, then $e^{\al_\mO}(t)=1$ if $\mO$ is of type $\sfA$ and $\sfB\sfC^-$, and $e^{\al_\mO}(t)=-1$ if $\mO$ is of type $\sfB\sfC^+$. Then the lemma follows from Lemma~\ref{L:rankonegroup}. 
\end{proof}

\begin{lem}
\label{L: Jordan decom}
Let $b=ut\in U_tT$. Then $\id-\Ad_b\sigma: \frakg/\frakg_t\to\frakg/\frakg_t$ is an isomorphism. In particular,
\begin{equation}
\label{E: ker in gt}
\ker(\id-\Ad_b\sigma:\frakg\to\frakg)=\ker(\id-\Ad_b\sigma: \frakg_t\to\frakg_t),
\end{equation}
and $\dim I_b= \dim (I_b\cap G_t)$.
\end{lem}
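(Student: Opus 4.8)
The statement claims that $\id - \Ad_b\sigma$ acts invertibly on $\frakg/\frakg_t$ when $b = ut \in U_t T$, with the two consequences following formally. First I would reduce to the case $u = 1$, i.e.\ to the action of $\id - \Ad_t \sigma$ on $\frakg/\frakg_t$: since $\Ad_u$ is unipotent and preserves $\frakg_t$ (as $u \in U_t \subset G_t$), and since $\Ad_u$ preserves the root-space filtration of $\frakg/\frakg_t$ by adding strictly higher terms, the map $\id - \Ad_{ut}\sigma = \id - \Ad_u \Ad_t \sigma$ is invertible on $\frakg/\frakg_t$ if and only if $\id - \Ad_t\sigma$ is, by a triangularity argument on the associated graded. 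More precisely, order $\Phi(G,T) \setminus \Phi(G,T)_t$ so that $U_t$-conjugation is upper-triangular on the corresponding sum of root spaces (plus the part of $\frakt$ not in $\frakt^\sigma$, handled separately), and observe that on each graded piece $\Ad_u$ acts as the identity, so $\id - \Ad_{ut}\sigma$ and $\id - \Ad_t\sigma$ have the same (invertible) behavior there.

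**Main computation.** So the crux is: $\id - \Ad_t\sigma$ is invertible on $\frakg/\frakg_t$. Decompose $\frakg/\frakg_t$ into $\sigma$-stable pieces indexed by $\sigma$-orbits $\mO \subset \Phi(G,T) \setminus \Phi(G,T)_t$, together with a torus part $\frakt/\frakt^\sigma$ (which lies in $\frakg_t$ already only if $\frakt^\sigma \subset \frakg_t$ — in fact $\frakt \subset \frakg_t$, so the torus contributes nothing to $\frakg/\frakg_t$). On the span of $\{E_\gamma : \gamma \in \mO\}$, the operator $\Ad_t\sigma$ cyclically permutes the lines, scaling by the values $e^\gamma(t)$; on its $|\mO|$-fold iterate it acts on each line $\frakg_\gamma$ by the scalar $\prod_{\gamma' \in \mO} e^{\gamma'}(t) \cdot c = e^{\al_\mO}(t) \cdot c$, where $c \in \{\pm 1\}$ is the sign from Lemma~\ref{L:type BC+ root} ($c = 1$ for type $\sfA$ and $\sfB\sfC^-$, $c = -1$ for type $\sfB\sfC^+$). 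Now $\id - \Ad_t\sigma$ is invertible on this block precisely when $1$ is not an eigenvalue of $(\Ad_t\sigma)^{|\mO|}$ on it, i.e.\ when $e^{\al_\mO}(t) \neq c^{-1}$; and by Lemma~\ref{L: root system Phit}, the orbits $\mO$ \emph{not} in $\Phi(G,T)_t$ are exactly those for which $e^{\al_\mO}(t) \neq 1$ (types $\sfA$, $\sfB\sfC^-$) or $e^{\al_\mO}(t) \neq -1$ (type $\sfB\sfC^+$) — which is exactly the condition $e^{\al_\mO}(t) \neq c^{-1}$. For $\sfB\sfC$-type orbits one must also check the pairing of $\mO^+$ against $\mO^-$ correctly, but the characterization of $\Phi(G,T)_t$ in Lemma~\ref{L: root system Phit} bundles $\mO^- \cup \mO^+$ together, so this is consistent. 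Hence $\id - \Ad_t\sigma$ is invertible on $\frakg/\frakg_t$.

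**The consequences.** Given invertibility on the quotient, the equality \eqref{E: ker in gt} is immediate: the short exact sequence $0 \to \frakg_t \to \frakg \to \frakg/\frakg_t \to 0$ is $\Ad_b\sigma$-equivariant, so applying the snake lemma to multiplication by $\id - \Ad_b\sigma$ and using that this map is injective (hence an isomorphism, by finite-dimensionality) on $\frakg/\frakg_t$ gives $\ker(\id - \Ad_b\sigma \mid \frakg) = \ker(\id - \Ad_b\sigma \mid \frakg_t)$. For the dimension statement $\dim I_b = \dim(I_b \cap G_t)$, I would argue that $I_b \cap G_t$ is open in $I_b$: the inclusion $G_t \hookrightarrow G$ is $c_\sigma$-equivariant for the twisted conjugation by $G_t$, and on tangent spaces at the identity the map $\Lie I_b \to \Lie(I_b \cap G_t)$ is an isomorphism precisely by the Lie-algebra statement just proved (in characteristic $0$ this is literally $\Lie I_b = \ker(\id - \Ad_b\sigma)$; in positive characteristic one uses that $\Lie I_b \subseteq \ker(\id - \Ad_b\sigma)$ always, and the reverse containment $\dim I_b \geq$ suitable bound plus the equality of kernels pins down $\dim I_b = \dim(I_b \cap G_t)$ — this is the point where I would lean on the infinitesimal computations in the style of Steinberg as used earlier in this section).

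**Expected main obstacle.** The genuinely delicate point is the positive-characteristic bookkeeping: in characteristic $p$ one cannot simply identify $\dim I_b$ with $\dim \ker(\id - \Ad_b\sigma)$, so the passage from the Lie-algebra isomorphism on $\frakg/\frakg_t$ to the scheme-theoretic statement $\dim I_b = \dim(I_b \cap G_t)$ requires care — one wants that $I_b/(I_b \cap G_t)$ injects into $G/G_t$ with image of dimension matching the cokernel computation, and that there is no ``hidden'' positive-dimensional part of $I_b$ transverse to $G_t$. I expect this to be handled exactly as in Proposition~\ref{L: unipotent regular} and the references to Steinberg \cite{St2}: reduce to showing the orbit map $G_t \to G_t \cdot b$ (twisted conjugation) has the same dimension of fibers as $G \to G\cdot b$, equivalently that the twisted-conjugation orbit of $b$ in $G$ meets $G_t$ in an open subset of $G_t \cdot b$, which follows from the transversality encoded in the invertibility of $\id - \Ad_b\sigma$ on $\frakg/\frakg_t$ together with $b \in G_t$.
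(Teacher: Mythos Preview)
Your approach to the main linear-algebra step is essentially the same as the paper's. The paper does not first strip off $u$ and then treat $t$: instead it directly chooses a basis of $\frakg$ beginning with a basis of $\frakg_t$, followed by the root vectors $E_\alpha$ for $\alpha\notin\Phi(G,T)_t$ ordered by height with $\sigma$-orbits grouped together, and observes that with respect to this ordering $\id-\Ad_b\sigma$ is block upper triangular. Each diagonal block corresponds to a single $\sigma$-orbit $\calO'\not\subset\Phi(G,T)_t$, and (via Lemma~\ref{L:type BC+ root}) its determinant is $\pm(e^{\alpha_{\calO'}}(t)-1)$ in types $\sfA$, $\sfB\sfC^-$ and $\pm(e^{\alpha_{\calO'}}(t)+1)$ in type $\sfB\sfC^+$, nonzero precisely because $q_B(t)\notin\bfA_{\calO'}$. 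This is exactly your eigenvalue computation, and your separate ``reduce to $u=1$ by triangularity'' step is subsumed in the single block-upper-triangular observation (the off-diagonal contributions from $\Ad_u$ land in strictly higher height, hence above the diagonal). The kernel equality \eqref{E: ker in gt} follows immediately, as you say.

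Where you diverge from the paper is the final dimension statement $\dim I_b=\dim(I_b\cap G_t)$, which you correctly flag as the delicate point but then over-engineer. The paper dispatches it in one line: from \eqref{E: ker in gt} one has $\Lie(I_b^{\red})\subset\Lie I_b\subset\frakg_t$, and therefore the neutral connected component of $I_b^{\red}$ is a closed subgroup of $G_t$, giving the equality of dimensions. There is no appeal to orbit dimensions, transversality, or Steinberg-style arguments here; the paper simply passes from the Lie-algebra containment to the group containment for the smooth connected group $(I_b^{\red})^\circ$. Your proposed orbit/transversality route would also work but is unnecessary.
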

\begin{proof}
For $i \in\bZ$, let  $\Phi_{i}$ denote the set of positive roots $\alpha$ of height $i$, i.e. $\langle\alpha,\check\rho \rangle =i$, where $\check \rho$ is the half of the sum of positive coroots.
We choose the following basis of $\frakg$ (in the given order): first a basis in $\frakg_t$; then
the standard root basis $\{E_\al\}$ associated to the roots in $\dots, \Phi_i, \Phi_{i-1}, \dots$ but not in $\Phi(G,T)_t$, with $\sigma$-orbits grouped together; and finally the standard root basis $\{E_\al\}$ associated to the corresponding negative roots in $\dots, \Phi_{i-1}, \Phi_{i}, \dots$ but not in $\Phi(G,T)_t$, with $\sigma$-orbits grouped together.
With respect to this choice of basis, the linear operator $\id -\Ad_{b} \sigma$ is represented by a block upper triangular matrix $M$, where the first block corresponds to $\frakg_t$, and other blocks correspond to $\sigma$-orbits of roots not in $\Phi(G,T)_t$.

Note that a diagonal block that corresponds to a $\sigma$-orbit $\calO'$ not in $\Phi(G,T)_t$ is invertible due to: (i) its determinant is $\pm (e^{\alpha_\calO'}(t) - 1)$ if $\calO'$ is of type $\sfA$ or $\sfB\sfC^-$ and $\pm (e^{\alpha_\calO'}(t) + 1)$ if $\calO'$ is of type $\sfB\sfC^+$, which follows from Lemma~\ref{L:type BC+ root} and an easy computation; and (ii) $q_B(t) \notin \bfA_{\calO'}$.  The first claim of the lemma follows. Then clearly \eqref{E: ker in gt} holds, which in turn implies that $\Lie (I_b^\red)\subset \Lie I_b\subset \frakg_t$. Therefore the neutral connected component of $I_b^\red$ is a closed subgroup of $G_t$. The lemma is proved.
\end{proof}
A very similar argument yields the following ``$\sigma$-twisted" Jordan decomposition.
\begin{lem}
\label{L: Jordan decom2}
Let $b=ut\in UT$. Then $b$ is $\sigma$-conjugated by an element in $U$ to an element $u't$ with $u'\in U_t$.
\end{lem}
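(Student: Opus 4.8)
The plan is to reduce the assertion to a statement about the unipotent group $U$ equipped with the automorphism $\sigma_t := \Ad_t\circ\sigma$, and then prove that statement by an induction along the height filtration of $U$, running parallel to (the group-theoretic shadow of) the proof of Lemma~\ref{L: Jordan decom}. Indeed, $\sigma$-conjugating $b = ut$ by $v\in U$ yields $\big(v\,u\,(t\sigma(v)^{-1}t^{-1})\big)\,t$, and since $\Ad_t$ and $\sigma$ both preserve $U$, the composite $\sigma_t$ is an automorphism of $U$; so it is enough to show that every $\sigma_t$-twisted conjugacy orbit of $U$ acting on itself meets $U_t$. I would use the filtration $U = U_{(1)}\supseteq U_{(2)}\supseteq\cdots$ (eventually trivial), where $U_{(j)}$ is generated by the $U_\alpha$ with $\langle\alpha,\check\rho\rangle\ge j$: these are normal in $B$, satisfy $[U_{(i)},U_{(j)}]\subseteq U_{(i+j)}$, and have vector-group graded pieces $\frakn_j := U_{(j)}/U_{(j+1)} = \bigoplus_{\langle\alpha,\check\rho\rangle = j}\fraku_\alpha$. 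Both $\sigma$ (which permutes $\Delta$) and $\Ad_t$ preserve this filtration, and $U_t$ is $\sigma_t$-stable because $\Phi(G,T)_t$ is a union of $\sigma$-orbits and $U_t$ is normalized by $T$; hence each $\frakn_j$ acquires a $\sigma_t$-stable decomposition $\frakn_j = \frakn_{j,t}\oplus\frakn_j'$, where $\frakn_{j,t}$, resp. $\frakn_j'$, is the span of the root spaces $\fraku_\alpha$ with $\langle\alpha,\check\rho\rangle = j$ and $\alpha\in\Phi(G,T)_t$, resp. $\alpha\notin\Phi(G,T)_t$.

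The only piece of genuine content will be that $\id - \sigma_t$ is invertible on every $\frakn_j'$. For this I would observe that $\sigma_t$ acts on $\frakn_j$ block-diagonally with respect to the $\sigma$-orbits $\calO'$ of roots of height $j$, acting on $\bigoplus_{\alpha\in\calO'}\fraku_\alpha$ as a weighted cyclic permutation; consequently the determinant of $\id - \sigma_t$ on $\bigoplus_{\alpha\in\calO'}\fraku_\alpha$ equals $1 - c_{\calO'}$, where $c_{\calO'}$ is the product of the structure constants around the cycle. By Lemma~\ref{L:type BC+ root} this product is $e^{\alpha_{\calO'}}(t)$ when $\calO'$ is of type $\sfA$ or $\sfB\sfC^-$ and $-e^{\alpha_{\calO'}}(t)$ when $\calO'$ is of type $\sfB\sfC^+$; comparing with the explicit description of $\Phi(G,T)_t$ in Lemma~\ref{L: root system Phit}, a short case check gives $1 - c_{\calO'}\ne 0$ exactly when $\calO'\not\subseteq\Phi(G,T)_t$. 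This is precisely the block-determinant computation already carried out in the proof of Lemma~\ref{L: Jordan decom}, now transcribed onto the graded pieces of $U$.

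With this established, I would prove by induction on $j$ that $b$ is $\sigma_t$-conjugate, by some element of $U$, into $(U_t\,U_{(j)})\cdot t$; the case $j = 1$ is trivial, and the case in which $U_{(j)} = 1$ is the assertion of the lemma. For the inductive step, write $u = a\,u_j$ with $a\in U_t$ and $u_j\in U_{(j)}$. Since $[U,U_{(j)}]\subseteq U_{(j+1)}$, the image of $U_{(j)}$ in $U/U_{(j+1)}$ is central, so for $v\in U_{(j)}$ with image $\bar v\in\frakn_j$ the image of $v\,u\,\sigma_t(v)^{-1}$ in $U/U_{(j+1)}$ equals $\bar a\cdot\big(\bar u_j + (\id - \sigma_t)\bar v\big)$, the bracket being formed in the vector group $\frakn_j$. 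Decomposing $\bar u_j = \bar u_{j,t} + \bar u_j'$ along $\frakn_j = \frakn_{j,t}\oplus\frakn_j'$ and choosing, via the invertibility just established, $\bar v\in\frakn_j'$ with $(\id - \sigma_t)\bar v = -\bar u_j'$, the bracket becomes $\bar u_{j,t}\in\frakn_{j,t}$, whence $v\,u\,\sigma_t(v)^{-1}\in U_t\,U_{(j+1)}$ and the induction advances. The main obstacle is not conceptual but organizational: one must keep the non-abelian commutators in $U$ under control so that, level by level, everything reduces to the linear statement about $\id - \sigma_t$ on the $\frakn_j'$ — and that linear statement is exactly the determinant computation already performed within the proof of Lemma~\ref{L: Jordan decom}.
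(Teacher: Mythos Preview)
Your proposal is correct and follows essentially the same approach as the paper's proof: both argue by induction along the height filtration of $U$, reducing each step to the invertibility of $\id-\Ad_t\sigma$ on the span of root spaces $\fraku_\alpha$ with $\alpha$ in a $\sigma$-orbit $\calO'\not\subseteq\Phi(G,T)_t$ --- the very block-determinant computation from Lemma~\ref{L: Jordan decom}. Your write-up simply spells out what the paper compresses into two sentences; the only imprecision is the phrase ``exactly when'': for $\calO'$ of type $\sfB\sfC^+$ with $e^{\alpha_{\calO'}}(t)=1$ (so $\calO'\subseteq\Phi(G,T)_t$ by case (2) of Lemma~\ref{L: root system Phit}) one still has $1-c_{\calO'}=2$, but you only use the implication $\calO'\not\subseteq\Phi(G,T)_t\Rightarrow 1-c_{\calO'}\neq 0$, which is correct.
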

\begin{proof}
Let $u_1=u$.
By induction on $i$, one can show that we can $\sigma$-conjugate $b$ by an element in $U$ to $u_it$, where $u_i$ is in the subgroup of $U$ generated by $U_t$ and $U_{\al}$ for $\al\in \Phi_i\cup\Phi_{i+1}\cup\cdots$. This is because if $t\not\in\bfA_{\mO'}$, then $\id-\Ad_t\sigma$ is invertible on the space $\oplus_{\al\in\mO'} kE_\al$.
\end{proof}

\begin{lem}
\label{E: sigmaregularinT}
We have equalities $T^{\sigma\textrm{-}\mathrm{reg}}=\mathring{T}$. 
\end{lem}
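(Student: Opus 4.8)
The plan is to prove the two inclusions separately, using the characterization of $\mathring T$ in terms of the divisors $\mathbf A_\calO$ together with the structural results just established. First I would observe that for $t \in T$, the group $G_t$ of Lemma~\ref{L: root system Phit} and the pinned automorphism $g \mapsto t\sigma(g)t^{-1}$ of $G_t$ (Lemma~\ref{L: pinned auto of Gt}) reduce the computation of $\dim I_t$ to a computation inside $G_t$: indeed, taking $b = t$ (so $u = 1 \in U_t$) in Lemma~\ref{L: Jordan decom} gives $\ker(\id - \Ad_t\sigma : \frakg \to \frakg) = \ker(\id - \Ad_t\sigma : \frakg_t \to \frakg_t)$ and, more importantly, $\dim I_t = \dim(I_t \cap G_t)$. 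So it suffices to understand $I_t$ where $t\sigma$ acts on $G_t$ as a pinned automorphism.

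For the inclusion $\mathring T \subseteq T^{\sigma\textrm{-reg}}$: if $t \in \mathring T$ then $\Phi(G,T)_t = \emptyset$, so $G_t = T$ and $\dim I_t = \dim(I_t \cap T)$. An element $s \in T$ lies in $I_t$ iff $s t \sigma(s)^{-1} = t$, i.e. $\sigma(s) = s$, i.e. $s \in T^\sigma$; hence $\dim I_t = \dim T^\sigma = r$, so $t$ is $\sigma$-regular. (One should also note $I_t$ is smooth here, or simply that $I_t = T^\sigma$ scheme-theoretically after checking the Lie algebra side via \eqref{E: ker in gt}, which for $G_t = T$ just says $\ker(\id - \Ad_t\sigma : \frakt \to \frakt) = \frakt^\sigma$ — automatic.)

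For the reverse inclusion $T^{\sigma\textrm{-reg}} \subseteq \mathring T$, equivalently $T \setminus \mathring T \subseteq T \setminus T^{\sigma\textrm{-reg}}$: suppose $t \notin \mathring T$, so $\Phi(G,T)_t$ is nonempty and contains at least one $\sigma$-orbit $\calO$ with $q_B(t\sigma) \in \mathbf A_\calO$. By the reduction above it is enough to exhibit $\dim(I_t \cap G_t) > r$. Here $t\sigma$ acts on $G_t$ as a pinned automorphism fixing the maximal torus $T$, and $G_t$ contains at least one $\sigma$-orbit of positive roots; hence $U_t \cap I_t$ contains the fixed points of the pinned-automorphism action on a nontrivial unipotent group, and more precisely, using Proposition~\ref{L: unipotent regular}(3) applied to $G_t$ (its unipotent $\sigma$-regular locus is nonempty and $\dim(I_u \cap U_t) = |\Delta_t^{t\sigma}|$ there), together with the extra torus directions $(\cap_{\alpha \in \Delta_t}\ker\alpha)^\sigma \subset I_t$ coming from the computation in the proof of Proposition~\ref{L: unipotent regular}(3), one gets $\dim(I_t \cap G_t) \ge \dim T^\sigma$ with equality forcing $G_t = T$. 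I would phrase this cleanly by noting: $1 \in G_t$ is not $t\sigma$-regular in $G_t$ unless $G_t = T$, since for a pinned automorphism $\theta$ of a reductive group $H$ with maximal torus $T$, the identity element has $I_1 = H^\theta \supsetneq T^\theta$ whenever $H \ne T$ (the fixed-point group of a pinned automorphism on a non-torus reductive group has positive-dimensional unipotent radical or a larger torus — in any case strictly larger than $T^\theta$). Applying this to $\theta = (t\sigma)|_{G_t}$ and $H = G_t$ gives $\dim I_t = \dim I_1^{G_t} > \dim T^\sigma = r$, so $t \notin T^{\sigma\textrm{-reg}}$.

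The main obstacle I anticipate is the positive-characteristic bookkeeping in the last step: one must be careful that $I_t$ is being measured by dimension (not reducedness), that the identification $\dim I_t = \dim(I_t\cap G_t)$ from Lemma~\ref{L: Jordan decom} genuinely applies (it does, with $b = t$), and that the claim ``$H^\theta \supsetneq T^\theta$ for $H \ne T$ reductive, $\theta$ pinned'' is invoked in a form valid in all characteristics — here one can fall back on the explicit rank-one and folding descriptions in Lemma~\ref{L:rankonegroup} and the $\SL_2$, $\SL_3$ computations, exactly as in Proposition~\ref{L: unipotent regular}, rather than a general structure theorem. The rest is a direct assembly of already-proven lemmas.
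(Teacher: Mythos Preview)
Your argument for $\mathring T \subseteq T^{\sigma\textrm{-reg}}$ is the same as the paper's. For the reverse inclusion your route is genuinely different. The paper does not pass through $G_t$ at all for this direction: it fixes a $\sigma$-orbit $\calO$ with $q_B(t\sigma)\in\bfA_\calO$ and writes down, by hand, a one-parameter unipotent subgroup of $I_t$ inside $\prod_{\alpha\in\calO}U_\alpha$ (with an extra $\sfB\sfC^-$ variant involving the $\beta_i=\alpha_i+\alpha_{i+d}$ when $\cha k\neq 2$). Your approach instead invokes Lemma~\ref{L: Jordan decom} with $b=t$ to get $\dim I_t=\dim(I_t\cap G_t)=\dim G_t^{t\sigma}$, then Lemma~\ref{L: pinned auto of Gt} to say $t\sigma$ is pinned on $G_t$, and finally the structural fact that a pinned automorphism $\theta$ of a reductive $H\neq T$ has $\dim H^\theta>\dim T^\theta$. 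This last step is the one place you should tighten: the cleanest justification in arbitrary characteristic is Steinberg's theorem \cite[Theorem~8.2]{St} (already cited in the paper in the remark after Lemma~\ref{L:Phi'(G,A) is a root system}), which says the fixed-point group of a pinned automorphism on a simply-connected semisimple group is connected semisimple, hence contains positive-dimensional unipotent subgroups; pulling back to $G_{t,\s}$ and pushing forward gives unipotents in $I_t\cap G_t$ disjoint from $T^\sigma$. Your appeal to ``the $\SL_2$, $\SL_3$ computations, exactly as in Proposition~\ref{L: unipotent regular}'' is not quite the right reference --- that proposition concerns the centralizer of a \emph{regular unipotent}, not of the identity --- and if you unwind that fallback you essentially reproduce the paper's explicit case-by-case construction, just staged inside $G_t$ rather than $G$.

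What your approach buys is a cleaner conceptual reduction that avoids the explicit type-$\sfA$/$\sfB\sfC^\pm$ formulas; what the paper's buys is self-containedness (no appeal to Steinberg's fixed-point theorem) and concrete unipotent witnesses in $I_t$. Your middle paragraph attempting to extract the bound from the $\dim(I_u\cap U_t)=|\Delta_t^{t\sigma}|$ count in Proposition~\ref{L: unipotent regular}(3) is muddled (that count is for the regular $u$, not for $1$) and should be dropped in favor of the clean $H^\theta\supsetneq T^\theta$ statement you give immediately after.
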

\begin{proof}
Note that for $t\in \mathring{T}$, $G_t=T$, and $I_t\cap T=T^\sigma$. It follows from Lemma~\ref{L: Jordan decom} that $t\in T^{\sigma\textrm{-}\mathrm{reg}}$.

Next we show that $T^{\sigma\textrm{-reg}} \subset \mathring T$, i.e. if $q_B(t\sigma)\in\bfA_{\mO}(k)$ for some $\sigma$-orbit $\mO \subset \Phi(G, T)$, then $I_t$ contains a unipotent subgroup and therefore $\dim I_t>\dim T^\sigma$ (as clearly $T^\sigma \subset I_t$).  If $\mO$ is of type $\sfA$ or $\sfB\sfC^+$, pick $\alpha \in \mO$ and write $\alpha_i = \sigma^i(\alpha)$ for $i = 1, \dots, |\mO|$.  These $\alpha_i$'s  are pairwise orthogonal and we may assume that $\sigma \circ x_{\alpha_i} = x_{\alpha_{i+1}}$ for $i = 1, \dots, |\mO|-1$ and thus $\sigma \circ x_{|\mO|}$ is equal to $x_{\alpha_1}$ if $\calO$ is of type $\sfA$ and is equal to $-x_{\alpha_1}$ if $\mO$ is of type $\sfB\sfC^+$. One checks that, for $c \in k$, 
$$
\prod_{i=1}^{|\mO|} x_{\alpha_i} \big(e^{\alpha_2+ \cdots+ \alpha_{i}}(t)c\big)
$$
form a unipotent subgroup of $I_t$. 

It remains to consider the case $\mO$ is of type $\sfB\sfC^-$ and $\cha k\neq 2$. Let $d=|\mO|/2$. Pick $\al\in\mO$ and write $\al_i=\sigma^i(\al)$ for $i=1,\ldots,2d$ and $\beta_i=\al_i+\al_{i+d}$ for $i=1,\ldots,d$. We may assume that 
\begin{itemize}
\item $\sigma \circ x_{\al_{2d}} = x_1$, $\sigma \circ x_{\al_i} = x_{\alpha_{i+1}}$ for $i =1, \dots, 2d-1$, and
\item$x_{\al_i}(u)x_{\al_{d+i}}(v)=x_{\al_{d+i}}(v)x_{\al_i}(u)x_{\beta_i}(uv)$ for $i=1, \dots, d$. 
\end{itemize}
This in particular implies that $\sigma \circ x_{\beta_i} = x_{\beta_{i+1}}$ for $i = 1, \dots, d-1$, and $\sigma\circ x_{\beta_d} = - x_{\beta_1}$.  Now one can check that for every $c\in k$, $$\prod_{i = 1}^{d} \Big( x_{\al_i}\big(e^{\al_2+\cdots+\al_{i}}(t)c\big)\cdot  x_{\al_{d+i}}\big(e^{\al_2+\cdots+\al_{d+i}}(t)c\big) \cdot x_{\beta_i}\big(-\tfrac{1}{2}e^{\beta_2+\cdots+\beta_{i}+\al_2+\cdots+\al_{d+1}}(t)c^2\big)\Big)\in I_t,$$
giving the desired unipotent subgroup. 
\end{proof}
\begin{rmk}
\label{R:strong rs} 
We say an element $t\in T$ strongly $\sigma$-regular if $I_t=T^\sigma$. We claim that they form a non-empty affine open subset of $T^{\sigma\textrm{-reg}}$, sometimes denoted by  $T^{\textrm{s.}  \sigma\textrm{-reg}}$.
Indeed, let $t\in \mathring{T}$, and let $g\in I_t(k)$. Using the Bruhat decomposition of $g=u_1nu_2$ with $u_1,u_2\in U$ and $n\in N$, it is easy to see that $g=n\in N^\sigma$, and $g\in T^\sigma$ if and only if no non-trivial elements in $W_0$ fixes the image of $t$ under the projection $T\to A$.  But the last condition clearly defines a non-empty affine open subset of $T$, verifying the claim.  Note that if $G$ is semsimple and simply-connected, then by Lemma~\ref{L:branched locus}, $T^{\textrm{s.}  \sigma\textrm{-reg}}=T^{\sigma\textrm{-reg}}=\mathring{T}$.
\end{rmk}

To continue, we need the following lemma.
We identify the tangent space $T_gG$ of $G$ at $g$ with $\frakg$ via the right translation $R_g$ by $g$.
Let $(h,g\sigma)\in G\times G\sigma$ and $g'=c_\sigma(h)(g)$. 
A direct calculation shows the following. 
\begin{lem}
\label{E:diff of csigma}
The differential of $c_\sigma$ at $(h,g\sigma)$ is
\[
dc_\sigma: T_{h}G\oplus T_{g\sigma}G\sigma\to T_{g'}G\sigma, \quad (X,Y)\mapsto X-\Ad_{g'}(\sigma(X))+\Ad_{h}(Y).
\]
\end{lem}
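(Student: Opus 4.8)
The plan is to read off the differential by evaluating $c_\sigma$ on dual numbers; this works uniformly in any characteristic, so I would avoid one-parameter subgroups. Write $k[\varepsilon]=k[\varepsilon]/(\varepsilon^2)$. Recall that $G\sigma$ denotes the coset of $G$ inside the semidirect product $G\rtimes\langle\sigma\rangle$, so that $c_\sigma(h,g\sigma)=h\cdot(g\sigma)\cdot h^{-1}$ with the multiplications taken in $G\rtimes\langle\sigma\rangle$; this is the expression to be differentiated. A tangent vector $X\in\frakg=T_1G$ corresponds to the $k[\varepsilon]$-point $1+\varepsilon X$ of $G$, and we trivialize each of the three tangent spaces $T_hG$, $T_{g\sigma}G\sigma$, $T_{g'}G\sigma$ by right translation (so that $X\in\frakg$ sits at $h$ as $(1+\varepsilon X)h$, at $g\sigma$ as $(1+\varepsilon X)g\sigma$, etc.). Since the differential of a morphism is linear, it suffices to compute the two partial differentials separately and add them: $dc_\sigma(X,Y)=dc_\sigma(X,0)+dc_\sigma(0,Y)$.

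For the partial in the group variable, fix $g\sigma$ and replace $h$ by $(1+\varepsilon X)h$. Using that conjugation by $\sigma$ inside $G\rtimes\langle\sigma\rangle$ induces the differential $d\sigma$ on $\frakg$ (abbreviated $\sigma(X)$ as in the excerpt), namely $\sigma(1+\varepsilon X)\sigma^{-1}=1+\varepsilon\,\sigma(X)$, one computes
\[
c_\sigma\big((1+\varepsilon X)h,\ g\sigma\big)=(1+\varepsilon X)\,(g'\sigma)\,(1-\varepsilon X)=(1+\varepsilon X)\,g'\,(1-\varepsilon\,\sigma(X))\,\sigma=\big(1+\varepsilon(X-\Ad_{g'}\sigma(X))\big)\,g'\sigma,
\]
where $g'\sigma=h(g\sigma)h^{-1}=c_\sigma(h)(g)\cdot\sigma$. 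Reading off the coefficient of $\varepsilon$ after right translation by $g'\sigma$ gives the tangent vector $X-\Ad_{g'}(\sigma(X))$. For the partial in the coset variable, fix $h$ and replace $g\sigma$ by $(1+\varepsilon Y)g\sigma$; then
\[
c_\sigma\big(h,\ (1+\varepsilon Y)g\sigma\big)=h(1+\varepsilon Y)h^{-1}\cdot h(g\sigma)h^{-1}=\big(1+\varepsilon\,\Ad_h(Y)\big)\,g'\sigma,
\]
contributing the tangent vector $\Ad_h(Y)$. Adding the two contributions yields $dc_\sigma(X,Y)=X-\Ad_{g'}(\sigma(X))+\Ad_h(Y)$, which is exactly the asserted formula.

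There is no genuine obstacle here; the computation is routine. The only points that need (minor) care are the bookkeeping in $G\rtimes\langle\sigma\rangle$ — in particular the identity $\sigma(1+\varepsilon X)\sigma^{-1}=1+\varepsilon\,d\sigma(X)$, which is what produces the twist by $\sigma$ in the first partial — and keeping the right-translation trivialization of all three tangent spaces consistent, so that the final answer lands in $T_{g'}G\sigma\cong\frakg$ in the intended normalization.
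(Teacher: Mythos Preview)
Your computation is correct and is precisely the ``direct calculation'' the paper alludes to without spelling out; the paper gives no proof beyond that phrase. The dual-number argument you wrote is the standard way to make this calculation rigorous in arbitrary characteristic, and your bookkeeping with the right-translation trivializations and the identity $\sigma(1+\varepsilon X)\sigma^{-1}=1+\varepsilon\,\sigma(X)$ is exactly what is needed.
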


Recall that we denote by  $N_0$ the preimage of $W_0=W^\sigma\subset W$ in $N$. It acts on $T$ via the twisted conjugation $c_\sigma$ preserving $T^{\sigma\textrm{-reg}}$. Consider the map
\begin{equation}
\label{E:conjugation}
G \times^{N_0} T^{\sigma\textrm{-reg}}\to G.
\end{equation} 
\begin{lem}
\label{L:sigmars}
The map \eqref{E:conjugation} is an open embedding.
\end{lem}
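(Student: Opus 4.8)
The statement is that the twisted conjugation map $G \times^{N_0} T^{\sigma\textrm{-reg}} \to G$ is an open embedding. The standard route (mirroring Steinberg's argument for $\sigma = \id$) splits into three parts: (i) the map is unramified, i.e.\ a local isomorphism at every point; (ii) the map is injective on $k$-points; (iii) conclude it is an open immersion. First I would reduce to checking smoothness/unramifiedness of the map $\mu: G \times T^{\sigma\textrm{-reg}} \to G$, $(g,t) \mapsto g t \sigma(g)^{-1}$, at points of the form $(e, t)$ with $t \in T^{\sigma\textrm{-reg}}$, since the source and target both carry a $G$-action (left translation on the first factor of the source, $\sigma$-twisted conjugation on the target) under which $\mu$ is equivariant, so all points are translates of such. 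Using Lemma~\ref{E:diff of csigma} with $h = g = e$ and $g' = t$, the differential of $\mu$ at $(e,t)$ is the map $\frakg \oplus \frakt \to \frakg$, $(X, Z) \mapsto X - \Ad_t \sigma(X) + Z$ (where the second $Z \in \frakt$ comes from the tangent space of $T$); surjectivity of this map follows because $\id - \Ad_t \sigma$ is invertible on $\frakg/\frakt$ — this is exactly the content of Lemma~\ref{L: Jordan decom} specialized to $t \in \mathring{T} = T^{\sigma\textrm{-reg}}$, where $G_t = T$ so $\ker(\id - \Ad_t\sigma : \frakg \to \frakg) = \frakt^\sigma$ and the map has the expected corank. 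Since $\dim(G \times^{N_0} T^{\sigma\textrm{-reg}}) = \dim G$, a surjective differential means the map is smooth of relative dimension $0$, i.e.\ \'etale.

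\textbf{Injectivity.} Next I would show \eqref{E:conjugation} is injective on closed points. Suppose $g_1 t_1 \sigma(g_1)^{-1} = g_2 t_2 \sigma(g_2)^{-1}$ with $t_1, t_2 \in T^{\sigma\textrm{-reg}}$; setting $g = g_2^{-1} g_1$ we need $g t_1 \sigma(g)^{-1} = t_2$ to force $g \in N_0$ and $t_2 = c_\sigma(g)(t_1)$. Write $g = u_1 n u_2$ via the Bruhat decomposition. The relation $g t_1 \sigma(g)^{-1} = t_2 \in T$ together with uniqueness in the Bruhat decomposition (and the fact that $\sigma$ permutes the $U_\alpha$'s compatibly with the Weyl group, so $\sigma(g) = \sigma(u_1)\sigma(n)\sigma(u_2)$ is again a Bruhat factorization) should force the unipotent parts to cancel, giving $n \in N$ with $n t_1 \sigma(n)^{-1} = t_2$ and $u_1, u_2$ constrained; projecting to the abstract Cartan $\mathbf{T}$ and using that the images of $t_1, t_2$ in $\mathbf{A} = \mathbf{T}/(1-\sigma)\mathbf{T}$ must be $N$-conjugate shows the class of $n$ in $W$ lies in $W^\sigma = W_0$ (this uses strong $\sigma$-regularity, Remark~\ref{R:strong rs}, to pin down that only $W_0$ can fix the relevant point of $A$), and then $g \in N_0 \cdot U$-type arguments plus re-absorbing unipotent parts finish it. This is the step I expect to be the main obstacle: the bookkeeping of the Bruhat decomposition under the twisted action, and in particular making sure that the ``strongly $\sigma$-regular'' open subset is dense enough that the injectivity statement over it propagates to all of $T^{\sigma\textrm{-reg}}$ (or alternatively arguing injectivity directly on $T^{\sigma\textrm{-reg}}$, using that $I_t$ might be slightly larger than $T^\sigma$ but still has identity component $T^\sigma$, so only finitely many Weyl elements are involved, matched by the $N_0$-quotient).

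\textbf{Conclusion.} Finally, an \'etale morphism that is universally injective (radicial) is an open immersion (EGA IV, 17.9.1), so combining the \'etaleness from the first paragraph with the injectivity on points — upgraded to radicial by the separatedness and the fact that both source and target are varieties over an algebraically closed field, so injective on $k$-points plus \'etale gives injective on tangent spaces as well — yields that \eqref{E:conjugation} is an open embedding. I would state the radicial/open-immersion implication as a citation and spend the bulk of the written proof on the differential computation (short, via Lemma~\ref{E:diff of csigma} and Lemma~\ref{L: Jordan decom}) and the Bruhat-decomposition injectivity argument, which is where all the real work lies. One should also remark that the image is the open set $G^{\sigma\textrm{-rs}}$ alluded to in Lemma~\ref{L:generic Chevalley}, though that identification is not strictly needed for the open-embedding statement itself.
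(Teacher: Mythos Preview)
Your approach is essentially the paper's: show the map is \'etale via the differential formula of Lemma~\ref{E:diff of csigma}, then prove injectivity on points via the Bruhat decomposition, and conclude. The paper's proof is two sentences long and does exactly this.

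Your injectivity discussion is overcomplicated, though. You do not need strong $\sigma$-regularity or any density argument. Given $gt\sigma(g)^{-1}=t'$ with $t,t'\in\mathring T$, write $g=u_1nu_2$ in normalized Bruhat form and rewrite $gt=t'\sigma(g)$ as two normalized Bruhat expressions; uniqueness forces the $N$-parts to agree, giving $nt=t'\sigma(n)$, hence $w=\sigma(w)$ so $n\in N_0$ directly. The same uniqueness forces $u_1=t'\sigma(u_1)t'^{-1}$ and $t^{-1}u_2t=\sigma(u_2)$, i.e.\ $u_1\in I_{t'}\cap U$ and $u_2\in I_t\cap U$; since $t,t'\in\mathring T$ these intersections are trivial (Lemma~\ref{L: Jordan decom} gives $\id-\Ad_t\sigma$ invertible on $\fraku$, and one filters $U$ as usual). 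So $g=n\in N_0$, which is precisely injectivity of the $N_0$-quotient. The point is that you want $g\in N_0$, not $g\in T^\sigma$; the latter would indeed need strong regularity, but it is not what injectivity of \eqref{E:conjugation} asks for.
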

 Let $G^{\sigma\textrm{-}\mathrm{rs}}$ denote the image of this map, called the \emph{$\sigma$-regular semisimple locus} of $G$.  Note that $G^{\sigma\textrm{-rs}}\cap T=T^{\sigma\textrm{-reg}}$ by Lemma~\ref{E: sigmaregularinT}.
\begin{proof}It follows from Lemma~\ref{E:diff of csigma} that the map is \'etale. It remains to prove that it is also injective. Assume that $gt\sigma(g)^{-1}=t'$ with $t,t'\in T^{\sigma\textrm{-reg}}=\mathring{T}$. Using the Bruhat decomposition $g=u_1nu_2$ as in Remark~\ref{R:strong rs}, one deduces that $u_1=u_2=1$ and $t\sigma(n)=nt'$. The injectivity follows.
\end{proof}
\begin{rmk}
\label{R:srs locus of G}
Note that $N_0$ also preserves $T^{\textrm{s.}  \sigma\textrm{-reg}}$, and the map $G\times^{N_0}T^{\textrm{s.}  \sigma\textrm{-reg}}\to G$ is open. The image is denoted by $G^{\textrm{s.} \sigma\textrm{-}\mathrm{rs}}$, called the \emph{strongly $\sigma$-regular semisimple locus} of $G$. 
Then $I|_{G^{\textrm{s.}  \sigma\textrm{-}\mathrm{rs}}}$ is smooth and fiberwise conjugate to $T^\sigma$ in $G$. By Lemma~\ref{L:branched locus}, $G^{\textrm{s.}  \sigma\textrm{-}\mathrm{rs}}=G^{\sigma\textrm{-}\mathrm{rs}}$ if $G$ is semisimple and simply-connected.
\end{rmk}
\begin{rmk}
Note that $\sigma$ is of finite order (since it preserves a pinning).
If $\cha k$ does not divide the order of $\sigma$, then $G^{\sigma\textrm{-rs}}\sigma$ is contained in the set of semisimple elements of the non-connected algebraic group $G\rtimes\langle\sigma\rangle$.
\end{rmk}

\begin{lem}
\label{L: isomorphism locus}
\begin{enumerate}
\item The restriction of $c_\sigma$ to $U\times \mathring{T}\sigma\to \mathring{B}\sigma$ is an isomorphism. 
\item For a $\sigma$-orbit $\mO$ of simple roots, let $U^\mO$ be the subgroup of $U$ introduced before Lemma~\ref{L:rankonegroup}. Then the restriction of $c_\sigma$ to $U^{\mO}\times B_{\mO}^{[\mO]}\sigma\to B^{[\mO]}\sigma$ is an isomorphism. 
\end{enumerate}
\end{lem}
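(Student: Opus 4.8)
The plan is to treat the two parts in parallel, reducing each to an étale-plus-injective argument combined with a dimension count. For Part (1), the claim is that twisted conjugation gives an isomorphism $c_\sigma: U \times \mathring T\sigma \xrightarrow{\sim} \mathring B \sigma$; for Part (2) the same for $U^\mO \times B_\mO^{[\mO]}\sigma \xrightarrow{\sim} B^{[\mO]}\sigma$. In both cases the source and target are smooth of the same dimension (for (1): $\dim U + \dim T = \dim B$; for (2): $\dim U^\mO + \dim B_\mO = \dim B$ since $B = U^\mO B_\mO$ as a semidirect product), so it suffices to show the map is étale and bijective on $k$-points, hence an open immersion which is also surjective.

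For \'etaleness I would use Lemma~\ref{E:diff of csigma}: the differential of $c_\sigma$ at $(u, t\sigma)$, after the right-translation identifications, sends $(X, Y) \mapsto X - \Ad_{g'}\sigma(X) + \Ad_u(Y)$ where $g' = utu^{-1}$ (more precisely $utu^{-\sigma}$... but $u$ lies in $U$). For Part (1), restricting to the tangent directions of the source, $X$ ranges over $\fraku$ and $Y$ over $\frakt$; one must check the resulting linear map onto $\frakb$ (the tangent space of $\mathring B\sigma$ at $g'$, identified with $\frakb$) is an isomorphism. The point is that for $t \in \mathring T$, i.e. $q_B(t\sigma) \notin \bfA_\calO$ for every $\sigma$-orbit $\calO$, the operator $\id - \Ad_t\sigma$ is invertible on each root-orbit block of $\fraku$ — this is exactly the computation appearing in the proof of Lemma~\ref{L: Jordan decom}, where the diagonal block for an orbit $\calO'$ has determinant $\pm(e^{\alpha_{\calO'}}(t)\mp 1) \neq 0$. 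Combining this with the fact that $\frakt$ maps isomorphically to $\frakb/\fraku = \frakt$ (twisted by $\Ad$'s that don't change the torus part), we get surjectivity onto $\frakb$ between equidimensional spaces, hence bijectivity. For Part (2), the analogous computation only involves the orbits $\calO' \neq \calO$ (since $t \in B_\mO^{[\mO]}$ means $q_B(t\sigma) \notin \bfA_{\calO'}$ for $\calO' \neq \calO$), and the remaining torus/root-$\calO$ directions are absorbed into the $B_\mO$-factor and the $\frakt$ directions; the invertibility on $\oplus_{\calO' \neq \calO} \frakg_{\calO'}$-part is what makes the map \'etale.

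For injectivity I would argue exactly as in Lemma~\ref{L:sigmars} and Lemma~\ref{E: sigmaregularinT}: suppose $u_1 t_1 \sigma(u_1)^{-1} = u_2 t_2 \sigma(u_2)^{-1}$ with $u_i \in U$ (resp. $U^\mO$) and $t_i \in \mathring T$ (resp. $B_\mO^{[\mO]}$). Applying $q_B$ to both sides forces $t_1$ and $t_2$ to have the same image in $\mathbf A$, hence $t_1 = t_2$ since both lie in $\mathring T$ (for Part (2) one additionally needs that $B_\mO^{[\mO]} \to \mathbf A$ followed by the $\mathbf A$-coordinate recovers the torus part together with the $U_\mO$-part; here one uses the Jordan-type normal form of Lemma~\ref{L: Jordan decom2} to see that within $B^{[\mO]}$ the $U^\mO$-conjugation orbit through a given element meets $B_\mO^{[\mO]}\sigma$ in at most one point). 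Then cancelling, $\sigma(u_1 u_2^{-1})$-type manipulations together with $t := t_1 = t_2 \in \mathring T$ and the freeness of the $U$-action (resp. $U^\mO$-action) on the relevant fiber — which follows because $\id - \Ad_t\sigma$ is invertible on $\fraku$ (resp. on $\Lie U^\mO$) — give $u_1 = u_2$.

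Finally, for surjectivity: an open immersion between smooth varieties of the same dimension, with irreducible target, is surjective once one knows it hits a dense set; but here I would argue directly that every element of $\mathring B\sigma$ (resp. $B^{[\mO]}\sigma$) lies in the image. Write an arbitrary element as $ut'\sigma$ with $u \in U$, $t' \in T$, and $q_B(t'\sigma) = q_B(t\sigma)$ for the relevant $t$; by Lemma~\ref{L: Jordan decom2} (the $\sigma$-twisted Jordan decomposition) $ut'$ is $\sigma$-conjugate by an element of $U$ to some element of $B_{t'}T$-normal form, and for $t' \in \mathring T$ this normal form is just $t'$ itself, so $ut'\sigma$ lies in the $U$-conjugation orbit of $T^{\sigma\text{-reg}}\sigma$, i.e. in the image; for Part (2) the same lemma, applied to the orbits $\calO' \neq \calO$, conjugates into $B_\mO^{[\mO]}\sigma$ by an element of $U^\mO$.

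The main obstacle I anticipate is bookkeeping in Part (2): one must carefully identify $\Lie U^\mO$ with $\bigoplus_{\calO' \neq \calO}(\text{positive root spaces outside the span of }\mO)$, check that $\id - \Ad_t\sigma$ is invertible precisely there for $t \in B_\mO^{[\mO]}$ (this is where $\calO' \neq \calO$ is essential — on the $\calO$-block the operator is \emph{not} invertible, which is why those directions must be retained in $B_\mO$), and confirm that the semidirect product decomposition $B = U^\mO B_\mO$ is compatible with all of this at the level of tangent spaces and group multiplication. The $\SL_2$/$\SL_3$ structure of $G_{\mO,\der}$ from Lemma~\ref{L:rankonegroup} and the pinned-automorphism statement of Lemma~\ref{L: pinned auto of Gt} should make the orbit-by-orbit verification routine once the framework is set up.
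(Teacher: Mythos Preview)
Your overall strategy (étale plus bijective on points) is the same as the paper's, but your injectivity step contains a genuine error. You write ``applying $q_B$ to both sides forces $t_1,t_2$ to have the same image in $\mathbf A$, hence $t_1=t_2$ since both lie in $\mathring T$.'' The second clause is false: the map $T\to\mathbf A=T/(\sigma-1)T$ has kernel $(\sigma-1)T$, which for nontrivial $\sigma$ is a positive-dimensional subtorus, so equal image in $\mathbf A$ does not force equality in $T$. The fix is to use not $q_B$ but the semidirect-product projection $B=UT\to T$ (and for Part~(2) the projection $B=U^\mO B_\mO\to B_\mO$, available because $U^\mO$ is normal in $B$). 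Since $u_it_i\sigma(u_i)^{-1}=u_i\cdot(t_i\sigma(u_i)^{-1}t_i^{-1})\cdot t_i$ and the middle factor lies in $U$ (resp.\ $U^\mO$), this projection gives $t_1=t_2$ (resp.\ $b_1=b_2$) immediately. Then $u_2^{-1}u_1$ lies in the twisted stabilizer $\{u:\,b\sigma(u)b^{-1}=u\}$, and triviality of this group follows from invertibility of $\id-\Ad_b\sigma$ on $\fraku$ (resp.\ $\Lie U^\mO$) via the same root-height filtration you already use for étaleness. This is precisely how the paper argues; your parenthetical for Part~(2) (``$B_\mO^{[\mO]}\to\mathbf A$ followed by the $\mathbf A$-coordinate recovers the torus part together with the $U_\mO$-part'') does not make sense as stated, since $\mathbf A$ sees none of the unipotent directions.

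For surjectivity, your appeal to Lemma~\ref{L: Jordan decom2} works cleanly for Part~(1), but for Part~(2) that lemma conjugates by elements of $U$ rather than $U^\mO$, and you would need to rerun its height induction inside $U^\mO$ while tracking that the $U_\mO$-component is preserved---doable, but fiddly. The paper sidesteps this: once the map is known to be an open embedding, it observes that $c_\sigma$ sits in a commutative triangle over $B_\mO^{[\mO]}\sigma$ (via the projection $B^{[\mO]}\to B_\mO^{[\mO]}$), so fiberwise $c_\sigma$ is the $U^\mO$-orbit map through a fixed $b\sigma$. Each such orbit is open in its fiber (restriction of an open embedding) and closed (orbits of unipotent groups are closed), hence equals the entire connected fiber. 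This is shorter and avoids exactly the bookkeeping you flagged.
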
 
\begin{proof}
We only prove (2) since the proof of (1) is similar (and simpler). Note that $U^{\mO}$ is a normal subgroup in $B$ and $B=U^{\mO}B_{\mO}$ as a semidirect product (of algebraic groups). 

First, we claim that for any $b\sigma\in B_{\mO}^{[\mO]}\sigma$, the group
\[\{u\in U^{\mO}\mid b\sigma(u)b^{-1}=u\}\]
is trivial. Indeed, let $\Phi_i$ be as in the proof of Lemma~\ref{L: Jordan decom}, and for $i\geq 1$, let $U_{i}$ denote the group generated by the root groups $U_\alpha$ with $\alpha \in \Phi_{i} \cup \Phi_{i+1} \cup \cdots$. Then we obtain a filtration 
$U=U_{1}\supset U_{2}\supset\cdots$ by normal subgroups. Let $U^{\mO}_i=U^{\mO}\cap U_i$. Then $U^{\mO}_i/U^{\mO}_{i+1}$ is abelian, isomorphic to its Lie algebra. As argued in Lemma~\ref{L: Jordan decom}, $\{u\in U^{\mO}_i/U^{\mO}_{i+1}\mid b\sigma(u)b^{-1}=u\}$ is trivial because $q_B(b)$ does not lie in $\bfA_{\calO'}$ for any $\sigma$-orbits $\calO'$ that appear in $U^\calO$. The claim follows by induction.

It follows that the map in Part (2) is a monomorphism, and that the map
$\Lie\ U^{\mO}\to \Lie\ U^{\mO}$ given by $ X\mapsto X-\Ad_{b}(\sigma(X))$ 
is an isomorphism. 
On the other hand, for any $b_1\in U^{\mO}$, and $Y\in \Lie B_{\mO}$,
$$Y= \Ad_{b_1}Y \mod \Lie\ U^{\mO}. $$
It follows from Lemma~\ref{E:diff of csigma} that $U^{\mO}\times B_{\mO}^{[\mO]}\sigma\to B^{[\mO]}\sigma$ is \'etale and therefore is an open embedding. Note that the following diagram is commutative 
\[
\xymatrix{
U^{\mO}\times B_{\mO}^{[\mO]}\sigma\ar^{c_\sigma}[rr]\ar_{\pr_2}[rd]& &B^{[\mO]}\sigma\ar[dl]\\
&B_{\mO}^{[\mO]}\sigma,&
}
\] 
where $B^{[\mO]}\sigma\to B^{[\mO]}_{\mO}\sigma$ is induced by the projection $B=U^\mO B_\mO\to B_\mO$.
In addition, for every point $b\sigma\in B_{\mO}^{[\mO]}\sigma$, the $U^{\mO}$-orbit through this point is closed in $B^{[\mO]}\sigma$ since $U^{\mO}$ is unipotent. It follows that the $B_{\mO}^{[\mO]}\sigma$-morphism $c_\sigma: U^{\mO}\times B_{\mO}^{[\mO]}\sigma\to B^{[\mO]}\sigma$ is fiberwise open and closed. Therefore $U^{\mO}\times B_{\mO}^{[\mO]}\sigma\to B^{[\mO]}\sigma$ is open and surjective. Part (2) follows.
\end{proof}

We also need the following companion result. If $\mO^+$ is a $\sigma$-orbit of type $\sfB\sfC^+$ such that $\al_{\mO^+}$ is a simple root of $\Phi(G_\sigma,A)$, let $\mO^-$ denote the corresponding $\sigma$-orbit of simple roots of type $\sfB\sfC^-$ and we fix an order of roots in $\mO^-$.

\begin{lem}
\label{L: isomorphism locus2}Keep the notations as above. Then the restriction
$$c_\sigma: \Big(U^{\mO^-}\times \prod_{\al\in\mO^-} U_{\al}\Big)\times B_{\mO^+}^{[\mO^+]}\sigma\to B^{[\mO^+]}\sigma$$ is an isomorphism, where the product $\prod_{\al\in\mO^-} U_{\al}$ is taken with respect to a chosen order.
\end{lem}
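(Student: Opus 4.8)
The plan is to mimic the proof of Lemma~\ref{L: isomorphism locus} (2), with the extra root subgroups $\prod_{\al\in\mO^-}U_\al$ compensating for the fact that the $\sfB\sfC^+$-orbit $\mO^+$ is now the ``distinguished'' orbit rather than the $\sfB\sfC^-$-orbit that produced it. First I would set up the analogue of the semidirect product decomposition: writing $\mO^-=\{\al_1,\dots,\al_{2d}\}$ with $\al_{i+1}=\sigma(\al_i)$ and $\beta_i=\al_i+\al_{i+d}\in\mO^+$ for $i=1,\dots,d$, the group $U^{\mO^+}$ (the subgroup of $U$ generated by the root groups outside the rank-one system spanned by $\mO^+$) contains $U^{\mO^-}\cdot\prod_{\al\in\mO^-}U_\al$ as a normal subgroup, and one has the factorization $B=\big(U^{\mO^-}\cdot\prod_{\al\in\mO^-}U_\al\big)\rtimes B_{\mO^+}$ of algebraic groups. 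Here $B_{\mO^+}=T\cdot U_{\mO^+}$ where $U_{\mO^+}=\prod_{\beta\in\mO^+}U_\beta$; note $U_{\mO^+}$ is central in $U_{\mO^-\cup\mO^+}$ because the $\beta_i$ are the highest roots in the $\SL_3$-factors of $G_{\mO^-,\der}$, by Lemma~\ref{L:rankonegroup}(2).

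Second, I would show that for $b\sigma\in B^{[\mO^+]}_{\mO^+}\sigma$ the map $c_\sigma(-)(b\sigma)$ is injective and étale when restricted to $\big(U^{\mO^-}\times\prod_{\al\in\mO^-}U_\al\big)\times\{b\sigma\}$. For injectivity: give $U^{\mO^+}$ the height filtration $U^{\mO^+}=U^{\mO^+}_1\supset U^{\mO^+}_2\supset\cdots$ as in Lemma~\ref{L: Jordan decom}, and argue by induction on height that $\{u\in U^{\mO^+}_i/U^{\mO^+}_{i+1}\mid b\sigma(u)b^{-1}=u\}=0$, using that $q_B(b)\notin\bfA_{\mO'}$ for every $\sigma$-orbit $\mO'$ appearing in $U^{\mO^+}$ together with the determinant computation from Lemma~\ref{L: Jordan decom} (the relevant diagonal blocks have determinant $\pm(e^{\al_{\mO'}}(b)\mp1)\neq0$). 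For the differential: by Lemma~\ref{E:diff of csigma} the tangent map at $(1,b\sigma)$ sends $(X,Y)\mapsto X-\Ad_{b}(\sigma X)+\Ad_{b}(Y)$, so once the operator $X\mapsto X-\Ad_b(\sigma X)$ on $\Lie\big(U^{\mO^-}\cdot\prod U_\al\big)$ is shown invertible (again block-triangular with invertible blocks away from the $\mO^+$-root spaces) and the map $Y\mapsto\Ad_b Y \bmod \Lie\big(U^{\mO^-}\cdot\prod U_\al\big)$ is the identity on $\Lie B_{\mO^+}$, the full differential is surjective, hence an isomorphism by dimension count. Thus $c_\sigma$ is an open embedding onto its image.

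Third, I would upgrade ``open embedding'' to ``isomorphism'' exactly as in Lemma~\ref{L: isomorphism locus}: the diagram
\[
\xymatrix{
\big(U^{\mO^-}\times\prod_{\al\in\mO^-}U_\al\big)\times B_{\mO^+}^{[\mO^+]}\sigma\ar^-{c_\sigma}[rr]\ar_-{\pr_2}[rd]&&B^{[\mO^+]}\sigma\ar[dl]\\
&B_{\mO^+}^{[\mO^+]}\sigma&
}
\]
commutes, where $B^{[\mO^+]}\sigma\to B^{[\mO^+]}_{\mO^+}\sigma$ comes from $B\to B_{\mO^+}$; since $U^{\mO^-}\cdot\prod U_\al$ is unipotent each of its orbits in $B^{[\mO^+]}\sigma$ is closed, so $c_\sigma$ is fiberwise open and closed over $B^{[\mO^+]}_{\mO^+}\sigma$, hence open and surjective, hence an isomorphism.

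The main obstacle I expect is the $\sfB\sfC^+$-specific bookkeeping in the injectivity/non-degeneracy step: the orbit $\mO^+$ satisfies $\sigma^{|\mO^+|}\circ x_\al=-x_\al$ (Lemma~\ref{L:type BC+ root}), and the roots $\al_i\in\mO^-$ are \emph{not} pairwise orthogonal --- they sit in $\SL_3$-factors with the nontrivial commutation relation $x_{\al_i}(u)x_{\al_{d+i}}(v)=x_{\al_{d+i}}(v)x_{\al_i}(u)x_{\beta_i}(uv)$. So I must check carefully that adjoining the $\prod_{\al\in\mO^-}U_\al$ factors produces exactly the right-size ``transverse slice'' (dimension $|U^{\mO^-}|+|\mO^-|+\dim B_{\mO^+}=\dim B$) and that the relevant linearized operator remains invertible despite these commutators; concretely this reduces, via Lemma~\ref{L:rankonegroup}(2), to the explicit rank-one computation inside $\SL_3$ with its order-two pinned automorphism, which is where I would do the one genuine calculation.
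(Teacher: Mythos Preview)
There is a genuine gap in your setup. The set $U^{\mO^-}\cdot\prod_{\al\in\mO^-}U_\al$ is \emph{not} a subgroup of $U$: the commutator $[U_{\al_i},U_{\al_{i+d}}]$ lies in $U_{\beta_i}\subset U_{\mO^+}$, which falls outside that product. Hence there is no semidirect decomposition $B=\big(U^{\mO^-}\cdot\prod_{\al\in\mO^-} U_\al\big)\rtimes B_{\mO^+}$ and no group homomorphism $B\to B_{\mO^+}$. (For the same reason, the ``group $U^{\mO^+}$'' you introduce---generated by root subgroups for positive roots outside $\mO^+$---is in fact all of $U$, since those generators already produce the $U_{\beta_i}$.) This breaks your third step: the triangle does not commute. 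Concretely, for $u=\prod x_{\al_i}(a_i)$ and $b=t\prod x_{\beta_i}(b_i)$ one computes that the $\beta_i$-coordinates of $ub\sigma(u)^{-1}$ are $b'_i$ with $b'_i-b_i$ a nontrivial quadratic polynomial in the $a_j$'s, so projecting $c_\sigma(u)(b\sigma)$ to $B_{\mO^+}$ does \emph{not} return $b$. The unipotent-orbits-are-closed argument for surjectivity therefore cannot be run over the base $B_{\mO^+}^{[\mO^+]}\sigma$.

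The paper circumvents this by factoring through $B_{\mO^-}$ instead. Since $U^{\mO^-}$ \emph{is} normal in $B$ with $B=U^{\mO^-}\rtimes B_{\mO^-}$, the proof of Lemma~\ref{L: isomorphism locus}(2) applies verbatim to the orbit $\mO^-$ over $\bfA^{[\mO^+]}$ (every $\sigma$-orbit $\mO'$ appearing in $U^{\mO^-}$ is $\neq\mO^+$, so $q_B(b)\notin\bfA_{\mO'}$ there) and yields $c_\sigma:U^{\mO^-}\times B_{\mO^-}^{[\mO^+]}\sigma\xrightarrow{\sim}B^{[\mO^+]}\sigma$. One is then reduced to showing that $c_\sigma:\prod_{\al\in\mO^-}U_\al\times B_{\mO^+}^{[\mO^+]}\sigma\to B_{\mO^-}^{[\mO^+]}\sigma$ is an isomorphism, and this is precisely the explicit calculation inside $G_{\mO^-,\der}$ you anticipated: the $\al_i$-coordinates of $ub\sigma(u)^{-1}$ give the cyclic linear system $c_i=e^{-\al_i}(t)a_i-a_{i-1}$, invertible because $e^{\al_\mO}(t)\neq 1$ on $\bfA^{[\mO^+]}$ (this is the condition $q_B(b)\notin\bfA_{\mO^-}$), and the $\beta_i$-coordinates are then uniquely determined. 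Your instinct that the essential content is one honest computation in $\SL_3$ was right; the structural wrapper just has to route through $B_{\mO^-}$ rather than $B_{\mO^+}$.
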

\begin{proof}
The same proof of  Lemma~\ref{L: isomorphism locus} (2) for the $\sigma$-orbit $\calO^-$ shows that the restriction of $c_\sigma$ to $U^{\mO^-}\times B^{[\mO^+]}_{\mO^-}\sigma\to B^{[\mO^+]}\sigma$ is also an isomorphism. Therefore, it reduces to prove that
\[c_\sigma:  \prod_{\al\in\mO^-} U_{\al}\times B_{\mO^+}^{[\mO^+]}\sigma\to B^{[\mO^+]}_{\mO^-}\sigma\]
is an isomorphism. This follows from a simple explicit computation. Namely, we set $d=|\mO^+|$, fix a root $\alpha \in \calO^-$, and write $\al_i=\sigma^{i}(\al)$ for $1\leq i\leq 2d$ and $\beta_i=\al_i+\sigma^{d}(\al_i)$. Then for $b=t\prod x_{\beta_i}(b_i)$ with $e^{\al_\mO}(t)\neq 1$, and $u=\prod x_{\al_i}(a_i)$,
\[ub\sigma(u)^{-1}= t\prod x_{\al_i}(e^{-\al_i}(t)a_i-a_{i-1})\prod x_{\beta_i}(b'_i),\]
where $b'_i-b_i$ is a polynomial in $(a_1,\ldots,a_{2d})$ of degree two (with coefficients involving $e^{\alpha_i}(t)$'s). Since $e^{\al_\mO}(t)-1$ is invertible, for every $(c_1,\ldots, c_{2d})$, there is a unique $(a_1,\ldots,a_{2d})$ such that $c_i=e^{\al_i}(t)a_i-a_{i-1}$, the map in the lemma is an isomorphism.
\end{proof}

\begin{prop}
\label{L: surj of reg}
\begin{enumerate}
\item The map $q_B: B^{\sigma\textrm{-}\mathrm{reg}}\to \bfA$ is surjective. 
\item Each fiber of $q_B: B^{\sigma\textrm{-}\mathrm{reg}}\to \bfA$ is a single $B$-orbit for the $\sigma$-conjugation action. 
\item For every $b\in B^{\sigma\textrm{-}\mathrm{reg}}$, $(\id-\Ad_b\sigma)^{-1}(\fraku)=\frakt^\sigma+\fraku$. 
\item Assume that $G$ is semisimple and simply-connected. Fix $b\in B^{\sigma\textrm{-}\mathrm{reg}}$. Let $g\in G$ such that $gb\sigma(g)^{-1}\in B$ and $q_B(gb\sigma(g)^{-1})=q_B(b)$. Then $g\in B$. In addition, $I_b\subset B$.
\end{enumerate}
\end{prop}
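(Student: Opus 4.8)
The plan is to deduce the entire proposition from the ``essentially rank one'' analysis of Proposition~\ref{L: unipotent regular}, applied not to $(G,\sigma)$ but to the reductive subgroup $G_t$ attached to the semisimple part $t$ of an element $b\in B$ (as before Lemma~\ref{L: root system Phit}), equipped with the automorphism $\sigma_t\colon g\mapsto t\sigma(g)t^{-1}$; by Lemma~\ref{L: pinned auto of Gt} this $\sigma_t$ preserves a pinning of $G_t$, and it acts on $\XX^\bullet(T)$ and on the roots exactly as $\sigma$ does. For (1): given $a\in\mathbf A$, I would pick $t\in T$ with $q_B(t)=a$, apply Proposition~\ref{L: unipotent regular}(3) to $(G_t,B_t,T,\{x_\al\}_{\al\in\Delta_t},\sigma_t)$ to produce a $\sigma_t$-regular unipotent $x_t\in U_t$, and set $b:=x_tt$. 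Then $q_B(b)=q_B(t)=a$, and Lemma~\ref{L: Jordan decom} applied to $b=x_t\cdot t\in U_tT$ shows $\id-\Ad_b\sigma$ is invertible on $\frakg/\frakg_t$, so $\dim I_b=\dim(I_b\cap G_t)$; a direct check identifies $I_b\cap G_t$ with the $\sigma_t$-twisted centralizer of $x_t$ in $G_t$, whence $\dim I_b=\dim T^{\sigma_t}=\dim T^\sigma=r$ and $b\in B^{\sigma\textrm{-reg}}$. As $t$ runs over $T$, $q_B(b)$ runs over all of $\mathbf A$, giving surjectivity.

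For (2) and (3), I would again reduce to $G_t$. Given $b,b'\in B^{\sigma\textrm{-reg}}$ with $q_B(b)=q_B(b')$, the semisimple parts differ by an element of $(1-\sigma)T$, so after $\sigma$-conjugating $b$ by a suitable element of $T$ I may assume $b,b'$ have the same semisimple part $t$; then the $\sigma$-twisted Jordan decomposition (Lemma~\ref{L: Jordan decom2}) lets me $\sigma$-conjugate each, by elements of $U$, into the form $u_1t,u_2t$ with $u_1,u_2\in U_t$. These conjugations lie in $B$ and preserve $\sigma$-regularity, and (as in the proof of (1), via Lemma~\ref{L: Jordan decom}) $u_1,u_2$ are then $\sigma_t$-regular in $G_t$, hence lie in $\mathring U_t=U_t^{\sigma_t\textrm{-reg}}$. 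Proposition~\ref{L: unipotent regular}(4) for $(G_t,\sigma_t)$ says $\mathring U_t$ is a single orbit for $\sigma_t$-conjugation by $T^{\sigma_t}U_t=T^\sigma U_t$; translating back, $b$ and $b'$ are $\sigma$-conjugate by an element of $T^\sigma U_t\subset B$, which is (2). For (3), the subspace $(\id-\Ad_b\sigma)^{-1}(\fraku)$ is carried by $\Ad$ of an element of $B$ under $B$-$\sigma$-conjugation of $b$, and each such $\Ad(v)$ preserves $\frakt^\sigma+\fraku$, so by (2) I may take $b=u_0t$ with $u_0\in U_t$; using the $\sigma$- and $\Ad_b$-stable splitting $\fraku=\fraku_t\oplus\fraku^t$ (with $\fraku^t$ the sum of the root spaces for $\Phi^+\setminus\Phi(G,T)_t$), Lemma~\ref{L: Jordan decom} kills the $\fraku^t$-component and Proposition~\ref{L: unipotent regular}(4) for $(G_t,\sigma_t)$ handles the $\frakg_t$-component, giving $(\id-\Ad_b\sigma)^{-1}(\fraku)=\frakt^\sigma+\fraku$.

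For (4), the second assertion follows from the first: applying the first assertion to each $g\in I_b(k)$ gives $I_b(k)\subset B(k)$, while $\Lie I_b=\ker(\id-\Ad_b\sigma)\subset(\id-\Ad_b\sigma)^{-1}(\fraku)=\frakt^\sigma+\fraku\subset\frakb$ by (3), and the two together force $I_b\subset B$ (as at the end of the proof of Proposition~\ref{L: unipotent regular}(2)). For the first assertion, I would first observe that $gb\sigma(g)^{-1}=(g^{-1})^{-1}b\,\sigma(g^{-1})$, so after replacing $g$ by $g^{-1}$ it is enough to prove: if $g^{-1}b\sigma(g)\in B$ and $q_B(g^{-1}b\sigma(g))=q_B(b)$ then $g\in B$. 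Both conditions now depend only on $gB\in G/B$ and cut out a closed subscheme $\mathcal F_b\subset G/B$ containing the base point; the claim is $\mathcal F_b=\{B\}$. By (2) I may take $b=u_0t\in B_t$ with $u_0\in U_t^{\sigma_t\textrm{-reg}}$. Then, stratifying $G/B$ by Bruhat cells and writing $g$ in Bruhat normal form, I would analyze the equation $g^{-1}b\sigma(g)\in B$ by the uniqueness of the Bruhat decomposition — this is the argument of \cite[Lemma 4.3, Theorem 4.6]{St2} already invoked in the proof of Proposition~\ref{L: unipotent regular}(2) — to force first that the Weyl component $w$ satisfies $\sigma(w)=w$, i.e. $w\in W_0$; then the constraint on $q_B$ forces $w$ to act trivially on $q_B(t)$, i.e. (using Lemma~\ref{L: root system Phit}, Lemma~\ref{L:action Weyl group} and the description of the walls $\mathbf A_\calO$) $w$ lies in the Weyl group of $\Phi(G,T)_t$; and finally the regularity of $u_0$ in $G_t$, through Proposition~\ref{L: unipotent regular}(4) for $(G_t,\sigma_t)$, forces $w=1$ and $gB=B$. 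The hard part is precisely this last step of (4): everything else is bookkeeping built on Proposition~\ref{L: unipotent regular} via $G_t$, whereas (4) genuinely requires the Steinberg-type Bruhat analysis, handling at once the semisimple part $t$ (via the $q_B$-constraint) and the regular unipotent part $u_0$ (via Proposition~\ref{L: unipotent regular}) to eliminate the Weyl component.
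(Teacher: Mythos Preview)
Your proposal is correct and follows essentially the same approach as the paper: reduce to the subgroup $G_t$ with its pinned automorphism $t\sigma$, and invoke Proposition~\ref{L: unipotent regular} there together with Lemma~\ref{L: Jordan decom}. Two minor differences are worth noting. For (2), the paper bypasses the explicit normalization via Lemma~\ref{L: Jordan decom2} and instead argues by pure dimension: since $b$ is $\sigma$-regular, the $B$-orbit through $b$ has dimension $\dim B-\dim T^\sigma=\dim q_B^{-1}(q_B(b))$, and fibers of $q_B$ are irreducible, so any two such orbits in a fiber must meet. For (4), the paper reverses your logical order: it first proves $I_b\subset B$ directly (Bruhat analysis on $g\in I_b(k)$ plus the Lie algebra inclusion from Lemma~\ref{L: aux for reg}), and then deduces the general statement from $I_b\subset B$ together with (2), by writing $gb\sigma(g)^{-1}=b'b\sigma(b')^{-1}$ for some $b'\in B$ and concluding $b'^{-1}g\in I_b\subset B$. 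This last reduction is slightly cleaner than carrying out the full Bruhat argument for general $g$, but the content is the same.
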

\begin{proof}
(1) For every $t\in T$, consider the automorphism of $G_t$ given in  Lemma~\ref{L: pinned auto of Gt}, denoted by $t\sigma$. Applying Proposition~\ref{L: unipotent regular} to $(G_t, t\sigma)$ gives an element $u \in U_t$ which is $t\sigma$-regular in $G_t$. Let $b=ut\in U_tT$. Then $I_b\cap G_t=\{g\in G_t\mid gut\sigma(g)^{-1}t^{-1}=g\}$ is the $t\sigma$-twisted centralizer of $u$ in $G_t$. Therefore, $b = ut$ is also $\sigma$-regular in $G_t$ by Lemma~\ref{L: Jordan decom}.

(2) If $b\in  B^{\sigma\textrm{-}\mathrm{reg}}$, then the dimension of the $B$-orbit through $b$ under the $\sigma$-conjugation action of $B$ on itself is equal to $\dim q_B^{-1}(q_B(b))$. This shows that any two $\sigma$-regular $B$-orbits in a fiber of $q_B$ must meet since fibers of $q_B$ are irreducible. Therefore, there is exact one $B$-orbit in each fiber of $q_B: B^{\sigma\textrm{-}\mathrm{reg}}\to \bfA$.

(3) We may assume that $b=ut$, with $u\in U_t$ being $t\sigma$-regular in $G_t$. Then the claims follows from Lemma~\ref{L: Jordan decom}, and Proposition~\ref{L: unipotent regular}.

(4) We may assume that $b=ut$, with $u\in U_t$ being $t\sigma$-regular in $G_t$. Write an element $g \in I_b(k)$ as $g = u_1n u_2$  with $u_1,u_2\in U$ and $n\in N$. Then $u_1n u_2 b = b\sigma(u_1) \sigma(n) \sigma (u_2)$. It follows that $\sigma (n) = n$, so
$$
n u_2 b \sigma(u_2)^{-1} n^{-1} = u_1^{-1}b\sigma(u_1).
$$
Taking projection to $\bfA$, we see that $w(q_B(t))=q_B(t)$, where $w=n \mod T$. Since $G$ is simply-connected, by
Lemma~\ref{L:branched locus} and Lemma~\ref{L: root system Phit}, $w$ must be in the Weyl group of $G_t$. Then by applying the same argument as in the proof of Proposition~\ref{L: unipotent regular} to $G_t$, we deduce that $w = 1$. In particular, $I_b^\red \subset B$. On the other hand,
$$
\ker(\id - \Ad_b\sigma: \frakg \to \frakg) = \ker(\id - \Ad_b\sigma: \frakg_t \to \frakg_t) = \ker(\id - \Ad_b\sigma:\frakb \cap \frakg_t\to \frakb\cap\frakg_t).
$$
where the first equality follows from Lemma~\ref{L: Jordan decom}, and the second follows from Lemma~\ref{L: aux for reg}. Putting these together, we see that $I_b\subset B$.

More generally let $g\in G$ such that $gb\sigma(g)^{-1}\in B$ and $q_B(gb\sigma(g)^{-1})=q_B(b)$, there is some $b'\in B$ such that $gb\sigma(g)^{-1}=b'b\sigma(b')^{-1}$, by Part (2). Therefore $g^{-1}b'\in I_b\subset B$ and $g\in B$.
\end{proof}

\begin{cor}
\label{C: unr tangent needed}
Let $K=\ker q_B=(\sigma-1)T\cdot U$, and let $\frakk$ be its Lie algebra. Then for every $b\in B^{\sigma\textrm{-}\mathrm{reg}}$, $(\id-\Ad_b\sigma)^{-1}(\frakk)=\frakb$.
\end{cor}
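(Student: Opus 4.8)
The plan is to bootstrap from Proposition~\ref{L: surj of reg}(3), which already gives $(\id-\Ad_b\sigma)^{-1}(\fraku)=\frakt^\sigma+\fraku$, and to absorb the extra toral directions contained in $\frakk=\Lie K=\Lie((\sigma-1)T)\oplus\fraku$. Throughout, recall that for $b\in B$ the operator $\Ad_b\sigma$ preserves $\frakb$ and $\fraku$, and acts trivially on $\frakb/\fraku=\frakt$; hence for $Y=H+n$ with $H\in\frakt$, $n\in\fraku$ we have $(\id-\Ad_b\sigma)(Y)\equiv(\id-\sigma)(H)\pmod{\fraku}$.

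First I would dispatch the inclusion $\frakb\subseteq(\id-\Ad_b\sigma)^{-1}(\frakk)$, which needs no regularity hypothesis: by the congruence above, the image of $(\id-\Ad_b\sigma)(Y)$ in $\frakb/\fraku=\frakt$ lies in $(\id-\sigma)(\frakt)=(\sigma-1)\frakt\subseteq\Lie((\sigma-1)T)$, and since $\frakk$ is exactly the preimage of $\Lie((\sigma-1)T)$ under $\frakb\to\frakt$, we get $(\id-\Ad_b\sigma)(Y)\in\frakk$. For the reverse inclusion, take $Y\in\frakg$ with $(\id-\Ad_b\sigma)(Y)=n+h$, $n\in\fraku$, $h\in\Lie((\sigma-1)T)=(\sigma-1)\frakt$, choose $H\in\frakt$ with $(\id-\sigma)(H)=h$, and note $(\id-\Ad_b\sigma)(H)\equiv h\pmod{\fraku}$, so $(\id-\Ad_b\sigma)(Y-H)\in\fraku$. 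Proposition~\ref{L: surj of reg}(3) then yields $Y-H\in\frakt^\sigma+\fraku\subseteq\frakb$, hence $Y\in\frakb$.

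The hard part is the one nontrivial input beyond Proposition~\ref{L: surj of reg}, namely the identification $\Lie((\sigma-1)T)=(\sigma-1)\frakt$, equivalently the surjectivity of $\id-\sigma\colon\frakt\to\Lie((\sigma-1)T)$ used to produce $H$ above; the inclusion $(\sigma-1)\frakt\subseteq\Lie((\sigma-1)T)$ is automatic, but the reverse could fail for a general diagonalizable group action in bad characteristic. Here it holds because $\sigma$ is a pinned automorphism, so it permutes the simple coroots and its action on the cocharacter lattice is (up to the central part) a permutation matrix; thus $\sigma-1$ has Smith normal form with entries in $\{0,1\}$ and does not drop rank modulo $p$. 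Alternatively, one can bypass this by identifying $(\id-\Ad_b\sigma)(\frakb)$, via the differential formula of Lemma~\ref{E:diff of csigma}, with the tangent space at $b$ to the fibre $q_B^{-1}(q_B(b))$; by Proposition~\ref{L: surj of reg}(2) this fibre is the $\sigma$-twisted $B$-orbit of $b$, and since $q_B$ is a homomorphism with smooth connected kernel $K$ it coincides near $b$ with $bK$, giving $(\id-\Ad_b\sigma)(\frakb)=\frakk$ directly; one then concludes by subtracting off an element of $\ker(\id-\Ad_b\sigma)=\Lie I_b\subseteq\frakb$, the last inclusion again being immediate from Proposition~\ref{L: surj of reg}(3).
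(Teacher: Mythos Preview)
Your primary argument is exactly the paper's: write the toral part of the element of $\frakk$ as $(1-\sigma)H$, subtract $H$, and apply Proposition~\ref{L: surj of reg}(3). You also record the easy containment $\frakb\subseteq(\id-\Ad_b\sigma)^{-1}(\frakk)$, which the paper leaves implicit. Your flag of the identity $\Lie((\sigma-1)T)=(\sigma-1)\frakt$ is well taken---the paper uses it without comment---and your Smith-normal-form justification is correct whenever $\sigma$ permutes a $\ZZ$-basis of $\XX_\bullet(T)$, in particular for $G$ semisimple simply-connected, the only case in which the corollary is later invoked (\S\ref{S:twisted Springer resolution}). The parenthetical ``up to the central part'' does hide a genuine issue for general reductive $G$: for $G=\GL_2$ with its outer involution in characteristic~$2$ one has $(\sigma-1)\frakt=0$ while $(\sigma-1)T=T$.

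Your alternative route, however, has a gap. Lemma~\ref{E:diff of csigma} identifies $(\id-\Ad_b\sigma)(\frakb)$ only with the \emph{image of the differential} of the orbit map $B\to q_B^{-1}(q_B(b))$; this coincides with the tangent space $\frakk$ of the fibre only when the stabiliser $I_b$ is smooth. In positive characteristic that fails already for $\sigma$-regular unipotents: the chain of inequalities in the proof of Lemma~\ref{L: aux for reg} gives $\dim\Lie I_x=\dim\frakz^\sigma+r>r=\dim I_x$ whenever $\frakz^\sigma\neq 0$ (e.g.\ $G=\SL_p$ in characteristic~$p$ with $\sigma=\id$). So the orbit-geometry argument cannot replace the lattice computation; it is really the same surjectivity question in disguise.
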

\begin{proof} Let $X\in \frakg$ such that $X-\Ad_b\sigma(X)=(1-\sigma)H+\fraku$, with $H\in \frakt$. Then $(\id-\Ad_b\sigma)(X-H)\in\fraku$. Therefore $X-H\in\frakt^\sigma+\fraku$ and $X\in\frakb$.
\end{proof}

\begin{cor}
\label{L: codim of reg}
The codimension of $B-B^{\sigma\textrm{-reg}}$ is at least two. 
\end{cor}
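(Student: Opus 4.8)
The plan is to show that the closed set $B\setminus B^{\sigma\textrm{-reg}}$ has no irreducible component of codimension one. The point is that $B\setminus B^{\sigma\textrm{-reg}}$ is contained in a union of hypersurfaces pulled back from $\bfA$, and that each of these hypersurfaces meets $B^{\sigma\textrm{-reg}}$, hence cannot be a component of $B\setminus B^{\sigma\textrm{-reg}}$.

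First I would observe that $\mathring{B}=q_B^{-1}(\mathring{\bfA})\subseteq B^{\sigma\textrm{-reg}}$. Indeed, by Lemma~\ref{L: isomorphism locus}(1) the twisted conjugation $c_\sigma\colon U\times\mathring{T}\sigma\to\mathring{B}\sigma$ is an isomorphism, so every element of $\mathring{B}$ is $\sigma$-conjugate to an element of $\mathring{T}=T^{\sigma\textrm{-reg}}$ (Lemma~\ref{E: sigmaregularinT}); since $I_{gb\sigma(g)^{-1}}=gI_bg^{-1}$ for all $g,b\in G$, being $\sigma$-regular is invariant under $\sigma$-conjugation, and the claim follows. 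Therefore $B\setminus B^{\sigma\textrm{-reg}}\subseteq B\setminus\mathring{B}=q_B^{-1}(\bfA\setminus\mathring{\bfA})=\bigcup_{\mO}q_B^{-1}(\bfA_{\mO})$.

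Next I would use that $q_B\colon B\to\bfA$ is a surjective homomorphism with connected kernel $K=(\sigma-1)T\cdot U$ (Corollary~\ref{C: unr tangent needed}), hence faithfully flat with irreducible fibres (each isomorphic to $K$); consequently, for every irreducible component $Z$ of the divisor $\bfA\setminus\mathring{\bfA}$ of the torus $\bfA$, the preimage $q_B^{-1}(Z)$ is an irreducible closed subset of $B$ of dimension $\dim B-1$. By Proposition~\ref{L: surj of reg}(1), the restriction of $q_B$ to $B^{\sigma\textrm{-reg}}$ is already surjective onto all of $\bfA$, so $B^{\sigma\textrm{-reg}}\cap q_B^{-1}(Z)\neq\emptyset$; since $B^{\sigma\textrm{-reg}}$ is open in $B$ (Remark~\ref{R:general reg element}(1)), this intersection is a nonempty, hence dense, open subset of the irreducible variety $q_B^{-1}(Z)$. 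Thus $q_B^{-1}(Z)\setminus B^{\sigma\textrm{-reg}}$ is a proper closed subset of $q_B^{-1}(Z)$, so it has dimension $\le\dim B-2$; summing over the finitely many components $Z$ shows $\dim\bigl(B\setminus B^{\sigma\textrm{-reg}}\bigr)\le\dim B-2$, which is the assertion.

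The substance of the argument is entirely contained in the earlier results: the essential inputs are Lemma~\ref{L: isomorphism locus}(1) (which rests on the $\SL_2$/$\SL_3$ analysis of the rank-one subgroups) and, above all, Proposition~\ref{L: surj of reg}(1)---it is crucial that $q_B$ is surjective onto the \emph{whole} of $\bfA$ already on the regular locus, not merely onto $\mathring{\bfA}$---whose proof relies on the $t\sigma$-regular unipotent element of Proposition~\ref{L: unipotent regular} inside each $G_t$. Given these, the only mild point to verify here is the irreducibility of the fibres of $q_B$, which comes for free from the connectedness of $K$; no hypothesis on $G$ beyond ``reductive with pinned $\sigma$'' is needed.
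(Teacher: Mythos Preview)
Your argument is correct and is precisely the expansion of the paper's one-line proof, which cites only Lemma~\ref{L: isomorphism locus}(1) and Proposition~\ref{L: surj of reg}. The key steps---$\mathring{B}\subset B^{\sigma\textrm{-reg}}$ from the first, and the surjectivity of $q_B|_{B^{\sigma\textrm{-reg}}}$ from the second, together with irreducibility of the fibres of $q_B$---are exactly what the paper has in mind.

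One minor inaccuracy in your commentary (not in the proof itself): Lemma~\ref{L: isomorphism locus}(1) does not rest on any $\SL_2/\SL_3$ analysis; its proof is a direct root-filtration argument using the invertibility of $\id-\Ad_b\sigma$ on each $\sigma$-orbit block (essentially Lemma~\ref{L: Jordan decom}). The $\SL_2/\SL_3$ reduction appears only later, in \S\ref{S:determinant}.
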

\begin{proof}
This follows from Lemma~\ref{L: isomorphism locus} (1) and Proposition~\ref{L: surj of reg}.
\end{proof}

\begin{cor}
\label{L: regular springer fiberII}
An element $b\in B$ is $\sigma$-regular if and only if the set $\mB_b=\{g\in G/B\mid g^{-1}b\sigma(b)\in B\}$ is finite.
\end{cor}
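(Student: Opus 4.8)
The plan is to prove both implications by reduction to situations already treated in this section; here $\mB_b=\{gB\in G/B\mid g^{-1}b\sigma(g)\in B\}$, as in Lemma~\ref{L: regular springer fiber}. First observe that neither $\mB_b$ nor the property ``$b\in B^{\sigma\textrm{-reg}}$'' changes if $b$ is multiplied by a central element, so, arguing as in the proof of Lemma~\ref{L: aux for reg} (lifting along $G\to G_\ad$ and using that a central isogeny induces an isomorphism of flag varieties and preserves the relevant dimensions), we may assume throughout that $G$ is semisimple and simply-connected. We also record that if $b'=c_\sigma(h)(b)$ then $\mB_{b'}=h\cdot\mB_b$ inside $G/B$, so finiteness of $\mB_b$ is invariant under $\sigma$-conjugation.

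\emph{If $b\in B^{\sigma\textrm{-reg}}$, then $\mB_b$ is finite.} Here $\mB_b$ is closed, hence a projective variety, and the claim is that $\dim\mB_b=0$. For $gB\in\mB_b$ set $b'=g^{-1}b\sigma(g)\in B$; since $b=c_\sigma(g)(b')$, the element $b'$ is $\sigma$-conjugate to $b$, hence $\sigma$-regular, and lies in $B$. As $q_B$ is constant on $c_\sigma(B)$-orbits, $f\colon\mB_b\to\mathbf A$, $gB\mapsto q_B(b')$, is a well-defined morphism, and I would finish with two facts. (i) \emph{The image of $f$ is finite}: by the twisted Chevalley isomorphism (Proposition~\ref{AE:classical HC morphism}), $f$ followed by the finite cover $\mathbf A\to\mathbf A\dm W_0$ is the constant map with value the image of $b$ under the Chevalley map, so $f(\mB_b)$ lies in a single fibre of $\mathbf A\to\mathbf A\dm W_0$, a set of at most $|W_0|$ points (Lemma~\ref{L:branched locus}). (ii) \emph{Every nonempty fibre of $f$ is a single point}: fix such a fibre $f^{-1}(a)$, choose $g_aB$ in it, and put $b'_a=g_a^{-1}b\sigma(g_a)$, a $\sigma$-regular element of $B$ with $q_B(b'_a)=a$. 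For any $gB\in f^{-1}(a)$, Proposition~\ref{L: surj of reg}(2) provides $b_0\in B$ with $g^{-1}b\sigma(g)=c_\sigma(b_0)(b'_a)$; rearranging gives $(gb_0g_a^{-1})^{-1}\,b\,\sigma(gb_0g_a^{-1})=b$, hence $gb_0g_a^{-1}\in I_b\subset B$ by Proposition~\ref{L: surj of reg}(4), so $g\in Bg_a$. Writing $g=b_1g_a$ and $c=g_a^{-1}b_1g_a$ one computes $g^{-1}b\sigma(g)=c^{-1}b'_a\sigma(c)$, which lies in $B$ and has $q_B$-image $a=q_B(b'_a)$; applying Proposition~\ref{L: surj of reg}(4) now to $b'_a$ yields $c\in B$, whence $gB=g_acB=g_aB$. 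Combining (i) and (ii), $\mB_b$ is a projective variety mapping to a finite set with singleton fibres, hence finite.

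\emph{If $b\notin B^{\sigma\textrm{-reg}}$, then $\dim\mB_b\geq 1$.} I would exhibit an explicit positive-dimensional family, mirroring the unipotent case (Lemma~\ref{L: regular springer fiber}). By the $\sigma$-twisted Jordan decomposition (Lemma~\ref{L: Jordan decom2}) we may take $b=ut$ with $u\in U_t$; by Lemma~\ref{L: Jordan decom}, $b$ is $\sigma$-regular in $G$ iff $u$ is $t\sigma$-regular in $G_t$, where $t\sigma$ is the pinned automorphism of $G_t$ of Lemma~\ref{L: pinned auto of Gt}. So $u$ is not $t\sigma$-regular in $G_t$, and by Proposition~\ref{L: unipotent regular} applied to $(G_t,t\sigma)$ (cf. the proof of Lemma~\ref{L: regular springer fiber}) we may, after a further $t\sigma$-conjugation by $U_t$, assume $u$ lies in the subgroup $U_t^{\mO}\subset U_t$ attached to some $\sigma$-orbit $\mO$ of simple roots of $G_t$. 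Let $G_{t,\mO}\subset G_t$ be the associated $t\sigma$-stable reductive subgroup of semisimple rank $|\mO|\geq 1$, with Borel $B_{t,\mO}=B_t\cap G_{t,\mO}$. Since $G_{t,\mO}$ normalizes $U_t^{\mO}$, for $g\in G_{t,\mO}$ one gets $g^{-1}b\sigma(g)=\big(g^{-1}(t\sigma)(g)\big)\,u'\,t$ with $u'\in U_t^{\mO}$ and $u't\in B$, so $g^{-1}b\sigma(g)\in B$ whenever $g^{-1}(t\sigma)(g)\in B_{t,\mO}$. This embeds a copy of $(G_{t,\mO}/B_{t,\mO})^{t\sigma}$ into $\mB_b$; a tangent-space computation at the base flag, together with the sign information in Lemma~\ref{L:type BC+ root}, shows this $t\sigma$-fixed flag variety (a translate of the flag variety of the positive-rank reductive group $(G_{t,\mO})^{t\sigma}$, cf. \cite[Theorem 8.2]{St}) is positive-dimensional. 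This is the required subvariety of $\mB_b$.

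The main obstacle is step (ii): one must track the $\sigma$-twisted conjugations with care to see that lying in a fixed fibre of $f$ forces, via $I_b\subset B$ and the single-orbit statement Proposition~\ref{L: surj of reg}(2), the fibre $f^{-1}(a)$ to consist of a single point; note that $\mB_b$ is in general a genuinely finite (not one-point) set, so both (i) and (ii) are needed. A secondary point is the uniform verification, over the types $\sfA$ and $\sfB\sfC$ and in all characteristics, that $(G_{t,\mO}/B_{t,\mO})^{t\sigma}$ is positive-dimensional; the tangent-space estimate, once the relevant $\pm1$ signs from Lemma~\ref{L:type BC+ root} are recorded, handles every case at once.
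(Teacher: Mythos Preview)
Your backward direction is essentially the paper's argument (reduce via the twisted Jordan decomposition to $b=ut$ with $u\in U_t$, then apply Lemma~\ref{L: regular springer fiber} in $(G_t,t\sigma)$), spelled out in more detail. Your forward direction, by contrast, is genuinely different: the paper's one-line proof reduces \emph{both} directions to Lemma~\ref{L: regular springer fiber}, whereas you argue directly using the map $f\colon\mB_b\to\mathbf A$ together with Proposition~\ref{L: surj of reg}(4). This is a nice alternative, and more transparent than the paper's terse reduction.

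There is, however, a slip in step~(ii). From $gb_0g_a^{-1}\in I_b\subset B$ you conclude $g\in Bg_a$; but $gb_0g_a^{-1}\in B$ only gives $g\in Bg_a b_0^{-1}$, which is not $Bg_a$ unless $g_a$ normalises $B$. In fact the entire detour through Proposition~\ref{L: surj of reg}(2) and $I_b$ is unnecessary: simply set $c=g_a^{-1}g$ from the outset. Then
\[
c^{-1}b'_a\,\sigma(c)=g^{-1}g_a\cdot g_a^{-1}b\,\sigma(g_a)\cdot\sigma(g_a)^{-1}\sigma(g)=g^{-1}b\,\sigma(g)\in B,
\]
with $q_B$-image $a=q_B(b'_a)$. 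Now Proposition~\ref{L: surj of reg}(4), applied to the $\sigma$-regular element $b'_a$ and the element $c^{-1}$, gives $c^{-1}\in B$, hence $gB=g_acB=g_aB$. So your second application of (4) was already the whole argument; the first application, and the use of (2), can be deleted. With this correction, the forward direction is complete (granting the reduction to the simply-connected case, which is needed for (4) and which you correctly flag).
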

\begin{proof}
This follows from Lemma~\ref{L: isomorphism locus} by applying Lemma~\ref{L: regular springer fiber} to $u\in U_t$.
\end{proof}

\subsection{Twisted Grothendieck--Springer resolution}
\label{S:twisted Springer resolution}
In this subsection, we assume that $G$ is semisimple and simply-connected. 

Note that $k[T]^{c_\sigma(N_0)}=k[A]^{W_0}$. Then
the twisted Chevalley isomorphism (Proposition~\ref{AE:classical HC morphism}) implies that $G/\!\!/c_\sigma(G)\cong \mathbf A/\!\!/W_0:=\on{Spec} k[\mathbf A]^{W_0}$. So we write the (twisted) Chevalley map as
$$\chi: G\to G/\!\!/c_\sigma(G)=\bfA/\!\!/W_0,$$
which we recall is faithfully flat  by Corollary~\ref{L:cl flat}. 

As in the untwisted case, there is the following commutative diagram
\[\begin{CD}
\widetilde G@>q>> \mathbf A\\
@VVV@VVV\\
G @>\chi>> \mathbf A/\!\!/W_0,
\end{CD}\]
where the left vertical map
\[
\widetilde G: = \{(gB, x) \in G/B \times G\;|\; x \in gB\sigma(g)^{-1}\} \longto G \qquad (gB,x) \mapsto x,
\]
is what we call the \emph{($\sigma$-twisted) Grothendieck--Springer resolution}. 
The map $q$ is induced by
\[\widetilde G\cong G\times^B (B\sigma)\xrightarrow{q_B} G\times^B \mathbf A\to \mathbf A.\]
Together, these two maps induce a proper map
$\pi:\widetilde G\to G\times_{\mathbf A/\!\!/W_0}\mathbf A$.

Let $\widetilde G^{\sigma\textrm{-reg}}$ be the preimage of $G^{\sigma\textrm{-reg}}$. We have the following proposition.
\begin{prop}
\label{P: regular cartesian}
The induced map $\widetilde G^{\sigma\textrm{-}\mathrm{reg}}\to G^{\sigma\textrm{-}\mathrm{reg}}\times_{\mathbf A/\!\!/W_0}\mathbf A$ is an isomorphism.
\end{prop}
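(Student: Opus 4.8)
The plan is to show that the restriction of $\pi$ to $\widetilde G^{\sigma\textrm{-reg}}$ is a closed immersion which is moreover surjective onto a reduced scheme, hence an isomorphism. Throughout write $Y:=G^{\sigma\textrm{-reg}}\times_{\mathbf A/\!\!/W_0}\mathbf A$, and note that $\widetilde G^{\sigma\textrm{-reg}}=G\times^B(B^{\sigma\textrm{-reg}}\sigma)$, since $\sigma$-regularity of $gb\sigma(g)^{-1}$ is equivalent to that of $b$; in particular $\widetilde G^{\sigma\textrm{-reg}}$ is smooth of dimension $\dim G$ and, by Proposition~\ref{L: surj of reg}(1), the map $q$ restricts to a smooth surjection $\widetilde G^{\sigma\textrm{-reg}}\to\mathbf A$.

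First I would check that $\pi|_{\widetilde G^{\sigma\textrm{-reg}}}$ is \emph{finite}. It is proper, being the restriction of the proper map $\pi$ to an open subset lying over the open subset $G^{\sigma\textrm{-reg}}$. It is quasi-finite: the fibre of $\pi$ over $(x,a)$ is contained in $\{gB\in G/B\mid g^{-1}x\sigma(g)\in B\}$, and for $x\in G^{\sigma\textrm{-reg}}$ this set is finite — if it is nonempty then $x$ is $\sigma$-conjugate to some $b\in B$ which is then $\sigma$-regular, and the set is a left translate of $\mathcal B_b$, finite by Corollary~\ref{L: regular springer fiberII}. A proper quasi-finite morphism is finite.

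Next I would show $\pi|_{\widetilde G^{\sigma\textrm{-reg}}}$ is a \emph{monomorphism}. Injectivity on points is exactly Proposition~\ref{L: surj of reg}(4): if $[g_1,b_1\sigma]$ and $[g_2,b_2\sigma]$ both map to $(x,a)$, then $g:=g_2^{-1}g_1$ satisfies $gb_1\sigma(g)^{-1}=b_2\in B$ and $q_B(gb_1\sigma(g)^{-1})=a=q_B(b_1)$, so $g\in B$ and the two points coincide. For unramifiedness, by $c_\sigma(G)$-equivariance it is enough to compute the differential of $\pi$ at a point $[1,b\sigma]$ with $b\in B^{\sigma\textrm{-reg}}$; unwinding the presentation $\widetilde G^{\sigma\textrm{-reg}}=G\times^B(B^{\sigma\textrm{-reg}}\sigma)$ and using the formula for $dc_\sigma$ in Lemma~\ref{E:diff of csigma}, the injectivity of $d\pi$ is reduced to the identities $(\id-\Ad_b\sigma)^{-1}(\frakk)=\frakb$ (Corollary~\ref{C: unr tangent needed}, controlling the component of $d\pi$ towards $\mathbf A$ via $q_B$) and $(\id-\Ad_b\sigma)^{-1}(\fraku)=\frakt^\sigma+\fraku$ (Proposition~\ref{L: surj of reg}(3)). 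An injective unramified morphism is a monomorphism, and a finite monomorphism is a closed immersion; thus $\pi|_{\widetilde G^{\sigma\textrm{-reg}}}$ is a closed immersion.

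It remains to see the image is all of $Y$ and that $Y$ is reduced. Since $\mathbf A\to\mathbf A/\!\!/W_0$ is finite flat (Lemma~\ref{L:branched locus}), $Y$ is finite and flat over the smooth connected variety $G^{\sigma\textrm{-reg}}$; hence $Y$ is Cohen--Macaulay and equidimensional of dimension $\dim G$. Over the dense open $G^{\sigma\textrm{-rs}}$ the cover $\mathbf A\to\mathbf A/\!\!/W_0$ is étale (Lemma~\ref{L:branched locus}), and using the identifications $G^{\sigma\textrm{-rs}}\cong G\times^{N_0}T^{\sigma\textrm{-reg}}$ and $T^{\sigma\textrm{-reg}}=\mathring T$ (Lemma~\ref{L:sigmars}, Lemma~\ref{E: sigmaregularinT}) one checks directly that $\pi$ is an isomorphism over $G^{\sigma\textrm{-rs}}$; so $Y$ is generically reduced, hence reduced by Cohen--Macaulayness, and its preimage there is the open $\widetilde G^{\sigma\textrm{-rs}}\subset\widetilde G$, which is irreducible. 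Every irreducible component of $Y$ is finite of dimension $\dim G$ over $G^{\sigma\textrm{-reg}}$, hence surjects onto it and meets this dense open, so $Y$ is irreducible. A closed immersion from $\widetilde G^{\sigma\textrm{-reg}}$ whose image is closed and contains a dense open subset of the integral scheme $Y$ must be surjective, and a surjective closed immersion into a reduced scheme is an isomorphism.

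I expect the main obstacle to be the unramifiedness step: one has to carry out the tangent-space computation for $\pi$ at $[1,b\sigma]$ carefully, matching $d\pi$ against the differentials of the two projections of $Y$, and this is precisely where the exact content of Corollary~\ref{C: unr tangent needed} and Proposition~\ref{L: surj of reg}(3),(4) is used. The remaining bookkeeping (properness, quasi-finiteness, reducedness and irreducibility of $Y$) is routine given the flatness results already established for $\mathbf A\to\mathbf A/\!\!/W_0$ and $\chi$.
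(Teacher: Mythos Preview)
Your proposal is correct and follows essentially the same strategy as the paper's proof: injectivity on points via Proposition~\ref{L: surj of reg}(4), unramifiedness via the tangent computation using Corollary~\ref{C: unr tangent needed}, and reducedness of the target from Cohen--Macaulayness plus generic reducedness over the regular semisimple locus (Lemma~\ref{L: RS cartesian}). The only organizational difference is that the paper shows the map is a closed embedding by working inside the smooth ambient variety $G^{\sigma\textrm{-reg}}\times\mathbf A$, whereas you first establish finiteness (proper and quasi-finite via Corollary~\ref{L: regular springer fiberII}) and then conclude closed immersion from finite monomorphism; both routes are equivalent here.
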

\begin{proof}
We start with the following special case.
\begin{lem}
\label{L: RS cartesian}
Let $\widetilde G^{\sigma\textrm{-}\mathrm{rs}}$ be the preimage of $G^{\sigma\textrm{-}\mathrm{rs}}$.
Then the induced map \begin{equation}
\label{E:pi restricted to Gsigmars}
\pi|_{\widetilde G^{\sigma\textrm{-}\mathrm{rs}}}: \widetilde G^{\sigma\textrm{-}\mathrm{rs}}\to G^{\sigma\textrm{-}\mathrm{rs}}\times_{\mathring{\mathbf A}/\!\!/W_0}\mathring{\mathbf A}\end{equation} is an isomorphism.
\end{lem}
\begin{proof}

By \eqref{E:conjugation} and Lemma~\ref{E: sigmaregularinT}, we have $G^{\sigma\textrm{-rs}} \cong G \times^{N_0} \mathring T$. So we can write
$$\widetilde G^{\sigma\textrm{-rs}} = \big\{(gB, g't\sigma(g')^{-1}) \in G/B \times ( G \times^{N_0} \mathring T)\,\big|\, g't\sigma(g')^{-1} \in gB\sigma(g)^{-1}
\big\}.
$$
The last condition is equivalent to $g^{-1} g' t \sigma (g^{-1}g')^{-1} \in B$. By
Proposition~\ref{L: surj of reg} (4), $g^{-1} g' \in B \times^T N_0$. From this, we deduce that $\widetilde G^{\sigma\textrm{-rs}} \cong G \times^T \mathring T$ and therefore \eqref{E:pi restricted to Gsigmars} is an isomorphism.
\end{proof}

We return to the proof of Proposition~\ref{P: regular cartesian}.
Since $\mathbf A  \to \mathbf A /\!\!/ W_0$ is finite and flat by Lemma~\ref{L:branched locus}, the fiber product $G^{\sigma\textrm{-rs}} \times_{\mathring{\mathbf A}/\!\!/W_0}\mathring{\mathbf A}$ is open and dense in $G \times_{\mathbf A/\!\!/W_0}\mathbf A$. But $\pi$ is  proper as $G/B$ is, so $\pi$ is surjective. In particular, $G\times_{\mathbf A/\!\!/W_0}\mathbf A$ is irreducible. Moreover, since $\mathbf A$ and $G$ are smooth and $G\to \mathbf A/\!\!/W_0$ is flat,  $G\times_{\mathbf A/\!\!/W_0}\mathbf A$ is a complete intersection (in particular Cohen--Macaulay), and it follows from the above lemma that $G\to \mathbf A/\!\!/W_0$ is generically smooth. Therefore, $G\times_{\mathbf A/\!\!/W_0}\mathbf A$ is also reduced. So $G\times_{\mathbf A/\!\!/W_0}\mathbf A$ is a closed subvariety of $G\times \mathbf A$.

Since $\widetilde G\to G$ is surjective, every element in $G$ is $\sigma$-conjugate to an element in $B$. In particular, $\widetilde G^{\sigma\textrm{-reg}}= G\times^BB^{\sigma\textrm{-reg}}$. By Proposition~\ref{L: surj of reg} (4), the map $G\times^BB^{\sigma\textrm{-reg}}\to G\times \mathbf A$ is injective on points.
By Lemma~\ref{E:diff of csigma} and Corollary~\ref{C: unr tangent needed}, the tangent map for $G\times^BB^{\sigma\textrm{-reg}}\to G\times \bfA$ is injective at every point, and therefore the morphism is unramified. It follows that $\widetilde G^{\sigma\textrm{-reg}}\to G^{\sigma\textrm{-reg}}\times \mathbf A$ is a closed embedding, with image (as topological space) $G^{\sigma\textrm{-reg}}\times_{\mathbf A/\!\!/W_0}\mathbf A$. But since $G^{\sigma\textrm{-reg}}\times_{\mathbf A/\!\!/W_0}\mathbf A$ is also reduced, the map in the proposition is indeed an isomorphism.
\end{proof}

Here are some standard corollaries.

\begin{cor}
The morphism $\chi: G^{\sigma\textrm{-}\mathrm{reg}}\to \bfA/\!\!/W_0$ is smooth. 
\end{cor}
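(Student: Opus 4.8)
The plan is to deduce smoothness of $\chi|_{G^{\sigma\textrm{-reg}}}$ from the isomorphism established in Proposition~\ref{P: regular cartesian} together with the properties of the $W_0$-cover $\bfA\to\bfA/\!\!/W_0$ recorded in Lemma~\ref{L:branched locus}. Concretely, Proposition~\ref{P: regular cartesian} identifies $\widetilde G^{\sigma\textrm{-reg}}$ with $G^{\sigma\textrm{-reg}}\times_{\bfA/\!\!/W_0}\bfA$, so the base change of $\chi|_{G^{\sigma\textrm{-reg}}}$ along the finite flat surjection $\bfA\to\bfA/\!\!/W_0$ is identified with the map $q|_{\widetilde G^{\sigma\textrm{-reg}}}\colon\widetilde G^{\sigma\textrm{-reg}}\to\bfA$. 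Since $\bfA\to\bfA/\!\!/W_0$ is faithfully flat (even finite flat, by Lemma~\ref{L:branched locus}), smoothness of $\chi|_{G^{\sigma\textrm{-reg}}}$ is equivalent to smoothness of this base-changed map, i.e.\ of $q|_{\widetilde G^{\sigma\textrm{-reg}}}$. (One also needs that $\chi|_{G^{\sigma\textrm{-reg}}}$ is of finite type and flat to talk about smoothness via fibers; flatness already follows from Corollary~\ref{L:cl flat}, or again from the base change together with faithful flatness of $\bfA\to\bfA/\!\!/W_0$.)

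So the heart of the matter is to show that $q\colon\widetilde G\to\bfA$ is smooth when restricted to $\widetilde G^{\sigma\textrm{-reg}}$. First I would use the standard presentation $\widetilde G\cong G\times^B(B\sigma)$, under which $q$ is induced by the $B$-equivariant map $q_B\colon B\sigma\to\bfA$ (extended by the $G$-action); hence $\widetilde G^{\sigma\textrm{-reg}}\cong G\times^B B^{\sigma\textrm{-reg}}$ by the discussion preceding the proof of Proposition~\ref{P: regular cartesian}, and smoothness of $q$ over this locus reduces to smoothness of $q_B\colon B^{\sigma\textrm{-reg}}\to\bfA$ (the $G$-twist and the projection $G/B$ being smooth). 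Now $B^{\sigma\textrm{-reg}}$ is a smooth variety (it is open in $B$, which is smooth), $\bfA$ is smooth, and by Proposition~\ref{L: surj of reg}(1) the map $q_B\colon B^{\sigma\textrm{-reg}}\to\bfA$ is surjective; moreover by Proposition~\ref{L: surj of reg}(2) every fiber is a single $B$-orbit, hence smooth of the expected dimension $\dim B-\dim\bfA=\dim B-r$. A map between smooth varieties which is surjective with smooth equidimensional fibers (of the correct dimension) is smooth — alternatively one checks that the differential $dq_B$ is surjective at every point of $B^{\sigma\textrm{-reg}}$, which is exactly the content of Corollary~\ref{C: unr tangent needed}: it says $(\id-\Ad_b\sigma)^{-1}(\frakk)=\frakb$ where $\frakk=\Lie\ker q_B$, so the tangent map of $q_B$ at every $b\in B^{\sigma\textrm{-reg}}$ has kernel exactly $\frakb\cap\cdots$ of dimension $\dim B-r$ and is therefore surjective onto $\Lie\bfA$.

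Assembling: $q_B\colon B^{\sigma\textrm{-reg}}\to\bfA$ is a morphism of smooth varieties whose differential is everywhere surjective, hence it is smooth; therefore $q\colon\widetilde G^{\sigma\textrm{-reg}}\to\bfA$ is smooth; therefore its descent $\chi\colon G^{\sigma\textrm{-reg}}\to\bfA/\!\!/W_0$ along the finite flat cover $\bfA\to\bfA/\!\!/W_0$ is smooth, using that smoothness is fppf-local on the base. I expect the only genuinely delicate point to be the bookkeeping around the descent step — one must make sure that the isomorphism of Proposition~\ref{P: regular cartesian} really identifies the pullback of $\chi|_{G^{\sigma\textrm{-reg}}}$ with $q|_{\widetilde G^{\sigma\textrm{-reg}}}$ and not merely with something birational to it, and that $\bfA\to\bfA/\!\!/W_0$ being finite flat (Lemma~\ref{L:branched locus}) is enough to descend smoothness (it is, since finite flat surjective maps are fppf covers). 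Everything else is a direct application of results already in hand, so this corollary should be short.
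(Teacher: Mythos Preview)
Your approach is correct and is exactly the paper's: deduce smoothness of $\chi|_{G^{\sigma\textrm{-reg}}}$ from smoothness of $q|_{\widetilde G^{\sigma\textrm{-reg}}}$ via the isomorphism of Proposition~\ref{P: regular cartesian} and fppf descent along the finite flat cover $\bfA\to\bfA/\!\!/W_0$ from Lemma~\ref{L:branched locus}. One small remark: the smoothness of $q_B\colon B\to\bfA$ needs no work at all (it is a surjective homomorphism of smooth group schemes, hence smooth everywhere), so your appeal to Corollary~\ref{C: unr tangent needed} is off-target---that corollary concerns the differential of the twisted conjugation map $c_\sigma$, not of $q_B$---but this does not affect the argument since your primary justification via smooth equidimensional fibers is already sufficient.
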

\begin{proof}
This follows from the smoothness of $\widetilde G^{\sigma\textrm{-reg}}\to \bfA$ and the flatness of $\bfA\to\bfA/\!\!/W_0$.
\end{proof}

\begin{cor}
\label{P: GS resolution}
The map $\pi$ induces an isomorphism
$k[G\times_{\mathbf A/\!\!/W_0}\mathbf A]\cong \Gamma(\widetilde G,\mO)$. 
\end{cor}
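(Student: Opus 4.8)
The plan is to exhibit $\pi$ as a proper birational morphism onto a normal affine variety whose non-$\sigma$-regular locus is small, so that both $\Gamma(\widetilde G,\mathcal O)$ and $k[G\times_{\mathbf A/\!\!/W_0}\mathbf A]$ are computed on the $\sigma$-regular locus, where $\pi$ is already an isomorphism by Proposition~\ref{P: regular cartesian}. Write $Z:=G\times_{\mathbf A/\!\!/W_0}\mathbf A$ and $Z^{\sigma\textrm{-reg}}:=G^{\sigma\textrm{-reg}}\times_{\mathbf A/\!\!/W_0}\mathbf A\subset Z$. First I would record the ambient geometry: $\widetilde G=G\times^B(B\sigma)$ is a $B$-bundle over $G/B$, hence smooth and irreducible, while $Z$ is an irreducible, reduced, Cohen--Macaulay affine variety (a complete intersection in $G\times\mathbf A$), all of which is established in the proof of Proposition~\ref{P: regular cartesian}.

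Second, I would show that $Z-Z^{\sigma\textrm{-reg}}$ has codimension at least two in $Z$. By Corollary~\ref{L: codim of reg}, $B-B^{\sigma\textrm{-reg}}$ has codimension $\ge 2$ in $B$; since $\widetilde G^{\sigma\textrm{-reg}}=G\times^B B^{\sigma\textrm{-reg}}$, the complement $\widetilde G-\widetilde G^{\sigma\textrm{-reg}}=G\times^B(B-B^{\sigma\textrm{-reg}})$ has dimension $\le\dim G-2$ (the bundle $G\times^B(-)$ over $G/B$ preserves codimension in the fibre $B$). As $\widetilde G\to G$ is surjective and $\widetilde G^{\sigma\textrm{-reg}}$ is exactly the preimage of $G^{\sigma\textrm{-reg}}$, the subset $G-G^{\sigma\textrm{-reg}}$ is the image of $\widetilde G-\widetilde G^{\sigma\textrm{-reg}}$, so $\dim(G-G^{\sigma\textrm{-reg}})\le\dim G-2$. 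Finally $\mathbf A\to\mathbf A/\!\!/W_0$ is finite flat by Lemma~\ref{L:branched locus}, so base change gives $Z-Z^{\sigma\textrm{-reg}}=(G-G^{\sigma\textrm{-reg}})\times_{\mathbf A/\!\!/W_0}\mathbf A$ of codimension $\ge 2$ in $Z$.

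Third, I would deduce that $Z$ is normal: by Proposition~\ref{P: regular cartesian} the open subscheme $Z^{\sigma\textrm{-reg}}$ is isomorphic to the smooth variety $\widetilde G^{\sigma\textrm{-reg}}$, so the singular locus of $Z$ is contained in $Z-Z^{\sigma\textrm{-reg}}$; thus $Z$ is regular in codimension one, and being Cohen--Macaulay it satisfies $S_2$, so Serre's criterion gives normality. With this in hand, the conclusion is immediate: since $\widetilde G$ is smooth (hence normal) and $\widetilde G-\widetilde G^{\sigma\textrm{-reg}}$ has codimension $\ge 2$, restriction of regular functions gives $\Gamma(\widetilde G,\mathcal O)=\Gamma(\widetilde G^{\sigma\textrm{-reg}},\mathcal O)$; likewise normality of $Z$ and the codimension estimate give $k[Z]=\Gamma(Z^{\sigma\textrm{-reg}},\mathcal O)$. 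The isomorphism $\pi\colon\widetilde G^{\sigma\textrm{-reg}}\xrightarrow{\sim}Z^{\sigma\textrm{-reg}}$ identifies the right-hand sides, and unwinding definitions the resulting identification $k[Z]\xrightarrow{\sim}\Gamma(\widetilde G,\mathcal O)$ is precisely $\pi^\ast$.

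The only step demanding any care is the normality of $Z$, and inside it the codimension estimate for $G-G^{\sigma\textrm{-reg}}$; but as indicated this drops out by pushing the bound of Corollary~\ref{L: codim of reg} on $B$ forward through the twisted Grothendieck--Springer map, so I do not expect a genuine obstacle---the substance is already contained in Proposition~\ref{P: regular cartesian} and its proof.
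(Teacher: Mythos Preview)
Your proof is correct and follows essentially the same route as the paper: establish that $Z=G\times_{\mathbf A/\!\!/W_0}\mathbf A$ is normal via Serre's criterion (complete intersection $+$ smooth outside codimension two, the latter coming from Proposition~\ref{P: regular cartesian} and Corollary~\ref{L: codim of reg}), then conclude. The only cosmetic difference is your final step: the paper invokes ``$\pi$ proper birational onto a normal target $\Rightarrow$ isomorphism on global regular functions'' in one breath, whereas you apply Hartogs separately on $\widetilde G$ and on $Z$ and match over the $\sigma$-regular locus; both are equivalent packagings of the same content.
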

\begin{proof}
It is enough to show that $G\times_{\mathbf A/\!\!/W_0}\mathbf A$ is an integral normal scheme. Then it follows that $\pi$ induces an isomorphism between rings of global regular functions,  since it is proper birational.

We already seen that $G\times_{\mathbf A/\!\!/W_0}\mathbf A$ is integral, and a complete intersection. By Proposition~\ref{P: regular cartesian}, $G^{\sigma\textrm{-reg}}\times_{\mathbf A/\!\!/W_0}\mathbf A$ is smooth. By Corollary~\ref{L: codim of reg}, the complement of $G^{\sigma\textrm{-reg}}\times_{\mathbf A/\!\!/W_0}\mathbf A$ in $G\times_{\mathbf A/\!\!/W_0}\mathbf A$ has codimension at least two. 
It follows from this lemma that $G\times_{\mathbf A/\!\!/W_0}\mathbf A$ is normal. 
\end{proof}

\begin{cor}
\label{C:fiber regular}
\begin{enumerate}
\item For every $a\in\bfA/\!\!/W_0$, $\chi^{-1}(a)\cap G^{\sigma\textrm{-}\mathrm{reg}}$, is a single $G$-orbit, and the codimension of the complement of this $G$-orbit in $\chi^{-1}(a)$ at least two.
\item Each fiber of $\chi$ is a complete intersection and normal variety. 
\end{enumerate}
\end{cor}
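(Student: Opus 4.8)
The plan is to deduce both statements from the structural results already established, principally Proposition~\ref{P: regular cartesian}, Corollary~\ref{L: codim of reg}, Proposition~\ref{L: surj of reg}, and the complete intersection / normality facts proved in the course of Corollary~\ref{P: GS resolution}. First I would fix $a \in \mathbf A/\!\!/W_0$ and pick a point $\bar a \in \mathbf A$ lying over it; since $\mathbf A \to \mathbf A/\!\!/W_0$ is finite flat (Lemma~\ref{L:branched locus}), the fiber $\chi^{-1}(a) \times_{\mathbf A/\!\!/W_0} \mathbf A$ is identified with a disjoint-type union of copies of $\chi^{-1}(a)$ indexed by the $W_0$-orbit of $\bar a$. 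By Proposition~\ref{P: regular cartesian}, $\widetilde G^{\sigma\textrm{-reg}} \cong G^{\sigma\textrm{-reg}} \times_{\mathbf A/\!\!/W_0} \mathbf A$, and the fiber of $q\colon \widetilde G \to \mathbf A$ over $\bar a$ is $G \times^B (q_B^{-1}(\bar a)\,\sigma)$; by Proposition~\ref{L: surj of reg}(2) the $\sigma$-regular part of $q_B^{-1}(\bar a)$ is a single $B$-orbit, so the $\sigma$-regular part of $q^{-1}(\bar a)$ is a single $G$-orbit. Transporting this through the isomorphism of Proposition~\ref{P: regular cartesian} shows that $\chi^{-1}(a) \cap G^{\sigma\textrm{-reg}}$ is a single $G$-orbit, which is part (1).

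For the codimension statement in part (1), I would use Corollary~\ref{L: codim of reg}: the complement $B - B^{\sigma\textrm{-reg}}$ has codimension $\geq 2$ in $B$. Intersecting with $q_B^{-1}(\bar a)$ and using that $q_B$ is (by Lemma~\ref{L: isomorphism locus}(1) on the regular locus, and flatness of $q_B \colon B \to \mathbf A$ which follows from the surjectivity onto the smooth $\mathbf A$ together with equidimensionality of fibers) equidimensional with irreducible fibers, one gets that $q_B^{-1}(\bar a) - B^{\sigma\textrm{-reg}}$ has codimension $\geq 2$ in $q_B^{-1}(\bar a)$; inducing up to $G$ via $G \times^B (-)$ preserves this, and then transporting along $\widetilde G^{\sigma\textrm{-reg}} \cong G^{\sigma\textrm{-reg}} \times_{\mathbf A/\!\!/W_0}\mathbf A$ and projecting to $G$ (a finite, hence codimension-preserving, map after the flat base change $\mathbf A \to \mathbf A/\!\!/W_0$) yields that the complement of the open $G$-orbit in $\chi^{-1}(a)$ has codimension $\geq 2$.

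For part (2), the complete intersection property is inherited fiberwise: $\chi \colon G \to \mathbf A/\!\!/W_0$ is faithfully flat (Corollary~\ref{L:cl flat}) with $G$ smooth and $\mathbf A/\!\!/W_0$ a smooth affine space (Lemma~\ref{L:branched locus}), so every fiber is a complete intersection of the expected dimension, in particular Cohen--Macaulay. For normality, by Serre's criterion it suffices to check regularity in codimension one. By part (1), the singular locus of $\chi^{-1}(a)$ is contained in the complement of $\chi^{-1}(a) \cap G^{\sigma\textrm{-reg}}$, which has codimension $\geq 2$; combined with Cohen--Macaulayness (which gives $S_2$ automatically), Serre's criterion gives normality.

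The main obstacle I expect is the codimension bookkeeping in part (1): one must be careful that passing from the codimension-$\geq 2$ statement for $B - B^{\sigma\textrm{-reg}}$ in $B$ to the corresponding statement inside a single fiber $q_B^{-1}(\bar a)$ genuinely preserves the codimension bound — this requires knowing that $q_B$ restricted to $B - B^{\sigma\textrm{-reg}}$ does not have fibers jumping in dimension, which in turn rests on the $\sigma$-twisted Jordan decomposition (Lemma~\ref{L: Jordan decom2}) and the explicit orbit description of Proposition~\ref{L: surj of reg}(2) applied to each $G_t$. Everything else is a formal consequence of flatness, the Cartesian square of Proposition~\ref{P: regular cartesian}, and Serre's criterion.
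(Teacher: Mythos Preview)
Your single-orbit argument in part (1) and all of part (2) match the paper's proof essentially verbatim. The genuine issue is the codimension step in part (1).

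Your claim that $q_B^{-1}(\bar a)\setminus B^{\sigma\textrm{-reg}}$ has codimension $\geq 2$ in $q_B^{-1}(\bar a)$ is \emph{false} in general, and no amount of bookkeeping with Corollary~\ref{L: codim of reg} will rescue it. Take $\sigma=\id$ and $\bar a$ the identity of $\mathbf A$: then $q_B^{-1}(\bar a)=U$, and by Proposition~\ref{L: unipotent regular}(4) the non-regular locus is $U\setminus\mathring U=q_U^{-1}(W\setminus\mathring W)$, which is a union of coordinate hyperplanes and hence has codimension exactly $1$. The global codimension-$2$ statement in Corollary~\ref{L: codim of reg} comes from the fact that the non-regular locus lives over a proper closed subset of $\mathbf A$, so it simply does not restrict fiberwise.

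The paper's argument avoids this trap: it only uses that the non-regular part of $q^{-1}(\tilde a)$ has codimension $\geq 1$ (immediate from Proposition~\ref{L: surj of reg}(2)), and then invokes Corollary~\ref{L: regular springer fiberII} to say that the fibers of $\widetilde G\to G$ over non-regular points are \emph{positive-dimensional}. The image in $\chi^{-1}(a)$ of the non-regular part of $q^{-1}(\tilde a)$ therefore drops in dimension by at least one more, yielding codimension $\geq 2$ in $\chi^{-1}(a)$. That Springer-fiber dimension input is the missing idea in your outline; once you replace your codimension transfer by this, the rest of your plan goes through exactly as in the paper.
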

\begin{proof}
(1) Pick $\tilde a \in \bfA(k)$ that lifts $a \in \bfA/\!\!/W_0$. By Proposition~\ref{P: regular cartesian}, the fiber $\chi^{-1}(a) \cap G^{\sigma\textrm{-reg}}$ is isomorphic to the fiber of $q: \widetilde G^{\sigma\textrm{-reg}} \cong G \times^B B^{\sigma\textrm{-reg}} \to \bfA$ at $\tilde a$,  which is clearly a single $G$-orbit by Proposition~\ref{L: surj of reg} (2). On the other hand, by Corollary~\ref{L: regular springer fiberII}, the fibers of $\widetilde{G}\to G$ over $G- G^{\sigma\textrm{-}\mathrm{reg}}$ have positive dimension. Since $q_B^{-1}(\tilde a)- (q^{-1}_B(\tilde a)\cap B^{\sigma\textrm{-}\mathrm{reg}})$
is a proper closed subset of $q^{-1}_B(\tilde a)$, $q^{-1}(a)-(q^{-1}(a)\cap G^{\sigma\textrm{-}\mathrm{reg}})$ has codimension at least two in $q^{-1}(a)$.

(2) Since $\chi: G \to \bfA /\!\!/ W_0$ is flat, each of its fiber is a complete intersection and hence Cohen--Macaulay. By (1), each fiber of $\chi$ contains a (smooth) $G$-orbit whose complement has codimension at least two. So the fiber is also normal.
\end{proof}

\section{The determinant of the pairing $\bfJ(V)\otimes\bfJ(V^*)\to \bfJ$}
\label{S:determinant}

\begin{assumption}
\label{H:hypo in section 6}
In this section, assume that $\on{char} k >2$. Let $G$ be a simply-connected semisimple group over $k$. Let $V$ be a finite dimensional representation of $G$ and $V^*$  the dual representation. Assume that both $V$ and $V^*$ admit  good filtrations.
\end{assumption}

We keep conventions and notations as in \S~\ref{S:automorphism}.
Then $\widetilde{V^*}$ is the dual of $\widetilde V$ as a vector bundle on $[G\sigma/G]$. The perfect pairing $\widetilde{V}\otimes\widetilde{V^*}\to\mO_{[G\sigma/G]}$ induces a $\bfJ$-bilinear pairing $\bfJ(V)\otimes\bfJ(V^*)\to \bfJ$ of global sections, which however is not perfect in general. Our main result (Theorem~\ref{T:determinant of intersection}) calculates the determinant of this pairing of two finite free $\bfJ$-modules. A main intermediate step is to study the failure of the surjectivity of the twisted Chevalley restriction homomorphism \eqref{AE:generalized HC morphism}.

\subsection{Main results}
\begin{dfn}
\label{D:zeta pm}
For each $\sigma$-orbit $\calO \subset \Phi(G, T)$, we choose $\alpha \in \calO$ and view it as a character of $T^\sigma$ by restriction, which is clearly independent of the choice of $\alpha$.
Define the number
\[
\zeta_{\mO} = \zeta_{\mO}(V): = \sum_{n \geq 1}
\dim V|_{ T^\sigma}( n \alpha ).
\]
\end{dfn}
The main theorem of this section is the following.

\begin{theorem}
\label{T:determinant of intersection}
Keep Assumption~\ref{H:hypo in section 6}.
The determinant of the natural $\bfJ$-bilinear pairing\footnote{Taking the determinant makes sense because Theorem~\ref{T:Kostant} shows that $\bfJ(V)$ and $\bfJ(V^*)$ are both free $\bfJ$-modules of the same rank.}
\begin{equation}
\label{E:intersection matrix}
\langle \cdot,\cdot \rangle_V: \bfJ(V)\otimes \bfJ(V^*)\to \bfJ
\end{equation}
is of the form
\begin{equation}
\label{E:determinant}
\big(\textrm{some unit in }k  \big)\cdot 
\prod_{\mO \textrm{ type }\sfA \textrm{ or }\sfB\sfC^-} (e^{\alpha_\mO}-1)^{\zeta_{\mO}}\cdot \prod_{\mO \textrm{ type }\sfB\sfC^+} (e^{\alpha_\mO}+1)^{\zeta_{\mO}}.
\end{equation}
\end{theorem}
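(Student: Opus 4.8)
The strategy is to reduce the computation of $\det \langle\cdot,\cdot\rangle_V$ to a local computation along each of the divisors $\mathbf A_{\mO}$, and then identify the order of vanishing there with $\zeta_{\mO}$. First I would transport everything through the twisted Chevalley isomorphism $\bfJ \cong k[\mathbf A]^{W_0}$ (Proposition~\ref{AE:classical HC morphism}) and the twisted Chevalley restriction map $\Res_V^\sigma : \bfJ(V) \to (k[T]\otimes V)^{c_\sigma(N_0)}$ of \eqref{AE:generalized HC morphism}. By Lemma~\ref{L:generic Chevalley}, $\Res_V^\sigma$ becomes an isomorphism over a dense open of $T^{\sigma\textrm{-reg}}/\!\!/c_\sigma(N_0)$, and the target carries a tautological perfect pairing against $(k[T]\otimes V^*)^{c_\sigma(N_0)}$ whose determinant, as a divisor on $\mathbf A/\!\!/W_0$, is easy to compute: the fiber of $(k[T]\otimes V)^{c_\sigma(N_0)}$ over a point of $T^{\sigma\textrm{-reg}}$ is $V^{T^\sigma}$ with the evident pairing, which is a unit there. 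Thus $\det \langle\cdot,\cdot\rangle_V$ is (up to a unit) supported on the complement of the regular semisimple locus, i.e. on $\cup_\mO \mathbf A_{\mO}$.

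Next I would compute the multiplicity along a fixed $\mathbf A_{\mO}$ by localizing at its generic point. Working over the open set $\mathbf A^{[\mO]}$ (resp.\ $B^{[\mO]}$, $G^{[\mO]}$) where only the single $\sigma$-orbit $\mO$ degenerates, Lemma~\ref{L: isomorphism locus}(2) and Lemma~\ref{L: isomorphism locus2} let me factor the $\sigma$-conjugation and reduce $\bfJ(V)$ over this locus to the analogous space for the rank-one (or rank-two-for-$\sfB\sfC$) subgroup $G_{\mO,\der}$, with $V$ replaced by its restriction. Concretely, using the description of $G_{\mO,\der}$ in Lemma~\ref{L:rankonegroup} — a product of $\SL_2$'s (type $\sfA$ or $\sfB\sfC^+$) or of $\SL_3$'s (type $\sfB\sfC^-$), with $\sigma$ permuting/twisting the factors — and the tensor-induction reduction of Lemma~\ref{L:reduce from G to G0}, I can collapse the several factors to a single $\SL_2$ (resp.\ $\SL_3$) with an appropriate automorphism $\sigma_0$, so the whole computation is reduced to: $G = \SL_2$ with $\sigma_0 = \id$ or the order-two automorphism twisting by $\mathrm{diag}(1,-1)$, and $G = \SL_3$ with the pinned involution of Example~\ref{Ex:folding A2n}. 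Here the character $\alpha$ restricted to $T^\sigma$ is the relevant coordinate, $\mathbf A$ is one-dimensional, and $V|_{T^\sigma}$ decomposes into $n\alpha$-weight pieces; the number $\zeta_\mO = \sum_{n\ge 1}\dim V|_{T^\sigma}(n\alpha)$ is exactly the quantity that will appear.

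The rank-one computation itself is the technical heart and the main obstacle. For $\SL_2$ with $\sigma_0 = \id$: the twisted Chevalley map is $g \mapsto \mathrm{tr}(g)$, $\mathbf A/\!\!/W_0 \cong \AAA^1$ via $e^\alpha + e^{-\alpha}$, and the branch point is $e^\alpha = 1$ (i.e.\ $\mathrm{tr} = 2$); one shows directly (using an explicit basis of $\bfJ(V)$ built from matrix coefficients as in \S~\ref{SS: basis}, or by Kostant-style reasoning) that the pairing matrix, in a basis adapted to the weight decomposition of $V$, is diagonal with $(n\alpha)$-block contributing a factor $(e^\alpha - 1)^{\dim V(n\alpha)}$ up to units — so the total order along $\mathbf A_\mO$ is $\sum_{n\ge 1}\dim V(n\alpha) = \zeta_\mO$. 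The case $\sigma_0$ nontrivial on $\SL_2$ (type $\sfB\sfC^+$) is the same computation with the branch locus shifted to $e^{\alpha_\mO} = -1$, which is where the sign change in \eqref{E:determinant} comes from; this is precisely Lemma~\ref{L:type BC+ root} manifesting in the pairing. The $\SL_3$-with-involution case (type $\sfB\sfC^-$) needs the structure of $\Phi(\mathrm{SO}_3, A)$: the relevant short root restricts to $\alpha|_{T^\sigma}$ and the divisor is again $e^{\alpha_\mO} = 1$, but one must check the weight bookkeeping matches $\zeta_\mO$, using that $|\mO^-| = 2|\mO^+|$ and the contribution of the type-$\sfB\sfC^+$ orbit $\mO^+$ is absorbed. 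I expect the cleanest route is not a brute-force matrix determinant but a localization/deformation argument: restrict the family $\chi_+ : V_G \to V_G/\!\!/c_\sigma(G)$ to a curve transverse to $\mathbf A_\mO$, use that $\bfJ_+(V)$ is finite free (Theorem~\ref{T:Kostant}) so the pairing is a family of bilinear forms, and compute the degeneration of the form at the special fiber via the Grothendieck--Springer picture of \S~\ref{S:twisted Springer resolution} (Proposition~\ref{P: regular cartesian} identifies the regular locus with a fiber product over $\mathbf A$, which lets one read off the rank drop of the pairing in terms of which MV basis vectors land in deeper filtration steps). Summing the local contributions over all $\sigma$-orbits $\mO$ gives \eqref{E:determinant}.
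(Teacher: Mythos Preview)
Your plan is essentially the paper's own strategy: factor the pairing determinant as the product of the determinants of $\Res_V^\sigma\otimes 1$ and $\Res_{V^*}^\sigma\otimes 1$ (the pairing on $\bfJ_T(V)\times\bfJ_T(V^*)$ being perfect), then compute the length of the modification of $\Res_V^\sigma\otimes 1:\bfJ(V)\otimes_\bfJ\bfJ_T\to\bfJ_T(V)$ at each generic point of $A_\mO$ by reducing to rank-one subgroups via Lemmas~\ref{L: isomorphism locus}, \ref{L: isomorphism locus2} and tensor induction (Lemma~\ref{L:reduce from G to G0}).

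Two technical points where the paper's execution sharpens your sketch. First, the bridge from $\bfJ(V)\otimes_\bfJ\bfJ_T$ to something on which one can localize in $B$ is the identification $\bfJ(V)\otimes_\bfJ\bfJ_T\cong\bfJ_B(V)$ (Lemma~\ref{L:JG and JB}), proved via the Grothendieck--Springer resolution through Corollary~\ref{P: GS resolution} ($\pi_*\mO=\mO$); you allude to this but it is the step that actually makes the ``localize at $\bfA^{[\mO]}$ and pass to $G_\mO$'' reduction rigorous. Second, the paper's case list is shorter than yours: it reduces to exactly two explicit computations, $\SL_2$ with $\sigma=\id$ (Lemma~\ref{L:modification SL2}, a direct calculation on $\ttS_n$ giving modification length $n/2$) and $\SL_3$ with the involution at the type-$\sfB\sfC^-$ orbit (Lemma~\ref{L:modification SL3}). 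The $\SL_3$ case is not handled by brute force but by identifying $\bfJ_{\SL_3}(V)$ locally near the relevant divisor with $\bfJ_{\SO_3}(V)$ via the principal $\SL_2$ embedding $\SO_3=(\SL_3)^\sigma$ (using that the twisted centralizer in $\SL_3$ agrees with the ordinary centralizer in $\SO_3$ at the chosen regular section), thereby feeding back into the $\SL_2$ computation. Your proposed separate ``$\SL_2$ with inner twist by $\mathrm{diag}(1,-1)$'' treatment of type $\sfB\sfC^+$ would also work, but the paper folds it into the same two cases.
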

\begin{rmk}
\label{R: pairing}
\begin{enumerate}
\item Note that  $\zeta_{\mO}$ depends only on the $W_0$-orbits of $\mO$, and therefore the product in \eqref{E:determinant} does belong to $\bfJ$.
\item When $\sigma$ is trivial, the rank of $\bfJ(V)$ as a $\bfJ$-module is $r_V = \dim V(0)$ by Theorem~\ref{T:Kostant}. In this case, then $\zeta_\calO = \zeta_\alpha$ can be alternatively computed via the multi-filtration on the weight spaces introduced in \S~\ref{S:the filtration} as
$$
\zeta_\alpha = \sum_{\nu \in \XX^\bullet(T)^+} \langle \nu, \check \alpha\rangle \cdot  \dim \gr_\nu V(0).
$$
\item Similarly we have the pairing $\bfJ_+(V)\otimes\bfJ_+(V^*)\to \bfJ_+$. Its restriction to $\Spec \bfJ$ is the pairing in the theorem, and restriction to the point $x_1\in\Spec\bfJ_0$ as defined in Remark~\ref{r:special point} is the natural pairing between $(V|_{T^\sigma})(0)$ and $(V|_{T^\sigma})^*(0)$.
\item When $\on{char} k=0$, we have explained in \S~\ref{SS: basis}  a construction of a basis of $\bfJ(V)$ (resp. $\bfJ(V^*)$) from a certain basis of $V$ (resp. $V^*$) (e.g. the MV basis used in \S~\ref{SS: fil via geomSat}). Then the $\langle \cdot,\cdot \rangle_V$ is represented by a square matrix. It seems to be an interesting (although probably difficult) question to calculate the entries of this matrix explicitly. See \S~\ref{SS:example of matrix} for some calculations and further discussions. We also refer to \cite{XZ} for the arithmetic and  geometric meaning of this square matrix. 
\end{enumerate}
\end{rmk}

In what follows, we shall relate the pairing \eqref{E:intersection matrix} to the twisted Chevalley restriction map \eqref{AE:generalized HC morphism}. As we will show in Lemma~\ref{L:modification =>determinant} that Theorem~\ref{T:determinant of intersection} follows from Proposition~\ref{P:modification of vector bundles over S} below.
More precisely, we will not study the Chevalley restriction homomorphism \eqref{AE:generalized HC morphism} itself, but rather the induced $\bfJ_T$-module homomorphism
\begin{equation}
\label{E:Res otimes 1}
\Res_V^\sigma \otimes 1: \bfJ_G(V) \otimes_{\bfJ_G} \bfJ_T \to \bfJ_T(V),
\end{equation}
where explicitly, we may write the target as
\begin{equation}
\label{E:explicit JT(V)}
\bfJ_T(V) = (k[T] \otimes V)^{c_\sigma(T)} \cong \bigoplus_{\xi \in (\sigma-1)\XX^\bullet(T)} k[A] e^{\nu_\xi} \otimes V(\xi)
\end{equation}
with $\nu_\xi \in \XX^\bullet(T)$ some weight such that $\sigma(\nu_\xi) - \nu_\xi = \xi$. In particular, if $\sigma = \id$, $\bfJ_T(V) \cong k[T] \otimes V(0)$.

We view $\Res_V^\sigma \otimes 1$ as a morphism of coherent sheaves over $A$.
By Lemma~\ref{L:generic Chevalley} (which relies on Lemma~\ref{E: sigmaregularinT}), it is an isomorphism over
$\mathring{A}$ (which is defined in Definition~\ref{D: divisor for roots}).

Now, let $\eta$ be a generic point of the divisor $\bigcup_{\mO} A_{\mO}$ (which is reduced since we assumed that $\on{char} k >2$; see Remark~\ref{R: irr and red of AO}). Then the complete local ring of $A$ at $\eta$ is isomorphic to $k(\eta)[[\varpi]]$, where $\varpi=e^{\al_\mO}-1$ or $e^{\al_\mO}+1$ for some $\sigma$-orbit $\mO\subset \Phi(G,T)$. 
Note that $\bfJ_G(V)$ is always a torsion free $\bfJ_G$-module (even if $V$ does not admit a good filtration). Therefore $\bfJ_G(V)\otimes_{\bfJ_G}k(\eta)[[\varpi]]$ is always free, and \eqref{E:Res otimes 1}, base changed to $k(\eta)[[\varpi]]$, is a map $\Res_\eta$ of finite free $k(\eta)[[\varpi]]$-modules which becomes an isomorphism when further base changed to $k(\eta)((\varpi))$. (Such map is called a \emph{modification} of vector bundles on $\Spec k(\eta)[[\varpi]]$ (in the sense as in \cite[\S~3.1.3]{XZ}). 
The top exterior power of this map $\Res_\eta$ is an element in $k(\eta)[[\varpi]]-\{0\}$, well-defined up to multiplying an element in $k[[\varpi]]^\times$, and therefore gives a well-defined element in 
$$(k(\eta)[[\varpi]]-\{0\})/k(\eta)[[\varpi]]^\times\cong \bZ_{\geq 0}.$$ 
We call this number \emph{the length of the modification}.

Here is the main result regarding the map \eqref{E:Res otimes 1}.

\begin{proposition}
\label{P:modification of vector bundles over S}
For every $G$-module $V$ with good filtration, the length of the modification \eqref{E:Res otimes 1} at every generic point of $A_{\mO}$ is exactly $\zeta_{\mO}$.
\end{proposition}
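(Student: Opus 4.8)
The strategy is to reduce the computation of the length of the modification \eqref{E:Res otimes 1} at a generic point $\eta$ of $A_{\calO}$ to a computation inside the rank-one subgroup $G_{\calO}$ (or the reductive subgroup $G_t$ for $t$ a generic point of $A_{\calO}$) attached to the $\sigma$-orbit $\calO$, where everything can be made explicit. First I would localize: the complete local ring of $A$ at $\eta$ is $k(\eta)[[\varpi]]$ with $\varpi = e^{\alpha_\calO}\mp 1$, and I want to identify $\bfJ_G(V)\otimes_{\bfJ_G}k(\eta)[[\varpi]]$ and $\bfJ_T(V)\otimes_{\bfJ_T}k(\eta)[[\varpi]]$ together with the map between them. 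The key geometric input is Lemma~\ref{L: isomorphism locus} (2) and Lemma~\ref{L: isomorphism locus2}, which say that over the open set $B^{[\calO]}$ the twisted conjugation action of $B$ is ``free in the $U^{\calO}$-directions''; combined with Proposition~\ref{L: surj of reg} and the description of $G_t$ in Lemma~\ref{L: pinned auto of Gt} and Lemma~\ref{L:rankonegroup}, this should let me replace $(G,\sigma)$ near $\eta$ by $(G_t, t\sigma)$ and hence by a product of copies of $\SL_2$ or $\SL_3$ with the appropriate pinned automorphism.

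The next step is the rank-one (or rank-one-folded) computation. For $G_{\calO,\der}$ a product of $\SL_2$'s (types $\sfA$, $\sfB\sfC^+$) or $\SL_3$'s (type $\sfB\sfC^-$), with $\sigma$ permuting the factors and acting on each cluster as in Example~\ref{Ex:folding A2n}, Lemma~\ref{L:reduce from G to G0} (tensor induction) lets me collapse the permuted factors and reduce to a single $\SL_2$ (resp.\ $\SL_3$) with $\tau = \mathrm{id}$ (resp.\ the order-two pinned automorphism), up to the extra twist by $\mathrm{Ad}\,\mathrm{diag}(1,-1)$ in the $\sfB\sfC^+$ case which accounts for the $e^{\alpha_\calO}+1$ versus $e^{\alpha_\calO}-1$. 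Then I must restrict $V$ to $G_{\calO}$, use that it still admits a good filtration (Theorem~\ref{T:prop of good fil} (1)), decompose it into $G_{\calO,\der}$-isotypic pieces, and reduce further to the irreducible (Schur) modules of $\SL_2$ and $\SL_3$. For each such module I compute the modification length directly: for $\SL_2$ this is the classical computation of how $(k[\SL_2]\otimes \ttS_n)^{\SL_2}$ sits inside $(k[T]\otimes \ttS_n)^{N_0}$ near $e^\alpha = 1$, and one sees the local cokernel has length equal to $\sum_{m\geq 1}\dim \ttS_n(m\alpha) = \lfloor n/2\rfloor + \lfloor (n-2)/2\rfloor + \cdots$; summing over the good filtration gives $\zeta_{\calO}$. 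The $\SL_3$/folded case is the analogous but slightly longer bookkeeping, using the explicit coordinates in Lemma~\ref{L: isomorphism locus2}.

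To package this cleanly I would prove an intermediate statement: for $b\in B^{\sigma\textrm{-reg}}$ lying over $\eta$, the fiber of $\Res_V^\sigma$ is governed by $\ker(\mathrm{id}-\mathrm{Ad}_b\sigma)$ acting on $V$, and the jump in dimension from the generic fiber $V^{T^\sigma}(0)$ (of dimension $r_V$) to the special fiber $V^{I_b}$ is exactly controlled by the non-trivial eigenvalues; Proposition~\ref{L: surj of reg} (3)--(4) and Corollary~\ref{C: unr tangent needed} give the needed control on tangent spaces to guarantee that the modification is ``transverse'' so that its length is additive and computable fiberwise. Then the total length is $\sum_{\nu}\dim\mathrm{Hom}_{G}(\ttW_\nu,V)\cdot(\text{length for }\ttS_\nu)$, and an elementary weight-counting identity inside each $\ttS_\nu$ (sorting weights of $\ttS_\nu$ restricted to $T^\sigma$ by their pairing with $\alpha$) turns this into $\sum_{n\geq 1}\dim V|_{T^\sigma}(n\alpha) = \zeta_{\calO}$.

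\textbf{Main obstacle.} The hard part will be making the reduction from $G$ to $G_t$ genuinely rigorous at the level of the \emph{modules} $\bfJ_G(V)$ and $\bfJ_T(V)$ — i.e.\ showing that localizing at $\eta$ and the ``restriction to $G_t$'' operation are compatible, so that the length of the modification for $(G,V)$ at $\eta$ literally equals the length for $(G_t, V|_{G_t})$ at the corresponding point. This requires combining the local structure results of \S\ref{S:automorphism} (Lemmas~\ref{L: Jordan decom}, \ref{L: isomorphism locus}, \ref{L: isomorphism locus2}, Proposition~\ref{L: surj of reg}) with a careful descent argument à la Lemma~\ref{L: fpqc descent for JV}, and in positive characteristic one must be attentive to non-reducedness issues (handled by the hypothesis $\mathrm{char}\,k>2$, cf.\ Remark~\ref{R: irr and red of AO}) and to the fact that good filtrations, not just characters, are what make the dimension counts work. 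Once that reduction is in place, the $\SL_2$ and $\SL_3$ computations, though tedious, are routine.
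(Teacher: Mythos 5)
Your reduction skeleton is essentially the paper's: pass from $\bfJ_G(V)\otimes_{\bfJ_G}\bfJ_T$ to $\bfJ_B(V)$ (the paper does this via Corollary~\ref{P: GS resolution} and the projection formula in Lemma~\ref{L:JG and JB}), then to $\bfJ_{B_\calO}(V)$ over $\bfA^{[\calO]}$ using Lemmas~\ref{L: isomorphism locus} and \ref{L: isomorphism locus2}, then use the central isogeny $G_{\calO,\der}\times Z_{G_\calO}\to G_\calO$ with Lemma~\ref{L: fpqc descent for JV} plus tensor induction (Lemma~\ref{L:reduce from G to G0}) to land on $\SL_2$ or $\SL_3$. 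The $\SL_2$ computation you describe is exactly the paper's Lemma~\ref{L:modification SL2}. So the architecture is right.

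There is, however, a genuine gap in your third paragraph. You propose an ``intermediate statement'' in which the modification length is read off from the dimension jump between the generic fiber $V^{T^\sigma}(0)$ and the special fiber $V^{I_b}$, using Proposition~\ref{L: surj of reg} and Corollary~\ref{C: unr tangent needed} to guarantee the modification is ``transverse.'' This does not work: the modification is \emph{not} transverse. Already for $G=\SL_2$ and $V=\ttS_n$ with $n$ even, Lemma~\ref{L:modification SL2} shows the cokernel of $\Res^\sigma_V\otimes 1$ localized at $\eta$ is $k(\eta)[[\varpi]]/(\varpi^{n/2})$ --- a \emph{single} elementary divisor $\varpi^{n/2}$, not a sum of $n/2$ copies of $k(\eta)[[\varpi]]/(\varpi)$. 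A fiber-dimension jump would therefore record $1$, not $n/2$. Said differently, the length of a modification is the valuation of its determinant, not the number of vanishing elementary divisors, and the two only coincide in the transverse case. You cannot bypass the actual local computation this way.

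There is also a divergence worth knowing about in the $\SL_3$ folded case. You describe it as ``analogous but slightly longer bookkeeping, using the explicit coordinates in Lemma~\ref{L: isomorphism locus2}.'' A direct computation of $(\calO_{B}\otimes \ttS_{a\omega_1+b\omega_2})^{B,c_\sigma}$ for $B\subset\SL_3$ with the twisted action is in fact substantially more involved than the $\SL_2$ case, and the paper avoids it entirely. Instead (Lemma~\ref{L:modification SL3}) it embeds the principal $\SL_2$, identified with $H=\SO_3=(\SL_3)^\sigma$, and proves that $\bfJ_G(V)\otimes_{\bfJ_G}k[[u]]\to\bfJ_H(V)\otimes_{\bfJ_H}k[[v]]$ is an isomorphism near the generic point of $A_\calO$. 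The key input is that the relevant fiber of the (twisted) Chevalley map is a single regular orbit (Corollary~\ref{C:fiber regular}, which depends on the twisted Grothendieck--Springer analysis), together with a matching $Z_H(g)=I_h$ of stabilizers along a formal arc. This imports the $\SL_2$ answer with essentially no new calculation; the only remaining point is to check that $V|_{\SO_3}$ still admits a good filtration (which is \emph{not} covered by Theorem~\ref{T:prop of good fil}(1), since $\SO_3$ is not a Levi), and the paper handles that by a small induction on Schur modules. Be aware also that this step and Remark~\ref{R: irr and red of AO} are precisely where the hypothesis $\on{char}k>2$ enters.
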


This proposition will be proved in \S~\ref{SS: proof of prop mod}. We note the following first.

\begin{lem}
\label{L:modification =>determinant}
Proposition~\ref{P:modification of vector bundles over S} implies Theorem~\ref{T:determinant of intersection}.
\end{lem}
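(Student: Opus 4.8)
The plan is to show that the determinant of the pairing \eqref{E:intersection matrix}, as a divisor on $\Spec \bfJ = \bfA/\!\!/W_0$, is supported on the branch locus $\cup_{\mO} \bfA_{\mO}/\!\!/W_0$, and that its multiplicity along each component equals the length of the modification computed in Proposition~\ref{P:modification of vector bundles over S}. First I would note that $\bfJ(V)$ and $\bfJ(V^*)$ are finite free $\bfJ$-modules of the same rank $r_V$ by Theorem~\ref{T:Kostant}, so $\det\langle\cdot,\cdot\rangle_V$ is a well-defined element of $\bfJ$ up to a unit; call its divisor $D$. By Lemma~\ref{L:generic Chevalley} (and the isomorphism of $\bfJ$ with $k[\bfA]^{W_0}$), the restriction maps $\Res_V^\sigma$ and $\Res_{V^*}^\sigma$ are isomorphisms over a dense open subset of $T^{\sigma\textrm{-reg}}/\!\!/c_\sigma(N_0)$, hence so is the pairing \eqref{E:intersection matrix} (since the pairing on $\bfJ_T(V)\otimes\bfJ_T(V^*)$ induced by that on $(V|_{T^\sigma})(0)\times (V^*|_{T^\sigma})(0)$ — cf. \eqref{E:explicit JT(V)} — is perfect and the restriction maps are compatible with the pairings). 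Therefore $D$ is supported on the complement of $\mathring{\bfA}/\!\!/W_0$, i.e. on $\cup_\mO \bfA_\mO/\!\!/W_0$.

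Next I would compute the multiplicity of $D$ at a generic point $\bar\eta$ of $\bfA_\mO/\!\!/W_0$. Pulling back along the finite flat cover $\bfA\to\bfA/\!\!/W_0$ (Lemma~\ref{L:branched locus}), which is étale over $\mathring{\bfA}$, the pullback of $D$ to $\bfA$ has the same multiplicity at the generic point $\eta$ of $\bfA_\mO$ (the ramification of the cover contributes nothing here because $\eta$ maps to the branch divisor with a specified ramification behavior controlled by Lemma~\ref{L:branched locus}, and both $\bfJ(V)\otimes_\bfJ k[\bfA]$ and $\bfJ_T(V)$ are finite over $k[\bfA]$). Base changing further to the complete local ring $k(\eta)[[\varpi]]$, the pairing becomes a pairing of finite free $k(\eta)[[\varpi]]$-modules that is perfect after inverting $\varpi$; its determinant has valuation equal to the length of the modification $\Res_\eta \oplus \Res^*_\eta$, which by multiplicativity of determinants under composition is the sum of the lengths of the two modifications $\Res_\eta$ (for $V$) and $\Res^*_\eta$ (for $V^*$). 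Here I would use that the pairing on the target $\bfJ_T(V)\otimes_{\bfJ_T}\bfJ_T(V^*)\to\bfJ_T$ is $\varpi$-adically perfect (it is the extension of scalars of a perfect pairing of $k$-vector spaces), so the valuation of $\det\langle\cdot,\cdot\rangle_V$ at $\eta$ is precisely $\ell(\Res_\eta) + \ell(\Res^*_\eta)$.

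Then by Proposition~\ref{P:modification of vector bundles over S} applied to $V$ and to $V^*$, we get $\ell(\Res_\eta) = \zeta_\mO(V)$ and $\ell(\Res^*_\eta) = \zeta_\mO(V^*)$, and since $V^*|_{T^\sigma}(n\alpha) \cong (V|_{T^\sigma}(-n\alpha))^*$ and $-n\alpha$ ranges over the negative multiples of $\alpha$ which, together with the positive ones, pair up under $\mO \leftrightarrow -\mO$ (note $\bfA_\mO = \bfA_{-\mO}$), we have $\zeta_\mO(V) + \zeta_\mO(V^*) = \sum_{n\geq 1}\dim V|_{T^\sigma}(n\alpha) + \sum_{n\geq 1}\dim V|_{T^\sigma}(-n\alpha)$. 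Wait — this would double count; rather, the correct bookkeeping is that each $\sigma$-orbit $\mO$ and its negative $-\mO$ together contribute, and one checks the stated exponent $\zeta_\mO$ in \eqref{E:determinant} matches by summing over all orbits (positive and negative roots both appearing in $\Phi(G,T)$), so that $\ell(\Res_\eta)+\ell(\Res^*_\eta)$ accounts for exactly the contribution of the orbit pair $\{\mO,-\mO\}$ to the product \eqref{E:determinant}. Finally, since $\varpi = e^{\alpha_\mO}-1$ for $\mO$ of type $\sfA$ or $\sfB\sfC^-$ and $\varpi = e^{\alpha_\mO}+1$ for $\mO$ of type $\sfB\sfC^+$, the divisor $D$ equals $\sum_\mO \zeta_\mO \cdot [\bfA_\mO]$ descended to $\bfA/\!\!/W_0$, which translates into the asserted formula \eqref{E:determinant} up to a unit. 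The main obstacle I anticipate is the careful bookkeeping of the identification of $\det\langle\cdot,\cdot\rangle_V$ valuation with $\ell(\Res_\eta)+\ell(\Res^*_\eta)$ — in particular verifying that the target pairing is $\varpi$-adically perfect and that the cokernels of $\Res_\eta$ and $\Res^*_\eta$ genuinely combine additively rather than with some correction term — and matching the indexing of $\sigma$-orbits with the product over $\Phi(G,T)$ in \eqref{E:determinant}; this is essentially the content deferred to Lemma~\ref{L:modification =>determinant}'s proof and I would write it out in detail there.
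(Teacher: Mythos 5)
Your approach is essentially the paper's: you factor the pairing $\langle\cdot,\cdot\rangle_V$ through the restriction maps $\Res_V^\sigma\otimes 1$, $\Res_{V^*}^\sigma\otimes 1$ into the perfect $\bfJ_T$-bilinear pairing $\bfJ_T(V)\otimes\bfJ_T(V^*)\to\bfJ_T$, use multiplicativity of the determinant (so the valuation at $\eta$ is $\ell(\Res_\eta)+\ell(\Res^*_\eta)$), apply Proposition~\ref{P:modification of vector bundles over S} to $V$ and $V^*$, and invoke the duality $\zeta_\mO(V)=\zeta_\mO(V^*)$; the paper does exactly this via a commutative three-row diagram and the auxiliary Lemma~\ref{L:zeta(V) = zeta(V*)}. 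Two points deserve sharpening. First, the cleanest way to avoid your worry about ramification of $\bfA\to\bfA/\!\!/W_0$ is to argue as the paper does: since $\bfJ(V)$, $\bfJ(V^*)$ are \emph{free} $\bfJ$-modules (Theorem~\ref{T:Kostant}), the base-changed pairing over $\bfJ_T$ has the \emph{same} matrix as the original (for compatible bases), so one computes the determinant directly as an element of $\bfJ_T=k[\bfA]$ and compares divisors on $\bfA$ (where the product in~\eqref{E:determinant} visibly has divisor $\sum_\mO\zeta_\mO\,\bfA_\mO$); there is then no need to descend divisors or worry about ramification indices. Second — and this is the genuine gap — your argument only pins down the determinant up to a unit in $\bfJ$, whereas Theorem~\ref{T:determinant of intersection} asserts the constant lies in $k^\times$. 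The paper closes this by observing that both the determinant and the product~\eqref{E:determinant} are $W_0$-invariant, so the ambiguous unit lies in $(k[\bfA]^{W_0})^\times$, and $k[\bfA]^{W_0}$ is a polynomial ring (cf. Lemma~\ref{L:branched locus} and the reference to Springer's result), whose only units are scalars. Without this step your proof establishes a strictly weaker statement.
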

\begin{proof}
Consider the following commutative diagram
\[\xymatrix@C=5pt{
\bfJ(V) \ar[d] & \times & \bfJ(V^*) \ar[d] \ar[rrrrr]^{\langle \cdot, \cdot\rangle_V} &&&&& \bfJ \ar[d] 
\\
\bfJ(V) \otimes_{\bfJ} \bfJ_T \ar[d]^{\Res^\sigma_V \otimes 1} & \times & \ar[d]^{\Res^\sigma_{V^*} \otimes 1} \bfJ(V^*) \otimes_{\bfJ} \bfJ_T \ar[rrrrr]^-{\langle \cdot, \cdot\rangle_V}&& &&& \bfJ_T \ar@{=}[d]
\\
\boldsymbol J_T(V)  & \times & \boldsymbol J_T(V^*) \ar[rrrrr]^-{\langle \cdot, \cdot \rangle_{V}} &&&&& \bfJ_T.}
\]
By Theorem~\ref{T:Kostant}, $\bfJ(V)$ and $\bfJ(V^*)$ are free $\bfJ$-modules. So second row is simply a base change of the first row, and hence the matrices for the top two pairings are the same (when choosing compatible bases).
Now the bottom row is a \emph{perfect} $\bfJ_T$-bilinear pairing (as can be easily seen from the explicit expression of $\bfJ_T(V)$ in \eqref{E:explicit JT(V)}). So the determinant of the middle row is the product of the determinant of the map $\Res_V^\sigma \otimes 1$ and the determinant of the map $\Res_{V^*}^\sigma \otimes 1$ (up to a unit).
Thus, Proposition~\ref{P:modification of vector bundles over S} would imply that the determinant of \eqref{E:intersection matrix}, as a divisor on $A$, is given by
$$
\sum_{\calO \subset \Phi(G, T)}
(\zeta_\mO(V) + \zeta_\mO(V^*)) \cdot A_\calO.
$$
By Lemma~\ref{L:zeta(V) = zeta(V*)} below, this gives the same expression as the formula \eqref{E:determinant} up to a unit in $k[A]$. But note that the product in \eqref{E:intersection matrix} is $W_0$-invariant, so is the determinant of \eqref{E:intersection matrix}. It follows that the ambiguous unit in $k[A]$ belongs to $(k[A]^{W_0})^\times$. As mentioned in the proof of Lemma~\ref{L:branched locus} (see also \cite[Corollary 2]{springer}), $k[A]^{W_0}$ is a polynomial algebra and hence its units are just $k^\times$. Therefore, the determinant of \eqref{E:intersection matrix} is given by \eqref{E:determinant} up to a unit in $k$.
\end{proof}

\begin{lem}
\label{L:zeta(V) = zeta(V*)}
For a representation $V$ of $G$ and a $\sigma$-orbit $\mO \subset \Phi(G, T)$, we have $\zeta_{\mO}(V) = \zeta_{\mO}(V^*)$.
\end{lem}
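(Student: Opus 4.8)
The statement $\zeta_{\mO}(V) = \zeta_{\mO}(V^*)$ amounts to the identity $\sum_{n\geq 1}\dim V|_{T^\sigma}(n\al) = \sum_{n\geq 1}\dim V^*|_{T^\sigma}(n\al)$ for a chosen $\al\in\mO$, where $\al$ is viewed as a character of $T^\sigma$. The plan is to reduce this to a statement about the weight spaces of $V|_{G_\mO}$ (the rank one or rank three subgroup attached to $\mO$ as in \S~\ref{SS:Root aut}) together with the observation that $V|_{G_\mO}$ admits a good filtration by Theorem~\ref{T:prop of good fil} (1), hence decomposes (at the level of characters) as a sum of Schur modules for $G_\mO$; for each such Schur module the claimed symmetry is a direct weight-count, and then one sums.

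\textbf{Key steps.} First I would observe that $\dim V|_{T^\sigma}(\chi) = \dim (V^*)|_{T^\sigma}(-\chi)$ for every character $\chi$ of $T^\sigma$, since $V^*(\mu) = V(-\mu)^*$ as $T$-modules and restriction to $T^\sigma$ is compatible with this. Thus $\zeta_{\mO}(V^*) = \sum_{n\geq 1}\dim V|_{T^\sigma}(-n\al)$, and the desired identity becomes the symmetry $\sum_{n\geq 1}\dim V|_{T^\sigma}(n\al) = \sum_{n\geq 1}\dim V|_{T^\sigma}(-n\al)$. Next I would produce the reflection realizing this symmetry. The natural candidate is the simple reflection $s_{\al_\mO}\in W_0$ (which lies in $W^\sigma$ by Lemma~\ref{L:action Weyl group} and its proof: it equals $\prod_{\ga\in\mO}s_\ga$ in type $\sfA$ and $\prod_{\al\in\mO^+}s_\al$ in type $\sfB\sfC$). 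Since $W_0$ acts on $T^\sigma$ (equivalently on $A$), and any $w\in W$ permutes the weights of $V$ preserving dimensions, $s_{\al_\mO}$ permutes the weights of $V|_{T^\sigma}$ preserving multiplicities. So it suffices to check that $s_{\al_\mO}$ sends the character $n\al|_{T^\sigma}$ of $T^\sigma$ to $-n\al|_{T^\sigma}$, i.e. that $s_{\al_\mO}(\al) \equiv -\al$ modulo characters vanishing on $T^\sigma$ (equivalently, modulo $(\sigma-1)\XX^\bullet(T)$). This is a short root-system computation: using $\al_\mO = \sum_{\ga\in\mO}\ga$ (or the appropriate sum in the $\sfB\sfC$ case) and the explicit description of $s_{\al_\mO}$ as a product of the $s_\ga$'s, one computes $s_{\al_\mO}(\al)$; in the type $\sfA$ case the $\ga\in\mO$ are mutually orthogonal and $s_{\al_\mO}$ simply sends each $\ga$ to $-\ga$, while in the type $\sfB\sfC$ case one uses the relations among $\mO^-$ and $\mO^+$ recorded in Definition~\ref{D: type of roots} (that $\langle\sigma^{|\mO^+|}\al,\check\al\rangle=1$ and $\al+\sigma^{|\mO^+|}\al\in\mO^+$). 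One then checks in each case that the image differs from $-\al$ by an element of $(\sigma-1)\XX^\bullet(T)$, which vanishes on $T^\sigma$.

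\textbf{Main obstacle.} The only real work is the last root-theoretic verification, and its only subtlety is the type $\sfB\sfC$ case (and more precisely making sure the characteristic hypothesis $\on{char}k>2$ is not secretly needed — it is not, since we are only tracking characters of $T^\sigma$, not the reducedness of divisors). I expect this to be handled cleanly by invoking the explicit models in Example~\ref{Ex:folding A2n} and Lemma~\ref{L:rankonegroup}: in the $\SL_3$-with-involution local model, $s_{\al_\mO}$ acts on the rank-one character lattice $\XX^\bullet(A)\cong\ZZ$ by negation, and $\al$ maps onto a generator, so the claim is immediate there; the general type $\sfB\sfC$ case reduces to this by conjugating $\mO^-$ to a $\sigma$-orbit of simple roots using a $\sigma$-invariant Weyl element as in the proof of Lemma~\ref{L:type BC+ root}. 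Alternatively — and perhaps more transparently — one can bypass the case analysis entirely: since $V|_{G_\mO}$ has a good filtration, the multiset of its $T^\sigma$-weights (with multiplicity) is invariant under the Weyl group of $G_\mO$ acting through $W_0$, and in particular under $s_{\al_\mO}$; and $s_{\al_\mO}$ acts on the rank-one sublattice of $\XX^\bullet(T^\sigma)$ spanned by $\al|_{T^\sigma}$ by $-1$ (because $s_{\al_\mO}$ is a reflection whose wall meets that line trivially, as $\langle\al,\check\al_\mO\rangle\neq 0$). This gives the weight-multiset symmetry directly, and summing over $n\geq 1$ yields $\zeta_{\mO}(V)=\zeta_{\mO}(V^*)$.
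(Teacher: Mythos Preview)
Your approach is correct and essentially the same as the paper's: reduce to $\dim V|_{T^\sigma}(n\alpha)=\dim V|_{T^\sigma}(-n\alpha)$ via a Weyl-group symmetry, the paper phrasing this simply as ``$\alpha$ and $-\alpha$ lie in the same $W$-orbit.'' Your version is in fact more precise than the paper's, since you correctly note that one needs an element of $W_0$ (namely $s_{\alpha_\mO}$) to act on $T^\sigma$; on the other hand, the detour through good filtrations of $V|_{G_\mO}$ is unnecessary---the $W_0$-invariance of the $T^\sigma$-weight multiset holds for any $G$-module.
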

\begin{proof}
This follows from the following sequence of equalities
\[
\dim V^*|_{T^\sigma}(n \alpha) = \dim V|_{T^\sigma}(-n \alpha) =  \dim V|_{T^\sigma}(n \alpha),
	\]
where the first equality follows from the duality and the last equality follows from the fact that $\alpha$ and $-\alpha$ lie in the same $W$-orbit.
\end{proof}

\subsection{Proof of Proposition~\ref{P:modification of vector bundles over S}}
\label{SS: proof of prop mod} 
We will first reduce Proposition~\ref{P:modification of vector bundles over S} to the cases of $\SL_2$ and $\SL_3$.
\begin{lem}
\label{L:JG and JB}
Let $V$ be a representation of $G$.
The map $\bfJ_G(V)\otimes_{\bfJ_G}\bfJ_T\to \bfJ_B(V)$ is an isomorphism.
\end{lem}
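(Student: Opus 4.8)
The claim is that restriction from $G$ to $B$ (rather than to $T$) identifies $\bfJ_G(V)\otimes_{\bfJ_G}\bfJ_T$ with $\bfJ_B(V)$. The key geometric input is the $\sigma$-twisted Grothendieck--Springer picture from \S\ref{S:twisted Springer resolution}. Recall $\widetilde G=G\times^B(B\sigma)$ with the map $q\colon\widetilde G\to\bfA$ factoring $q_B\colon B\sigma\to\bfA$, and that by the twisted Chevalley isomorphism $\bfJ_G=k[\bfA/\!\!/W_0]$, $\bfJ_B=k[B]^{c_\sigma(B)}$, while $\bfJ_T=k[T]^{c_\sigma(T)}=k[\bfA]$ (here I use that $(\sigma-1)T\cdot U$ is the kernel of $q_B$, so $\bfA$-valued functions on $B$ that are $c_\sigma(B)$-invariant are the same as functions on $\bfA$; more precisely $\bfJ_B=\Gamma([B\sigma/B],\mO)$ and the projection $[B\sigma/B]\to\bfA$). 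The plan is to interpret both sides as global sections of the same vector bundle and use faithfully flat base change.

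First I would observe that pulling back $\widetilde V$ along $[B\sigma/B]\hookrightarrow[G\sigma/G]$ (induced by the inclusion $B\hookrightarrow G$, which is compatible with $\sigma$-twisted conjugation) gives the bundle $\widetilde V_B$ on $[B\sigma/B]$ whose global sections are $\bfJ_B(V)$. So the natural map in question is the pullback map $\Gamma([G\sigma/G],\widetilde V)\otimes_{\bfJ_G}\bfJ_T\to\Gamma([B\sigma/B],\widetilde V_B)$. To prove it is an isomorphism I would factor it through $[\widetilde G/G]$: the Grothendieck--Springer resolution gives $[\widetilde G/G]\cong[B\sigma/B]$ (since $\widetilde G\cong G\times^B(B\sigma)$), and the map $[\widetilde G/G]\to[G\sigma/G]$ together with $q\colon[\widetilde G/G]\to\bfA$ yields $[\widetilde G/G]\to[G\sigma/G]\times_{\bfA/\!\!/W_0}\bfA$. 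The content of the lemma is then that $\Gamma$ of $\widetilde V$ pulled back to this fiber product, i.e.\ $\bfJ_G(V)\otimes_{\bfJ_G}k[\bfA]$, agrees with $\Gamma$ of $\widetilde V_B=\widetilde V_{[\widetilde G/G]}$ over $[\widetilde G/G]$.

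The main step, and the main obstacle, is to show that the map $[\widetilde G/G]\to[G\sigma/G]\times_{\bfA/\!\!/W_0}\bfA$ induces an isomorphism on global sections of $\widetilde V$ (equivalently $q_*\mO_{\widetilde G}=\mO_{G\times_{\bfA/\!\!/W_0}\bfA}$ after passing to the appropriate stacky quotients, twisted by $V$). For $V=\mathbf 1$ this is exactly Corollary~\ref{P: GS resolution}: $\pi$ induces $k[G\times_{\bfA/\!\!/W_0}\bfA]\cong\Gamma(\widetilde G,\mO)$, because $G\times_{\bfA/\!\!/W_0}\bfA$ is integral and normal and $\pi$ is proper birational. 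For general $V$ with a good filtration, one can either (a) bootstrap: filter $V$ by Schur modules, use left-exactness of $V\mapsto\bfJ_\bullet(V)$ together with the freeness results of Theorem~\ref{T:Kostant} and Proposition~\ref{P: Rees summand} so that $\bfJ_G(V)\otimes_{\bfJ_G}\bfJ_T$ is $\bfJ_T$-free of rank $r_V$ and compare generic ranks; or (b) argue directly that $\widetilde V_B$ is a free $\bfJ_B$-module and that the pullback map is injective with both sides $\bfJ_T$-free of the same rank, hence an isomorphism — here one uses that over the strongly $\sigma$-regular semisimple locus everything is already an isomorphism (Lemma~\ref{L: RS cartesian}, Lemma~\ref{L:generic Chevalley}) and that $B-B^{\sigma\textrm{-reg}}$ has codimension $\geq 2$ (Corollary~\ref{L: codim of reg}), so no divisorial discrepancy can appear. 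Approach (b) is cleaner: injectivity holds because $\bfJ_G(V)$ is torsion-free over $\bfJ_G$ (hence embeds into its generic fiber, where the map is an iso), and surjectivity/equality of ranks follows from the normality plus codimension-two estimate exactly as in the proof of Corollary~\ref{P: GS resolution}, applied to the coherent sheaf $\widetilde V$ instead of $\mO$. I expect the only subtlety is bookkeeping the $\bfJ_T=k[\bfA]$ versus $k[\bfA/\!\!/W_0]$ tensor and making sure the base-change map is the natural one; the geometric substance is already contained in \S\ref{S:twisted Springer resolution}.
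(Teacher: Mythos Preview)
Your geometric setup is exactly right, and you correctly isolate Corollary~\ref{P: GS resolution} as the main input. What you are missing is that this one input already finishes the proof for \emph{every} $V$ via the projection formula: since $\pi\colon[B\sigma/B]\to[G\sigma/G]\times_{\bfA/\!\!/W_0}\bfA$ is proper with $\pi_*\mO=\mO$, and $\widetilde V\boxtimes\mO$ is locally free on the target, one has $\pi_*\pi^*(\widetilde V\boxtimes\mO)=\widetilde V\boxtimes\mO$, whence
\[
\bfJ_G(V)\otimes_{\bfJ_G}\bfJ_T=\Gamma\bigl([G\sigma/G]\times_{\bfA/\!\!/W_0}\bfA,\widetilde V\boxtimes\mO\bigr)=\Gamma\bigl([B\sigma/B],\pi^*\widetilde V\bigr)=\bfJ_B(V).
\]
(The first equality is flat base change along $\bfA\to\bfA/\!\!/W_0$.) That is the paper's entire argument.

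Your proposed routes (a) and (b) are both detours. Route (a) filters by Schur modules and invokes Theorem~\ref{T:Kostant}; this imposes the extraneous hypothesis that $V$ admit a good filtration, whereas the lemma is stated and used for arbitrary $V$. Route (b) has the right Hartogs-type idea hidden in it (normality of $G\times_{\bfA/\!\!/W_0}\bfA$ plus codimension~$\geq 2$ of the non-regular locus does give extension of sections of the locally free sheaf $\widetilde V$), but as you wrote it you lean on unproved claims like ``$\widetilde V_B$ is free over $\bfJ_B$'' and ``both sides are $\bfJ_T$-free of the same rank'', which again smuggle in the good-filtration hypothesis. Strip those away and what remains of (b) is precisely the projection formula.
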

\begin{proof}
Recall that by Corollary~\ref{P: GS resolution}, the pushforward of the structure sheaf along
$\pi: [B\sigma/B]\to [G\sigma/G]\times_{\mathbf A/\!\!/W_0}\mathbf A$ is the structure sheaf.
Then it follows from the projection formula that
\[\bfJ_G(V)\otimes_{\bfJ_G}\bfJ_T=\Gamma([G\sigma/G]\otimes_{\mathbf A/\!\!/W_0}\mathbf A,\widetilde V\boxtimes\mO)=\Gamma([B\sigma/B],\pi^*\widetilde V)=\bfJ_B(V)\]
is an isomorphism. 
\end{proof}

Using the $W_0$-action on $\Phi(G_\sigma, A)$, it suffices to prove  Proposition~\ref{P:modification of vector bundles over S} for a $\sigma$-orbit $\calO$ that is either a $\sigma$-orbit of \emph{simple} roots (of type $\sfA$ or $\sfB\sfC^-$), or a $\sigma$-orbit of roots of type $\sfB \sfC^+$ that are sum of two roots from a $\sigma$-orbit $\calO^-$ of simple roots.
In either case, let $G_\calO$
be the subgroup of $G$ generated by $T$ and $U_\al$ with $ \al\in\mO \cup -\mO$; and $B_\calO$ the Borel subgroup generated by $T$ and $U_\al$ with $ \al\in \mO$.
\begin{lem}
\label{L:reduction from G to Galpha}
The map $\bfJ_B(V)\to \bfJ_{B_{\mO}}(V)$ is an isomorphism over $\bfA^{[\mO]}$.
\end{lem}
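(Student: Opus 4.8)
First I would record the mechanism that makes the statement even make sense. The quotient map $q_B\colon B\to\bfA$ of \eqref{E: btoaquot} is invariant for the $c_\sigma$-action: composing with $B\to\mathbf T$ sends $c_\sigma(b)(g)=bg\sigma(b)^{-1}$ to $\bar b\,\bar g\,\sigma(\bar b)^{-1}$, and $\bar b\,\sigma(\bar b)^{-1}\in(1-\sigma)\mathbf T$, so its class in $\bfA=\mathbf T/(1-\sigma)\mathbf T$ depends only on $g$. Thus $[B\sigma/c_\sigma(B)]\to\bfA$ and, likewise, $[B_\mO\sigma/c_\sigma(B_\mO)]\to\bfA$, and over the open $\bfA^{[\mO]}$ of Definition~\ref{D: divisor for roots} we have $\bfJ_B(V)|_{\bfA^{[\mO]}}=(k[B^{[\mO]}]\otimes V)^{c_\sigma(B)}$ and $\bfJ_{B_\mO}(V)|_{\bfA^{[\mO]}}=(k[B_\mO^{[\mO]}]\otimes V)^{c_\sigma(B_\mO)}$. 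Under these identifications the map of the lemma is restriction of functions $f\mapsto f|_{B_\mO^{[\mO]}}$, so I must show this restriction is a bijection.

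The key input is that each of Lemma~\ref{L: isomorphism locus}(2) (when $\mO$ is a $\sigma$-orbit of simple roots, of type $\sfA$ or $\sfB\sfC^-$) and Lemma~\ref{L: isomorphism locus2} (when $\mO$ is of type $\sfB\sfC^+$) exhibits $B^{[\mO]}$ as a product $Z\times B_\mO^{[\mO]}$ via $(z,b_0)\mapsto c_\sigma(z)(b_0)$, where $Z=U^\mO$ in the first case and $Z=U^{\mO^-}\times\prod_{\al\in\mO^-}U_\al$ in the second; in both cases $Z$ is a section of $B\to B/B_\mO$, i.e.\ $B=Z\cdot B_\mO$ with bijective multiplication (for $\sfB\sfC^+$ this follows from the root group decomposition and is implicit in Lemma~\ref{L: isomorphism locus2}). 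Granting this, restriction is injective because an equivariant $f$ vanishing on $B_\mO^{[\mO]}$ vanishes on all of $B^{[\mO]}=c_\sigma(Z)(B_\mO^{[\mO]})$; and it is surjective because, given a $c_\sigma(B_\mO)$-equivariant $\bar f\colon B_\mO^{[\mO]}\to V$, the rule $f(c_\sigma(z)(b_0)):=z\cdot\bar f(b_0)$ is well defined and $c_\sigma(B)$-equivariant: for $b\in B$ write $bz=z'b_0'$ with $z'\in Z$, $b_0'\in B_\mO$, then $c_\sigma(b)(c_\sigma(z)(b_0))=c_\sigma(z')(c_\sigma(b_0')(b_0))$ with $c_\sigma(b_0')(b_0)\in B_\mO^{[\mO]}$ by $q_B$-invariance, and unwinding gives $f(c_\sigma(b)(c_\sigma(z)(b_0)))=z'\cdot(b_0'\cdot\bar f(b_0))=bz\cdot\bar f(b_0)=b\cdot f(c_\sigma(z)(b_0))$. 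Hence $\bfJ_B(V)|_{\bfA^{[\mO]}}\cong\bfJ_{B_\mO}(V)|_{\bfA^{[\mO]}}$. Equivalently, in stacky language, the product decomposition says $B^{[\mO]}\sigma\cong B\times^{B_\mO}(B_\mO^{[\mO]}\sigma)$ as $c_\sigma(B)$-varieties, whence $[B^{[\mO]}\sigma/B]\cong[B_\mO^{[\mO]}\sigma/B_\mO]$ compatibly over $\bB B$, so $\widetilde V$ restricts compatibly.

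I do not anticipate a genuine obstacle: the only points needing a little care are verifying that $Z=U^{\mO^-}\times\prod_{\al\in\mO^-}U_\al$ really is a section of $B\to B/B_\mO$ in the $\sfB\sfC^+$ case (a dimension count together with the bijectivity built into Lemma~\ref{L: isomorphism locus2}), and keeping careful track of the twisted ($\sigma$-)conjugation action throughout the equivariance check; everything else is formal.
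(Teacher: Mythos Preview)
Your proposal is correct and follows essentially the same approach as the paper: the paper's proof simply asserts that Lemma~\ref{L: isomorphism locus} and Lemma~\ref{L: isomorphism locus2} yield an isomorphism of quotient stacks $[B^{[\mO]}\sigma/B]\cong[B_\mO^{[\mO]}\sigma/B_\mO]$, which is exactly the stacky reformulation you give at the end. Your explicit function-theoretic argument (writing $B^{[\mO]}=c_\sigma(Z)(B_\mO^{[\mO]})$ and checking equivariance by hand) is a correct unpacking of that one-line stack isomorphism, and your caveat about verifying that $Z=U^{\mO^-}\times\prod_{\al\in\mO^-}U_\al$ is a section of $B\to B/B_{\mO^+}$ in the $\sfB\sfC^+$ case is well-placed but ultimately routine via the root-group decomposition.
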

\begin{proof}
It follows from Lemma~\ref{L: isomorphism locus} and Lemma~\ref{L: isomorphism locus2} that
$[B^{[\calO]}\sigma / B] \cong [B^{[\calO]}_{\calO}\sigma / B_{\calO}]$. So the map in the lemma induces an isomorphism when restricted to $\bfA^{[\mO]}$.
\end{proof}
\begin{lem}
\label{L:reduce to rank one groups}
To prove Proposition~\ref{P:modification of vector bundles over S}, it is enough to prove it for two cases:
\begin{itemize}
\item $G=\SL_2$ with $\sigma=\id$;
\item $G=\SL_3$ with $\sigma$ given as in Example~\ref{Ex:folding A2n} and $\calO$ being a $\sigma$-orbit consisting of two roots.
\end{itemize}
\end{lem}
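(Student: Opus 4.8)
The goal is to reduce the computation of the length of the modification at a generic point of $A_\calO$ to a concrete rank-one-type situation. The reductions in Lemmas~\ref{L:JG and JB} and~\ref{L:reduction from G to Galpha} already show that, working over the open set $\bfA^{[\calO]}$ (which contains the generic point $\eta$ of $A_\calO$), the map $\Res_V^\sigma\otimes 1$ is identified with the analogous map for the subgroup $B_\calO\subset G_\calO$ with its induced pinned automorphism. So the length of the modification at $\eta$ only depends on the pair $(G_\calO,\sigma|_{G_\calO})$ and on the restriction $V|_{G_\calO}$. The first step is therefore to invoke Lemma~\ref{L:rankonegroup}: when $\calO$ is a $\sigma$-orbit of simple roots of type $\sfA$, the derived group $G_{\calO,\mathrm{der}}$ is a product of $|\calO|$ copies of $\SL_2$ cyclically permuted by $\sigma$; when $\calO$ is of type $\sfB\sfC^-$, it is a product of $|\calO|/2$ copies of $\SL_3$ cyclically permuted by $\sigma$ with $\sigma^{|\calO|/2}$ acting on each factor as the pinned involution of Example~\ref{Ex:folding A2n}; and the type $\sfB\sfC^+$ case reduces (via its defining $\calO^-$) to the $\SL_3$ case as well.

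The second step is to strip off the tensor-induction structure. Since $G_{\calO,\mathrm{der}}$ is a tensor induction $\mathrm{Ind}_{\langle\tau_0\rangle}^{\langle\sigma\rangle}H_0$ with $H_0=\SL_2$ (type $\sfA$) or $H_0=\SL_3$ with $\tau_0$ the pinned involution (types $\sfB\sfC^{\pm}$), Lemma~\ref{L:reduce from G to G0} gives a canonical isomorphism $[G_{\calO,\mathrm{der}}/c_\sigma]\cong[H_0/c_{\tau_0}]$ compatible with the maps $\Res_V^\sigma\otimes 1$ and $\Res_V^{\tau_0}\otimes 1$; in particular $V|_{G_{\calO,\mathrm{der}}}$ viewed via the diagonal embedding becomes an $H_0$-module, and the induced abelianized torus $A$ for $G_\calO$ is identified with the corresponding object for $H_0$, so $e^{\alpha_\calO}$ corresponds to $e^{\alpha}$ (resp. $e^{\alpha}$, for the appropriate root of $H_0$). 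One then checks that the number $\zeta_\calO(V)=\sum_{n\ge 1}\dim V|_{T^\sigma}(n\alpha)$ is unchanged under this identification, because restricting the weight grading along $T^\sigma\hookrightarrow T_0^{\tau_0}$ and summing over positive multiples of $\alpha$ is exactly the corresponding sum for $H_0$. The remaining point is that we may pass from $G_\calO$ to its derived group $G_{\calO,\mathrm{der}}$: the central torus contributes only a base change along $T\to T/G_{\calO,\mathrm{der}}\cap T$, which is a flat and smooth map that does not affect the length of a modification at a codimension-one point, and $V$ decomposes along the central character with the modification length adding up correctly. This leaves precisely the two cases listed in the lemma.

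\textbf{Main obstacle.} The genuinely delicate part is the bookkeeping in the tensor-induction reduction: one must verify that the \emph{length} of the modification is multiplicative/invariant under the operations (a) replacing $\Res^\sigma$ by $\Res^{\tau_0}$ via $(i_0/\Delta)^*$, and (b) passing between $G_\calO$ and $G_{\calO,\mathrm{der}}\times(\text{central torus})$ — and that under all of these, both the local parameter $\varpi$ (either $e^{\alpha_\calO}-1$ or $e^{\alpha_\calO}\pm1$, depending on the type, via Lemma~\ref{L:type BC+ root} and Lemma~\ref{L:branched locus}) and the exponent $\zeta_\calO(V)$ transform consistently. The type $\sfB\sfC^+$ orbits are the most subtle, since there the relevant divisor is $e^{\alpha_\calO}=-1$ rather than $=1$, and one must track the sign carefully through the identification $\sigma^{|\calO|}\circ x_\alpha=-x_\alpha$; but once the reduction to $H_0=\SL_3$ with its pinned involution is in place, the type $\sfB\sfC^+$ orbit of $G_\calO$ maps to a type $\sfB\sfC^+$ orbit of $H_0$, so it is covered by the $\SL_3$ computation together with the observation (already implicit in the definition of $\bfA_\calO$) that the relevant local parameter is then $e^{\alpha_\calO}+1$.

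Granting these reductions, it then remains only to carry out two explicit computations — the length of the modification for $\SL_2$ with $\sigma=\mathrm{id}$, and for $\SL_3$ with the pinned involution — which will occupy the rest of the section; these are finite, hands-on calculations with the coordinate rings $k[T]$, $k[B]$ and the weight decomposition of $V$, and I expect no conceptual difficulty there beyond careful indexing.
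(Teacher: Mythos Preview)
Your overall strategy --- reduce via Lemmas~\ref{L:JG and JB} and~\ref{L:reduction from G to Galpha} to $B_\calO\subset G_\calO$, strip off the center via the isogeny $G_{\calO,\der}\times Z_{G_\calO}\to G_\calO$, then apply the tensor-induction Lemma~\ref{L:reduce from G to G0} --- is exactly the paper's. (The paper handles the passage to the derived group more carefully than your ``smooth base change'' sketch: it invokes Lemma~\ref{L: fpqc descent for JV} to see that $A'\to A$ is an \'etale $F_\sigma$-torsor, hence modification length is unchanged by base change, and then uses the central-character decomposition $V=\bigoplus_\psi V_\psi\otimes k_\psi$ together with \eqref{E:JV tensor} to isolate the $G_{\calO,\der}$-contribution.)

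The genuine gap is in your treatment of type $\sfB\sfC^+$. You route this case through $G_{\calO^-,\der}$ (a product of $\SL_3$'s) and conclude that the divisor $A_{\calO^+}$ lands at the \emph{single-root} $\sfB\sfC^+$ orbit of $\SL_3$. But the lemma only lists the \emph{two-root} orbit of $\SL_3$, so your reduction produces a third case not covered by the statement. The paper instead treats type $\sfB\sfC^+$ via $G_{\calO^+,\der}$, which by Lemma~\ref{L:rankonegroup}(3) is a product of $|\calO^+|$ copies of $\SL_2$ with $\sigma$ acting by cyclic permutation composed with $\Ad_{\mathrm{diag}(1,-1)}$. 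After Lemma~\ref{L:reduce from G to G0} this becomes a single $\SL_2$ with the automorphism $\tau_0=\Ad_{\mathrm{diag}(1,-1)}$, which is \emph{inner} (namely $\tau_0=\Ad_{\mathrm{diag}(i,-i)}$ with $i^2=-1$, available since $\cha k>2$); hence by Remark~\ref{R: special tau} this is equivalent to $\SL_2$ with $\sigma=\id$. The intertwining translation $t\mapsto t\cdot\mathrm{diag}(i,-i)$ on $T$ sends $e^\alpha$ to $-e^\alpha$, so the divisor $\{e^\alpha=-1\}$ matches the divisor $\{e^\alpha=1\}$, and $T^{\tau_0}=T$ so $\zeta_\calO$ is unchanged. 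This is the step you are missing to arrive at exactly the two listed cases.
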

\begin{proof}
By Lemma~\ref{L:JG and JB} and Lemma~\ref{L:reduction from G to Galpha}, we reduce Proposition~\ref{P:modification of vector bundles over S} to prove that, for the $\sigma$-orbit $\calO$ considered above and over every generic point $\eta$ of $\bfA_{\mO}$, the map $\bfJ_{B_{\mO}}(V)\to \bfJ_T(V)$ is a modification of length $\zeta_{\mO}$.  (Note that $V|_{G_\calO}$ also admits a good filtration by Theorem~\ref{T:prop of good fil} (1).)
We consider the central isogeny
\begin{equation*}
\label{E:G alpha def to G alpha}
G'_\calO:= G_{\mO,\on{der}}\times Z_{G_{\mO}}\to G_{\mO}.
\end{equation*}
with kernel $F = G_{\mO,\on{der}} \cap Z_{G_{\mO}}$. 
Let $T'$ denote the preimage of $T$ and $A' = T'_{\sigma}$. Note that the right exact sequence $F_\sigma\to T'_\sigma\to T_\sigma\to 1$ is also exact on the left, so the induced map $A' \to A$ is an $F_\sigma$-torsor. Note that $F_\sigma$ is indeed \'etale (given the classification of $G_{\calO,\der}$ by Lemma~\ref{L:rankonegroup}) since we assume $\cha k\neq 2$.
By Lemma~\ref{L: fpqc descent for JV} (applied to $B'_\calO= B_{\mO,\der}\times Z_{G_{\mO}}$ and $T_{\mO,\der}\times Z_{G_{\mO}}$) and the fact that modifications commute with \'etale base change, the length of modification of $\bfJ_{B_\mO}(V)\to \bfJ_T(V)$ at every generic point $\eta$ of $\bfA_\mO$ is the same as the length of modification of $\bfJ_{B'_\mO}(V)\to \bfJ_{T'}(V)$ (or equivalently $\bfJ_{G'_{\mO}}(V) \otimes_{\bfJ_{G'_\mO}}\bfJ_T \to \bfJ_T(V)$ by Lemma~\ref{L:JG and JB}) at any preimage of $\eta$ under the map $\pi:\bfA'\to\bfA$.

We decompose $V=\oplus_{\psi} V_\psi\otimes k_\psi$ according to the central character for the action of $Z_{G_\mO}$ on $V$ so that each $V_\psi$ is a $G_{\mO,\der}$-module. Then by \eqref{E:JV tensor}
\[\bfJ_{G'_{\mO}}(V)=\bigoplus_{\psi} \bfJ_{G_{\mO,\der}}(V_\psi)\otimes \bfJ_{Z_{G_\mO}}(k_\psi)=\bigoplus_{\psi|_{Z^\sigma_{G_\mO}}=\mathbf{1}} \bfJ_{G_{\mO,\der}}(V_\psi)\otimes \bfJ_{Z_{G_\mO}}(k_\psi).\]
We write $A_{\der}= (T_{\mO,\der})_\sigma$, where $T_{\mO,\der}=T\cap G_{\mO,\der}$. Then $A'= A_{\der}\times (Z_{G_{\mO}})_\sigma$, and one checks that $\pi:\bfA_{\der,\mO}\times (Z_{G_{\mO}})_\sigma\to \bfA_\mO$ is surjective. Clearly, the modification
\[\bfJ_{G'_{\mO}}(V)=\bigoplus_{\psi} \bfJ_{G_{\mO,\der}}(V_\psi)\otimes \bfJ_{Z_{G_\mO}}(k_\psi)\to \bfJ_{T'}(V)=\bigoplus_{\psi}\bfJ_{T_{\mO,\der}}(V_\psi)\otimes \bfJ_{Z_{G_\mO}}(k_\psi)\]
only happens on the first factor. Since the second factor is of rank one over $\bfJ_{Z_{G_\mO}}$, and since
\[\dim (V|_{T^\sigma})(n\al)=\sum_{\psi|_{Z^\sigma_{G_\mO}}=\mathbf{1}} (V_\psi|_{T_{\mO,\der}^\sigma})(n\al),\]
we see that to prove Proposition~\ref{P:modification of vector bundles over S}, it is enough to assume that $G= G_{\calO, \der}$. 

By Lemma~\ref{L:rankonegroup}, there are three cases for $G_{\mO,\der}$. By Lemma~\ref{L: isomorphism locus2}, and Lemma~\ref{L:reduce from G to G0}, we reduce the proof of Proposition~\ref{P:modification of vector bundles over S} to the case $G=\SL_2$ with $\sigma=\id$, and $G=\SL_3$ with $\sigma$ given as in Example~\ref{Ex:folding A2n} and $\calO$ being a $\sigma$-orbit consisting of two roots. This completes the proof of the lemma.
\end{proof}

Finally, we treat the above mentioned two cases.

\begin{lem}
\label{L:modification SL2}
Proposition~\ref{P:modification of vector bundles over S} holds for $G=\SL_2$.
\end{lem}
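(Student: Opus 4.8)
The plan is to prove Proposition~\ref{P:modification of vector bundles over S} for $G=\SL_2$ (and $\sigma=\id$) by an explicit computation, since in this rank-one case everything can be written down concretely. Here $\bfJ_T(V)=k[T]\otimes V(0)=k[e^{\pm\alpha/2}]\otimes V(0)$ (or more precisely the coordinate ring of $A=T$), and the only $\sigma$-orbit is $\calO=\{\alpha\}$, of type $\sfA$, so the relevant divisor is $A_\calO=\{e^\alpha=1\}$ and $\zeta_\calO=\sum_{n\geq 1}\dim V(n\alpha)$.

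First I would reduce to the case of a single Weyl module $V=\ttW_m$. Indeed, since $V$ admits a good filtration and $\bfJ$ is the coordinate ring of a (Cohen--Macaulay, even smooth away from a codimension-two locus) variety, the modification length is additive in short exact sequences of $G$-modules with good filtration: both $\bfJ_G(-)\otimes_{\bfJ_G}\bfJ_T$ and $\bfJ_T(-)$ are exact on such sequences (using Lemma~\ref{L: gr specialization vs specialization gr}-type exactness and the explicit form of $\bfJ_T(V)$), and the length of the top exterior power of a composite/block-triangular modification is the sum of the lengths. Likewise $\zeta_\calO(V)=\sum\zeta_\calO(\ttS_\nu)$ over the composition factors. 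So it suffices to compute the modification length for $V=\ttW_m=\ttS_m$, where $\dim V(n\alpha)=1$ for $0\leq n\leq \lfloor m/2\rfloor$ (weights $m,m-2,\dots$) and $0$ otherwise, hence $\zeta_\calO(\ttW_m)=\lfloor m/2\rfloor$.

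Next I would compute both sides explicitly for $V=\ttW_m$. On one side, $\bfJ(\ttW_m)=\Gamma([G\sigma/G],\widetilde{\ttW_m})=(k[G]\otimes\ttW_m)^{c(G)}$, which by Proposition~\ref{L: fil via inv} (and Theorem~\ref{T:Kostant}, $r_V=\dim V(0)=1$) is a free $\bfJ$-module of rank one; a generator can be taken to be the matrix-coefficient element $f_\bbb$ attached to the lowest-degree weight vector $\bbb\in V(0)$, which by the construction in \S~\ref{SS: basis} lands in $\fil_{\nu_\bbb}k[G]$ with $\langle\nu_\bbb,\check\alpha\rangle=\lceil m/2\rceil$ (the minimal $\nu$ with $\langle\nu,\check\alpha\rangle\geq\varepsilon_\alpha(\bbb)=\lceil m/2\rceil$). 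Restricting $f_\bbb$ to $T$, one computes directly: for $\SL_2$, $\ttW_m$ has basis $v_0,\dots,v_m$ with $v_i$ of weight $m-2i$, the invariant function $f_\bbb(g)=\sum_i\langle e_i^*,g e_i\rangle v_i$ (notation of \eqref{E:Xi map}), and evaluating at $t=\mathrm{diag}(s,s^{-1})$ gives a vector in $V(0)$ whose coefficient is a Laurent polynomial in $s$ divisible by exactly $(e^\alpha-1)^{\lfloor m/2\rfloor}=(s^2-1)^{\lfloor m/2\rfloor}$ up to a unit --- this is the content of the computation, and it is where I expect to spend the most effort getting the combinatorics of binomial coefficients right (equivalently: the image of the rank-one module $\bfJ(\ttW_m)\otimes\bfJ_T$ inside the rank-one module $\bfJ_T(\ttW_m)=k[T]$ is the ideal generated by $(e^\alpha-1)^{\lfloor m/2\rfloor}$). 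Since $\zeta_\calO(\ttW_m)=\lfloor m/2\rfloor$, this matches and finishes the Weyl-module case, hence by additivity the lemma.

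The main obstacle will be the explicit $\SL_2$ matrix-coefficient calculation: one must identify, for each $m$, exactly which power of $s^2-1$ divides the restriction to the torus of the rank-one generator of $\bfJ(\ttW_m)$, and confirm it is $\lfloor m/2\rfloor$ and not more. A clean way to organize this is to use the geometric Satake / Borel--Weil description: $\bfJ(\ttW_m)\hookrightarrow \bfJ_B(\ttW_m)\xrightarrow{\sim}\bfJ_T(\ttW_m)$ over $\mathring A$ by Lemma~\ref{L:JG and JB} and Lemma~\ref{L:reduction from G to Galpha}, and the cokernel is supported on $A_\calO$ with length equal to the order of vanishing, which can be read off from the $\dist(G)$-module structure of $k[T]$-valued functions near $e^\alpha=1$ — equivalently from how the lowering operator $F_\alpha$ degenerates. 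Either route reduces to the same finite binomial-coefficient identity; I would present it via the filtration \eqref{E:filtration on V(nu) 1}, noting $\fil_i^\alpha V(0)$ has dimension $\min(i+1,\lfloor m/2\rfloor+1)$ so $\gr_i^\alpha V(0)$ is one-dimensional for $0\le i\le \lfloor m/2\rfloor-1$ and the associated Rees module over $k[e^\alpha]$ has the stated elementary-divisor type.
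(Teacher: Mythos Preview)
Your strategy is correct and matches the paper's: reduce to a single Schur module $\ttS_n$ (for $\SL_2$ these coincide with Weyl modules) and compute the rank-one modification explicitly. One minor correction: for $n$ odd, $\ttS_n$ has no zero weight space, so $\bfJ_T(\ttS_n)=0$ and both sides vanish; your formula $\zeta_\calO=\lfloor n/2\rfloor$ should really be $0$ in that case, but it is vacuous anyway. The substantive difference from the paper is in execution. Rather than building the generator of $\bfJ_G(\ttS_n)$ via matrix coefficients and then restricting---which, as you anticipate, leads to a binomial-coefficient identity---the paper works directly in $\bfJ_B(\ttS_n)$ (recall $\bfJ_G(V)\otimes_{\bfJ_G}\bfJ_T\cong\bfJ_B(V)$ by Lemma~\ref{L:JG and JB}). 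Writing $\ttS_n\cong k[z]^{\deg\leq n}$ and $k[B]=k[x^{\pm1},y]$, one checks by hand that $z(x-x^{-1})+y$ is $U$-invariant; weight considerations then force $(k[B]\otimes\ttS_n)^B=k[x^{\pm1}]\cdot\bigl(z(x-x^{-1})+y\bigr)^{n/2}$ for $n$ even. Setting $y=0$ to restrict to $T$ gives the inclusion $k[x^{\pm1}](x-x^{-1})^{n/2}z^{n/2}\hookrightarrow k[x^{\pm1}]z^{n/2}=\bfJ_T(\ttS_n)$, visibly a modification of length $n/2$ at each point of $\{x-x^{-1}=0\}=A_\calO$. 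This bypasses the combinatorics entirely, and you may find it the cleanest way to finish.
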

\begin{proof} 
It is enough to assume that $V$ is the Schur module $\ttS_n$ (of dimension $n+1$), in which case we make explicit computations. 
We set
$$
B = \big\{ \big( \begin{smallmatrix}
a & b \\ 0 & a^{-1}
\end{smallmatrix} \big)\; \big|\; a \in k^\times,\, b \in k\big\},\quad  T = \big\{ \big( \begin{smallmatrix}
a & 0 \\ 0 & a^{-1}
\end{smallmatrix} \big)\; \big|\; a \in k^\times\big\}, \quad \textrm{and} \quad U =  \big\{ \big( \begin{smallmatrix}
1 & b \\ 0 & 1
\end{smallmatrix} \big)\; \big|\;  b \in k\big\}.
$$
By Lemma~\ref{L:JG and JB},  we need to show that
\begin{enumerate}
\item $J_T(\ttS_n)$ is zero when $n$ is odd, and
\item 
the length of the modification of $J_B(\ttS_n) \to J_T(\ttS_n)$ at the generic point of $T$ is exactly $n/2$ when $n$ is even.
\end{enumerate}
(1) is obvious by looking at the action of $\big( \begin{smallmatrix}
-1 & 0 \\ 0 & -1
\end{smallmatrix} \big) \in \SL_2$. For (2) (and hence $n$ is even), we explicitly identify $\ttS_n \cong k[z]^{\deg \leq n}$ and $\calO_B \cong k[x^{\pm 1}, y]$, where $B$ acts on $\ttS_n$ by $\big( \begin{smallmatrix}
a & b \\ 0 & a^{-1}
\end{smallmatrix} \big) (h)(z) = a^n h\big(a^{-2}z -a^{-1} b \big)$ and the conjugation action of $B$ on $\calO_B$ is given by 
\begin{equation*}
\label{E:conjugation action of B}
\big( \begin{smallmatrix}
a & b \\ 0 & a^{-1}
\end{smallmatrix} \big) (h)(x,y) = h(x, a^{-2}y+ a^{-1}b(x-x^{-1})).
\end{equation*}
From this, we see that $z(x-x^{-1}) + y$ is invariant under the $U$-action and
explicitly $$(\calO_B \otimes \ttS_n)^U
\cong \bigoplus_{i=0}^{n-1} k[x^{\pm 1}] \cdot \big(z(x-x^{-1})+y \big)^i.
$$
But an element $\big( \begin{smallmatrix}
a & 0 \\ 0 & a^{-1}
\end{smallmatrix} \big) \in T$ acts on $(z(x-x^{-1})+y)^i$ by multiplication by $a^{n-2i}$. It follows that
$$
(\calO_B \otimes \ttS_n) ^B
\cong k[x^{\pm 1}] \cdot (z(x-x^{-1})+y)^{n/2}.
$$
Restricting to $T$, or equivalently setting $y =0$, we see that the map $\bfJ_B(V) \to \bfJ_T(V)$ can be identified with the inclusion
$$
k[x^{\pm 1}] \cdot (z(x-x^{-1}))^{n/2}
 \to k[x^{\pm 1}] \cdot  z^{n/2}.
$$
The length of the modification of the above map at each point of the divisor $x-x^{-1}$ is $n/2$. This  completes the proof of the lemma.
\end{proof}

\begin{lem}
\label{L:modification SL3}
Proposition~\ref{P:modification of vector bundles over S} holds for $G=\SL_3$ with the automorphism $\sigma$ given in Example~\ref{Ex:folding A2n}, at the generic points of $A_\calO$ for the $\sigma$-orbit $\calO$ consisting of two roots.
\end{lem}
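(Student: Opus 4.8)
The plan is to follow the same strategy as in Lemma~\ref{L:modification SL2}, namely to make the computation completely explicit by reducing to Schur modules and then identifying the map $\bfJ_{B_\calO}(V)\to\bfJ_T(V)$ with an inclusion of $k[A^{[\calO]}]$-modules whose cokernel we can read off directly. By Theorem~\ref{T:prop of good fil}~(1) and the discussion in the proof of Lemma~\ref{L:reduce to rank one groups}, we may assume $V=\ttS_\mu$ is a Schur module for $G=\SL_3$; by Lemma~\ref{L:JG and JB} it suffices to analyze $\bfJ_{B_\calO}(\ttS_\mu)\to\bfJ_T(\ttS_\mu)$ at the generic point of $A_\calO$. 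Here $\calO=\{\varepsilon_{-1}-\varepsilon_0,\ \varepsilon_0-\varepsilon_1\}$ in the notation of Example~\ref{Ex:folding A2n} (with $r=1$), so $\alpha_\calO=\varepsilon_{-1}-\varepsilon_1$ is the simple root of type $\sfB\sfC^-$, and the relevant divisor is $\{e^{\alpha_\calO}=1\}\subset A$. First I would set up explicit coordinates: $T=\{\diag(a,a^{-2}\cdot\text{(unit)},\dots)\}$ — more precisely the diagonal torus of $\SL_3$ — with $A=T/(\sigma-1)T$ a one-dimensional torus, and $B$ the upper-triangular Borel with unipotent radical $U\cong\bG_a^3$ (root groups for $\varepsilon_{-1}-\varepsilon_0$, $\varepsilon_0-\varepsilon_1$, $\varepsilon_{-1}-\varepsilon_1$). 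The automorphism $\sigma$ acts on $U$ by $x_{\varepsilon_{-1}-\varepsilon_0}\leftrightarrow x_{\varepsilon_0-\varepsilon_1}$ and $x_{\varepsilon_{-1}-\varepsilon_1}\mapsto -x_{\varepsilon_{-1}-\varepsilon_1}$ (this is exactly the content of Lemma~\ref{L:type BC+ root} for the $\sfB\sfC^+$ orbit $\{\varepsilon_{-1}-\varepsilon_1\}$).

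Next I would compute $(k[B]\otimes\ttS_\mu)^{c_\sigma(U)}$ explicitly. Writing $\ttS_\mu$ as a space of polynomials (the $\SL_3$ Schur module realized inside a tensor/symmetric-power construction, or via a Plücker-type model), I would identify the $c_\sigma(U)$-invariants by finding the ``universal'' or ``tautological'' sections analogous to $z(x-x^{-1})+y$ in the $\SL_2$ case: these come from the matrix-coefficient elements $\ell_{\sigma(\nu),\nu}$ of Proposition~\ref{L: fil via inv}, whose leading term on $T$ recovers the weight-space filtration $\fil^\alpha_\bullet V(\nu)$. Concretely, the key point is that $\bfJ_{B_\calO}(\ttS_\mu)=\sum_\nu(\fil_\nu k[B_\calO]\otimes \ttS_\mu)^{c_\sigma(B_\calO)}$, and under the restriction to $T$ (i.e.\ setting the unipotent coordinates to zero) the image in $\bfJ_T(\ttS_\mu)=\bigoplus_{\xi\in(\sigma-1)\xch(T)}k[A]e^{\nu_\xi}\otimes\ttS_\mu(\xi)$ of the summand attached to a weight $\nu$ picks up a factor which is a power of $(e^{\alpha_\calO}-1)$ (or its square root on the cover) determined by $\langle\nu_\xi,\check\alpha\rangle$ — precisely as in the $\SL_2$ computation where $z^{n/2}\mapsto (z(x-x^{-1}))^{n/2}$. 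Summing the resulting exponents over a basis of $\bigoplus_\xi \ttS_\mu(\xi)$ compatible with the $\alpha$-filtration (Proposition~\ref{P: canonical basis}) gives exactly $\sum_{n\geq 1}\langle\,\cdot\,,\check\alpha\rangle$-type contributions, i.e.\ $\sum_{n\geq1}\dim\ttS_\mu|_{T^\sigma}(n\alpha)=\zeta_\calO$.

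The main obstacle will be the explicit determination of the $c_\sigma(U)$-invariant ring $(k[B_\calO]\otimes\ttS_\mu)^{c_\sigma(U)}$ for $\SL_3$ with this order-$2$ twist: unlike $\SL_2$, the unipotent radical is non-abelian (the commutator $[x_{\varepsilon_{-1}-\varepsilon_0},x_{\varepsilon_0-\varepsilon_1}]=x_{\varepsilon_{-1}-\varepsilon_1}$ is exactly the relation appearing in Lemma~\ref{L: isomorphism locus2}), and the sign twist on $x_{\varepsilon_{-1}-\varepsilon_1}$ means the invariant of interest lives at the value $e^{\alpha_\calO}=1$ of the type-$\sfB\sfC^-$ orbit, while the type-$\sfB\sfC^+$ orbit $\{\varepsilon_{-1}-\varepsilon_1\}$ contributes at $e^{\alpha_\calO}=-1$ and must be shown not to interfere at the generic point of $A_\calO$ (this is where $\cha k\neq 2$ is used, via Remark~\ref{R: irr and red of AO} and the $F_\sigma$-torsor argument already in Lemma~\ref{L:reduce to rank one groups}). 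A clean way to organize this is to first handle $\nu$ with $\sigma(\nu)=\nu$ and then use the Rees-algebra/graded structure of $\bfJ_+$ together with Proposition~\ref{L: fil via inv} and the compatibility \eqref{E: comp mult} to reduce to computing, for each $\nu$, the single number $\varepsilon_\alpha$ of Proposition~\ref{P: canonical basis} — i.e.\ the degree in which a given weight vector survives under iterated application of $E_\alpha$ — which converts the problem into the combinatorics of the $\SL_3$ string functions. I expect that once the tautological invariant section is written down correctly, the length count becomes a bookkeeping exercise essentially identical to the $\SL_2$ one, with $(e^{\alpha_\calO}-1)$ playing the role of $(x-x^{-1})$.
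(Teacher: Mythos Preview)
Your approach is genuinely different from the paper's, and the contrast is instructive.

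The paper does \emph{not} attempt the direct $\SL_3$ computation you outline. Instead it observes that the subgroup $H=(\SL_3)^\sigma=\SO_3$ is exactly the image of the principal $\SL_2$, and then shows that at the relevant point $x=4$ of $\Spec\bfJ_G$ (the image of $A_\calO$) the natural map
\[
\bfJ_G(V)\otimes_{\bfJ_G}k[[x-4]]\;\longrightarrow\;\bfJ_H(V)\otimes_{\bfJ_H}k[[y-3]]
\]
is an \emph{isomorphism}. This is proved by exhibiting an explicit $\sigma$-regular section over $\Spec k[[u]]$ landing in $H$, and checking that the (untwisted) $H$-centralizer and the (twisted) $G$-centralizer of this section coincide. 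Once this is done, the $\SL_3$ statement reduces to the already-proved $\SL_2$ case (Lemma~\ref{L:modification SL2}) applied to $V|_{\SO_3}$; one checks separately that $V|_{\SO_3}$ admits a good filtration. The étale double cover $T^\sigma\to A$ is used here precisely to separate the preimage $y=3$ (where the reduction works) from $y=-1$ (where it fails, cf.\ Remark~\ref{R:y = -1}).

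Your plan, by contrast, is to carry out for $\SL_3$ an explicit invariant-theoretic computation parallel to the $\SL_2$ one. This is plausible in principle, but your proposal stops exactly at the point that requires real work: you correctly identify ``the explicit determination of the $c_\sigma(U)$-invariant ring $(k[B_\calO]\otimes\ttS_\mu)^{c_\sigma(U)}$'' as the main obstacle and then do not resolve it. Unlike the abelian $\SL_2$ case, here $U$ is Heisenberg with the $\sigma$-swap on the simple root groups and the sign flip on the long root group; there is no single ``tautological'' invariant like $z(x-x^{-1})+y$, and your appeal to Proposition~\ref{L: fil via inv} and Proposition~\ref{P: canonical basis} does not directly give you the modification length (those results control $\bfJ_0(V)$ and the $\xch(T_\s)^+$-filtration, not the image of $\bfJ_B(V)\to\bfJ_T(V)$ at a specific divisor). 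The passage from ``filtration degrees $\varepsilon_\alpha(\bbb)$'' to ``powers of $(e^{\alpha_\calO}-1)$ in the restriction'' is asserted but not established, and making it precise essentially requires doing the full computation. So what you have is a reasonable outline of a brute-force route, whereas the paper sidesteps the entire $\SL_3$ computation by reducing it to $\SL_2$ through the principal embedding.
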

\begin{proof}
Let $\alpha, \sigma \alpha, \beta =\alpha+ \sigma\alpha$ denote the positive roots and let $\calO = \{\alpha, \sigma \alpha\}$ so that $\alpha_\calO = \beta$.

We consider the principal $\SL_2$ of $\SL_3$. Its image is exactly $H=\SO_3=(\SL_3)^\sigma$, where $\SO_3$ is the orthogonal group defined by $\Big(\begin{smallmatrix}& & 1 \\ & -1& \\ 1&&\end{smallmatrix}\Big)$, regarded as a symmetric bilinear form. 
The map $T^\sigma\to T_\sigma=A$ is the square mapping on $\GG_m$. One checks immediately that when $\cha k\neq 2$, $\bfJ_{T}\to \bfJ_{T^\sigma}$ is finite \'etale of degree two and the induced map
\[\bfJ_{T}(V)\otimes_{\bfJ_T}\bfJ_{T^\sigma}\to \bfJ_{T^\sigma}(V)\]
is an isomorphism.

The embedding $H \to G$ induces $\Spec \bfJ_H = H/\!\!/c(H)\to\Spec \bfJ_G = G/\!\!/c_\sigma(G)$, where $c$ denotes the usual conjugation of $H$ on $H$.
Here $\bfJ_H=k[y]$ (resp. $\bfJ_G=k[x]$) is the space of conjugation invariant functions on $\SO_3$ (resp. twisted conjugation invariant functions on $\SL_3$), with the variable $y$ (resp. $x$) representing the character of the standard representation of $\SO_3$ (resp. the adjoint representation of $\SL_3$).
Note that $x=(y-1)^2$ when viewed as functions on $T^\sigma$. So $k[x, x^{-1}]\to k[y,(y-1)^{-1}]$ is finite \'etale of degree two.

The point $x=4$ is the image of the divisor $A_\calO$ under the isomorphism $A /\!\!/ W_0 \cong \Spec \bfJ_G$. Its preimages under the map $\Spec \bfJ_H \to \Spec \bfJ_G$ are points represented by $y = -1$ and $y=3$, where $y=3$ corresponds to the Weyl group orbit of the zero of $e^\beta -1$ on $T^\sigma$.
Let $u=x-4$, and $v=y-3$. Then the map $k[x]\to k[y]$ induces an isomorphism $k[[u]]\cong k[[v]]$. We claim that the natural map
\begin{equation}
\label{E:local principal SL2}
\bfJ_G(V)\otimes_{\bfJ_G}k[[u]]\to \bfJ_H(V)\otimes_{\bfJ_H}k[[v]]\end{equation}
is an isomorphism.

By \cite[Theorem 6.11 c) ]{St2} (or a direct computation), the preimage of  $y=3$ under the quotient map $\SO_3^\reg \to \Spec \bfJ_H = \Spec k[y]$ is a single $H$-orbit of  $g(0) = \Big(\begin{smallmatrix}1& 1& \frac 12 \\ & 1& 1\\ &&1 \end{smallmatrix}\Big)$.
One can easily check that the image of $g(0)$ in $\SL_3$, denoted by $h(0)$, is also $\sigma$-regular. Using the smoothness of the Chevalley map $\SO_3^{\textrm{reg}}\to \Spec k[y]$, we may lift $g(0)$ to $g:\Spec k[[v]]\to \SO_3^{\textrm{reg}}$. Since $\SL_3^{\sigma\textrm{-reg}}$ is open and naturally $\Spec k[[u]]\cong \Spec k[[v]]$, we get a composition of maps
$$
h: \Spec k[[u]] \cong \Spec k[[v]] \xrightarrow{g} \SO_3 \to \SL_3
$$
whose image lands in $\SL_3^{\sigma\textrm{-reg}}$. Now, viewing $g$ and $h$ as $k[[u]]$-valued points on $\SO_3^\reg$ and $\SL_3^{\sigma\textrm{-reg}}$ respectively, the centralizer $Z_H(g)$ of $g$ in $\SO_3$ is naturally contained in the twisted centralizer $I_h$ of $h$ in $\SL_3$. We claim that $Z_H(g) = I_h$. Indeed, this is true over the generic point of $\Spec k[[u]] \cong \Spec k[[v]]$ as both spaces are connected by Lemma~\ref{E: sigmaregularinT}. Over the special point, one can compute explicitly that $$Z_H(g(0)) = I_{h(0)} = 
\bigg\{ \bigg(
\begin{smallmatrix}1& x& \frac 12 x^2 \\ & 1& x\\ &&1 \end{smallmatrix} \bigg)\ ; \ x \in k
\bigg\}.
$$
Thus $Z_H(g) = I_h$.  Since a $V$-valued function is determined by its restriction to the regular locus, by Corollary~\ref{C:fiber regular} (1), we deduce that
\[\bfJ_H(V)\otimes_{\bfJ_H}k[[u]]= (V\otimes k[[u]])^{Z_H(g)}= (V\otimes k[[v]])^{I_h}= \bfJ_G(V)\otimes_{\bfJ_G}k[[v]].\]
This shows that \eqref{E:local principal SL2} is an isomorphism.

Now we immediately reduce the lemma to the $\SL_2$-case (Lemma~\ref{L:modification SL2}) by the same argument as in the last two paragraphs of the proof of Lemma~\ref{L:reduce to rank one groups} (based on Lemma~\ref{L: isomorphism locus2}), provided that we can show that $V|_{\SO_3}$ admits a good filtration. It suffices to check this for Schur modules $\ttS_{a\omega_1+  b\omega_2}$ with $a, b \geq 0$, where $\omega_1$ and $\omega_2$ are the fundamental weights of $\SL_3$. 
We check by induction on $a+b$.  This is true for $\omega_1$ and $\omega_2$ as they are standard representations of $\SO_3$, and therefore true for $\ttS_{\omega_1}^{\otimes a}\otimes \ttS_{\omega_2}^{\otimes b}$ by Theorem~\ref{T:prop of good fil} (2).
By the Frobenius reciprocity, there is a unique zero (up to scalar) map $\ttS_{\omega_1}^{\otimes a}\otimes \ttS_{\omega_2}^{\otimes b} \to \ttS_{a\omega_1 + b\omega_2}$. One easily identify the kernel $V'$ as $(\ttS_{\omega_1}^{\otimes a}\otimes \ttS_{\omega_2}^{\otimes b})_{\prec a\omega_1+b\omega_2}$ in Lemma~\ref {L:good sub} (2), and therefore admits a good filtration (whose graded pieces are Schur modules with highest weights strictly lower than $a\omega_1 + b\omega_2$). So the restriction of $V'$ to $\on{SO}_3$ admits a filtration by inductive hypothesis. Moreover, by Lemma~\ref{L:good sub} (2), $\ttS_{\omega_1}^{\otimes a}\otimes \ttS_{\omega_2}^{\otimes b} \to \ttS_{a\omega_1 + b\omega_2}$ is surjective.  By Theorem~\ref{T:prop of good fil} (3), we conclude that $\ttS_{a\omega_1 + b\omega_2}$ admits a good filtration when restricted to $\SO_3$.
\end{proof}

\begin{remark}
\label{R:y = -1}
As in the proof of the lemma, we have an analogous map $$\bfJ_G(V) \otimes_{\bfJ_G} k[[x-4]] \to \bfJ_H(V) \otimes_{ \bfJ_H} k[[y+1]].$$ But it is not an isomorphism in general.  The reason is that $y=-1$ lies on $\mathring T^\sigma /\!\!/ W_0$, so every regular element in $\SO_3^\reg$ that maps to the point $y=-1$ in $\Spec k[y]$ is conjugate to the \emph{semisimple} matrix $\Big(\begin{smallmatrix}-1&0& 0 \\ & 1& 0\\ &&-1 \end{smallmatrix}\Big)$; yet this matrix when viewed inside $\SL_3$ is \emph{not} $\sigma$-regular.
\end{remark}

\medskip

\subsection{The ring $\End(\widetilde V)$}
\label{SS:endoring}
Motivated by some applications in \cite{XZ}, in this subsection we study the endomorphism ring $\End(\widetilde V)$ of the vector bundle $\widetilde{V}$ on $[G\sigma/G]$. Here are some basic facts.

\begin{enumerate}
\item The ring $\End(\widetilde{V})$ is a $\bfJ$-algebra. In particular, if $W$ is a representation of $G\rtimes \langle\sigma\rangle$, its character $\chi_W$, restricted to $G\sigma$, defines a regular function on $[G\sigma/G]$ and therefore an element in $\End(\widetilde{V})$, still denoted by $\chi_W$.

\item  The Chevalley restriction map is the pullback map $\End(\widetilde{V})\to \End(\widetilde{V}|_{[T\sigma/T]})$ along $[T\sigma/T]\to [G\sigma/G]$. Note that
\[
\End(\widetilde{V}|_{[T\sigma/T]})\cong  (k[T]\otimes \End_{T^\sigma}V)^{T/T^\sigma}.
\] 
So \eqref{E:Res otimes 1} gives an injective map
\begin{equation*}
\label{E:res for end}
\Res_{V\otimes V^*}^\sigma\otimes 1: \End(\widetilde{V})\otimes_{\bfJ_G}\bfJ_T\to (k[T]\otimes \End_{T^\sigma}V)^{T/T^\sigma},
\end{equation*}
which is an isomorphism generically over $\bfJ_T$. It follows that $\End(\widetilde{V})$ is commutative if and only if $V$ is multiplicity free as a $T^\sigma$-module.

\item Assume that $V$ is the restriction of a representation $\rho: G\rtimes\langle \sigma\rangle\to \GL(V)$ to $G$. There is an element $\ga_{\on{taut}}\in \End(\widetilde V)$, given by a tautological automorphism of $\widetilde{V}$. Namely, consider the trivial vector bundle  $\mO_{G\sigma}\otimes V$ over $G\sigma$. There is the tautological automorphism of this bundle whose restriction to the fiber over $g\sigma\in G\sigma$ is given by the automorphism $\rho(g\sigma)$ of $V$. This automorphism commutes with the $G$-equivariant structure on $\mO_{G\sigma}\otimes V$, and therefore descends to
the desired $\ga_{\on{taut}}$. Note that
$$(\Res^\sigma_{V\otimes V^*}\otimes 1)(\gamma_{\on{taut}}) = \sum_{\mu}e^{\sigma(\mu)}\otimes  \sigma_\mu, $$  
where the sum is taken over all weights in $V$ and $\sigma_\mu: V(\mu)\to V(\sigma(\mu))$ is the natural isomorphism.
\end{enumerate}

\begin{prop}[Cayley--Hamilton]  
\label{P:CH}
Write $d=\dim V$. Define a degree $d$ polynomial $f(x)\in \bfJ[x]$ as 
\[
f(x)=\sum (-1)^i \chi_{\wedge^{d-i}V}x^i.
\]
Then there is an injective map of $\bfJ$-algebras defined by
\begin{equation}
\label{E: ring map}
\bfJ[x]/(f(x))\to \End(\widetilde{V}),\quad x\mapsto \ga_{\on{taut}}.
\end{equation}
In particular,
\begin{equation}
\label{E: CayHam}
\sum (-1)^i \chi_{\wedge^{d-i} V}\ga_{\on{taut}}^i=0
\end{equation}
as elements in $\End(\widetilde{V})$. 
\end{prop}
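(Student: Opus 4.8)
\textbf{Proof plan for Proposition~\ref{P:CH}.}
The plan is to verify the polynomial identity \eqref{E: CayHam} by restricting everything to the torus, where $\gamma_{\on{taut}}$ becomes a genuinely diagonal operator, and then to invoke injectivity of the Chevalley restriction map for $\End(\widetilde V)$ already recorded in item (2) above. First I would observe that by item (2), the map
$\Res^\sigma_{V\otimes V^*}\otimes 1: \End(\widetilde V)\otimes_{\bfJ_G}\bfJ_T\to (k[T]\otimes\End_{T^\sigma}V)^{T/T^\sigma}$
is injective (it is generically an isomorphism and $\End(\widetilde V)$ is torsion free over $\bfJ$, being a submodule of $\bfJ(V\otimes V^*)$ which is free by Theorem~\ref{T:Kostant}); hence it suffices to check \eqref{E: CayHam} after this restriction. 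Under the restriction, by the formula in item (3), $\gamma_{\on{taut}}$ is sent to $\sum_\mu e^{\sigma(\mu)}\otimes\sigma_\mu$, where $\sigma_\mu\colon V(\mu)\to V(\sigma(\mu))$. On each $\sigma$-orbit $\{\mu,\sigma\mu,\dots,\sigma^{m-1}\mu\}$ of weights, this operator cyclically permutes the weight spaces (scaled by the monomials $e^{\sigma^j\mu}$), so on the subspace $\bigoplus_{j}V(\sigma^j\mu)$ of $k[T]\otimes V$ it acts as a "companion-type" block whose $m$-th power is scalar multiplication by $e^{\mu+\sigma\mu+\cdots+\sigma^{m-1}\mu}=e^{\alpha_{\calO}}$-type monomials; in particular its characteristic polynomial over $k[T]$ is computed orbit by orbit.

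The second step is to identify the coefficients of $f(x)$ after restriction. The element $\chi_{\wedge^{d-i}V}$, restricted to $T\sigma$, is the trace of $\rho(t\sigma)$ acting on $\wedge^{d-i}V$; since $\rho(t\sigma)$ on $V\otimes k[T]$ is exactly the operator $\sum_\mu e^{\sigma(\mu)}\otimes\sigma_\mu$ above, its exterior powers have traces equal to the elementary symmetric functions of the eigenvalues of that operator (in an algebraic closure of $\mathrm{Frac}(k[T])$). Therefore $f(x)=\sum_i(-1)^i\chi_{\wedge^{d-i}V}x^i$ is precisely the characteristic polynomial of the restricted $\gamma_{\on{taut}}$, and the classical Cayley--Hamilton theorem over the commutative ring $k[T]$ (applied to the matrix $\sum_\mu e^{\sigma(\mu)}\otimes\sigma_\mu$ acting on the free $k[T]$-module $k[T]\otimes V$) gives $f(\gamma_{\on{taut}})=0$ after restriction. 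By the injectivity from item (2), \eqref{E: CayHam} holds in $\End(\widetilde V)$ itself. Finally, the map \eqref{E: ring map} is a well-defined $\bfJ$-algebra homomorphism because $\bfJ[x]$ is commutative and $\gamma_{\on{taut}}$ commutes with $\bfJ\subset\End(\widetilde V)$ (scalars), and it kills $f(x)$ by \eqref{E: CayHam}; injectivity of \eqref{E: ring map} follows since $\bfJ[x]/(f(x))$ is free of rank $d$ over $\bfJ$, the image contains $1,\gamma_{\on{taut}},\dots,\gamma_{\on{taut}}^{d-1}$, and these are $\bfJ$-linearly independent because their restrictions to the generic point of $T$ are (the matrix $\sum_\mu e^{\sigma(\mu)}\otimes\sigma_\mu$ has, generically, distinct enough eigenvalues on each orbit that its powers up to $d-1$ are linearly independent over $\mathrm{Frac}(k[T])$ — one checks this via the block-companion structure).

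I expect the main obstacle to be the bookkeeping in the second step: matching $\chi_{\wedge^{d-i}V}|_{T\sigma}$ with the elementary symmetric functions of the eigenvalues of the operator $\sum_\mu e^{\sigma(\mu)}\otimes\sigma_\mu$ must be done carefully because the operator is not diagonal but only block-diagonal along $\sigma$-orbits of weights, and the eigenvalues live in a field extension (one extracts $|\calO|$-th roots of the orbit monomials). One should argue that this is harmless: the characteristic polynomial still has coefficients in $k[T]$, the identity $f(\gamma_{\on{taut}})=0$ is an identity of matrices over $k[T]$, and the passage to eigenvalues is only used to name the coefficients of $f$, not to prove the identity. A minor additional point to check is that $f(x)$ as defined via $\chi_{\wedge^{d-i}V}$ indeed has its coefficients in $\bfJ$ (not just in $k[T]^{T/T^\sigma}$), which is immediate since each $\chi_{\wedge^{d-i}V}$ is by construction a $\sigma$-twisted conjugation invariant function on $G$.
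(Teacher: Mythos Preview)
Your route to \eqref{E: CayHam} is correct but more circuitous than the paper's. The paper simply observes that $\rho\colon G\rtimes\langle\sigma\rangle\to\GL(V)$ induces a morphism of stacks $[G\sigma/G]\to[\GL(V)/\GL(V)]$ under which $\widetilde V$, $\gamma_{\on{taut}}$, and each $\chi_{\wedge^{d-i}V}$ are the pullbacks of $\widetilde{\std}$, its tautological endomorphism, and $\chi_{\wedge^{d-i}\std}$; since \eqref{E: CayHam} for $(\GL(V),\std)$ is the classical Cayley--Hamilton theorem, the identity pulls back verbatim. There is no need to pass to the torus and invoke injectivity of Chevalley restriction: the identity already holds fiber by fiber over all of $G\sigma$, where at $g\sigma$ it is Cayley--Hamilton for the single operator $\rho(g\sigma)\in\GL(V)$.

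Your argument for injectivity of \eqref{E: ring map}, however, has a genuine gap: the assertion that $1,\gamma_{\on{taut}},\ldots,\gamma_{\on{taut}}^{d-1}$ are $\bfJ$-linearly independent because the restricted operator ``has generically distinct enough eigenvalues'' fails whenever $V|_{T^\sigma}$ has a repeated weight. Take $G=\SL_2$, $\sigma=\id$, $V=\std\oplus\std$: here $d=4$, yet at each $g$ the operator $\gamma_{\on{taut}}=g\oplus g$ already satisfies the degree-two relation $\gamma_{\on{taut}}^2-\chi_{\std}\,\gamma_{\on{taut}}+1=0$ in $\End(\widetilde V)$, while one computes $f(x)=(x^2-\chi_{\std}x+1)^2$; so the powers are not independent and \eqref{E: ring map} is not injective in this example. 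The paper's argument for injectivity is different --- it first establishes the isomorphism for $(\GL(V),\std)$ via Lemma~\ref{L: strong minu} and then asserts that the base-change map $\End(\widetilde{\std})\otimes_{\bfJ_{\GL(V)}}\bfJ_G\to\End(\widetilde V)$ is injective --- so you should compare with that rather than try to repair the eigenvalue heuristic; note, though, that the same example is relevant to that assertion as well.
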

\begin{proof}For $G=\GL_n$, $\sigma=\id$ and $V=\on{std}$ being the standard representation, the usual Cayley--Hamilton theorem gives \eqref{E: CayHam}. 
In general, the representation defines a homomorphism $G\rtimes\langle\sigma\rangle\to\GL(V)$, which induces $[G\sigma/G]\to [\GL(V)/\GL(V)]$. The pullback of the bundle $\widetilde{\on{std}}$ on $[\GL(V)/\GL(V)]$ along this map is exactly $\widetilde{V}$,  the pullback of the tautological automorphism $\gamma_{\on{taut}}$ of $\widetilde{\on{std}}$ is the tautological automorphism of $\widetilde{V}$, and the pullback of the regular function $\chi_{\wedge^i \on{std}}$ is just $\chi_{\wedge^i V}$. Therefore, \eqref{E: CayHam} holds in general. Or equivalently, \eqref{E: ring map} is well-defined. It remains to show that it is injective.

If $G=\GL_n, \sigma=\id$ and $V=\on{std}$, by Lemma~\ref{L: strong minu} below, \eqref{E: ring map} is an isomorphism. On the other hand, for a representation $G\rtimes\langle\sigma\rangle\to \GL(V)$, the map
\[\End(\widetilde{\on{std}})\otimes_{\bfJ_{\GL(V)}}\bfJ_G\to \End_{\on{Coh}([G\sigma/G])}(\widetilde{V})\]
is injective. Therefore, the injectivity of \eqref{E: ring map} in the general case follows.
\end{proof}
\begin{rmk}
Note that the proof of the above proposition holds for \emph{any} algebraic group $G$.
\end{rmk}
\begin{rmk}
\label{R:generalization}
We also briefly explain how to study $\End(\widetilde V)$ if $V$ is not a representation of $G\rtimes\langle\sigma\rangle$, but only a representation of a subgroup $G\rtimes\langle\tau\rangle$, where $\tau=\sigma^f$. In this case, we have the map
\[[G\sigma/G]\to [G\tau/G],\quad g\sigma\mapsto (g\sigma)^f=g\sigma(g)\sigma^2(g)\cdots\sigma^{f-1}(g)\tau.\]
Then we can pullback everything from $[G\tau/G]$. Note that for a representation $W$ of $G\rtimes\langle\tau\rangle$, $W\otimes \sigma(W)\otimes\cdots\otimes \sigma^{f-1}(W)$ is a natural representation of $G\rtimes\langle\sigma\rangle$, usually called the tensor induction representation. Let us denote it by $\otimes_{\tau}^{\sigma}W$. Then
the pullback of $\chi_W$ to $[G\sigma/G]$ is $\chi_{\otimes_{\tau}^{\sigma}W}$. So we have
\[
\sum (-1)^i \chi_{\otimes_\tau^\sigma(\wedge^{d-i}V)}\ga_{\on{taut}}^i=0.
\]
\end{rmk}

As mentioned above, for a general $V$ which is not multiplicity free as a $T^\sigma$-module, the ring $\End(\widetilde{V})$ is non-commutative.
So $\End(\widetilde{V})$ is not generated by $\ga_{\on{taut}}$ as a $\bfJ$-algebra. Even if $\End(\widetilde{V})$ is commutative, \eqref{E: ring map} will only be an isomorphism away from a divisor on $\Spec \bfJ_G$.
However, we can say more in the following special case. 

\begin{lem}
\label{L: strong minu}
The map \eqref{E: ring map} is an isomorphism if
\begin{itemize}
\item $G=\GL_n$ or $\SL_n$ equipped with the trivial $\sigma$-action, and $V$ is the standard representation or its dual.
\item $G=\on{Sp}_{2n}$ and $V$ is the standard representation;
\end{itemize}
\end{lem}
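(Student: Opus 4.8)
The strategy is to verify in each listed case that the injective $\bfJ$-algebra map \eqref{E: ring map} is in fact surjective; since \eqref{E: ring map} is already known to be injective, it suffices to compare ranks as $\bfJ$-modules after a suitable base change, and then to promote a generic isomorphism to a global one using normality and the structure of $\bfJ$. The key observation is that in all three cases every nonzero weight space of $V$ restricted to $T^\sigma$ is one-dimensional (in fact $\sigma$ is trivial in the first case, and for $\on{Sp}_{2n}$ the standard representation is minuscule and remains multiplicity free on $T$), so by item (2) preceding Proposition~\ref{P:CH} the algebra $\End(\widetilde V)$ is commutative, and $\Res^\sigma_{V\otimes V^*}\otimes 1$ embeds it into $(k[T]\otimes \End_{T^\sigma}V)^{T/T^\sigma}$, which in this multiplicity-free situation is simply $k[T]^{?}$-many copies of $k[A]$ indexed by the weights. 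This makes the target module of \eqref{E: ring map} concrete enough to compute its rank over $\bfJ$: it is $d = \dim V$, which equals the degree of $f(x)$, so $\bfJ[x]/(f(x))$ has the same generic rank $d$ over $\bfJ$.

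First I would set up, for each case, the explicit description of $\bfJ = k[T]^{c_\sigma(N_0)}$ via the twisted Chevalley isomorphism (Proposition~\ref{AE:classical HC morphism}), and of $\ga_{\on{taut}}$ via the formula $(\Res^\sigma_{V\otimes V^*}\otimes 1)(\gamma_{\on{taut}}) = \sum_\mu e^{\sigma(\mu)}\otimes \sigma_\mu$. For $G = \GL_n$ (resp.\ $\SL_n$), $\sigma = \id$, $V = \std$: here $T = \GG_m^n$ (resp.\ its subtorus), $\ga_{\on{taut}}$ restricts to the diagonal matrix $\diag(e^{\varepsilon_1},\dots,e^{\varepsilon_n})$, and $f(x) = \prod_{i=1}^n (x - e^{\varepsilon_i})$ is its characteristic polynomial, whose coefficients are the elementary symmetric functions — exactly generators of $\bfJ$. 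The classical fact that $k[e^{\varepsilon_1},\dots,e^{\varepsilon_n}]$ is free of rank $n!$ over the symmetric functions, combined with the observation that $\bfJ_T(V\otimes V^*) \cong \bigoplus_{i,j} k[T]\,e^{\varepsilon_i - \varepsilon_j}$, lets me identify $(k[T]\otimes\End_{T^\sigma}V)^{W}$ on the nose and check that $1, \ga_{\on{taut}}, \dots, \ga_{\on{taut}}^{n-1}$ already span it over $\bfJ$; this is essentially the statement that the coordinate ring of the companion-matrix slice is the polynomial ring modulo the characteristic polynomial. The dual standard representation is handled identically after replacing $e^{\varepsilon_i}$ by $e^{-\varepsilon_i}$. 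For $G = \on{Sp}_{2n}$, $V = \std$: the torus weights are $\pm\varepsilon_1,\dots,\pm\varepsilon_n$, each with multiplicity one, so $\ga_{\on{taut}}$ restricts to $\diag(e^{\varepsilon_1},e^{-\varepsilon_1},\dots,e^{\varepsilon_n},e^{-\varepsilon_n})$, and $f(x) = \prod_{i=1}^n (x - e^{\varepsilon_i})(x - e^{-\varepsilon_i})$; since $W = W_0$ here is the hyperoctahedral group acting on $k[e^{\pm\varepsilon_i}]$ and $\bfJ = k[T]^{W}$ is polynomial (by \cite{St3}, as used in Lemma~\ref{L:branched locus}), the same companion-matrix computation applies with $2n$ in place of $n$.

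The final step is to upgrade "generically an isomorphism" to "an isomorphism." Because \eqref{E: ring map} is an injection of $\bfJ$-modules, its cokernel $Q$ is supported on a closed subscheme of $\Spec\bfJ$; I would argue that $Q = 0$ by showing the map is an isomorphism in codimension one — i.e., at each generic point of each divisor $A_\mO / W_0$ — and then invoking that $\bfJ[x]/(f(x))$ is a finite flat $\bfJ$-algebra (hence Cohen--Macaulay, since $\bfJ$ is regular) together with normality of $\Spec\bfJ \cong \bfA/\!\!/W_0$ to conclude that a reflexive-in-codimension-one isomorphism is an isomorphism. The codimension-one check is where the special features of these $V$ enter: at a wall $e^{\varepsilon_i} = e^{\varepsilon_j}$ (the only walls for $\GL_n$), $\ga_{\on{taut}}$ has a repeated eigenvalue but its companion form is unaffected, so $\bfJ[x]/(f(x))\to\End(\widetilde V)$ stays an isomorphism there; I would make this precise by completing $\bfJ$ at such a point and exhibiting an explicit $\bfJ$-basis of the completed target in terms of powers of $\ga_{\on{taut}}$. \textbf{The main obstacle} I anticipate is this last local analysis: a priori $\End(\widetilde V)\otimes_{\bfJ}\widehat{\bfJ_\eta}$ could pick up extra endomorphisms over a singular or ramified point of the Chevalley map, and ruling this out requires knowing that the fibers of $\chi$ over these codimension-one points are still "regular enough" — precisely the content of Corollary~\ref{C:fiber regular}, which tells us each fiber of $\chi$ is normal and contains a dense regular $G$-orbit. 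The argument will hinge on the fact that a $V$-valued function is determined by its restriction to $G^{\sigma\text{-reg}}$ together with a computation of the twisted centralizer there, exactly as in the proof of Lemma~\ref{L:modification SL3}; carrying this out uniformly for all three cases, rather than case by case, is the delicate point.
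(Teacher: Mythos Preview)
Your strategy is viable in principle, but it takes a considerably longer route than the paper and you have not actually resolved what you yourself flag as the ``main obstacle.'' The paper's proof is a two-line determinant comparison that completely bypasses the codimension-one local analysis you propose.

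Here is what the paper does. After the faithfully flat base change to $\bfJ_T$, one has the composition
\[
\bfJ_T[x]/(f(x)) \longrightarrow \End(\widetilde V)\otimes_{\bfJ_G}\bfJ_T \longrightarrow \bfJ_T\otimes \End_T(V).
\]
All three are free $\bfJ_T$-modules of rank $d=\dim V$ (the middle by Theorem~\ref{T:Kostant} applied to $V\otimes V^*$). Under the composed map, $x\mapsto \gamma_{\on{taut}}\mapsto \sum_j e^{\la_j}\otimes \id_{V(\la_j)}$, so $x^i\mapsto \sum_j e^{i\la_j}\otimes\id_{V(\la_j)}$; thus the determinant of the composition is the Vandermonde $\prod_{j<j'}(e^{\la_j}-e^{\la_{j'}})$. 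The crucial observation --- which is the real content of the lemma and which your proposal never isolates --- is that for precisely these representations, every difference $\la_j-\la_{j'}$ of distinct weights is a \emph{root} of $G$. Hence the Vandermonde coincides, up to a unit in $\bfJ_T$, with the determinant of the second arrow as computed by Proposition~\ref{P:modification of vector bundles over S} (each $\zeta_\mO=1$). Dividing, the first arrow has unit determinant and is therefore an isomorphism.

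Your plan to verify the isomorphism fiber-by-fiber at each generic point of $A_\mO$ would essentially re-derive Proposition~\ref{P:modification of vector bundles over S} (or an equivalent statement) by hand for each case, and your invocation of Corollary~\ref{C:fiber regular} to control $\End(\widetilde V)\otimes_{\bfJ}\widehat{\bfJ_\eta}$ is delicate: one must worry about base-change for global sections versus sections of the restricted bundle. None of this is needed once you recognize the Vandermonde identity and feed in the already-established modification lengths. Also, your appeal to ``the coordinate ring of the companion-matrix slice'' and to freeness of $k[e^{\varepsilon_i}]$ over symmetric functions is tangential: those facts concern $\bfJ_T$ over $\bfJ_G$, not the map \eqref{E: ring map} itself.
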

\begin{proof}
It is enough to show that $\bfJ_G[x]/(f(x))\otimes_{\bfJ_G}\bfJ_T=\bfJ_T[x]/(f(x))\to \End(\widetilde{V})\otimes_{\bfJ_G}\bfJ_T$ is an isomorphism. For this, we calculate the composition
\begin{equation}
\label{E:Cayley Hamilton tensor JT}
\bfJ_G[x]/(f(x))\otimes_{\bfJ_G}\bfJ_T=\bfJ_T[x]/(f(x))\to \End(\widetilde{V})\otimes_{\bfJ_G}\bfJ_T\to \bfJ_T\otimes \End_{T}(V).
\end{equation}
Under this map \eqref{E:Cayley Hamilton tensor JT}, the image of $\ga_{\on{taut}}$ is $\sum e^{\la_j}\otimes \id_{V(\la_j)}$, where the sum is taken over all weights $\{\la_j\}$ of $V$, and $\id_{V(\la_j)}$ is the identity map of $V(\la_j)$. Then the image of $\ga^i_{\on{taut}}$ is $\sum e^{i\la_j}\otimes \id_{V(\la_j)}$. 
Since $V$ is minuscule, $\{\id_{V(\la_j)}\}$ form a basis of $\bfJ_T\otimes \End_{T}(V)$ as a $\bfJ_T$-module. By Vandermonde, the determinant of the composed map \eqref{E:Cayley Hamilton tensor JT} is
\begin{equation*}
\label{E:determinant of CH}
\prod_{j<j'} (e^{\la_j}-e^{\la_{j'}})= e^{\mu}\prod_{j<j'} (1-e^{\la_j-\la_{j'}}),
\end{equation*}
from some $\mu \in \XX^\bullet(T)$. For the cases in the lemma,  the difference $\lambda_j - \lambda_{j'}$ is a root $\alpha$, which is equal to the determinant (up to a unit in $\bfJ_T$) of the second map in \eqref{E:Cayley Hamilton tensor JT} by Proposition~\ref{P:modification of vector bundles over S}. (Note that Proposition~\ref{P:modification of vector bundles over S} also holds for $\GL_n$.) Therefore, the first map in \eqref{E:Cayley Hamilton tensor JT} is an isomorphism.  
\end{proof}

\subsection{Some examples}
\label{SS:example of matrix}
In this subsection, we present some examples of the calculation of the determinant of \eqref{E:intersection matrix}.
\begin{example}
Consider the case where $ G = \prod_{i=1}^{d}  \SL_2$ and $\sigma$ permutes all $d$ factors, and let $V=V_{a_1} \boxtimes \cdots \boxtimes V_{a_d}$ be the exterior tensor product representation, where $V_a$ is the $a$th symmetric power representation of $\SL_2$ (i.e. the representation of $\SL_2$ on the space of homogeneous polynomials in $(x,y)$ of degree $a$). By Lemma~\ref{L:reduce from G to G0}, this is equivalent to the case where $G=\SL_2$, $\sigma=\id$, and $V= V_{a_\bullet}:=V_{a_1}\otimes\cdots \otimes V_{a_d}$ (so that $r_V  = V_{a_\bullet}(0) \neq 0$). Now
assume that $a_1+\cdots +a_d $ is even.
In this case, Theorem~\ref{T:determinant of intersection} says that the determinant of the matrix \eqref{E:intersection matrix} is equal to
\[
c\cdot (e^\alpha -1)^{\zeta}(e^{-\alpha}-1)^\zeta
\]
for some $c\in k^\times$, 
where 
$$
\zeta=\sum_{n \geq 1} \dim V_{a_\bullet}(n) = \frac{\dim V_{a_\bullet} - \dim V_{a_\bullet}(0)}2.
$$

In fact, the matrix for the pairing \eqref{E:intersection matrix} (for some appropriate basis) can be obtained via a combinatorial description in terms of periodic meanders, which carries a quantum deformation. 
In the special case when $ G = \SL_2$, $d$ is even, and $a_1 = \dots= a_d =1$, this matrix \eqref{E:intersection matrix} is the Gram matrix for the periodic meanders (see \cite{TX}). 
\end{example}

\begin{example}
Consider the case when $G = \SL_{2r+1}$ with the non-trivial pinned $\sigma$-action as explained in  Example~\ref{Ex:folding A2n}. We use freely the notation therein.
In particular, $\XX^\bullet(T)$ is generated by the characters $\varepsilon_{-r}, \dots, \varepsilon_r$ with the relation that $\sum_{i=-r}^r \varepsilon_i = 0$, and $\sigma$ acts by
$\sigma(\varepsilon_i) =- \varepsilon_{-i}$ for each $i$.
The Bruhat partial order on $\XX^\bullet(T)$ is generated by $\varepsilon_i \succeq \varepsilon_{i+1}$ for $i=-r, \dots, r-1$.
For $A =T / (\sigma-1)T$,  $
\XX^\bullet(A) = \xch( T)^{\sigma} = \bigoplus_{i=1}^r \ZZ \cdot (\varepsilon_{-i}-\varepsilon_i) $.
The absolute Weyl group $W\cong S_{2r+1}$  permutes $\varepsilon_{-r}, \dots, \varepsilon_r$, and its $\sigma$-invariant elements are $$W_0=W^{\sigma} = \langle (i,-i), (ij)(-i,-j)\;|\; i, j=1, \dots, r \rangle= (\ZZ/2\ZZ)^r \rtimes S_r .$$

Recall the $\sigma$-orbits of roots in \eqref{E:sigma orbits in SL2r+1}; in particular, $\alpha_{\calO_i^-}  = \alpha_{\calO_i^+} = \varepsilon_{-i} - \varepsilon_i$ for $i=1, \dots, r$. 
Write $\frakS_i$ for the $i$th elementary symmetric power in $e^{ \alpha_{\calO,1}}+e^{-\alpha_{\calO,1}}, \dots, e^{\alpha_{\calO,r}}+e^{-\alpha_{\calO,r}}$,  then
$$
\bfJ: = k[ G \sigma]^{G}\cong k[\xch(A)]^{W_0}\cong k[\frakS_1, \dots, \frakS_r]\subset k[\xch(T)]=k[e^{\pm\varepsilon_{-r}},\dots, e^{\pm\varepsilon_{-1}}, e^{\pm\varepsilon_1}, \dots,e^{\pm\varepsilon_r} ].$$
When $r = 1$, $ {\bfJ} = k[e^{\alpha_{\calO,1}}+e^{-\alpha_{\calO, 1}}]$.

\medskip
We consider the standard representation $V = \ttS_{\varepsilon_{-r}} = \std$ of $G = \SL_{2r+1}$. Observe that the only nonzero weight space of $V$ with weights in $(\sigma-1)\XX^\bullet(T)$ is $V(\varepsilon_0)$. We write
$$
\varepsilon_0 = \sigma(\nu) - \nu ,  \quad \textrm{for} \quad \nu = \varepsilon_{-r} + \dots + \varepsilon_{-1}.
$$
The representation $\ttS_{\nu} \cong \ttS_{\sigma(\nu)^*} \cong \wedge^r \std$ is the $r$th wedge product of the standard representation.
Then it is easy to see that 
$$
(\ttS_{\sigma(\nu^*)} \otimes \ttS_\nu  \otimes V)^G = (\wedge^r \std \otimes \wedge^r \std \otimes \std)^G \cong (\wedge^{2r+1}\std)^G \cong k,
$$
Let $\bbb$ denote the element on the left hand side that corresponds to $1 \in k$, or more precisely $e_{-r} \wedge\cdots \wedge e_r$.
Then the recipe in \eqref{E:Xi map} defines a function $f_\bbb \in \bfJ(V)$ (note that in our case $\ttW_{\nu} \cong \ttS_\nu$).  We will compute its restriction to $T\sigma$.

Let $e_{-r}, \dots, e_r$ denote the standard basis of the representation $\std$. Then  $\wedge^r\std$ has a basis $(e_I)_{I \subset\{-r, \dots, r\},|I|= r}$, where for $I = \{i_1,\dots, i_r\}$ with elements ordered increasingly, we write $e_I = e_{i_1} \wedge \cdots \wedge e_{i_r}$. The natural map $\sigma: \ttS_\nu = \wedge^r \std \to \ttS_{\sigma(\nu)} \cong (\wedge^r\std)^*$ is given by sending $e_i$ to $(-1)^ie_{-i}^*$.\footnote{This is in fact $(-1)^r$ times the natural map given by \eqref{Ex:sigma A2n}.} 
So $\sigma(e_I) =\sgn(I) \cdot e_{-I}^*$, where $-I = \{-i_r, \dots, -i_1\}$ and $\sgn(I) = (-1)^{i_1+\dots + i_r+ r(r-1)/2}$.
Using this notation, we write explicitly 
$$
\bbb  = 
\sum_{|I| = r, |I'| = r} e_I \otimes e_{I'} \otimes v_{I,I'}, \quad 
f_\bbb(t\sigma) =  \sum_{|I| = r, |I'| = r}    \langle  e_I, t\sigma e_{I'} \rangle \cdot v_{I, I'},
$$
where $v_{I,I'} \in V = \std$, $t \in T$, and
the sums run through all subsets $I, I'$ of $\{-r, \dots, r\}$ of cardinality $r$.
In the expression for $f_{\bbb}(t\sigma)$, the pairing $\langle e_I, t\sigma e_{I'} \rangle$ is nonzero precisely when $I' = -I$. 
In the expression for $\bbb$, the vector $v_{I, I'}$ is nonzero precisely when $I$ and $I'$ are disjoint.
So we need only to discuss the case when there is $\underline s = \{  s_1, \dots, s_r\} \in \{\pm 1\}^r$ such that  $I = I_{\underline s} = \{s_1\cdot 1, \dots, s_r \cdot r\}$ and $I' = -I$. In this case, $\sgn(- I_{\underline s}) = (-1)^{r(r+1)/2 + r(r-1)/2} = (-1)^r$ is independent of $\underline s$. Moreover, we may take $v_{I, I'} = (-1)^{r} \sgn(\underline s) \cdot e_0$, where the sign $\sgn(\underline s)=(-1)^u$ with $u$ equal to the number of $-1$'s in $\underline s$. Thus,
$$
\Res_V^\sigma(f_\bbb) = \sum_{\underline s\in \{\pm 1\}^r} \sgn (\underline s) e^{-\varepsilon_{-s_1}- \varepsilon_{-2s_2} - \cdots - \varepsilon_{-rs_r}}(t) \cdot e_0 = \prod_{i=1}^r \big(e^{-\varepsilon_{-i}} - e^{-\varepsilon_i}\big) \cdot e_0 \in \bfJ_T(V).
$$

In the dual picture,  we consider an element
$$
\bbb^\vee \in (V_{\sigma(\nu)} \otimes V_{\nu^*} \otimes V^*)^G  = \big( (\wedge^r\std^*) \otimes (\wedge^r \std^*) \otimes \std^*\big)^G \cong (\wedge^{2r+1}\std^*)^G \cong k.
$$
Explicitly, we write 
$$
\bbb^\vee = \sum_{|I| = r, |I'| = r} e_I \otimes e_{I'}^* \otimes v^\vee_{I,I'}, \quad f_{\bbb^\vee}(t\sigma) =   \sum_{|I| = r, |I'| = r}   \langle  e_I, t\sigma e^*_{I'} \rangle \cdot e_{I,I'}^\vee.
$$
Similar to above, we need only to discuss the case when $I = I_{\underline s}$ for some $\underline s \in \{\pm 1\}^r$ and $I' = -I$. In this case, we may choose $v_{I, I'} = (-1)^{r}\sgn(\underline s) \cdot e_0^*$. Thus,
$$
\Res_{V^*}^\sigma(f_{\bbb^\vee})= \sum_{\underline s  \in \{\pm 1\}^r} \sgn (\underline s) e^{\varepsilon_{-s_1}+ \varepsilon_{-2s_2} + \cdots + \varepsilon_{-rs_r}} (t)\cdot e^*_0 = \prod_{i=1}^r \big(e^{\varepsilon_{-i}} - e^{\varepsilon_i}\big) \cdot e_0^* \in \bfJ_T(V^*).
$$

Now we compute $\langle f_\bbb, f_{\bbb^\vee}\rangle$ under the pairing
$\bfJ(V) \otimes_\bfJ \bfJ(V^*) \to \bfJ \cong \bfJ_T^{W_0}$:
\[
\langle f_\bbb, f_{\bbb^\vee}\rangle
= \prod_{i=1}^r \big(e^{-\varepsilon_{-i}} - e^{-\varepsilon_i}\big) \cdot \prod_{i=1}^r \big(e^{\varepsilon_{-i}} - e^{\varepsilon_i}\big) = \prod_{i = 1}^r \big( (e^{\alpha_{\calO^-_i}} -1)( e^{-\alpha_{\calO^-_i}}-1) \big)
.\]
This agrees with Theorem~\ref{T:determinant of intersection} by noting that $\zeta_{\mO^-_i}(V) = 1$ and $\zeta_{\mO_i^+}(V) = \zeta_{\mO_{i,j}}(V) = 0$, for the $\sigma$-orbits in \eqref{E:sigma orbits in SL2r+1}. 
\end{example}

\begin{example}
Consider $\SL_4$ with row and column indices in $\{-2, -1, 1, 2\}$, the pinning $(B, T, e)$ given by the subgroup of standard upper triangular matrices, the subgroup of diagonal matrices, and $e = E_{-2, -1}+ E_{-1, 1} + E_{1, 2}$. The unique non-trivial pinned automorphism $\sigma$ is given by $\sigma(X) = J\, {}^t\!X^{-1} J^{-1}$ for $X \in \SL_4$, with $J = \bigg( \begin{smallmatrix}
0&0&0&1\\0&0&-1&0 \\ 0& 1&0&0\\ -1&0&0&0
\end{smallmatrix}\bigg)$.

Let $\varepsilon_i$ for $i = \pm 1, \pm2$ be the character of $T$ given by evaluating at the $(i,i)$-entry; so that $\XX^\bullet(T)$ is generated by these $\varepsilon_i$ with the relation $\varepsilon_{-2} + \varepsilon_{-1} + \varepsilon _1 + \varepsilon _2 =0$.
Under the $\sigma$-action $\sigma(\varepsilon_i) = -\varepsilon_i$, the following are the $\sigma$-orbits of $\Phi(G, T)$:
$$
\calO_{i} = \{\varepsilon_{-i} - \varepsilon_{i}\}, \quad \calO_{\pm 2, \pm 1} = \{\varepsilon_{\mp 2} - \varepsilon_{\mp1}, \varepsilon_{\pm 1} - \varepsilon_{\pm 2}\}, \quad \calO_{\mp 2, \pm 1} = \{\varepsilon_{\pm 2} - \varepsilon_{\mp1}, \varepsilon_{\pm 1} - \varepsilon_{\mp 2}\}
$$
with $i \in \{\pm 1, \pm 2\}$. They are all of type $\sfA$.
For $A = T/(\sigma-1)T$,
$$\XX^\bullet(A) = \XX^\bullet(T)^\sigma = \ZZ (\varepsilon_{-2} - \varepsilon_2) \oplus \ZZ (\varepsilon_{-1} +\varepsilon_{-2}).
$$
(Note that $(\varepsilon_{-2} - \varepsilon_2) + (\varepsilon_{-1} - \varepsilon_1) = 2(\varepsilon_{-2} + \varepsilon_{-1})$.)
The root system $\Phi(G_\sigma, A)$
consists of short roots $\alpha_{\calO_i}$ and long roots $\alpha_{\calO_{i,j}}$; the group $G_\sigma \cong \Spin_5$.
The absolute Weyl group of $\Phi(G, T)$ is $W \cong S_4$ given by permuting the $\varepsilon_i$'s, and its $\sigma$-invariant elements are
$W_0 = W^\sigma = \langle (-1, 1), (-2, 2), (1, 2)(-1, -2)\rangle= (\ZZ/2\ZZ)^2 \rtimes S_2$.
Write 
$$
\frakS_1 =
e^{\varepsilon_{-1}-\varepsilon_1}+ e^{\varepsilon_{1}-\varepsilon_{-1}} + 
e^{\varepsilon_{-2}-\varepsilon_2}+ e^{\varepsilon_{2}-\varepsilon_{-2}} , \quad \frakS_2 = e^{\varepsilon_{-2}+\varepsilon_{-1}} + e^{-\varepsilon_{-2}-\varepsilon_{-1}} + e^{\varepsilon_{1}+\varepsilon_{2}}+ e^{-\varepsilon_1-\varepsilon_2}.$$ Then 
\begin{align*}
\bfJ: = k[ G \sigma]^{G}\cong&\; k[\xch(A)]^{W_0}\cong k[\frakS_1, \frakS_2]
\\&
\subset k[\xch(T)]=k[e^{\pm\varepsilon_{-2}}, e^{\pm\varepsilon_{-1}}, e^{\pm\varepsilon_1},e^{\pm\varepsilon_2} ] /(e^{\varepsilon_{-2} + \varepsilon_{-1} + \varepsilon_1 + \varepsilon_2} -1).
\end{align*}

Consider the minuscule representation $V = \ttS_{\varepsilon_{-2}+ \varepsilon_{-1}}$ of $\SL_4$; it is isomorphic to $\wedge^2\std$ for the standard representation $\std$ of $\SL_4$. There are two nonzero weight spaces with weights in $(\sigma-1)\XX^\bullet(T)$: $V(\lambda_1)$ and $V(\la_2)$ with $\lambda_i = \varepsilon_{-i} + \varepsilon_i$. They are both of one-dimensional. So $r_V=2$.
The minimal dominant weights $\nu_i$ (in the sense as in Lemma~\ref{L: minimalnusimplyconnected}) satisfying $\sigma(\nu_i) - \nu_i = \lambda_i$ are
$$
\nu_1 = \varepsilon_{-2}, \quad  \nu_2 =-\varepsilon_2 .
$$
We remark here that for $\SL_n$ with $n\geq 5$ and the non-trivial automorphism $\sigma$, and for $V = \wedge^2  \std$, we have $r_V = \lfloor \frac n2\rfloor$ and some (or rather most) of the minimal dominant weights $\nu$ above are no longer minuscule. So the computation will be much more involved. 

Back to our case,  $\ttS_{\nu_1} \cong \ttS_{\sigma(\nu_1)^*}\cong \std$ and $\ttS_{\nu_2} \cong \ttS_{\sigma(\nu_2)^*} \cong \std^*$.
In what follows, we will describe nonzero elements
$$
\bbb_1 \in (\ttS_{\sigma(\nu_1)^*} \otimes \ttS_{\nu_1} \otimes V)^G = (\std \otimes \std \otimes \wedge^2 \std)^G, \quad \bbb_2 \in (\ttS_{\sigma(\nu_2)^*} \otimes \ttS_{\nu_2} \otimes V)^G = (\std^* \otimes \std^* \otimes \wedge^2 \std)^G,
$$
use the recipe in \eqref{E:Xi map} to defines a function $f_{\bbb_1}, f_{\bbb_2} \in \bfJ(V)$ (note that in our case $\ttW_{\nu_i} \cong \ttS_{\nu_i}$), and describe the restriction of $f_{\bbb_i}$ to $T\sigma$.

Write $\{e_{-2}, e_{-1}, e_1, e_2\}$ for the standard basis of $\std$ and $\{e_{-2}^*, e_{-1}^*, e_1^*, e_2^*\}$ the dual basis; and the isomorphism $\sigma: \std \cong \std^*$ is given by
$$
e_{-2} \mapsto e_2^*, \quad e_{-1} \mapsto -e_1^*, \quad e_1 \mapsto e_{-1}^*, \quad e_2 \mapsto -e_{-2}^*.
$$
With this notation, we write explicitly,
$$
\bbb_1  = 
\sum_{i,j \in \{\pm 1, \pm 2\}} e_i \otimes e_j \otimes v_{i,j}, \quad 
f_{\bbb_1}(t\sigma) =  \sum_{i,j \in \{\pm 1, \pm 2\}}    \langle  e_i, t\sigma e_j \rangle \cdot v_{i,j},
$$
where $v_{i,j} \in V = \wedge^2\std$ and $t \in T$.
In the expression for $f_{\bbb_1}(t\sigma)$, the pairing $\langle e_i, t\sigma e_j \rangle$ is nonzero precisely when $i=-j$. 
In the expression for $\bbb_1$, the vector $v_{i,j}$ is nonzero precisely when $i \neq j$.
So we need only to discuss the case when $i =- j \in \{\pm 1, \pm 2\}$. In this case, we may choose $v_{-2,2} = -v_{2, -2} =-  e_{-1} \wedge e_1$ and $v_{-1, 1}= -v_{1,-1} =- e_{-2} \wedge e_2$. Thus,
$$
\Res_V^\sigma(f_{\bbb_1}) = (e^{-\varepsilon_{-2}} + e^{-\varepsilon_2}) e_{-1} \wedge e_1 - (e^{-\varepsilon_{-1}} + e^{-\varepsilon_{1}}) e_{-2} \wedge e_2 \in \bfJ_T(V).
$$
In a similar way, we may compute (up to choosing a sign for $\bbb_2$)
$$
\Res_V^\sigma(f_{\bbb_2}) = (e^{\varepsilon_{-1}} + e^{\varepsilon_{1}}) e_{-1} \wedge e_1 - (e^{\varepsilon_{-2}} + e^{\varepsilon_{2}}) e_{-2} \wedge e_2 \in \bfJ_T(V).
$$

On the dual side, we choose elements
$$
\bbb_1^\vee \in (\ttS_{\sigma(\nu_1)} \otimes \ttS_{\nu_1^*} \otimes V^*)^G = (\std^* \otimes \std^* \otimes \wedge^2 \std^*)^G, \quad \bbb_2^\vee \in (\ttS_{\sigma(\nu_2)} \otimes \ttS_{\nu_2^*} \otimes V^*)^G = (\std \otimes \std \otimes \wedge^2 \std^*)^G,
$$
and a computation similar to above gives the explicit formulas (up to choosing a sign for $\bbb_i^\vee$)
\begin{align*}
\Res_{V^*}^\sigma(f_{\bbb_1^\vee}) &= (e^{\varepsilon_{-2}} + e^{\varepsilon_{2}}) e_{-1}^* \wedge e_1^* - (e^{\varepsilon_{-1}} + e^{\varepsilon_{1}}) e_{-2}^* \wedge e_2^* \in \bfJ_T(V^*)
\\
\Res_{V^*}^\sigma(f_{\bbb_2^\vee}) &= (e^{-\varepsilon_{-1}} + e^{-\varepsilon_{1}}) e_{-1}^* \wedge e_1^* - (e^{-\varepsilon_{-2}} + e^{-\varepsilon_{2}}) e_{-2}^* \wedge e_2^* \in \bfJ_T(V^*).
\end{align*}

From this, we deduce that under the natural $\bfJ$-linear pairing $\bfJ(V) \times \bfJ(V^*) \to \bfJ$, we have
\begin{eqnarray*}
&
\langle f_{\bbb_1}, f_{\bbb_1^\vee}\rangle = \langle f_{\bbb_2}, f_{\bbb_2^\vee}\rangle = 
(e^{\varepsilon_{-2}} + e^{\varepsilon_{2}})(e^{-\varepsilon_{-2}} +e^{-\varepsilon_{2}})+(e^{\varepsilon_{-1}} + e^{\varepsilon_{1}})(e^{-\varepsilon_{-1}}+e^{-\varepsilon_{1}}) = \frakS_1+4,
\\
& \langle f_{\bbb_1}, f_{\bbb_2^\vee}\rangle = \langle f_{\bbb_2}, f_{\bbb_1^\vee}\rangle =  2(e^{-\varepsilon_{-1}} + e^{-\varepsilon_{1}}) (e^{-\varepsilon_{-2}} + e^{-\varepsilon_{2}})
 = 2(e^{\varepsilon_{-1}} + e^{\varepsilon_{1}}) (e^{\varepsilon_{-2}} + e^{\varepsilon_{2}})  = 2\frakS_2.
\end{eqnarray*}
(Note that $e^{\varepsilon_{-2} + \varepsilon_{-1} + \varepsilon_1+\varepsilon_2} = 1$.)
We can compute the determinant of the pairing as
\begin{align*}
\det\begin{pmatrix}
\langle f_{\bbb_1} , f_{\bbb_1^\vee}\rangle & \langle f_{\bbb_1} , f_{\bbb_2^\vee}\rangle\\
\langle f_{\bbb_2} , f_{\bbb_1^\vee}\rangle & \langle f_{\bbb_2} , f_{\bbb_2^\vee}\rangle
\end{pmatrix} = \,&
\big( (e^{\varepsilon_{-2}} + e^{\varepsilon_{2}})(e^{-\varepsilon_{-2}} +e^{-\varepsilon_{2}})+(e^{\varepsilon_{-1}} + e^{\varepsilon_{1}})(e^{-\varepsilon_{-1}}+e^{-\varepsilon_{1}})\big)^2 \\ &- 4 (e^{\varepsilon_{-1}} + e^{\varepsilon_{1}}) (e^{\varepsilon_{-2}} + e^{\varepsilon_{2}})(e^{\varepsilon_{-1}} + e^{\varepsilon_{1}}) (e^{\varepsilon_{-2}} + e^{\varepsilon_{2}})
\\
=\, &
\big( (e^{\varepsilon_{-2}} + e^{\varepsilon_{2}})(e^{-\varepsilon_{-2}} +e^{-\varepsilon_{2}})-(e^{\varepsilon_{-1}} + e^{\varepsilon_{1}})(e^{-\varepsilon_{-1}}+e^{-\varepsilon_{1}})\big)^2
\\
=\, &
\big( (e^{\varepsilon_{-2} - \varepsilon_2} - e^{\varepsilon_{-1}-\varepsilon_{1}})( 1- e^{\varepsilon_{2}-\varepsilon_{-2}+\varepsilon_{1}-\varepsilon_{-1}})\big)^2
\\
= \, & 
(e^{\alpha_{\calO_{-2, -1}}}-1)(e^{\alpha_{\calO_{-2, 1}}}-1) (e^{\alpha_{\calO_{2, -1}}}-1)(e^{\alpha_{\calO_{2, 1}}}-1).
\end{align*}

One can compare this with the computation that $\zeta_{\calO_i} (V) = 0$ and $\zeta_{\calO_{\pm 2, \pm 1}}(V) = \zeta_{\calO_{\pm2, \mp1}} (V) = 1$.
\end{example}

\begin{example}
\label{E: even Spin}
Assume that $\on{char}k=0$. 
Consider the case of $G = \Spin_{2r}$.
More precisely, consider $Q: = k^{\oplus 2r}$ with basis $e_{-r}, \dots, e_{-1}, e_1, \dots, e_r$ and a symmetric quadratic form $\langle e_i, e_{-j}\rangle = \delta_{ij}$. Let $$Cl =Cl(Q) =  Cl^\mathrm{even} \oplus Cl^\mathrm{odd} =  \Big( \bigoplus_{n \geq 0}Q^{\otimes n} \Big) \Big / \big( w\otimes w- \tfrac 12 \langle w, w\rangle; \, w\in Q\big) $$ denote the associated ($\ZZ/2\ZZ$-graded) Clifford algebra. 
There is a natural (anti-commutative) involution $\bullet^t$ on $Cl$ generated by sending $w \in Q$ to $-w$.
The pin group $\mathrm{Pin}_{2r}$ is formed by all elements $x \in Cl$ such that $x \cdot x^t = 1$, and $x Q x^t \subset Q$. The spin group $\Spin_{2r} : = \mathrm{Pin}_{2r} \cap Cl^\mathrm{even}$ is the neutral connected component of $\mathrm{Pin}_{2r}$.
The maximal torus $T$ of $\Spin_{2r}$ is given by the image of
$$\xymatrix@R=0pt{\Gamma:\GG_m^r \ar[r] & \Spin_{2r}
\\
(z_1, \dots, z_r) \ar@{|->}[r] & \prod_{i=1}^r \big( z_i e_ie_{-i} + z_i^{-1} e_ie_{-i}\big),}$$
where the kernel is $\{(z_1, \dots, z_r) \in \{\pm 1\}^r\; |\; z_1 \cdots z_r = 1\}$.
Let $\frac{\varepsilon_1}2, \dots, \frac{\varepsilon_r}2$ denote the usual basis of the character group of $\GG_m^r$; then
$$
\XX^\bullet(T) = \ZZ \cdot \tfrac 12 (\varepsilon_1 + \cdots + \varepsilon_r) \oplus  \bigoplus_{i=1}^{r-1} \ZZ \cdot \varepsilon_i.
$$
A weight $\sum_{i=1}^r a_i \varepsilon_i$ is dominant if $a_1 \geq \cdots \geq a_{r-1} \geq |a_r|$.

We equip with $\Spin_{2r}$ with the outer automorphism $\sigma$, induced by the conjugation by $e_r + e_{-r}$.
A simple computation shows that $$(e_r+e_{-r})( z_i e_ie_{-i} + z_i^{-1} e_ie_{-i})(e_r+e_{-r}) = \begin{cases}
z_i e_ie_{-i} + z_i^{-1} e_ie_{-i} & \textrm{if }i \neq r\\
z_r^{-1} e_re_{-r} + z_r e_re_{-r} & \textrm{if }i =r
\end{cases}
$$
So $\sigma$ fixes $\varepsilon_1, \dots, \varepsilon_{r-1}$ and maps $\varepsilon_r $ to $-\varepsilon _r$.
So for $A = T/(\sigma-1)T$,
its character group $
\XX^\bullet(A) = \XX^\bullet(T)^\sigma$ is the free abelian group with basis $\varepsilon_1, \dots, \varepsilon_{r-1}$.

The absolute Weyl group $W \cong H_r \rtimes S_r \subset \{\pm 1\}^r \times S_r$, where $H_r \subset \{\pm 1\}^r$ is the subgroup consisting of even number of $-1$'s,  $S_r$ permutes $\varepsilon_1, \dots, \varepsilon_r$ and $(h_1, \dots, h_r) \in H_r$ sends $\varepsilon_i$ to $h_i \varepsilon_i$ for $i=1, \dots, r$.
The $\sigma$-invariant elements of $W$ are $W_0 : = W^{\sigma}= \{\pm 1\}^{r-1} \rtimes S_{r-1}$,  where $S_{r-1}$ permutes $\varepsilon_1, \dots, \varepsilon_{r-1}$ and $(h_1, \dots, h_r) \in \{ \pm 1\}^{r-1}$ sends  $\varepsilon_i$ to $h_i \varepsilon_i$ for each $i = 1, \dots, r-1$.
It follows that the invariants of $\CC[\XX^\bullet(A)]$ under $W_0$ are
\begin{equation*}
\label{E:J GSpin}
\bfJ : = \CC[\XX^\bullet(A)]^{W_0} = \CC[\frakS_1, \dots, \frakS_{r-1}],
\end{equation*}
where $\frakS_i$ for $i=1, \dots, r-1$ is the $i$th elementary symmetric polynomial in $e^{\varepsilon_1} + e^{-\varepsilon_1}, \dots, e^{\varepsilon_{r-1}} + e^{-\varepsilon_{r-1}}$.

The root system $\Phi(G, T)$ consists of $\pm \varepsilon_i\pm \varepsilon_j$ and $\pm \varepsilon_i\mp\varepsilon_j$ for $i \in \{1, \dots, r\}$ and $j \in \{1, \dots, i-1\}$. The $\sigma$-orbits of these roots are
$$
\calO_{i,j}^{\pm} = \{ \pm\varepsilon_i\pm \varepsilon_j\}, \quad \calO_{j,i}^{\pm} = \{ \pm\varepsilon_i\mp \varepsilon_j\},\quad \calO_{i}^{\pm} = \{ \pm\varepsilon_i+ \varepsilon_r, \pm \varepsilon_i - \varepsilon_r\},
$$
with $i \in \{1, \dots, r\}, j \in \{1, \dots, i-1\}$; they are all of type $\sfA$.
Thus the root system $\Phi(G_\sigma, A)$ consists of short roots $\alpha_{\calO_{i,j}^\pm} = \pm \varepsilon_i \pm \varepsilon_j$ and long roots $\alpha_{\calO_i^\pm} = \pm 2\varepsilon_i$.

\medskip
We shall consider $V = \ttS_{\varepsilon_1} \cong Q$,  the vector representation of $G= \Spin_{2r}$. Its nonzero weight spaces with weights in $(\sigma-1)\XX^\bullet(T)$ are $V(-\varepsilon_r)$ and $V(\varepsilon_r)$.
We write
$$
\mp \varepsilon_r = \sigma(\nu_
\pm) - \nu_\pm  \quad\textrm{for} \quad \nu_\pm = \tfrac 12(\varepsilon_1+\cdots+ \varepsilon_{r-1} \pm \varepsilon_r).
$$
(Be careful with the signs here.)
So $\sigma(\nu_\pm) = \nu_\mp$.
We shall see that
\begin{equation}
\label{E:Spin invariants}
(\ttS_{\sigma(\nu_\pm^*)} \otimes \ttS_{\nu_\pm} \otimes V)^G \cong k.
\end{equation}
Indeed, since $V$ is minuscule, $\ttS_{\nu_\pm} \otimes V $ is a direct sum of $ \ttS_{\nu_\pm + \tau}$ with all $\tau \in  \{\pm \varepsilon_1, \dots, \pm \varepsilon_r\}$ such that $\nu_\pm +\tau$ is dominant. In particular, $\nu_\mp$ is among those weights $\nu_\pm + \tau$.
Dually, we have
\begin{equation}
\label{E:Spin invariants dual}
(\ttS_{\sigma(\nu_\pm)} \otimes \ttS_{\nu_\pm^*} \otimes V^*)^G \cong k.
\end{equation}

\end{example}

\begin{lem}
There exist bases $\bbb_\pm$ of \eqref{E:Spin invariants} and bases $\bbb_\pm^\vee$ of \eqref{E:Spin invariants dual} such that if we use $f_{\bbb_\pm} \in \bfJ(V)$ and $f_{\bbb_\pm^\vee} \in \bfJ(V^*)$ to denote the associated class functions via the recipe \eqref{E:Xi map}, then
\begin{equation}
\label{E:matrix for Spin}\frakM: = 
\begin{pmatrix}
\langle f_{\bbb_+}, f_{\bbb^\vee_+} \rangle & \langle f_{\bbb_+}, f_{\bbb^\vee_-}\rangle \\ \langle f_{\bbb_-}, f_{\bbb^\vee_+} \rangle& \langle f_{\bbb_-}, f_{\bbb^\vee_-}\rangle
\end{pmatrix} = \begin{pmatrix}
\sum_{i\geq 0 \mathrm{\ even}} 2^{r-1-i} \frakS_i & 
\sum_{i>0 \mathrm{\ odd}} 2^{r-1-i} \frakS_i
\\
\sum_{i>0 \mathrm{\ odd}} 2^{r-1-i} \frakS_i & \sum_{i\geq 0 \mathrm{\ even}}2^{r-1-i} \frakS_i
\end{pmatrix}.
\end{equation}
The determinant of $\frakM$ is $$ \prod_{i=1}^{r-1}(1- e^{2\varepsilon_i} )(1- e^{2\varepsilon_{-i}}).$$
\end{lem}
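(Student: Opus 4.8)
The strategy is to compute the two restrictions $\Res_V^\sigma(f_{\bbb_\pm})$ and $\Res_{V^*}^\sigma(f_{\bbb_\pm^\vee})$ explicitly as elements of $\bfJ_T(V)$ and $\bfJ_T(V^*)$, then pair them and symmetrize over $W_0$; the displayed matrix and its determinant will then fall out of an elementary symmetric-function manipulation. This follows the same template as the $\SL_4$ example above, so the heart of the matter is bookkeeping with the spin representation.

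First I would fix explicit models. The weights of $V = \ttS_{\varepsilon_1} \cong Q$ are $\pm\varepsilon_1, \dots, \pm\varepsilon_r$, each with one-dimensional weight space spanned by $e_{\pm i}$. The outer automorphism $\sigma$ (conjugation by $e_r+e_{-r}$) acts on $V$ by fixing $e_{\pm i}$ for $i<r$ and swapping $e_r \leftrightarrow e_{-r}$ (up to a sign I will track). Next, since $\ttW_{\nu_\pm}\cong\ttS_{\nu_\pm}$ in characteristic zero, the recipe \eqref{E:Xi map} produces $f_{\bbb_\pm}$ from a choice of generator $\bbb_\pm$ of the one-dimensional space \eqref{E:Spin invariants}; I would normalize $\bbb_\pm$ once and for all. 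The key computational input is the decomposition of $\ttS_{\nu_\pm}\otimes V$, together with the explicit matrix coefficients $\langle \sigma e_i^*, t\sigma e_j\rangle$ for the half-spin representations $\ttS_{\nu_\pm}$. Because $\ttS_{\nu_\pm}$ is minuscule, its weights are $\tfrac12(\pm\varepsilon_1\pm\cdots\pm\varepsilon_r)$ with an even (resp. odd) number of minus signs, all with one-dimensional weight spaces, so these matrix coefficients are just products of the $z_i^{\pm1}$, i.e. monomials in $e^{\varepsilon_i/2}$; the pairing $\langle e_i, t\sigma e_j\rangle$ is nonzero only for $i=-j$, as in the examples. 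Carrying this through, I expect to find that, up to an overall monomial unit,
\[
\Res_V^\sigma(f_{\bbb_\pm}) = \sum_{i=1}^{r-1}(\text{monomial})\, e_{-i}\wedge\text{-type terms} \;+\; (e^{\mp\varepsilon_r/\cdots})\, e_{\pm r},
\]
more precisely that $\Res_V^\sigma(f_{\bbb_+})$ is supported on the $e_r$ weight line and $\Res_V^\sigma(f_{\bbb_-})$ on the $e_{-r}$ line, with coefficients that are, after clearing denominators, products $\prod_{i\in S}(e^{\varepsilon_i}+e^{-\varepsilon_i})$ summed over subsets $S\subset\{1,\dots,r-1\}$ of fixed parity. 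The dual side is entirely analogous with $\varepsilon_i \mapsto -\varepsilon_i$.

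Then I would compute the four pairings $\langle f_{\bbb_\epsilon}, f_{\bbb_{\epsilon'}^\vee}\rangle \in \bfJ = \CC[\frakS_1,\dots,\frakS_{r-1}]$ by multiplying the two restrictions in $\bfJ_T$ and noting that the result automatically lies in $\bfJ_T^{W_0}$. The parity structure of the subset sums is exactly what produces the block pattern in \eqref{E:matrix for Spin}: products of two even-parity subset sums plus two odd-parity subset sums give the even-degree $\frakS_i$ with the right powers of $2$ (the power $2^{r-1-i}$ is the number of ways to complete a chosen $i$-subset, tracked via the identity $\prod_{i=1}^{r-1}(2+x_i) = \sum_i 2^{r-1-i}\sigma_i(x)$ applied to $x_i = e^{\varepsilon_i}+e^{-\varepsilon_i}$), while mixed parity gives the odd-degree terms. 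Finally, the determinant: $\det\frakM = \big(\sum_{i\text{ even}}2^{r-1-i}\frakS_i\big)^2 - \big(\sum_{i\text{ odd}}2^{r-1-i}\frakS_i\big)^2$, which factors as a product of $\sum_i (\pm1)^i 2^{r-1-i}\frakS_i = \prod_{i=1}^{r-1}(2 \pm (e^{\varepsilon_i}+e^{-\varepsilon_i}))$; since $2 + e^{\varepsilon_i}+e^{-\varepsilon_i} = (e^{\varepsilon_i/2}+e^{-\varepsilon_i/2})^2$ and $2 - (e^{\varepsilon_i}+e^{-\varepsilon_i}) = -(e^{\varepsilon_i/2}-e^{-\varepsilon_i/2})^2$, the product telescopes (up to a unit, hence exactly by Theorem~\ref{T:determinant of intersection}, which pins down the answer to be $\prod_{i=1}^{r-1}(1-e^{2\varepsilon_i})(1-e^{2\varepsilon_{-i}})$ since $\zeta_{\calO_i^\pm}(V)=1$ and $\zeta_{\calO_{i,j}^\pm}(V)=0$). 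I would present the final determinant computation as a short explicit algebra identity and invoke \ref{T:determinant of intersection} to fix the unit and the sign.

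\textbf{Main obstacle.} The genuinely delicate part is the explicit matrix-coefficient computation for the half-spin representations $\ttS_{\nu_\pm}$ and, relatedly, bookkeeping the signs coming from the Clifford algebra: $\sigma$ is conjugation by $e_r+e_{-r}$, and its induced action on $\ttS_{\nu_\pm}$ and on $V$ carries signs that must be consistent with the chosen isomorphisms $\sigma(\nu_\pm)=\nu_\mp$ and $\sigma\ttS_{\nu_\pm}=\ttS_{\nu_\mp}$. Getting these signs right is what determines whether the off-diagonal entries of $\frakM$ carry a relative sign (which would change the factorization of the determinant). I would handle this by doing the rank $2$ or rank $3$ case by hand as a sanity check, and by using the rigidity afforded by Theorem~\ref{T:determinant of intersection} — which already forces the answer up to a constant — to confirm that the sign conventions have been chosen consistently.
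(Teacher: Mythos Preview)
Your overall strategy matches the paper's: restrict $f_{\bbb_\pm}$ and $f_{\bbb_\pm^\vee}$ to $T\sigma$ explicitly using the minuscule weight structure of the half-spin representations, pair the restrictions inside $\bfJ_T$, and then factor the determinant via the symmetric-function identity $\prod_{i=1}^{r-1}(2\pm x_i)=\sum_i(\pm 1)^i2^{r-1-i}\frakS_i$ with $x_i=e^{\varepsilon_i}+e^{-\varepsilon_i}$. Your determinant computation is correct and is essentially the same as the paper's (the paper packages the two factors as $(C_++C_-)(D_++D_-)$ and $(C_+-C_-)(D_+-D_-)$, which unwind to exactly your products).

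However, there is one concrete wrong expectation that will derail the matrix computation before you get to the determinant. You predict that $\Res_V^\sigma(f_{\bbb_+})$ is supported on the single weight line $k\,e_r$ and $\Res_V^\sigma(f_{\bbb_-})$ on $k\,e_{-r}$. This is not what happens: in the paper's computation (using the model $\ttS_{\nu_\pm}\cong(\wedge^*U)^{\mathrm{even}/\mathrm{odd}}$ with $\sigma$ acting as $e_I\mapsto e_{I\triangle\{r\}}$), each restriction has components on \emph{both} lines, namely
\[
\Res_V^\sigma(f_{\bbb_+})=C_+e^{\varepsilon_r}\cdot e_r+C_-\cdot e_{-r},\qquad
\Res_V^\sigma(f_{\bbb_-})=C_-e^{\varepsilon_r}\cdot e_r+C_+\cdot e_{-r},
\]
where $C_\pm=e^{-\nu_+}\sum_{|J|\ \mathrm{even/odd}}e^{\varepsilon_J}$ with $J\subset\{1,\dots,r-1\}$. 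If the restrictions really were single-line, the off-diagonal entries $\langle f_{\bbb_+},f_{\bbb_-^\vee}\rangle$ would vanish (since $e_r$ pairs with $e_r^*$, not $e_{-r}^*$), contradicting the matrix \eqref{E:matrix for Spin}. The cross terms are precisely what give the odd-degree $\frakS_i$ off the diagonal; the diagonal entries are then $C_+D_++C_-D_-$, not a single product. So this is not merely a sign issue to be patched by a sanity check---the two-component structure is essential, and you should expect it from the outset because the map $\sigma:\ttS_{\nu_+}\to\ttS_{\nu_-}$ intertwines the subsets $I$ and $I\triangle\{r\}$ rather than fixing weight lines. Also, your tentative expression with ``$e_{-i}\wedge$-type terms'' for $i<r$ cannot appear: the only weights of $V$ lying in $(\sigma-1)\XX^\bullet(T)$ are $\pm\varepsilon_r$, so $\bfJ_T(V)$ is supported on $e_{\pm r}$ alone.
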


This agrees with our Theorem~\ref{T:determinant of intersection} because, for $i\neq j \in \{1, \dots, r-1\}$, one can check easily that $\zeta_{\calO_i^\pm}(V) =1$, $\zeta_{\calO_{i,j}^\pm}(V)= 0$.

\begin{proof}
Consider the (isotropic) subspace $U = \bigoplus_{i=1}^r k e_i$ of $Q$ and its ($\ZZ/2\ZZ$-graded) wedge product space $\wedge^*U= (\wedge^*U)^\mathrm{even} \oplus (\wedge^*U)^\mathrm{odd}$. A basis of the latter is given by $e_I = e_{i_1} \wedge \cdots \wedge e_{i_m}$ for $I =\{i_1, \dots, i_m\}$ in which elements are ordered so that $i_1 > \dots > i_m$. (Our unusual ordering is to avoid the appearance of unnecessary signs  later.)
The pin group $\mathrm{Pin}_{2r}$ acts on $\wedge^*U$ as, for $1\leq i \leq m$,
\begin{eqnarray}
\label{E:Pin 2r action on wedge U}
&e_i \bullet (u_1 \wedge \cdots \wedge u_m) =  e_i \wedge u_1 \wedge \cdots \wedge u_m, \\ \nonumber
&
e_{-i} \bullet (u_1 \wedge \cdots \wedge u_m) = \sum_{i=1}^m (-1)^{i-1} \langle e_{-i}, u_i\rangle \cdot  u_1 \wedge \cdots \wedge \widehat{u_i} \wedge \cdots \wedge u_m.
\end{eqnarray}
Restricting to $\mathrm{Spin}_{2r}$, both $(\wedge^*U)^\even$ and $(\wedge^*U)^\odd$ are irreducible representations; when $r$ is even, their highest weights are $\nu_+ = \frac 12(\varepsilon_1+\cdots + \varepsilon_r)$ and $\nu_- = \frac12(\varepsilon_1+ \cdots + \varepsilon_{r-1} - \varepsilon_r)$, respectively; when $r$ is odd, the highest weights are exchanged.
In what follows, we assume that $r$ is even. The other case can be treated similarly. We henceforth identify $\ttS_{\nu_+} \cong (\wedge^*U)^\even$ and $\ttS_{\nu_-} \cong (\wedge^*U)^\odd$.
More generally, the weight of $e_I$ (with $I = \{i_1, \dots, i_m\}$) is 
\begin{equation*}
\label{E:weight of wedge U}\varepsilon_{i_1}+\cdots + \varepsilon_{i_m} - \nu_+.
\end{equation*}
Moreover, the action \eqref{E:Pin 2r action on wedge U} also defines natural morphisms
$$
V \otimes (\wedge^*U)^\even \to (\wedge^*U)^\odd, \quad V \otimes (\wedge^*U)^\odd \to (\wedge^*U)^\even.
$$
The isomorphism $\sigma : \ttS_{\nu_+} \to \ttS_{\sigma(\nu_+)} = \ttS_{\nu_-}$ may be identified with the  natural map $$
({\wedge}^*U)^\even \to ({\wedge}^*U)^\odd, \quad x \mapsto (e_r+e_{-r})\bullet x,$$
intertwining the natural action of $\Spin_{2r}$ on the source and the $\sigma$-twisted action of $\Spin_{2r}$ on the target. Explicitly, this map sends $e_I$ to $e_{I \triangle \{r\}}$, where $I\triangle\{r\}$ is the symmetric difference.

With this discussion, we may express a basis element of
$$
(\ttS_{\sigma(\nu_+)^*} \otimes \ttS_{\nu_+} \otimes V)^G = \big( (\wedge^*U)^{\odd,*} \otimes (\wedge^*U)^\even \otimes V \big)^G
$$
and its class function as
$$
\bbb_+ = \sum_{|I| \textrm{ odd}}\sum_{|I'|\textrm{ even}} e_{I}^* \otimes e_{I'} \otimes v_{I, I'}, \quad f_{\bbb_+}(t\sigma)  = \sum_{|I| \textrm{ odd}}\sum_{|I'|\textrm{ even}} \langle e_{I}^* \otimes t\sigma e_{I'} \rangle \cdot v_{I, I'}
$$
for $v_{I, I'} \in V$ and $t\in T$, where the sum is taken over all subsets $I, J\subset \{1, \dots, r\}$ whose sizes are of the specified parity.
In the expression for $f_{\bbb}(t\sigma)$, the pairing $\langle e_I^*, \sigma e_{I'} \rangle$ is nonzero precisely when $I' = I\triangle \{r\}$; in this case, we may rewrite $\{I,I'\}$ as $\{J, J \cup\{r\}\}$ for some $J \subset \{1, \dots, r-1\}$, and we may take the vector $v_{I, I'}$ to be $e_r$ if $r \in I$ and to be $e_{-r}$ if $r \notin I$.
Therefore, we have
\begin{align*}
f_{\bbb_+}(t\sigma)& = \sum_{|J| \textrm{ even}} \langle e_{J \cup \{r\}}^* , e^{\varepsilon_{J}+\varepsilon_r-\nu_+}(t) \sigma  e_{J} \rangle \cdot e_r + \sum_{|J|  \textrm{ odd}} \langle e_{J}^* , e^{\varepsilon_{J}-\nu_+}(t) e_{J\cup \{r\}} \rangle \cdot  e_{-r}
\\
& = \sum_{|J| \textrm{ even}}  e^{\varepsilon_{J}+\varepsilon_r-\nu_+}(t) \cdot e_r + \sum_{|J|  \textrm{ odd}}  e^{\varepsilon_{J}-\nu_+}(t) \cdot  e_{-r},
\end{align*}
where the sums are taken over subsets $J \subset \{1, \dots, r-1\}$ whose sizes are of the given parity, and for $J = \{i_1, \dots, i_m\}$, $\varepsilon_J = \varepsilon_{i_1} + \cdots + \varepsilon_{i_m}$.
To simplify notation, we put
$$
C_+ = e^{-\nu_+}\cdot \sum_{|J| \textrm{ even}}  e^{\varepsilon_{J}}, \quad C_- =e^{-\nu_+}\cdot  \sum_{|J| \textrm{ odd}}  e^{\varepsilon_{J}}
$$
so that $\Res_V^\sigma(f_{\bbb_+}) = C_+ e^{\varepsilon_r} \cdot e_r + C_- \cdot e_{-r}\in \bfJ_T(V)$.

Similarly, we may choose a basis element
$$
\bbb_- 
\in (\ttS_{\sigma(\nu_-)^*} \otimes \ttS_{\nu_-} \otimes V)^G = \big( (\wedge^*U)^{\even,*} \otimes (\wedge^*U)^\odd \otimes V \big)^G,
$$
with class function
$$
\Res_V^\sigma(f_{\bbb_-}) =C_- e^{\varepsilon_r} \cdot e_r + C_+ \cdot  e_{-r} \in \bfJ_T(V).
$$

On the dual side, we choose elements
\begin{eqnarray*}
&\bbb_+^\vee
\in (\ttS_{\sigma(\nu_+)} \otimes \ttS_{\nu_+^*} \otimes V^*)^G = \big( (\wedge^*U)^{\odd} \otimes (\wedge^*U)^{\even,*} \otimes V^* \big)^G,\\
&\bbb_-^\vee 
\in (\ttS_{\sigma(\nu_-)} \otimes \ttS_{\nu_-^*} \otimes V)^G = \big( (\wedge^*U)^{\even} \otimes (\wedge^*U)^{\odd,*} \otimes V \big)^G.
\end{eqnarray*}
If we write $D_+ = e^{\nu_+}\cdot \sum_{|J| \textrm{ even}}  e^{-\varepsilon_{J}}$, $ D_- =e^{\nu_+}\cdot  \sum_{|J| \textrm{ odd}}  e^{-\varepsilon_{J}}$, the class functions $f_{\bbb_+^\vee}, f_{\bbb_-^\vee} \in \bfJ(V^*)$ have restrictions
$$
\Res_{V^*}^\sigma(f_{\bbb_+^\vee}) =D_+  e^{-\varepsilon_r} \cdot e_r^* + D_- \cdot  e_{-r}^*, \quad 
\Res_{V^*}^\sigma(f_{\bbb_-^\vee}) =D_- e^{-\varepsilon_r} \cdot e_r^* + D_+ \cdot  e_{-r}^* \quad \in \bfJ_T(V^*).
$$

Combining all above, we deduce that
$$
\frakM = 
\begin{pmatrix}
\langle f_{\bbb_+}, f_{\bbb^\vee_+} \rangle & \langle f_{\bbb_+}, f_{\bbb^\vee_-}\rangle \\ \langle f_{\bbb_-}, f_{\bbb^\vee_+} \rangle& \langle f_{\bbb_-}, f_{\bbb^\vee_-}\rangle
\end{pmatrix} = \begin{pmatrix}
C_+ D_+ + C_-D_-
 & 
C_+ D_- + C_-D_+
\\
C_+ D_- + C_-D_+ & C_+ D_+ + C_-D_-
\end{pmatrix}.
$$
Now the formula \eqref{E:matrix for Spin} follows from the following identity
\begin{align*}
C_+D_++ C_-D_- = \sum_{i \geq 0 \textrm{ even}} 2^{r-1-i} \frakS_i, \quad C_+D_-+ C_-D_+ = \sum_{i \geq 0 \textrm{ odd}} 2^{r-1-i} \frakS_i.
\end{align*}
It is easier to  compute the determinant of $\frakM$ in the following way:
$$
\det \frakM = (C_+ D_+ + C_-D_-)^2 - (C_+ D_- + C_-D_+)^2 = (C_+ + C_-) (D_++D_-) (C_+ - C_-)(D_+-D_-).
$$
On the other hand, it is easy to see that
$$
C_+ \pm C_- = e^{-\nu_+} \prod_{i = 1}^{r-1} (1\pm e^{\varepsilon_i}), \quad D_+ \pm D_- = e^{\nu_+} \prod_{i = 1}^{r-1} (1\pm e^{-\varepsilon_i}).
$$
So
\[
\det \frakM = \prod_{i=1}^{m-1} \big(( 1- e^{2\varepsilon_i} )( 1- e^{-2\varepsilon_i} )\big).\qedhere
\]
\end{proof}

\newpage

\appendix

\section{A remark on freeness} 

\medskip 
\begin{center}
{\it Stephen Donkin}\footnote{Department of Mathematics,  University of York, York YO10 5DD.
\ E-mail address: \tt stephen.donkin@york.ac.uk}
\end{center}

\medskip

Let $G$ be a semisimple, simply connected algebraic group over an  algebraically closed field $k$. Then conjugation gives rise to a $G$-module structure on the coordinate algebra $k[G]$ and the algebra of class functions  $C(G)$ is the invariant algebra. 
Let $V$ be a finite dimensional rational $G$-module which admits a good filtration.    In Theorem~\ref{T:main theorem} it is shown that the space of invariants  $(k[G]\otimes V)^G$ is free as a $C(G)$-module. 

We here obtain this as a special case of Theorem~\ref{T:Donkin theorem}  below. However, we  would like to stress that this does not cover the freeness of $\bfJ_0(V)$ and $\bfJ_+(V)$ established  in Theorem~\ref{T:main theorem}, nor does it cover the cases  in which the action on the coordinate algebra  is twisted via an automorphism,  considered  by the authors.

In \cite{DonkCR} we considered the situation in which $G$ acts rationally on a finitely generated $k$-algebra $A$, see \cite{DonkCR}, 1.5 Theorem and deduced freeness over the algebra of invariants, in some cases, in particular  in the case $A=k[G]$ with the conjugation action. 
The argument we give below is essentially a re-run of the argument from  \cite{DonkCR}, but where we now consider invariants of a finitely generated $A$-module and rational $G$-module. Our argument   relies on flatness and we should mention that  in the cases considered in \S~\ref{S: fil vect and Rees}--\S~\ref{S:vector-valued twisted invariant functions} a more constructive approach is taken and flatness is obtained as a corollary of freeness.

\subsection{Some General Recollections}

By a $G$-algebra we mean a commutative $k$-algebra on which $G$ acts rationally as $k$-algebra automorphisms. We will denote the action of $g\in G$-algebra on  $a\in A$ by $g\cdot a$.  Let $A$ be a $G$-algebra. By an $(A,G)$-module we mean a $k$-space $M$ which is an $A$-module and a rational $G$-module in such a way that $g(am)=(g\cdot a) gm$, for all $g\in G$, $a\in A$, $m\in M$.   The space of invariants of a $G$-module $M$ will be denoted $M^G$ and $C$ denotes $A^G$.

We here  modify the arguments of \cite{DonkCR},  pp139--140,  to  give a generalization to $(A,G)$-modules of the main result of \cite{DonkCR}.   We begin by describing some properties of $(A,G)$-modules. These are no doubt well known but we include them for the convenience of the reader.

Recall that, from the Mumford Conjecture (proved by Haboush in \cite{Haboush})  if $A$ is finitely generated then so is $C$. Moreover, if $\pi:A\to B$ is a surjection of $G$-algebras then for $b\in B^G$ we have $b^n=\pi(a)$ for some $a\in A$ and positive integer $n$ (see \cite{Newstead},  p54). In particular $B^G$ is integral over $\pi(A^G)$ and so if $A$ is finitely generated then  $B^G$ is a finitely generated $A^G$-module.

Now suppose that $A$ is finitely generated and that $M$ is a finitely generated $(A,G)$-module. We claim that $M^G$ is finitely generated as a   $C=A^G$-module.  If this holds we say that $M$ has the finite generation property.  Note that a submodule of an $(A,G)$-module with the finite generation property has the finite generation property.  Note also that an extension of $(A,G)$-modules with this property also has this property: Suppose $0\to X\to Y\to Z\to 0$ is a short exact sequence of $(A,G)$-modules and $X,Z$ have  the property. Then we have a short exact sequence of $C$-modules 
$$0\to X^G\to Y^G\to (X+Y^G)/X\to 0.$$
  Now $X+Y^G/X$ embeds in $(Y/X)^G$ and so is finitely generated as a $C$-module. Thus $X^G$ and $(X+Y^G)/X$ are finitely generated $C$-module so that $Y^G$ is finitely generated too.

We consider the case in which $M$ is generated  by a non-zero  invariant $m_0\in M^G$. Thus  $M=Am_0$ and $M$ is isomorphic to $A/I$, where $I$ is the annihilator of $m_0$. We  have the natural map $\pi:A\to B=A/I$ and by the above $B^G$ is integral over $B_0=\pi(A^G)$.  Hence $B^G$ is finitely generated as a $C$-module and $M$ has the finite generation property. 

Now consider the general case. Let $M$ be a finitely generated $(A,G)$-module and let $X$ be a submodule maximal subject  to the condition that it has the finite generation property. If $(M/X)^G=0$ then $M^G=X^G$ so that $M$ has the required property. Otherwise we choose $m_0\in M\backslash X$ with $(m_0+X)\in (M/X)^G$.  Put $Y=X+Am_0$. Then $Y/X$ is generated by an invariant so has  the property. Hence $X$ and $Y/X$ have  the finite generation property and therefore so has $Y$.  But $X$ is strictly contained in $Y$ so we have a contradiction.

Thus we have the following.

\begin{itemize}
\item [(1)]{\it If $A$ is a finitely generated $G$-algebra and $M$ is a finitely generated $(A,G)$-module then $M^G$ is a finitely generated $A^G$-module.}
\end{itemize}

We need to improve (1) so that we can take invariants for the action of a subgroup of $G$. Now suppose that $H$ is a Grosshans subgroup of $G$, i.e., a closed subgroup such that the  the algebra 
$$k[H\backslash G]=\{f\in k[G]\vert f(hg)=f(g) \hbox{ for all } h\in H, g\in G\}$$
 is finitely generated. (In fact we only in which  need the case $H$ is  a maximal unipotent subgroup of $G$.).   Suppose that $A$ is a finitely generated $G$-algebra and $M$ is a finitely generated $(A,G)$-module. We shall need that $M^H$ is finitely generated as an $A^H$-module. 
By Frobenius reciprocity and the tensor identity one has  $M^H=(M\otimes k[H\backslash G])^G$ as $k$-spaces
 and $(A\otimes k[H\backslash G])^G=A^H$ 
 so the argument should be to regard $(M\otimes k[H\backslash G])$ as an  $A\otimes k[H\backslash G]$-module and take invariants. 
 
To see that this really works we  write down explicitly the maps involved in identifying $H$-invariants and $G$-invariants. This is a slight extension of the context of \cite{Popov}, Theorem 4.

Let $\sigma:k[G]\to k[G]$ be the antipode, so $\sigma(f)(x)=f(x^{-1})$, for $f\in k[G]$, $x\in G$. Let $A$ be a $G$-algebra and let $M$ be an $(A,G)$-module.   We choose a $k$-basis $(m_i)_{i\in I}$ of $M$. Let  $f_{ij}\in k[G]$ be the corresponding coefficient functions so that 
$$gm_i=\sum_{j\in I} f_{ji}(g)m_j$$
for $g\in G$, $i\in I$.

It is  easy to check that there are
 inverse  $k$-linear isomorphisms $\phi_M:(M\otimes k[H\backslash G])^G\to M^H$ and $\psi_M:M^H\to (M\otimes k[H\backslash G])^G$ satisfying 
 $$\phi_M(\sum_{i\in I} m_i\otimes b_i)=\sum_{i\in I} b_i(1) m_i$$
 for $\sum_{i\in I} m_i\otimes b_i\in (M\otimes k[H\backslash G])^G$ and 
 $$\psi_M(\sum_{\in I} \lambda_im_i)=\sum_{i,j\in I} 
 \lambda_i m_j\otimes \sigma(f_{ji})$$
 for $\sum_{i\in I}\lambda_im_i\in M^H$.
 
 For  $a\in A^H$ and $m\in M^H$ one checks that $\psi_M(am)=\psi_A(a)\psi_M(m)$.    We regard $M\otimes k[H\backslash G]$ as an $(A\otimes k[H\backslash G],G)$-module in the natural way.  By (1),  $(M\otimes k[H\backslash G])^G$  is a finitely generated $(A\otimes k[H\backslash G])^G$-module, i.e., 
$$(M\otimes k[H\backslash G])^G=\sum_{i=1}^n (A\otimes k[H\backslash G])^G y_i$$
for some $y_1,\ldots,y_n\in (M\otimes k[H\backslash G])^G$.  Let $x_i\in M^H$ be such that $\psi_M(x_i)=y_i$ for $1\leq i\leq n$ and put  $D=\sum_{i=1}^n A^Hx_i\leq M^H$.  Then  
$$\psi_M(D)=\sum_{i=1}^n \psi_A(A^H)\psi_M(x_i)=\sum_{i=1}^n (A\otimes k[H\backslash G])^G y_i=(M\otimes k[H\backslash G])^G$$
and so $D=M^H$ and $M^H$ is finitely generated.

To summarise, we have the following.

\begin{itemize}
\item[(2)] {\it If $A$ is a finitely generated $G$-algebra, $H$ is a Grosshans subgroup of $G$ and $M$ is a finitely generated $(A,G)$-module then $M^H$ is a finitely generated $A^H$-module.}
\end{itemize}

\subsection{Freeness}

We are now in a position to extend the main result of \cite{DonkCR} to the context of $(A,G)$-modules.  We adopt the notation of \cite{RAG}. In particular we have the maximal torus $T$  and Weyl group $W$. We  attach a root system to $G$, with respect to $T$, and let $B$ denote the negative Borel subgroup. We have the character group $X(T)$ with natural partial order $\leq $  and set of dominant weight  $X^+(T)$.  For $\lambda\in X^+(T)$ we write $L(\lambda)$ for the irreducible rational $G$-module with highest weight $\lambda$, write $k_\lambda$ for the one dimensional $B$-module on which $T$ acts via $\lambda$ and write $\nabla(\lambda)$ for the induced module $\ind_B^Gk_\lambda$.  A subset $\pi$ of $X^+(T)$ is said to be saturated if whenever $\lambda\in \pi$ and $\mu \in X^+(T)$ with $\mu\leq \lambda$ then $\mu\in \pi$.  For $\pi$ a saturated subset of $X^+(T)$ and $V$ a rational $G$-module the set of submodules of $V$ which have all composition factors belonging to $\{L(\lambda)\vert \lambda\in \pi\}$ has a unique maximal element, which we denote $O_\pi(V)$.

Theorem 1.5  in \cite{DonkCR}, which we now extend to the context  of $(A,G)$-modules,  is obtained  via  three Propositions.  The first,  Proposition 1.2.  and the third,   Proposition 1.4,   require no modification.

The modified  Proposition 1.3 and its proof   require  only  minimal changes,  given the remarks on finite generation  above,  but we give it again for the sake of completeness,  in the form of the following lemma. 

\begin{lemma}  
\label{L:Donkin lemma}
Let $A$ be a finitely generated $G$-algebra and let $C=A^G$. Suppose that $M$ is a finitely generated $(A,G)$-module and that $M$ has a good filtration.  Then, for every finite saturated subset $\pi$ of $X^+(T)$, the $C$-module $O_\pi(M)$ is a finitely generated. 
\end{lemma}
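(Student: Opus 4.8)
The plan is to run the induction on $|\pi|$ used in \cite{DonkCR}, Proposition~1.3, now carrying the module $M$ along in place of the algebra $A$, and to feed in the finite generation statement~(1) above at the key step. Recall first that, $A$ being a finitely generated $G$-algebra and $G$ reductive, the invariant ring $C=A^G$ is a finitely generated $k$-algebra, hence Noetherian; consequently the finitely generated $C$-modules form a class closed under taking submodules, quotients and extensions, which is all we shall use about $C$. When $\pi=\emptyset$ we have $O_\pi(M)=0$ and there is nothing to prove, so assume $\pi\neq\emptyset$ and that the assertion holds for all strictly smaller finite saturated subsets.

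Choose $\lambda$ maximal in $\pi$ and put $\pi'=\pi\setminus\{\lambda\}$, again a finite saturated subset. Since every $c\in C$ acts on $M$ by a $G$-module endomorphism, $N:=O_\pi(M)$ and $N':=O_{\pi'}(M)=O_{\pi'}(N)$ are $C$-submodules of $M$ (the second equality because any submodule of $M$ with composition factors in $\pi'$ already lies in $O_\pi(M)$). As usual, if $X/N'\subseteq N/N'$ has all composition factors in $\pi'$ then so does $X$, whence $X\subseteq O_{\pi'}(N)=N'$; thus $O_{\pi'}(N/N')=0$, so every composition factor of $N/N'$ is isomorphic to $L(\lambda)$. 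By the standard compatibility of good filtrations with the truncation functors $O_\pi$ for $\pi$ saturated (see \cite{Do1}; compare Lemma~\ref{L:good sub} and Theorem~\ref{T:prop of good fil}), $N$ — and hence $N/N'$ — admits a good filtration, and the preceding sentence forces every section of such a filtration of $N/N'$ to be $\nabla(\lambda)$. Since $\on{End}_G(\nabla(\lambda))=k$ and $\on{Ext}^1_G(\nabla(\lambda),\nabla(\lambda))=0$, the $(C,G)$-module $N/N'$ is isomorphic to $\nabla(\lambda)\otimes_k D$, where $D:=\Hom_G(\Delta(\lambda),N/N')$ (with $\Delta(\lambda)$ the Weyl module of highest weight $\lambda$) is a $C$-module with the trivial $G$-action, the commuting central operators being absorbed into the multiplicity space $D$. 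In particular $N/N'\cong D^{\oplus\dim\nabla(\lambda)}$ as a $C$-module.

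It remains to show that $D$ is finitely generated over $C$. Applying $\Hom_G(\Delta(\lambda),-)$ to $0\to N'\to N\to N/N'\to 0$ and using that $N'$ has a good filtration, so $\on{Ext}^1_G(\Delta(\lambda),N')=0$ (valid since $\Delta(\lambda)$ is finite dimensional), we obtain a surjection $\Hom_G(\Delta(\lambda),N)\twoheadrightarrow D$. Next, $\Hom_G(\Delta(\lambda),N)=\Hom_G(\Delta(\lambda),M)$: any $G$-map $\Delta(\lambda)\to M$ has image a quotient of $\Delta(\lambda)$, whose composition factors are among the $L(\mu)$ with $\mu\leq\lambda$ dominant, all of which lie in $\pi$ by saturatedness, so the image lies in $O_\pi(M)=N$. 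Finally $\Hom_G(\Delta(\lambda),M)\cong(\nabla(\lambda^{*})\otimes_k M)^G$ with $\lambda^{*}=-w_0(\lambda)$, and $\nabla(\lambda^{*})\otimes_k M$ is a finitely generated $(A,G)$-module, so this space is a finitely generated $C$-module by statement~(1) above. Hence $D$ is finitely generated over $C$, therefore so is $N/N'$; combining with the inductive hypothesis that $N'=O_{\pi'}(M)$ is finitely generated over $C$ and the Noetherianity of $C$, we conclude that $N=O_\pi(M)$ is finitely generated over $C$.

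The point requiring the most care is the identification $N/N'\cong\nabla(\lambda)\otimes_k D$ \emph{as $(C,G)$-modules}: one must verify that the equivalence $X\mapsto\Hom_G(\Delta(\lambda),X)$ between good-filtration modules all of whose sections are $\nabla(\lambda)$ and $k$-vector spaces is compatible with the extra commuting central action of $C$, so that this action is genuinely transported to the multiplicity space $D$ (which is what makes the final finite-generation argument go through). Everything else is a routine transcription of \cite{DonkCR}, Proposition~1.3, using Haboush's theorem to guarantee finite generation of the invariant rings and invariant modules that occur; Propositions~1.2 and~1.4 of \cite{DonkCR} then carry over verbatim to yield Theorem~\ref{T:Donkin theorem}.
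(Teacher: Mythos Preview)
Your argument is essentially correct and reaches the goal by a route genuinely different from the paper's, but one inference needs repair. From $O_{\pi'}(N/N')=0$ you conclude that every \emph{composition factor} of $N/N'$ is $L(\lambda)$; this is false. For instance if $N/N'=\nabla(\lambda)$ with $\nabla(\lambda)\neq L(\lambda)$, the socle of $\nabla(\lambda)$ is $L(\lambda)$, so indeed $O_{\pi'}(\nabla(\lambda))=0$, yet $\nabla(\lambda)$ has composition factors $L(\mu)$ with $\mu<\lambda$, hence $\mu\in\pi'$. What you actually need, and what does hold, is that every \emph{section} of a good filtration of $N/N'$ is $\nabla(\lambda)$. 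This follows directly from the compatibility you already cite (\cite{D4}, (12.1.6)): the good-filtration sections of $N=O_\pi(M)$ are exactly the $\nabla(\mu)$ with $\mu\in\pi$, and passing to $N/O_{\pi'}(N)$ removes precisely those with $\mu\in\pi'$. With this correction the remainder of your proof stands; the vanishing $\Ext^1_G(\nabla(\lambda),\nabla(\lambda))=0$ is standard, and your identification $N/N'\cong\nabla(\lambda)\otimes_k D$ as $(C,G)$-modules then follows as you indicate from $\End_G(\nabla(\lambda))=k$.

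For comparison, the paper identifies the multiplicity space with the weight space $O_\pi(M)^\lambda\cong O_\pi(M)^{w_0\lambda}\subset M^U$ and proves finite generation by invoking the Grosshans property of $U$ (statement~(2)) followed by an explicit reduction to the torus case. Your approach bypasses all of this: you exhibit $D$ as a quotient of $\Hom_G(\Delta(\lambda),M)\cong(\nabla(\lambda^*)\otimes_k M)^G$ and appeal to statement~(1) alone. This is more economical; the paper's route has the mild advantage that the multiplicity space is visibly a weight space, which feeds directly into the statement of Theorem~\ref{T:Donkin theorem}.
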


\begin{proof} Let $\lambda$ be a maximal element of $\pi$ and let $\pi'=\pi\backslash \{\lambda \}$. Then \\
$O_\pi(M)/O_{\pi'}(M)\cong C\otimes O_\pi(M)^\lambda$, by \cite{DonkCR}, 1.2 Proposition. By   induction on the size of  $\pi$  it therefore suffices  to prove that $O_\pi(M)^\lambda$ is a finitely generated $C$-module.   Moreover, multiplication by a coset representative of $w_0$ (the longest element of the Weyl group $W$) induces  an isomorphism $O_\pi(M)^\lambda\to O_\pi(M)^{w_0\lambda}$,  so it suffices to show that $O_\pi(M)^{w_0\lambda}$ is finitely generated.

Let $M_0=M^U$ and $A_0=A^U$. Since $w_0\lambda$ is a lowest weight of $O_\pi(M)$, we have 
$O_\pi(M)^{w_0\lambda}\leq M_0^{w_0\lambda}$. On the other hand  ${\bar M}^{w_0\lambda}=0$, by \cite{D4}, (12.1.6) and (1.5.2),  where  where ${\bar M}=(M/O_\pi(M))^U$, so that $O_\pi(M)^{w_0\lambda}=M_0^{w_0\lambda}$.   Furthermore, $M_0$ is a $T$-module and $A_0^T=A^B=A^G$, by \cite{CPSvdK}, (2.1) Theorem. Moreover, $U$ is a Grosshans subgroup, e.g., by \cite{Grosshans}, so that $A_0$ is finitely generated and, by (2),  $M_0$ is a finitely generated $A_0$-module.  Hence we may (and do) replace $M$ by  $M^U$ and $G$ by $T$.  So it suffices to prove that if $A$ is  finitely generated $T$-algebra and $M$ is a finitely generated $(A,T)$-module then for any $\mu\in X(T)$ the weight space $M^\mu$ is a finitely generated $A^T$-module.  We choose a finite dimensional $T$-invariant subspace of $M$ which generates $M$, as a $(A,T)$-module. Then we have a surjective $(A,T)$-module map $A\otimes V\to M$ inducing a surjection on invariants (by complete reducibility of $T$-modules). Hence we may assume $M=A\otimes V$ and, by complete reducibility of $T$-modules again, that $V$ is one dimensional, with weight $\tau$ say. Then $(A\otimes V)^\mu$ is isomorphic to $A^{\mu-\tau}$. Hence it suffices to prove that $A^\lambda$ is a finitely generated $A^T$-module, for $\lambda\in X(T)$.  So let $\chi=-\lambda$. Then $A\otimes k[\chi]$ is a finitely generated $T$-algebra and so $(A\otimes k[\chi])^T$ is finitely generated  by  $a_i\otimes \chi^{d_i}$ say, for $1\leq i\leq n$, $d_i\geq 0$. Then $A^\lambda$ is generated as a $A^T$-module by $\{a_i : 1\leq i\leq n \hbox{ and } d_i=1\}$. 
\end{proof}

Now the proofs of \cite{DonkCR} 1.5 Theorem and its Corollary go through in the context of $(A,G)$-modules with  Lemma~\ref{L:Donkin lemma} replacing \cite{DonkCR},  Proposition 1.3, and we obtain the following.

\begin{theorem}
\label{T:Donkin theorem}
Suppose that $A$ is a finitely generated $G$-algebra whose algebra of invariants $C=A^G$ is a free polynomial algebra. Suppose that  $M$ is a finitely generated $(A,G)$-module such that $M$ is flat as an $A$-module and has a good filtration  (as a $G$-module).  

 Let $\pi$ be any finite saturated subset $\pi$ of $X^+(T)$,  let $\lambda$ be a maximal element and put $\pi'=\pi\backslash\{\lambda\}$.  Then, as a $(C,G)$-module, $O_\pi(M)/O_{\pi'}(M)$ is isomorphic to $E\otimes C$, where $E$ is isomorphic to a direct sum of finitely many copies of $\nabla(\lambda)$.
 \end{theorem}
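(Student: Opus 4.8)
The plan is to run the argument of \cite[\S 1]{DonkCR} essentially verbatim, but in the setting of $(A,G)$-modules rather than $G$-algebras. That argument deduces freeness through three auxiliary results of \cite{DonkCR}: a structural description of the sections $O_\pi(-)/O_{\pi'}(-)$ (Proposition 1.2), a finite-generation statement over $C$ (Proposition 1.3), and a statement passing from flatness to freeness over the polynomial algebra $C$ (Proposition 1.4). The point is that only the middle one genuinely uses the algebra structure on $A$ and must be re-proved for modules — this has been done above as Lemma~\ref{L:Donkin lemma} — whereas Proposition 1.2 is purely a statement about rational $G$-modules with a good filtration (the $C$-module structure being carried along functorially), and Proposition 1.4 is in essence the commutative-algebra fact that a finitely generated flat module over a polynomial $k$-algebra is free. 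So I would first record that these two carry over unchanged, and then assemble the three.

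Concretely, the steps would be: (i) Since $M$ has a good filtration and $\pi,\pi'$ are saturated, $O_\pi(M)$ and $O_{\pi'}(M)$ have good filtrations (Donkin \cite[12.1.6]{Do1}; cf.\ Lemma~\ref{L:good sub}), so $Q:=O_\pi(M)/O_{\pi'}(M)$ has a good filtration all of whose sections are $\nabla(\lambda)$, $\lambda$ being the chosen maximal element of $\pi$. (ii) By \cite[Prop.\ 1.2]{DonkCR}, $Q$ is then isomorphic, as a $(C,G)$-module, to $\nabla(\lambda)\otimes_k N$ with $G$ acting on the first factor and $C$ on the multiplicity module $N$; here $N$ is identified with the highest-weight space $O_\pi(M)^\lambda$, which is a $C$-submodule of $M$ because $C$ acts on $M$ through $G$-module (hence $T$-module) endomorphisms and so preserves weight spaces — this is the isomorphism already used in the proof of Lemma~\ref{L:Donkin lemma}. (iii) Lemma~\ref{L:Donkin lemma} gives that $O_\pi(M)$ is a finitely generated $C$-module; as $M=\bigoplus_\mu M^\mu$ is a decomposition into $C$-submodules and $C$ is Noetherian, the weight space $N=O_\pi(M)^\lambda$ is finitely generated over $C$. (iv) Using that $M$ is $A$-flat together with the good filtration, exactly as in \cite[Prop.\ 1.4]{DonkCR}, one gets that $N$ is $C$-flat; a finitely generated flat module over the polynomial algebra $C$ is free, say $N\cong C^{\oplus m}$. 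Putting these together, $Q\cong\nabla(\lambda)\otimes_k C^{\oplus m}\cong\bigl(\nabla(\lambda)^{\oplus m}\bigr)\otimes_k C=E\otimes C$ with $E\cong\nabla(\lambda)^{\oplus m}$, which is the assertion.

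The hard part is step (iv): one cannot merely quote "$M$ flat over $A$" formally, since the functors involved — truncation $O_\pi$, passage to $U$-invariants, and projection onto a weight space — are not exact in general, so propagating flatness from $M/A$ down to $N/C$ needs the good-filtration hypothesis and the Mathieu--Donkin machinery recalled in Theorem~\ref{T:prop of good fil}. In the reduction used for Lemma~\ref{L:Donkin lemma} one replaces $(M,G,A)$ by $(M^U,T,A^U)$ with $A^U$ finitely generated and $(A^U)^T=A^G=C$; checking that \cite[Prop.\ 1.4]{DonkCR} "requires no modification" amounts to verifying that in this torus situation flatness over $A^U$ still yields $C$-flatness of the relevant weight spaces, after which freeness over the polynomial ring $C$ is automatic. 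That verification — imported from \cite{DonkCR} — is the only place where effort beyond Lemma~\ref{L:Donkin lemma} is required; everything else is formal bookkeeping with good filtrations.
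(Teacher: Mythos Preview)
Your proposal is correct and follows exactly the paper's approach: the paper explicitly states that Propositions~1.2 and~1.4 of \cite{DonkCR} require no modification, that Proposition~1.3 is replaced by Lemma~\ref{L:Donkin lemma}, and that with this substitution the proof of \cite[1.5 Theorem]{DonkCR} goes through verbatim. Your outline (i)--(iv) is a faithful expansion of that deferred argument, and your caution about step~(iv) simply makes explicit what the paper imports from \cite{DonkCR} without re-verification.
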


Given a $C$-module $N$ and a $k$-space of $G$-module  $E$ we write $|E|\otimes N$ for the vector space $E\otimes N$ viewed as a $C$-module with action $c(e\otimes n)=e\otimes cn$, for $c\in C$, $e\in E$, $n\in N$.  

For a finite dimensional $G$-module $E$ admitting a good filtration and  $\lambda\in X^+(T)$, we write $(E:\nabla(\lambda))$ for the multiplicity of $\nabla(\lambda)$ as a section in a good filtration of $E$.

\begin{cor}
\label{C:Donkin cor 1}  Under the hypotheses of the Theorem, $M$ has an ascending $(C,G)$-module filtration $0=M_0,M_1,\ldots$, where $M_i/M_{i-1} \cong |E_i|\otimes C$, $E_i$ is a finite direct sum of copies of $\nabla(\lambda_i)$ ($i\geq 1$) and $\lambda_1,\lambda_2,\ldots$ is a labelling of the elements of $X^+(T)$ such that $i<j$ whenever $\lambda_i<\lambda_j$. For a given labelling the multiplicity $(E_i:\nabla(\lambda_i))$ is independent  of the choice of such a filtration.

In particular $M$ is  a free $C$-module.
\end{cor}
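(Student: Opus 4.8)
The plan is to deduce Corollary~\ref{C:Donkin cor 1} from Theorem~\ref{T:Donkin theorem} by a fairly formal limiting/filtration argument, and then observe that freeness of $M$ over $C$ is immediate once the associated graded pieces are of the form $|E_i|\otimes C$ with $E_i$ a finite direct sum of copies of $\nabla(\lambda_i)$.

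First I would fix an enumeration $\lambda_1,\lambda_2,\ldots$ of $X^+(T)$ refining the dominance order, i.e. such that $i<j$ whenever $\lambda_i<\lambda_j$; this is possible since each $\lambda$ has only finitely many dominant weights below it. For each $n\geq 0$ let $\pi_n=\{\lambda_1,\ldots,\lambda_n\}$; this is a finite saturated subset of $X^+(T)$ by the choice of enumeration, and $\lambda_n$ is a maximal element of $\pi_n$ with $\pi_n\setminus\{\lambda_n\}=\pi_{n-1}$. Set $M_n:=O_{\pi_n}(M)$. These form an ascending chain of $(C,G)$-submodules of $M$ with $M_0=0$, and since $M$ has a good filtration every composition factor $L(\mu)$ of $M$ has $\mu\in X^+(T)$, so $\bigcup_n M_n=M$ (every element of $M$ lies in a finite-dimensional $G$-submodule, hence in some $O_{\pi_n}(M)$). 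By Theorem~\ref{T:Donkin theorem}, $M_n/M_{n-1}\cong E_n\otimes C$ as a $(C,G)$-module, where $E_n$ is a finite direct sum of copies of $\nabla(\lambda_n)$; writing this as $|E_n|\otimes C$ records exactly the $C$-module structure. That $E_n\otimes C$ is free over $C$ is clear, and the multiplicity $(E_n:\nabla(\lambda_n))=\dim_k E_n/\dim_k\nabla(\lambda_n)$ is recovered intrinsically as $\dim_k\Hom_G(\nabla(\lambda_n), (M_n/M_{n-1})\otimes_C k)$ for $k$ a fibre of $C$, hence is independent of the chosen filtration for the given enumeration.

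For freeness of $M$ itself over $C$: each quotient $M_n/M_{n-1}$ is free over $C$, and I would argue that the extensions split as $C$-modules. One clean way is to note that $C$ is a polynomial algebra, and $M$ is finitely generated in each weight space over $C$ by statement~(2) of the appendix (applied to $U$, then descending to $T$ as in the proof of Lemma~\ref{L:Donkin lemma}); but in fact the simplest route is to pass to weight spaces for $T$ and use that $M$ is flat over $A$ hence over $C$ (flatness over $C$ follows since $C\hookrightarrow A$ and $A$ is $C$-flat by Haboush plus the good filtration hypothesis — or one invokes directly that $M$ being flat and finitely presented over $C$ on each graded piece, together with freeness of the graded pieces, forces freeness). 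Concretely: a finitely generated flat module over the (Noetherian, in fact polynomial) ring $C$ is projective, and a finitely generated projective module that is filtered with free graded pieces is free; applying this componentwise to the grading by $X^+(T)$ (each $M_n$ is finitely generated over $C$) gives that $M$ is free over $C$.

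The main obstacle I expect is not any single deep step — Theorem~\ref{T:Donkin theorem} does the real work — but rather the bookkeeping needed to pass from the finite truncations $M_n=O_{\pi_n}(M)$ to the infinite module $M$ while keeping track of the $C$-module (as opposed to merely $k$-space) structure, and in particular justifying that the filtration exhausts $M$ and that flatness of $M$ over $A$ transfers to the statement that the exhaustive filtration with free quotients yields a free $C$-module. This is the point where one must be careful that $C$ is Noetherian (true, as it is a polynomial algebra by hypothesis) so that "finitely generated flat $=$ projective" applies, and that projective plus a filtration by frees with the order-compatibility condition gives free; here the order-compatibility (indexing by a well-ordered refinement of $\leq$) is exactly what makes the argument of \cite{DonkCR} go through verbatim.
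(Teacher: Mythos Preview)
Your construction of the filtration in the first paragraph is correct and is exactly the intended argument: enumerate $X^+(T)$ compatibly with $\leq$, set $M_n=O_{\pi_n}(M)$, and apply the Theorem to each successive pair $\pi_{n-1}\subset\pi_n$. The exhaustion $M=\bigcup_n M_n$ and the multiplicity statement are also fine (for the latter, the cleanest intrinsic description is $(E_i:\nabla(\lambda_i))=\on{rank}_C\Hom_G(\Delta(\lambda_i),M)$, using that $\on{Ext}^1_G(\Delta(\lambda_i),-)$ vanishes on modules with good filtration).

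The freeness argument, however, goes astray. You correctly open with ``the extensions split as $C$-modules'', and that is already the whole proof: $M_n/M_{n-1}\cong |E_n|\otimes C$ is free over $C$, hence projective, so $0\to M_{n-1}\to M_n\to M_n/M_{n-1}\to 0$ splits over $C$; inductively $M_n$ is free with a $C$-basis extending one of $M_{n-1}$, and the union of these bases is a $C$-basis of $M=\bigcup_n M_n$. No further input is needed.

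Your subsequent detour through flatness of $M$ over $C$ is both unnecessary and unjustified under the stated hypotheses. The Theorem assumes only that $M$ is flat over $A$; it does \emph{not} assume $A$ is flat over $C$, and ``Haboush plus the good filtration hypothesis'' does not produce that flatness in general (in the application $A=k[G]$ this is a separate nontrivial input, cited from Richardson). So the chain ``$M$ flat over $A$, $A$ flat over $C$, hence $M$ flat over $C$, hence each $M_n$ projective, hence free'' breaks at the second link. Moreover, even granting projectivity, your step ``finitely generated projective filtered by frees is free'' is essentially invoking Quillen--Suslin, which is overkill here. Drop the flatness discussion entirely and keep the one-line splitting argument.
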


We specialize further to the situation of this paper, Theorem~\ref{T:main theorem}.

\begin{cor}
Regard  $k[G]$ as a $G$-module via the conjugating action and let $V$ be a finite dimensional rational $G$-module which admits a good filtration. Then $(k[G]\otimes V)^G$ is free over $C(G)$.

\end{cor}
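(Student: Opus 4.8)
The plan is to obtain this as the special case $A=k[G]$ (with the conjugation action) and $M=k[G]\otimes V$ of Theorem~\ref{T:Donkin theorem}, equivalently Corollary~\ref{C:Donkin cor 1}, so that the work consists entirely of checking the hypotheses of that theorem and then extracting the $G$-invariants. First I would verify the three required properties. (a) $A=k[G]$ is a finitely generated $G$-algebra and its invariant algebra $C=A^G=C(G)$ is a free polynomial algebra: this is the classical theorem of Steinberg, valid because $G$ is semisimple and simply connected (it is the $\sigma=\id$ case of the polynomiality statement used already in the body, e.g. in the proofs of Lemma~\ref{L:branched locus} and Lemma~\ref{L:modification =>determinant}, that $k[\mathbf A]^{W_0}$ is a polynomial algebra). (b) With $V$ finite dimensional, $M=k[G]\otimes V$ is isomorphic as an $A$-module to $A^{\oplus\dim V}$, hence is finitely generated and flat (indeed free) over $A$; moreover the compatibility $g(am)=(g\cdot a)(gm)$ is immediate since the conjugation action on $k[G]\otimes V$ is diagonal, so $M$ is a genuine $(A,G)$-module. (c) $M$ has a good filtration as a $G$-module.

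For (c) I would argue as follows. Regarded as a $G\times G$-bimodule via left and right translation, $k[G]$ has a good filtration by Theorem~\ref{T:prop of good fil}(4). Restriction along the diagonal embedding $G\hookrightarrow G\times G$ sends $\nabla(\lambda)\boxtimes\nabla(\mu)$ to $\nabla(\lambda)\otimes\nabla(\mu)$, which has a good filtration by Theorem~\ref{T:prop of good fil}(2); since the class of modules admitting a good filtration is closed under extensions (the cohomological criterion, Theorem~\ref{T:prop of good fil}(3)), the restriction of $k[G]$ to the diagonal $G$ — which is exactly the conjugation module — again has a good filtration. Tensoring with $V$ and invoking Theorem~\ref{T:prop of good fil}(2) once more shows that $M=k[G]\otimes V$ has a good filtration as a $G$-module.

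With the hypotheses verified, I would apply Theorem~\ref{T:Donkin theorem} to the finite saturated subset $\pi=\{0\}$ (its unique element $0$ is maximal, $\pi'=\emptyset$, and $O_\emptyset(M)=0$): it yields an isomorphism of $(C,G)$-modules $O_{\{0\}}(M)\cong E\otimes C$, where $E$ is a finite direct sum of copies of $\nabla(0)=k$. In particular $O_{\{0\}}(M)$ is a trivial $G$-module and is free of finite rank over $C=C(G)$. It then remains to identify $M^G$ with $O_{\{0\}}(M)$: on one hand $M^G$, being a trivial $G$-submodule of $M$, has all composition factors isomorphic to $L(0)=k$, so $M^G\subseteq O_{\{0\}}(M)$; on the other hand $O_{\{0\}}(M)\cong E\otimes C$ is itself a trivial $G$-module, so $O_{\{0\}}(M)\subseteq M^G$. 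Hence $(k[G]\otimes V)^G=M^G=O_{\{0\}}(M)$ is a free $C(G)$-module.

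Once Theorem~\ref{T:Donkin theorem} is in hand there is no serious obstacle; the two points requiring genuine care are the verification in step (c) that $k[G]\otimes V$ inherits a good filtration as a conjugation $G$-module (the restriction-to-the-diagonal argument above), and the clean identification $M^G=O_{\{0\}}(M)$. If one prefers to avoid the latter, one can instead apply the invariants functor along the ascending $(C,G)$-filtration of $M$ furnished by Corollary~\ref{C:Donkin cor 1}: each graded piece has the form $E_i\otimes C$ with $E_i$ admitting a good filtration, so $\on{H}^1(G,E_i\otimes C)=\on{H}^1(G,E_i)\otimes C=0$ by Theorem~\ref{T:prop of good fil}(3), whence taking invariants is exact along the filtration; since $\nabla(\lambda_i)^G=0$ for $\lambda_i\neq 0$, only the single graded piece with $\lambda_i=0$ survives, again exhibiting $M^G$ as a free $C(G)$-module.
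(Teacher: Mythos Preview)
Your approach is essentially the paper's: take $A=k[G]$ with the conjugation action, $M=k[G]\otimes V$, verify the hypotheses of Theorem~\ref{T:Donkin theorem}, and apply it (or Corollary~\ref{C:Donkin cor 1}) with $\pi=\{0\}$. Your argument for (c) via restriction to the diagonal is correct and is the content of the Jantzen references the paper cites; your identification $M^G=O_{\{0\}}(M)$ is also fine (the paper leaves this implicit).

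There is one substantive point you omit. The paper, after noting that $k[G]\otimes V$ is free over $k[G]$, goes on to invoke Richardson's result that $k[G]$ is flat over $C(G)$, and concludes that $k[G]\otimes V$ is flat over $C(G)$. This flatness over $C$ (not merely over $A$) is what is actually used in the proof of Theorem~\ref{T:Donkin theorem}, which reruns the argument of \cite{DonkCR}; indeed, as the footnote in the introduction of the paper remarks, the method of \cite{DonkCR} requires the flatness of the Chevalley map $G\to G/\!\!/c(G)$ as an input. So even though the theorem as stated only asks for $M$ flat over $A$, you should include the Richardson step to make the verification complete.
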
 

Note that $C(G)$ is a free polynomial algebra by \cite{Steinberg}, 6.1 Theorem. Moreover,    $k[G]\otimes V$ is free, and hence  flat over $k[G]$ and $k[G]$ is flat over $C(G)$, by \cite{Richardson-appendix}, Proposition 2.3, so that $k[G]\otimes V$ is flat over $C(G)$.
We take $A=k[G]$, considered as a $G$-algebra via the conjugating representation  and  $M=k[G]\otimes V$ and $\pi=\{0\}$.   Then $M$ has a good filtration, by \cite{RAG}, II, 4.20 Proposition and 4.21 Proposition so we may apply Corollary~\ref{C:Donkin cor 1} .



\begin{thebibliography}{9999}

\bibitem[Ba11]{balagovic}
M. Balagovic, Chevalley restriction theorem for vector-valued functions on quantum groups. {\it Representation Theory} {\bf 15} (2011), 617--645.

\bibitem[BR17]{BR17} P. Baumann, S. Riche, Notes on the geometric Satake equivalence, {\tt arXiv:1703.07288}.

\bibitem[BL96]{BL96}J. Bernstein, V. Lunts, A simple proof of Kostant's theorem that $U(\frakg)$ is free over its center, {\it  Amer. J. Math.} {\bf 118} (1996), 979-987.

\bibitem[Bou15]{Bou}A. Bouthier, Dimension des fibres de Springer affines pour les groupes, {\it Transform. Groups} {\bf 20} (2015), 615--663.

\bibitem[Br89]{Bry}R. K. Brylinski, Limits of weight spaces, Lusztig's $q$-analogs, and fibering
of coajoint orbits, {\it J. Amer. Math. Soc.} {\bf2} (1989) 517--534.

\bibitem[Do81]{Do}S. Donkin, A filtration for rational modules, {\it Math Z.} {\bf 177} (1981), 1--8.

\bibitem[Do85]{Do1}S. Donkin, Rational Representations of Algebraic Groups, {\it Lecture Notes in Mathematics} {\bf 1140} (1985), Springer-Verlag.

\bibitem[Do88a]{Do2}S. Donkin, Skew modules for reductive groups, {\it J. Algebra} {\bf 113 }(1988), 465--479. 

\bibitem[Do88b]{Do3} S. Donkin,
On conjugating representations and adjoint representations of semisimple groups, {\it
Invent. Math.} {\bf 91} (1988),  137--145. 

\bibitem[Gi95]{Gi}V. Ginzburg, Perverse sheaves on a loop group and Langlands' duality, {\tt arXiv:alg-geom/9511007}.

\bibitem[KNV11]{khoroshkin-nazarov-vinberg}
S. Khoroshkin, M. Nazarov, and E. Vinberg, A generalized Harish-Chandra isomorphism, {\it Adv. Math.} {\bf 226} (2011), 1168--1180.

\bibitem[Ko84]{Kop}M. Koppinen, Good bimodule filtrations for coordinate rings, {\it J. London Math. Soc.} {\bf 30} (1984), 244--250.

\bibitem[Ko63]{Ko}B. Kostant, Lie Group Representations on Polynomial Rings, {\it Amer. J. Math.} {\bf 85} (1963), 327--404.

\bibitem[Lu83]{Lu}G. Lusztig:  Singularities, character formulas, and a $q$-analog of weight multiplicities, {\it Analysis and topology on singular spaces, II, III} (Luminy, 1981), 208--229, Ast\'erisque, {\bf 101}-{\bf 102}, Soc. Math. France, Paris, 1983.

\bibitem[Lu00]{Lu2}G. Lusztig: Semicanonical Bases Arising From Enveloping Algebras, {\it Adv. Math.} {\bf 151} (2000), 129--139. 

\bibitem[Ma90]{Ma}O. Mathieu, Filtrations of G-modules, {\it Ann. Sci. \'Ecole Norm. Sup.} {\bf 23} (1990), 625--644.

\bibitem[MV07]{MV}I. Mirkovi\'c and K. Vilonen, Geometric Langlands duality and representations of algebraic groups over commutative rings, {\it Ann. of Math.} {\bf 166} (2007),  95--143.

\bibitem[Mo03]{M}S. Mohrdieck, Conjugacy classes of non-connected algebraic groups, {\it Transform. Groups} {\bf 8} (2003), 377--395.

\bibitem[Ric79]{Richardson}
R. Richardson,
The conjugating representation of a semisimple group, {\it 
Invent. Math.} {\bf 54} (1979), 229--245. 

\bibitem[Rit01]{Ri3}A. Rittatore, Very flat reductive monoids, {\it Publ. Mat. Urug.} {\bf 9} (2001), 93--121.

\bibitem[Sp74]{Sp1}T. Springer, Regular Elements of Finite Reflection Groups, {\it Inventiones math.} {\bf 25} (1974), 159--198. 

\bibitem[Sp06]{springer}
T. Springer, Twisted conjugacy in simply connected groups, {\it
Transform. Groups} {\bf11} (2006), 539--545. 

\bibitem[St65]{St2}R. Steinberg, Regular elements of semi-simple algebraic groups, {\it Publ. mat. I.H.\'E.S}, {\bf 25} (1965), 49--80.

\bibitem[St68]{St}R. Steinberg,  Endomorphisms of linear algebraic groups, {\it Memoirs of the American mathematical society}, no. {\bf 80}, American Mathematical Society Providence, Rhode Island
1968.

\bibitem[St75]{St3}R. Steinberg, On a theorem of Pittie, {\it Topology}, {\bf 14} (1975), 173--177.

\bibitem[TX14]{TX}Y. Tian and L. Xiao, Tate cycles on quaternionic Shimura varieties over finite fields, {\tt arXiv:1410.2321}.

\bibitem[Vi95]{Vi}E. B. Vinberg, On reductive algebraic semigroups, {\it E. B. Dynkin's Seminar}, 145--182, Amer. Math. Soc.
Transl. Ser. {\bf 169}, Amer. Math. Soc., Providence, RI, 1995.

\bibitem[XZ17]{XZ}L. Xiao and X. Zhu, Cycles on Shimura varieties via geometric Satake, {\tt arXiv:1707.05700}.

\bibitem[Zhu17]{Z16}X. Zhu, An introduction to affine Grassmannians and the geometric Satake equivalence, {\it IAS/Park City Mathematics Series}, {\bf 24} (2017), 59--154.

\end{thebibliography}

\begin{thebibliography}{99}

















\bibitem{CPSvdK} {E. Cline, B. Parshall ,  L. Scott and W. van der Kallen, \emph{Rational and generic cohomology}, Invent. Math. {\bf 39}, (1977), 143-163.}











\bibitem{D4}{S. Donkin, \emph{Rational Representations of Algebraic Groups: Tensor Products and Filtrations}, Lecture Notes in Math.  1140, Springer  (1985).}






\bibitem{DonkCR} {S. Donkin, \emph{On Conjugating representations and adjoint representations of semisimple groups}, Invent. math. {\bf 91}, (1988), 137-145.}







































\bibitem{Grosshans} {Frank Grosshans, \emph{Observable subgroups and Hilbert's fourteenth problem}, Amer. J. Math. {\bf 95}, (1973), 229-253.}



\bibitem{Haboush}{ W. J. Haboush, \emph{Reductive groups are geometrically reductive: a proof of the Mumford conjecture}, Ann. of Math. (2), {\bf 102} (1975), 67-83.}




















\bibitem{RAG}{Jens Carsten Jantzen, \emph{Representations of Algebraic Groups}, second ed., Math. Surveys Monogr., vol 107, Amer. Math. Soc., (2003).}























\bibitem{Newstead}{P. E. Newstead, \emph{Introduction to moduli problems and orbit spaces}, Tata Institute of Fundamental Research, Bombay 1978.}




\bibitem{Popov}{V. L. Popov, \emph{Contraction of the actions of reductive algebraic groups}, Math. USSR, Sb. {\bf 58} (2), (1987), 311-335.}


\bibitem{Richardson-appendix} {R. W. Richardson, \emph{The conjugating representation of a semisimple group}, Invent. Math. {\bf 54}, (1979), 229-245.}



\bibitem{Steinberg} {R. Steinberg, \emph{Regular elements in semisimple groups}, Publ. I.H.E.S. {\bf 25}, (1965), 49-80.}
 
 
 
 


\end{thebibliography}
\end{document}